\newtheorem{theorem}{Theorem}[section]
\newtheorem{proposition}[theorem]{Proposition}
\newtheorem{lemma}[theorem]{Lemma}
\newtheorem{corollary}[theorem]{Corollary}
\newtheorem{remark}[theorem]{Remark}
\newtheorem{definition}[theorem]{Definition}
\newtheorem{conjecture}[theorem]{Conjecture}
\newtheorem*{theorem*}{Theorem}
\newcommand{\N}{\mathbb{N}}
\newcommand{\R}{\mathbb{R}}
\renewcommand{\to}{\rightarrow}
\def\sideremark#1{\ifvmode\leavevmode\fi\vadjust{\vbox to0pt{\vss
 \hbox to 0pt{\hskip\hsize\hskip1em
 \vbox{\hsize2.1cm\tiny\raggedright\pretolerance10000
  \noindent #1\hfill}\hss}\vbox to15pt{\vfil}\vss}}}%
\newcommand{\apice}[2]{\overset{\scriptscriptstyle{#2}}{{#1}}}
\begin{document}
\numberwithin{equation}{section}
\parindent=0pt
\hfuzz=2pt
\frenchspacing

\title[Radial solutions]{Sharp asymptotic behavior of radial solutions of some planar semilinear elliptic problems}

\author[]{Isabella Ianni}

\address{Isabella Ianni, Universita degli studi della Campania \emph{Luigi Vanvitelli}, V.le Lincoln 5, 81100 Caserta, Italy.
\texttt{isabella.ianni@unicampania.it}}

\author[]{Alberto Salda\~{n}a}
\address{Alberto Salda\~{n}a, Instituto de Matem\'{a}ticas, Universidad Nacional Aut\'{o}noma de M\'{e}xico,Circuito Exterior, Ciudad Universitaria, 04510 Coyoac\'{a}n, Ciudad de M\'{e}xico, M\'{e}xico. \texttt{alberto.saldana@im.unam.mx}}

\thanks{2010 \textit{Mathematics Subject classification:} 35B05, 35B06, 35J91. }

\thanks{ \textit{Keywords}: H\'enon equation, Lane-Emden equation, sign-changing radial solutions, asymptotic analysis, a priori bounds, Morse index.}

\thanks{Research partially supported by:  PRIN  $2017$JPCAPN$\_003$ grant and INDAM - GNAMPA}

\begin{abstract} We consider the equation $-\Delta u= |x|^{\alpha}|u|^{p-1}u$ for any $\alpha\geq 0$, 
	either in $\R^2$ or in the unit ball $B$ of $\R^2$ centered at the origin with Dirichlet or Neumann boundary conditions. We give a sharp description of the asymptotic behavior as $p\rightarrow +\infty$ of all the radial solutions to these problems and we show that there is no uniform a priori bound (in $p$) for nodal solutions under Neumann or Dirichlet boundary conditions. This contrasts with the existence of uniform bounds for positive solutions, as recently shown in \cite{KamburovSirakov} for $\alpha=0$ and Dirichlet boundary conditions.\end{abstract}

\maketitle

\section{Introduction}
We consider  the equation
\begin{equation}\label{problemHenonWhole}
-\Delta u=|x|^{\alpha} |u|^{p-1}u\qquad  \mbox{ in }\R^2,
\end{equation}
where  $\alpha \geq 0$ and $p>1$, we also consider the problem in the unit ball $B$ of $\R^2$ centered at the origin
\begin{equation}\label{equationHenon}-\Delta u= |x|^{\alpha}|u|^{p-1}u\qquad  \mbox{ in }B,
\end{equation}  together with either  Dirichlet boundary conditions
\begin{equation}\label{DirichletBC}
u=0\qquad\text{ on }\partial B
\end{equation}
or  Neumann boundary conditions
\begin{equation}\label{NeumannBC}
\partial_{\nu}u=0\qquad\text{ on }\partial B.
\end{equation}
When $	\alpha=0$ these are commonly known as \emph{Lane-Emden} problems, while when $\alpha>0$ they 
were introduced first by M. H\'enon in \cite{Henon} and so are usually referred to as  \emph{H\'enon} problems. As we will see the existence of radial solutions for all these planar problems can be easily proved for any $p>1$.
\\\\
In this work we give a sharp description of the asymptotic behavior of all the radial solutions to the problems \eqref{problemHenonWhole}, \eqref{equationHenon}-\eqref{DirichletBC} and \eqref{equationHenon}-\eqref{NeumannBC} as the exponent  $p\rightarrow +\infty$, for any fixed $\alpha\geq 0$.
\\
\\
The interest in the study of the asymptotic behavior as $p\rightarrow +\infty$ for  these $2$-dimensional problems started from the seminal works by Ren and Wei (\cite{RenWeiTAMS1994,RenWeiPAMS1996}), which concern the study of the least energy solutions  for the Lane-Emden equation ($\alpha=0$) in general smooth bounded domains, under  Dirichlet boundary conditions. In this case, the least energy solutions $u_p$ (which are \emph{positive}) exhibit a single point concentration  and $u_p\rightarrow 0$ locally uniformly outside the concentration point as $p\rightarrow +\infty$. Moreover, differently from the almost critical higher dimensional case (which is much more studied, see for instance  \cite{BahriLiRey,BrezisPeletier,Han,Rey,Struwe}), these solutions do not blow-up but  stay uniformly  bounded (in $p$) in $L^{\infty}$ and  (see \cite{AdimurthiGrossi})   $\|u_p\|_{L^{\infty}}\rightarrow \sqrt{e}$ as $p\rightarrow +\infty$.\\\\  Recently, an \emph{a priori uniform bound} (in $p$) has been proven in  \cite{KamburovSirakov}  \emph{for any positive solution} of the Dirichlet Lane-Emden problem, in any smooth bounded planar domain. More precisely it has been proved that, for any $p_0>1$, there exists a constant $C>0$, which depends only on the domain and $p_0$, such that, for all $p\geq p_0$, any solution $u_p$ of the  Lane-Emden problem with Dirichlet condition on the boundary of the domain satisfies that
\begin{equation}\label{aPrioriBoundPositive}
\|u_p\|_{L^{\infty}}\leq C.
\end{equation}
Moreover in \cite{DIPpositive} a complete description of the asymptotic behavior of any positive solution of the Dirichlet Lane-Emden problem has been given, in any smooth bounded planar domain and under a uniform energy bound assumption, showing simple concentration at a finite number of distinct points, convergence to $0$ locally uniformly outside the concentration set, convergence of the $L^{\infty}$-norm to $\sqrt e$ and energy quantization to integer multiples of the value $8\pi e$ in the limit as $p\rightarrow +\infty$ (for the sharp quantization result and $L^{\infty}$-norm limit behavior see \cite{DeMarchisGrossiIanniPacellaImprovement} and also \cite{Thizy}).
\\
In particular, when the domain is the ball $B$ centered at $0$, then  the Dirichlet Lane-Emden problem (i.e. \eqref{equationHenon}-\eqref{DirichletBC} with $\alpha=0$) admits a unique positive solution  (which is the least energy) which is radial and strictly decreasing in the radial variable (by the  symmetry result in \cite{GNN}), so the unique concentration point is necessarily the origin, which is the maximum point, and one has that
\begin{equation}\label{conv11}u_p(0)=\|u_p\|_{L^{\infty}}\rightarrow \sqrt e\qquad \text{as $p\rightarrow +\infty$},
\end{equation}
\begin{equation}\label{conv111}pu_p(x)\rightarrow 4\sqrt{e}\log\frac{1}{|x|} \qquad \mbox{ in }C^1_{loc}(\bar B\setminus\{0\})\text{ as $p\rightarrow +\infty$},
\end{equation}
from which the following convergence of the derivative on the boundary directly follows
\begin{align*}
p|(u_p)'(1)|\rightarrow 4\sqrt e\qquad \text{as $p\rightarrow +\infty$};
\end{align*}
furthermore, the energy satisfies that
\begin{equation}\label{energi11}p\int_0^1|(u_p)'(r)|^2r\, dr\rightarrow 4 e\qquad \text{as $p\rightarrow +\infty$}.
\end{equation}  
From \cite{AdimurthiGrossi} we also know that a suitable scaling of $u_p$ converges to a radial solution $Z_0$ of the Liouville equation in the whole $\mathbb R^2$

\begin{equation}	\label{intoLiou}
\left\{
\begin{array}{lr}
	\ \ \ \ -\Delta Z_0=e^{Z_0}\quad\mbox{ in }\R^2,\\
\ \ \ \ \ Z_0(0)=0,\\
\int_{\R^2}e^{Z_0}dx= 8\pi.
\end{array}
\right.
\end{equation}

\begin{center}

\setlength{\unitlength}{1cm}
\thicklines
\begin{picture}(5,4.5)
\put(5.8,3.7){$r$}
\put(-1,0.3){$Z_0(r)$}
\includegraphics[width=.4\textwidth]{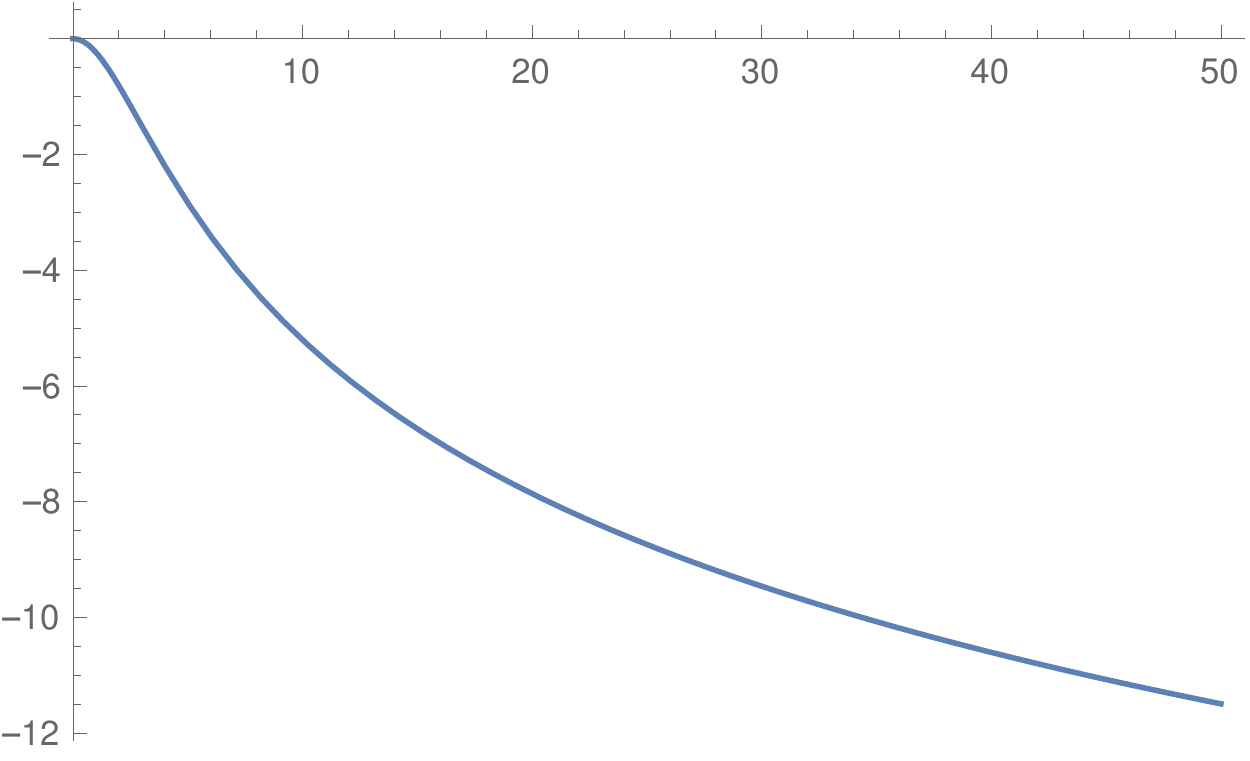}
\end{picture}

\captionof{figure}{The solution of \eqref{intoLiou}.}
\label{f1}	
\end{center}

\

\

Concerning \emph{sign-changing} solutions of the planar Dirichlet Lane-Emden problem, from \cite{DeMarchisIanniPacellaJEMS}  it is known that, under energy uniform bounds in $p$, nodal solutions are uniformly bounded in $p$,  concentrate at a finite number of points and converge to zero locally uniformly outside the concentration set as $p\rightarrow +\infty$ (see also \cite{GrossiGrumiauPacella1}, where low energy sign-changing solutions have been studied).
\\ 
\\
In this paper we show that an a prior bound as in  \eqref{aPrioriBoundPositive}  does \emph{not} hold in general for sign-changing solutions (see Theorem \ref{theorem:NObounds}).  Moreover, differently from the positive case,   as $p\rightarrow +\infty$ the concentration may not be simple and a tower of bubbles may appear as shown in \cite{GrossiGrumiauPacella2}  in the ball $B$ and later generalized to other symmetric domains in \cite{DeMarchisIanniPacellaJEMS}. \\
In particular, when the domain is the unit ball $B$ centered at $0$, it is known that the Dirichlet Lane-Emden problem \eqref{equationHenon}-\eqref{DirichletBC} (with $\alpha=0$) admits infinitely many radial solutions, one (up to sign) for each fixed number $m$ of nodal regions, and they all  concentrate only at the origin as $p\rightarrow +\infty$, where their absolute maximum (up to sign) is attained. 
\\
The asymptotic behavior of  the \emph{nodal radial  solutions with $m=2$ nodal regions} has been analyzed in full detail in \cite{GrossiGrumiauPacella2}. 
The nodal radius $r_p$ of these solutions shrinks to the origin as $p\rightarrow +\infty$ and there exists an explicit  constant $\theta\sim 10.374$ (so that $t:=\frac{4\sqrt e}{\theta-2}$ is the unique solution of the equation $2\sqrt e\log t+t=0$) such that 
\begin{equation}\label{conv22}|u_p(0)|=\|u_p\|_{L^{\infty}}\rightarrow \frac{\theta-2}{4}e^{\frac{2}{\theta+2}} (>\sqrt e)\qquad \text{as $p\rightarrow +\infty$},
\end{equation}
\begin{equation}
\label{conv222}
p|u_p(x)|\rightarrow (\theta+2) e^{\frac{2}{\theta+2}} \log\frac{1}{|x|} \qquad \mbox{ in }C^1_{loc}(\bar B\setminus\{0\})\text{ as $p\rightarrow +\infty$}.
\end{equation} 
From this convergence the following convergence of the derivative on the boundary directly follows
\[|p(u_p)'(1)|\rightarrow (\theta+2) e^{\frac{2}{\theta+2}}\qquad \text{as $p\rightarrow +\infty$}. \]
Moreover, in \cite{GrossiGrumiauPacella2} it is  proved that the nodal radius $r_p$ satisfies
\begin{equation}\label{r22}(r_p)^{\frac{2}{p-1}}\rightarrow \frac{4\sqrt e}{(\theta-2)}e^{-\frac{2}{\theta +2}}\qquad \text{as $p\rightarrow +\infty$}
\end{equation}
and 
\begin{equation}\label{r222}|p(u_p)'(r_p)|r_p\rightarrow  (\theta-2)e^{\frac{2}{\theta+2}}\qquad \text{as $p\rightarrow +\infty$}.
\end{equation}
Furthermore, denoting by $s_p$ the unique minimum (up to sign) of $u_p$, one has that it shrinks to $0$, 
\begin{equation}\label{s22}(s_p)^{\frac{2}{p-1}}\rightarrow e^{-\frac{2}{\theta +2}}\qquad \text{as $p\rightarrow +\infty$}
\end{equation}
and \begin{equation}\label{s222}|u_p(s_p)|\rightarrow e^{\frac{2}{\theta+2}}(<\sqrt e)\qquad \text{as $p\rightarrow +\infty$}.
\end{equation}
Finally, the asymptotic value of the  energy is explicitly determined
\begin{equation}\label{energi22}p\int_0^1|(u_p)'(r)|^2r\, dr\rightarrow \frac{(\theta+2)^2}{4} e^{\frac{4}{\theta+2}}\qquad \text{as $p\rightarrow +\infty$}.
\end{equation}  
The following is also known: a suitable scaling of $u_p^+$ converges to the solution $Z_0$ of the same limit problem  \eqref{intoLiou} already involved in the asymptotic of the positive solution, but a suitable rescaling of $u_p^-$ converges to a radial solution of a \emph{different} limit problem, that is, the singular Liouville equation
\begin{align}\label{intoLiousin}
\left\{
\begin{array}{lr}
\ \ \ \ -\Delta Z=e^Z+ 2\pi(2-\theta)\delta_{0}\quad\mbox{ in }\R^2,\\
\int_{\R^2}e^Zdx=4\pi\theta,
\end{array}
\right.
\end{align}
where  $\delta_{0}$ is the Dirac measure centered at $0$.

\begin{center}

\setlength{\unitlength}{1cm}
\thicklines
\begin{picture}(5,4.5)
\put(5.8,3.7){$r$}
\put(-0.9,0.3){$Z(r)$}
\includegraphics[width=.4\textwidth]{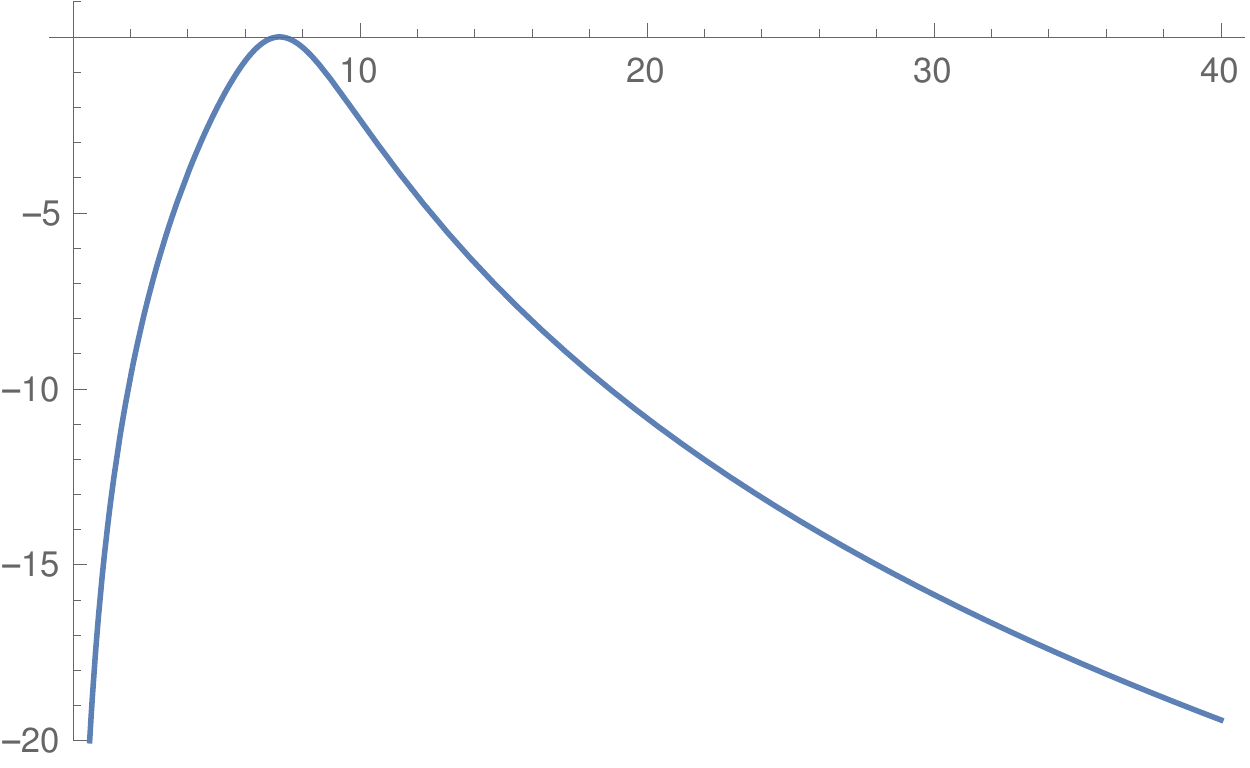}
\end{picture}

\captionof{figure}{The solution of \eqref{intoLiousin} with $\theta=10.374$.}
\label{fig2}
\end{center}

Observe that,  in \cite{DeMarchisIanniPacellaMathAnn}, this asymptotic information is the starting point to show that the Morse index of $u_p$ is $12$ when $p$ is sufficiently large (see also \cite{DeMarchisIanniPacellaAdvMath} for the higher dimensional case).\\\\
In this paper, we extend and generalize  to \emph{any} radial solution $u_p$  the asymptotic results described above.
In particular (see the case $\alpha =0$ in Theorems \ref{theorem:mainDirichletG} and \ref{theorem:analisiAsintoticaRiscalateAlpha}  below) we  show similar convergence as in \eqref{conv11}-\eqref{conv111} and \eqref{conv22}-\eqref{conv222} and describe in a precise way the asymptotic behavior of all the critical points, all the roots and the values of $u_p$ and $u_p'$ at these points respectively as in \eqref{r22}-\eqref{r222}-\eqref{s22}-\eqref{s222}, for \emph{any} radial solution $u_p$. We also study the convergence of a suitable rescaling of the solution in each nodal region and explicitly determine the asymptotic value of the energy, generalizing \eqref{energi11} and \eqref{energi22}.\\
\\
Our asymptotic analysis is carried out also for the \emph{Neumann Lane-Emden problem} (i.e. \eqref{equationHenon}-\eqref{NeumannBC} with $\alpha=0$), which is  much less studied than the Dirichlet problem.  These seem to be the first results concerning the asymptotic behavior of Neumann solutions in the $2$-dimensional case. Observe that an easy application of the divergence theorem implies that nontrivial solutions $u_p$ of the Neumann problem are necessarily sign-changing, since
 \begin{equation}\label{Neumansign}0=\int_{\partial B}\frac{\partial u_p}{\partial\nu}=-\int_B \Delta u_p=\int_B|u_p|^{p-1}u_p.
 \end{equation}
 Moreover, when the domain is the ball $B$, radial solutions cannot be ground states (see \cite[Corollary 1.4]{SaldanaTavares}, see also \cite{GiraoWeth}).
Our Theorem \ref{theoremNeumannMain}  below (case $\alpha=0$) provides a first  description in dimension $2$ of the asymptotic behavior  of \emph{all the radial solutions of the Neumann Lane-Emden problem in the ball $B$} as $p \rightarrow +\infty$. 
In particular, we obtain sharp constants for the asymptotic behavior of all the critical points, all the roots and the values of $u_p$ and $u_p'$ at these points respectively and explicitly determine the asymptotic value of the energy.
\\\\	
Our results, both for the Dirichlet and the Neumann Lane-Emden problems, complement the known results for the radial solutions of the  same problems in  higher dimension $N\geq 3$, as $p$ approaches the critical Sobolev exponent $\frac{N+2}{N-2}$ from the left (see \cite{AtkinsonPeletier, DeMarchisIanniPacellaAdvMath} and in particular the recent paper \cite{GrossiSaldanaTavares} where explicit rates of blow-up and sharp constants are obtained, similarly to what we obtain in our results).
\\
\\
In this work we also consider the case $\alpha>0$, namely the \emph{H\'enon problems}.
\\\\
The first results on the asymptotic behavior of solutions to the Dirichlet H\'enon problem are due to Cao e Peng (\cite{CaoPeng}) who study, in higher dimension $N\geq 3$ and as $p\rightarrow \frac{N+2}{N-2}$ from below, the ground states solutions (which are positive) showing a pointwise blow-up at a point approaching the boundary. As observed in \cite{CaoPeng} when the domain is a ball of $\mathbb R^N$, $N\geq 3$, this implies in particular that, differently with the case $\alpha=0$,  the least energy Dirichlet solution for the H\'enon problem for any fixed  $\alpha>0$ is not radial for almost critical values  $p$. This was already known from \cite[Theorem 6.1]{SmetsSuWillem}, where it was also proved that, in any dimension $N\geq 2$, the breaking of symmetry also happens at any subcritical value of the exponent $p$, as soon as $\alpha$ is large enough. \\\\
When the domain is the ball $B$,  radial  solutions for the Dirichlet H\'enon problem (i.e. \eqref{equationHenon}-\eqref{DirichletBC} with $\alpha>0$) exist for any $p\in (1, p_{\alpha})$, where $p_{\alpha}=\frac{N+2+2\alpha}{N-2}$ if $N\geq 3$, $p_{\alpha}=+\infty$ if $N=2$ (see \cite{NiHenon}).\\
A partial study of their asymptotic behavior, as $p\rightarrow p_{\alpha}$, may be found in  \cite{AmadoriGladiali_Henon_N>=3} in dimension $N\geq 3$, and in \cite{AmadoriGladiali_Henon_2} in dimension $N=2$, where this analysis is used as the starting point to compute the Morse index of the solutions as in \cite{DeMarchisIanniPacellaMathAnn, DeMarchisIanniPacellaAdvMath}. In particular, \cite{AmadoriGladiali_Henon_2} contains the sharp description,  in dimension $N=2$, of the asymptotic behavior, as $p\rightarrow +\infty$, of the nodal radii, critical points and values of the \emph{least energy radial}  solution (which is positive) and of the \emph{least energy sign-changing radial} solution (which has $2$ nodal regions).  Also, the behavior of suitable rescalings of these  solutions are studied.
\\
In this paper, we derive the existence of a unique (up to a sign) radial sign-changing solution  with  $m$ nodal regions for any $m\geq 2$, $p>1$ and for $N=2$, for the \emph{H\'enon problems} with both \emph{Dirichlet and Neumann boundary conditions} and we perform a sharp asymptotic analysis of solutions as $p\rightarrow +\infty$ (see Theorems \ref{theorem:mainDirichletG}, \ref{theorem:analisiAsintoticaRiscalateAlpha} and \ref{theoremNeumannMain}).
\\ 
In the Dirichlet case, our results complement  the ones about the asymptotic behavior of the solutions contained in \cite{AmadoriGladiali_Henon_2, AmadoriGladiali_Henon_N>=3}. In particular, we extend the results in \cite{AmadoriGladiali_Henon_2}  to \emph{any radial solution} and improve them, showing among other things that, similarly  as for the Lane-Emden case, the solutions concentrate at the origin. Following \cite{AmadoriGladiali_Henon_2, AmadoriGladiali_Henon_N>=3} and using our asymptotic analysis, one could then compute the Morse index for all the radial solutions also in dimension $N=2$, this will be the object of future investigation (see the Appendix for some discussion and a conjecture).
\\\\
The case of the \emph{Neumann H\'enon problem} is particularly interesting, since no results at all are available in the literature for this equation. The only papers considering Neumann H\'enon type problems (where a linear term is added into the equation) are \cite{GazziniSerra}, where the existence of ground state solutions and the breaking of symmetry phenomenon is investigated, and \cite{ByeonWang2018} where the asymptotic behavior as $\alpha\rightarrow +\infty$ and for fixed $p$ is studied for this solution. Our case is different not only because we are interested in radial solutions but above all because while  \cite{GazziniSerra, ByeonWang2018} deal with positive solutions, it is not difficult to see that any nontrivial solution of \eqref{equationHenon}-\eqref{NeumannBC} is necessarily sign-changing (just apply the divergence theorem similarly as in \eqref{Neumansign}).
\\ 
\\
As a consequence of our results, we also obtain information on the asymptotic behavior of radial solutions for the equation in the whole plane \eqref{problemHenonWhole}, for any fixed $\alpha\geq 0$ (see Theorem \ref{theoremMainWhole}).
\\
\\
Furthermore, from our sharp asymptotic analysis, we deduce the \emph{lack of uniform (in $p$) a priori bounds for nodal solutions of the $2$-dimensional Lane-Emden/H\'enon equation} with either Dirichlet or Neumann boundary conditions (see Theorem \ref{theorem:NObounds}), thus showing, in the Dirichlet Lane-Emden case, a difference with respect to the case of positive solutions, for which the uniform a priori bound \eqref{aPrioriBoundPositive} holds, as  recently proved in \cite{KamburovSirakov}.

\

We point out that  we are interested in the asymptotic analysis as $p\rightarrow +\infty$ and for  $\alpha\geq 0$ fixed, we refer to 
\cite{ByeonWangI, ByeonWangII, ByeonWang2018} for different results concerning the description of the  asymptotic behavior as $\alpha\rightarrow +\infty$ and $p$ is fixed (\cite{ByeonWangI, ByeonWangII}  for   Dirichlet ground states positive solutions  and \cite{ByeonWang2018}  for  Neumann ground states positive solutions of an H\'enon-type problem).

\section{Main results and ideas}
For a radial function $u :B\rightarrow\mathbb R$  defined on the unit ball $B$ we freely vary between the notations $u(x), x\in B$
and $u(r)$, $0\leq r \leq 1$.
\\
\\
In order to state our results, first we need to introduce the following definition:
\begin{definition} \label{definition:Constants}
	Let $(\theta_{k})_{k\geq 0}$ be the  sequence of real numbers uniquely defined by the following iteration:
	\begin{equation}\label{iterateAlphaM}
	\left\{
	\begin{array}{lr}
	\theta_0=2,\\
	&\\
	\theta_{k}=\displaystyle\frac{2}{\mathcal L\left[\displaystyle \frac{2}{ \ 2+\theta_{k-1}}e^{ -\frac{2}{2+\theta_{k-1}}}\right]}+2\ (>2), &\qquad \mbox{for }k\geq 1,
	\end{array}
	\right.
	\end{equation}
	where $\mathcal L$ is the Lambert function, namely the inverse function of $f(L)=Le^L$.
\end{definition}

The numbers $\theta_k$ are the building blocks to express the sharp constants involved in the asymptotic analysis of the solutions as $p\to\infty$. Below we define these constants (Definition \ref{definition:Constants2}) and explain their relationship with the concentration rates (Theorems \ref{theorem:mainDirichletG}, \ref{theoremNeumannMain}, and \ref{theoremMainWhole}). Once these relationships have been established, a careful study of the behavior of these constants (Section \ref{Constants:sec}) is used to show the lack of a  universal uniform bound on nodal solutions for the Dirichlet and Neumann problems (see Theorem \ref{theorem:NObounds}).
\begin{definition}\label{definition:Constants2}
	Let
	\begin{eqnarray}
	&& \label{Mk}	\apice{M}{k}_{k-1}:= e^{{2}/(2+\theta_{k-1})} \qquad\qquad \forall k\geq 1,\\
	&& \label{Rk} 
	\apice{R}{k}_{k-1}:=\frac{\apice{M}{k-1}_{\!\!k-2}\ (\theta_{k-2}+2)}{\apice{M}{k}_{k-1}(\theta_{k-1}-2)} \qquad\qquad \forall k\geq 2,
	\\ \label{Dk}
	&& \apice{D}{k}_k 
	:=\apice{M}{k}_{k-1} (\theta_{k-1}+2)\qquad\qquad \forall k\geq 1,
	\\\label{Sk}
	&&\apice{S}{k}_{k-1}:=\left\{\begin{array}{lr}
	0\qquad& k=1,\\
	(\apice{M}{k}_{k-1})^{-1}  \qquad&  k\geq 2,
	\end{array}
	\right.
	\end{eqnarray}
	\begin{equation}\label{SoloUltimiReD}
	\apice{R}{m}_{i}:=
	\prod_{k=i+1}^m\!\!\apice{R}{k}_{k-1},
	\qquad 
	\apice{D}{m}_{i}:=
	\frac{\apice{D}{i}_{i}}{ \displaystyle \prod_{ k=i+1}^m\!\!\apice{R}{k}_{k-1}}, \qquad i=1,\ldots, m-1,
	\end{equation}
 
	\begin{equation}
	\label{SoloUltimiSeM}
	\apice{S}{m}_{i}:=\apice{S}{i+1}_{\!\!i}\prod_{k=i+2}^{m}\!\!\apice{R}{k}_{k-1},
	\qquad
	\apice{M}{m}_{i}:=\frac{\apice{M}{i+1}_{\! i}}{\displaystyle\prod_{k=i+2}^{m}\!\!\apice{R}{k}_{k-1}},\qquad i=0,\ldots, m-2.
	\end{equation}
\end{definition}

We have the following monotonicity properties. 
\begin{lemma}\label{lemma:PropertiesConstants}
It holds that \begin{equation}\label{catenateointro}0=\apice{S}{m}_{0}<\ \apice{R}{m}_{1}<\apice{S}{m}_{1}< \apice{R}{m}_{2}<\apice{S}{m}_{2}<\cdots <\apice{S}{m}_{m-2}<\apice{R}{m}_{m-1}<\apice{S}{m}_{m-1}\, < 1,\end{equation} 
\begin{equation}\label{catenaMintro}\apice{M}{m}_0> \apice{M}{m}_1> \ldots> \apice{M}{m}_{m-1}>1.\end{equation}
The sequences $(\apice{M}{m}_j)_m$, $(\apice{D}{m}_j)_m$ are strictly increasing, while 	the sequences $(\apice{S}{m}_j)_m$, $(\apice{R}{m}_j)_m$  are strictly decreasing.
\end{lemma}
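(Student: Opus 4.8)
The plan is to establish the two chains \eqref{catenateointro} and \eqref{catenaMintro} first, for fixed $m$, and then derive the monotonicity in $m$ of the four families as a consequence. The whole analysis rests on understanding a single one-variable map. Writing $g(s):=\frac{2}{\mathcal L\!\left[\frac{2}{2+s}e^{-2/(2+s)}\right]}+2$, the iteration \eqref{iterateAlphaM} is $\theta_k=g(\theta_{k-1})$ with $\theta_0=2$, and I would first record the elementary facts about $\mathcal L$ on $(0,e^{-1})$ (it is increasing, $\mathcal L(te^{-t})=t$ for $t\in(0,1)$, and $0<\mathcal L(x)<1$ there), so that the argument $\frac{2}{2+s}e^{-2/(2+s)}$ lies in $(0,e^{-1})$ for $s>2$ and $g$ is well defined, real-valued, and $g(s)>2$. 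A short computation then shows $g$ is strictly increasing in $s$ on $[2,\infty)$ (the inner function $s\mapsto\frac{2}{2+s}$ is decreasing into $(0,1/2)$, and on $(0,1)$ the map $t\mapsto te^{-t}$ is increasing, and $\mathcal L$ is increasing, and $L\mapsto 2/L+2$ is decreasing — chasing the signs gives monotone increase), and that $g(s)>s$ for $s>2$. Hence $(\theta_k)_k$ is strictly increasing with $\theta_k>2$ for all $k\geq 1$; I also expect to need that it diverges, $\theta_k\to\infty$ (otherwise a finite fixed point $\theta_*>2$ of $g$ would exist, contradicting $g(s)>s$), which controls the tail behavior of the products.

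Next I would translate \eqref{iterateAlphaM} into a clean multiplicative identity. By definition of $\mathcal L$, from $\theta_k-2=\frac{2}{\mathcal L[\cdot]}$ we get $\mathcal L\!\left[\frac{2}{2+\theta_{k-1}}e^{-2/(2+\theta_{k-1})}\right]=\frac{2}{\theta_k-2}$, and applying $f(L)=Le^L$ to both sides yields the key relation
\begin{equation*}
\frac{2}{\theta_k-2}\,e^{2/(\theta_k-2)}=\frac{2}{2+\theta_{k-1}}\,e^{-2/(2+\theta_{k-1})},
\end{equation*}
equivalently, in the notation of Definition \ref{definition:Constants2} with $\apice{M}{k}_{k-1}=e^{2/(2+\theta_{k-1})}$, one has $e^{2/(\theta_k-2)}=\apice{M}{k+1}_{k}\,/\,\apice{M}{k}_{k-1}\cdot\frac{\theta_k-2}{\theta_{k-1}+2}$ after rearranging, and most usefully $(\theta_{k-1}-2)\apice{M}{k-1}_{k-2}\big/\big((\theta_{k-1}+2)\apice{M}{k}_{k-1}\big)=\mathcal L[\cdots]\cdot(\text{something})$ — the precise bookkeeping here is exactly what makes $\apice{R}{k}_{k-1}\in(0,1)$ come out. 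Concretely, I expect to show $0<\apice{R}{k}_{k-1}<1$ for each $k\geq 2$ directly from this identity plus $\mathcal L[\cdot]<1$, and then $\apice{S}{k}_{k-1}=(\apice{M}{k}_{k-1})^{-1}\in(0,1)$ trivially since $\apice{M}{k}_{k-1}=e^{2/(2+\theta_{k-1})}>1$. The interlacing $\apice{R}{m}_{i}<\apice{S}{m}_{i}<\apice{R}{m}_{i+1}$ in \eqref{catenateointro} would then be unwound from the product formulas \eqref{SoloUltimiReD}–\eqref{SoloUltimiSeM}: since $\apice{R}{m}_{i}=\prod_{k=i+1}^m\apice{R}{k}_{k-1}$ and $\apice{S}{m}_{i}=\apice{S}{i+1}_i\prod_{k=i+2}^m\apice{R}{k}_{k-1}$, the ratio $\apice{S}{m}_i/\apice{R}{m}_i=\apice{S}{i+1}_i/\apice{R}{i+1}_i$, and the two inequalities $\apice{R}{m}_i<\apice{S}{m}_i$ and $\apice{S}{m}_i<\apice{R}{m}_{i+1}$ reduce, after cancelling the common tail product, to the single-index statements $\apice{R}{i+1}_i<\apice{S}{i+1}_i$ and $\apice{S}{i+1}_i<1$ — i.e. $(\apice{M}{i+1}_i)^{-1}<1$ and a comparison of $\apice{R}{i+1}_i$ with $(\apice{M}{i+1}_i)^{-1}$. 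The chain \eqref{catenaMintro} is immediate: $\apice{M}{m}_j=\apice{M}{j+1}_j/\prod_{k=j+2}^m\apice{R}{k}_{k-1}$ and each $\apice{R}{k}_{k-1}<1$, so dividing by a product of numbers less than one increases the value as $j$ decreases; and $\apice{M}{m}_{m-1}=\apice{M}{m}_{m-1}=e^{2/(2+\theta_{m-1})}>1$.

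For the monotonicity in $m$ (the last sentence of the lemma): from the product representations, passing from $m$ to $m+1$ multiplies $\apice{R}{m}_j$ and $\apice{S}{m}_j$ by the extra factor $\apice{R}{m+1}_m<1$, so $(\apice{R}{m}_j)_m$ and $(\apice{S}{m}_j)_m$ are strictly decreasing; dually it divides $\apice{D}{m}_j$ and $\apice{M}{m}_j$ by $\apice{R}{m+1}_m<1$, so $(\apice{D}{m}_j)_m$ and $(\apice{M}{m}_j)_m$ are strictly increasing. (One must check the base case $\apice{D}{j}_j=\apice{M}{j}_{j-1}(\theta_{j-1}+2)$ and $\apice{M}{j+1}_j$ sit correctly, but that is just reading off the definitions.) The one genuinely substantive point — and the step I expect to be the main obstacle — is proving $0<\apice{R}{k}_{k-1}<1$, i.e. that $\apice{M}{k-1}_{k-2}(\theta_{k-2}+2)<\apice{M}{k}_{k-1}(\theta_{k-1}-2)$. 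Expanding, this is $e^{2/(2+\theta_{k-2})}(\theta_{k-2}+2)<e^{2/(2+\theta_{k-1})}(\theta_{k-1}-2)$, which is not obvious termwise since $\theta_{k-1}>\theta_{k-2}$ makes the exponential factor smaller while the polynomial factor larger; the resolution must come from the defining identity above, which pins down $\theta_{k-1}$ in terms of $\theta_{k-2}$ precisely so that this inequality is equivalent to $\mathcal L\!\left[\frac{2}{2+\theta_{k-2}}e^{-2/(2+\theta_{k-2})}\right]<1$, a property of the Lambert function on its principal branch over $(0,e^{-1})$. So the real content is: reduce everything by algebra to statements about $\mathcal L$ and about the scalar sequence $(\theta_k)$, and then invoke $0<\mathcal L(x)<1$ for $x\in(0,e^{-1})$ together with the monotonicity and divergence of $(\theta_k)$.
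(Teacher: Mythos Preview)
Your overall plan matches the paper's: reduce the chain to the two single-index facts $\apice{S}{k}_{k-1}=(\apice{M}{k}_{k-1})^{-1}<1$ and $\apice{R}{k}_{k-1}<\apice{S}{k}_{k-1}$, then read off \eqref{catenateointro}, \eqref{catenaMintro}, and the monotonicity in $m$ from the product formulas \eqref{SoloUltimiReD}--\eqref{SoloUltimiSeM} together with $\apice{R}{k}_{k-1}<1$. One clarification is worth making, though. The substantive inequality needed for the interlacing is $\apice{R}{k}_{k-1}<\apice{S}{k}_{k-1}$, i.e.\ $\apice{M}{k-1}_{k-2}(\theta_{k-2}+2)<\theta_{k-1}-2$; you correctly flagged this earlier but then conflated it in your last paragraph with the strictly weaker statement $\apice{R}{k}_{k-1}<1$. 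Also, neither of these is ``equivalent'' to $\mathcal L[\cdot]<1$. The paper simply uses $\mathcal L(x)<x$ for $x>0$: since $\mathcal L\!\left[\frac{2}{2+\theta_{k-2}}e^{-2/(2+\theta_{k-2})}\right]=\frac{2}{\theta_{k-1}-2}$, one gets $\frac{2}{\theta_{k-1}-2}<\frac{2}{2+\theta_{k-2}}e^{-2/(2+\theta_{k-2})}$, which rearranges directly to $\apice{R}{k}_{k-1}<\apice{S}{k}_{k-1}$ (and hence also $\apice{R}{k}_{k-1}<1$). Your analysis of the monotonicity of $g$ and the divergence of $(\theta_k)$ is correct but not needed for this lemma.
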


Next we state our asymptotic results for the Dirichlet problems \eqref{equationHenon}-\eqref{DirichletBC}.  Recall the definition of $\apice{M}{m}_{m-1}$, $\theta_{m-1}$, $\apice{R}{m}_i,$ $\apice{S}{m}_i,$ $\apice{M}{m}_{i},$ $\apice{D}{m}_i>0$ given in Definitions \ref{definition:Constants}-\ref{definition:Constants2} and let 
\begin{align*}
G(x,0)=-\frac{1}{2\pi}\log|x|,\qquad x\in B,
\end{align*}
denote the Green's function of $-\Delta$ in $B$ with Dirichlet boundary condition, computed at the origin.

\begin{theorem}[Dirichlet problem]\label{theorem:mainDirichletG}
Let $\alpha\geq 0$, $m\in\mathbb N$, $m\geq 1$ and $p>1$. Then there exists a unique (up to a sign) radial solution of  \eqref{equationHenon}-\eqref{DirichletBC} with $m-1$ interior zeros. This solution does not vanish in the origin, between any two consecutive zeros it has exactly one critical point, which is either a minimum or a maximum.  Let us denote by $\apice{u}{m}_{\alpha,p}$  the radial solution of \eqref{equationHenon}-\eqref{DirichletBC} with $m-1$ interior zeros and such that  $\apice{u}{m}_{\alpha,p}(0)>0$. Then $\|\apice{u}{m}_{\alpha,p}\|_{L^{\infty}}=|\apice{u}{m}_{\alpha,p}(0)|$ and 
\begin{equation}\label{pointwiseDirichletG}
p\apice{u}{m}_{\alpha,p}(x)=2\pi\gamma_{\alpha,m} G(x,0) +o(1), \mbox{ in } C^1_{loc }(B\setminus\{0\})\qquad \text{as $p\rightarrow +\infty$,}\end{equation}
where \begin{equation}
\label{gammaalm}
 \gamma_{\alpha,m}:=(-1)^{m-1}\frac{\alpha+2}{2}\apice{M}{m}_{m-1} (\theta_{m-1}+2).
 \end{equation}
Moreover,
\begin{eqnarray}
\label{EnergiaTotaleG}
p\int_0^1|(\apice{u}{m}_{\alpha,p})'(r)|^2rdr&=&
p\int_0^1|\apice{u}{m}_{\alpha,p}(r)|^{p+1}r^{\alpha+1}dr
=\frac{\alpha+2}{8}(\apice{M}{m}_{m-1})^2 (\theta_{m-1}+2)^2 +o(1)\qquad 
\end{eqnarray}
as $p\rightarrow +\infty$. If $\apice{r}{m}_{i,\alpha,p}$ and  $\apice{s}{m}_{i,\alpha,p}$ denote the  zeros  and the critical points of $\apice{u}{m}_{\alpha,p}$ respectively, so that
\[ 
0=\apice{s}{m}_{0,\alpha,p}<\apice{r}{m}_{1,\alpha,p}<\apice{s}{m}_{1,\alpha,p}<\apice{r}{m}_{2,\alpha,p}<\ldots <\apice{r}{m}_{m-1,\alpha,p}<\apice{s}{m}_{m-1,\alpha,p}<\apice{r}{m}_{m,\alpha,p}=1,
\] then, as $p\rightarrow +\infty$,
\begin{eqnarray}
&(\apice{r}{m}_{i,\alpha,p})^{\frac{2}{p-1}}=\left(\apice{R}{m}_{i}\right)^{\frac{2}{\alpha+2}}+o(1), & \ i=1,\ldots, m-1,
\label{raggioG}
\\
&p|(\apice{u}{m}_{\alpha,p})'(\apice{r}{m}_{i,\alpha,p})|(\apice{r}{m}_{i,\alpha,p})=\frac{\alpha+2}{2}\apice{D}{m}_i +o(1), &\  i=1,\ldots, m,
\label{derivataInRaggioG}
\\
&(\apice{s}{m}_{i,\alpha,p})^{\frac{2}{p-1}}=\left(\apice{S}{m}_i\right)^{\frac{2}{\alpha+2}}+o(1),& \ i=1,\ldots, m-1,
\label{puntoCriticoG}
\\
&|\apice{u}{m}_{\alpha,p}(\apice{s}{m}_{i,\alpha,p})|=\apice{M}{m}_{i}+o(1),&\  i=0,\ldots, m-1.
\label{valoreInPuntoCriticoG}
\end{eqnarray}
\end{theorem}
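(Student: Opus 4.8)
The plan is to reduce the H\'enon problem to a one-dimensional Lane--Emden problem that is essentially autonomous, and then run an iterated blow-up. Writing the radial equation as $-(ru')'=r^{\alpha+1}|u|^{p-1}u$ on $(0,1)$ with $u'(0)=u(1)=0$, the substitution $s=r^{(\alpha+2)/2}$ together with the constant rescaling $w=\bigl(\tfrac{2}{\alpha+2}\bigr)^{2/(p-1)}u$ turns it into the planar radial Lane--Emden problem $-(sw')'=s|w|^{p-1}w$ on $(0,1)$, $w'(0)=w(1)=0$; every quantity in the statement transforms by explicit factors that converge as $p\to\infty$, e.g. $r\,u'(r)=\tfrac{\alpha+2}{2}\,s\,w'(s)$ exactly and $r^{\alpha+1}\,dr=\tfrac{2}{\alpha+2}\,s\,ds$, which is the source of every factor $\tfrac{\alpha+2}{2}$ and every exponent $\tfrac{2}{\alpha+2}$ in \eqref{pointwiseDirichletG}--\eqref{valoreInPuntoCriticoG}. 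For the Lane--Emden problem the dilation $w\mapsto a\,w(a^{(p-1)/2}\cdot)$ is a symmetry, so existence and uniqueness (up to sign) of a solution with exactly $m-1$ interior zeros follows from the elementary fact that the initial value solution with $w(0)=1$, $w'(0)=0$ has infinitely many simple zeros tending to $+\infty$ (Sturm comparison, or phase--plane after $t=\log s$). The qualitative picture is then immediate: $w\not\equiv 0\Rightarrow w(0)\neq 0$ by uniqueness of the Cauchy problem; on each nodal interval $(\rho_{k-1},\rho_k)$ (with $\rho_0=0$, $\rho_m=1$) the function $s\,w'$ is strictly monotone because $(sw')'=-s|w|^{p-1}w$ has a fixed sign, so there is exactly one critical point $\sigma_{k-1}$; and the modified energy $\mathcal E(s):=\tfrac12 w'(s)^2+\tfrac1{p+1}|w(s)|^{p+1}$ satisfies $\mathcal E'=-s^{-1}w'^2\le 0$, hence $|w(\sigma_0)|>|w(\sigma_1)|>\cdots$, which gives $\|w\|_{L^\infty}=|w(0)|$ and therefore $\|\apice{u}{m}_{\alpha,p}\|_{L^\infty}=|\apice{u}{m}_{\alpha,p}(0)|$. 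I would also record for later use the flux identity $-p\,\rho\,w'(\rho)=p\int_0^\rho s|w|^{p-1}w\,ds$, the Green representation $|w(0)|=\int_0^1 s\log\tfrac1s\,|w|^{p-1}w\,ds$, and the testing identity $p\int_0^1 w'^2 s\,ds=p\int_0^1|w|^{p+1}s\,ds$.

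The core is the iterated blow-up of $w$. Set $\mu_k:=|w(\sigma_{k-1})|$, $\delta_k:=(p\,\mu_k^{\,p-1})^{-1/2}$, let $\varepsilon_k:=(-1)^{k-1}$ be the sign of $w$ on the $k$-th region (normalizing $w(0)>0$), and put $v_k(\xi):=\tfrac{p}{\mu_k}\bigl(w(\delta_k\xi)-\varepsilon_k\mu_k\bigr)$, which solves $-\Delta v_k=|1+v_k/p|^{p-1}(1+v_k/p)$ on the rescaled $k$-th region. First one shows $p\,\mu_k^{\,p-1}\to\infty$ for every $k$ (otherwise the rescaled equation degenerates, contradicting either $w(1)=0$ or the prescribed number of zeros). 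Then, proceeding outward from $k=1$: a neck analysis (a Pohozaev identity on degenerating annuli) rules out concentration at intermediate scales, so within the $k$-th nodal region, after rescaling, one sees a single bubble; the inner zero $\rho_{k-1}$ measured in units of $\delta_k$ tends to $0$ and the outer zero $\rho_k$ to $+\infty$, hence $v_k\to V_k$ in $C^1_{loc}$ where $V_1=Z_0$ solves \eqref{intoLiou} with $\int_{\R^2}e^{Z_0}=8\pi$, and for $k\ge 2$ the profile $V_k$ solves a singular Liouville equation of type \eqref{intoLiousin}, namely $-\Delta V_k=e^{V_k}+2\pi(2-\theta_{k-1})\delta_0$ with $\int_{\R^2}e^{V_k}=4\pi\theta_{k-1}$. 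The Dirac coefficient $2-\theta_{k-1}$ is read off from the inner flux $-p\,\rho_{k-1}w'(\rho_{k-1})$, which by the flux identity and by telescoping the alternating contributions $p\int_{\text{region }j}s|w|^{p-1}w\,ds\to \varepsilon_j\,\mu_j\,\tfrac1{2\pi}\!\int_{\R^2}e^{V_j}$ has a finite limit; and $\theta_{k-1}$ itself is pinned down by matching the outer logarithmic tail of $V_{k-1}$ with the inner behaviour of $V_k$. Writing down the radial singular Liouville profiles explicitly, this matching becomes a transcendental equation of the form $2\log t+t=\mathrm{const}$, whose solution via the Lambert function $\mathcal L$ is exactly the recursion \eqref{iterateAlphaM}; in particular the scale ratios $\delta_k/\delta_{k-1}$, $\rho_{k-1}/\delta_k$, $\rho_k/\delta_k$ are governed by the $\theta_k$, and the interlacing \eqref{catenateointro} of Lemma~\ref{lemma:PropertiesConstants} guarantees that the limiting zeros and critical points are distinct and correctly ordered.

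It then remains to extract the sharp constants from the limit profiles. \emph{(i) Values at critical points.} Inserting the rescalings into the Green representation, $|w(0)|=\sum_{k=1}^m\int_{\text{region }k}s\log\tfrac1s\,|w|^{p-1}w\,ds$, each term converges (the dominant factor being $\log\delta_k\sim-\tfrac p2\log\mu_k$) to an explicit multiple of $(\lim\mu_k)\bigl(\int_{\R^2}e^{V_k}\bigr)\log(\lim\mu_k)$; writing the analogous identity with $w(\sigma_i)$ in place of $w(0)$ produces a closed nonlinear system for the limits $\lim\mu_{i+1}$ whose unique solution is the array $\apice{M}{m}_{i}$ of Definition~\ref{definition:Constants2}, giving \eqref{valoreInPuntoCriticoG}; evaluating the same representation pointwise for $|x|$ bounded away from $0$ and transferring back to $r$ yields \eqref{pointwiseDirichletG} with the Green's function $G(\cdot,0)$ and the constant $\gamma_{\alpha,m}$ of \eqref{gammaalm}. \emph{(ii) Zeros and critical points.} Comparing consecutive scales $\delta_k,\delta_{k+1}$ with the positions $\rho_k,\sigma_k$ gives $\rho_k^{2/(p-1)}\to\apice{R}{m}_{k}$ and $\sigma_k^{2/(p-1)}\to\apice{S}{m}_{k}$ in the $s$--variable, which become \eqref{raggioG} and \eqref{puntoCriticoG} after $r=s^{2/(\alpha+2)}$. \emph{(iii) Derivatives at the zeros.} The flux identity gives $|p\,\rho_k w'(\rho_k)|\to\apice{D}{m}_{k}$, hence \eqref{derivataInRaggioG} using $r\,u'(r)=\tfrac{\alpha+2}{2}\,s\,w'(s)$. \emph{(iv) Energy.} From the testing identity, $p\int_0^1|(\apice{u}{m}_{\alpha,p})'|^2r\,dr=p\int_0^1|\apice{u}{m}_{\alpha,p}|^{p+1}r^{\alpha+1}dr=\tfrac{\alpha+2}{2}\,p\int_0^1|w|^{p+1}s\,ds+o(1)$, and summing $p\int_{\text{region }k}|w|^{p+1}s\,ds\to(\lim\mu_k)^2\,\tfrac1{2\pi}\!\int_{\R^2}e^{V_k}$ over $k$ collapses, using the system solved in (i) and the definitions of $\apice{M}{m}_{i}$, $\apice{D}{m}_i$, to $\tfrac{\alpha+2}{8}(\apice{M}{m}_{m-1})^2(\theta_{m-1}+2)^2$, which is \eqref{EnergiaTotaleG}.

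The main obstacle is the second step: making the iterated blow-up rigorous. Concretely, one must prove $p\,\mu_k^{\,p-1}\to\infty$ and the scale separations $\rho_{k-1}/\delta_k\to 0$, $\rho_k/\delta_k\to\infty$ at every stage, and --- crucially --- exclude any intermediate concentration so that each nodal region contributes exactly one bubble, with precisely the mass and singular coefficient predicted by the recursion \eqref{iterateAlphaM}. This requires uniform $C^1_{loc}$ estimates on the rescaled solutions together with a neck (Pohozaev-on-annuli) argument to upgrade the pointwise limits to the quantized masses $8\pi$ and $4\pi\theta_{k-1}$. Everything after that is careful bookkeeping with the explicit Liouville profiles and the constants of Definitions~\ref{definition:Constants}--\ref{definition:Constants2}.
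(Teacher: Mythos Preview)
Your reduction from $\alpha>0$ to $\alpha=0$ via $s=r^{(\alpha+2)/2}$ and $w=\bigl(\tfrac{2}{\alpha+2}\bigr)^{2/(p-1)}u$ is exactly what the paper does (Section~\ref{section:Heno}), and your qualitative analysis (uniqueness, one critical point per nodal region, decay of $|w(\sigma_k)|$ via the Lyapunov energy) matches Proposition~\ref{PropositionUnicoMaxeMin}. The divergence begins with the blow-up for $\alpha=0$.

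The paper does \emph{not} carry out a direct multi-bubble analysis on $\apice{u}{m}_p$. Its central device is the observation (Lemma~\ref{lemma:legami}) that the restriction of $\apice{u}{m}_p$ to its first $m-1$ nodal regions is, after the exact rescaling $\apice{u}{m-1}_{\!\!p}(r)=(r_p)^{2/(p-1)}\apice{u}{m}_p(r_p r)$ with $r_p=\apice{r}{m}_{m-1,p}$, precisely the solution with $m-2$ interior zeros. This turns the problem into an induction on $m$: assuming all the asymptotics for $\apice{u}{m-1}_{\!\!p}$, one only has to identify the four ``last'' quantities $(r_p)^{2/(p-1)}$, $(s_p)^{2/(p-1)}$, $|\apice{u}{m}_p(s_p)|$, $p|(\apice{u}{m}_p)'(1)|$ and the single last bubble (Theorem~\ref{th:ultimaBubble}). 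All earlier bubbles and constants come for free from the inductive hypothesis via Lemma~\ref{lemma:legameCostantiPassiConsecutivi}. In particular the paper never runs a neck argument on degenerating annuli and never has to rule out intermediate concentration in many regions at once: the self-similar structure reduces each step to a one-bubble problem on the outermost annulus, where the inner flux is already known and equals $\apice{D}{m-1}_{\!m-1}/R$.

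Your alternative---rescale around every $\sigma_{k-1}$ simultaneously and invoke a Pohozaev neck analysis to quantize each region---is a legitimate strategy in the spirit of \cite{DeMarchisIanniPacellaJEMS}, but you correctly flag it as the main obstacle and do not resolve it. Two specific points where your outline is thinner than what would be needed: (a)~passing to the limit in $p\int_{\text{region }k}|w|^{p+1}s\,ds$ on the \emph{outer} part of each annulus is not a dominated-convergence step (Fatou gives only $\liminf$); the paper handles the analogous issue for the last region (Lemma~\ref{lemmaH}) by combining Fatou with an \emph{upper} bound coming from the global Pohozaev identity (Lemma~\ref{lemmaP}), and for the total energy it uses Pohozaev directly rather than summing bubble masses; (b)~your derivation of the recursion \eqref{iterateAlphaM} via ``matching outer tail to inner behaviour'' is the right picture, but in the paper the recursion emerges from a concrete log-weighted integral identity (multiply the ODE by $\log r-\log r_p$ and integrate over $(r_p,s_p)$; see Proposition~\ref{Proposition4.4}), which pins down $t=RM$ without any matched-asymptotics heuristics. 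The paper's inductive route buys you exactly the a~priori information (bounds \eqref{raggioLimitatoBasso}, \eqref{normaLInfinitoLimitataAlto} and the value of $p|(\apice{u}{m}_p)'(r_p)|r_p$) that your scheme would have to manufacture by hand at every scale.
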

Observe that, from  \eqref{pointwiseDirichletG}, it follows that the solution $\apice{u}{m}_{\alpha,p}$ \emph{concentrates at the origin as $p\rightarrow +\infty$} and that $\apice{u}{m}_{\alpha,p}\rightarrow 0$ uniformly on compact subsets of $B\setminus\{0\}$. Moreover from \eqref{valoreInPuntoCriticoG} we also know that $\apice{u}{m}_{\alpha,p}(0)\rightarrow \apice{M}{m}_0$ ($>1$ from \eqref{catenaMintro}). Notice also that 
Lemma \ref{lemma:PropertiesConstants} and  \eqref{raggioG}, \eqref{puntoCriticoG},\eqref{derivataInRaggioG} and \eqref{catenateointro} imply respectively that, as $p\to\infty$,
\[\begin{array}{ll}
\apice{r}{m}_{i,\alpha,p}\sim (\apice{R}{m}_i)^{\frac{p-1}{\alpha+2}}\rightarrow 0, \qquad i=1,\ldots, m-1, \\
\apice{s}{m}_{i,\alpha,p}\sim (\apice{S}{m}_i)^{\frac{p-1}{\alpha+2}}\rightarrow 0, \qquad i=1,\ldots, m-1,
\\
|(\apice{u}{m}_{\alpha,p})'(\apice{r}{m}_{i,\alpha,p})|\sim\left\{\begin{array}{lr}
\frac{\frac{\alpha+2}{2}\apice{D}{m}_i}{p(\apice{R}{m}_i)^{\frac{p-1}{\alpha+2}}}\rightarrow +\infty, \quad i=1,\ldots, m-1,\\\\
\frac{\frac{\alpha+2}{2}\apice{D}{m}_m}{p}\rightarrow 0, \qquad \ \ \ i=m.
\end{array}\right. 
\end{array}
\]

\

Next we investigate the asymptotic behavior as $p\rightarrow +\infty$ of suitable rescalings of the solution $\apice{u}{m}_{\alpha,p}$ in each nodal region, showing a \emph{tower of bubbles} phenomenon.

To this aim, let us define the $m$ 
parameters
\begin{equation}\label{scpar}\apice{\varepsilon}{m}_{i,\alpha,p}:= \left(\frac{\alpha+2}{2}\right)^{\frac{2}{\alpha+2}}\left[p |\apice{u}{m}_{\alpha,p}(\apice{s}{m}_{i,\alpha,p})|^{p-1}\right]^{-\frac{1}{2+\alpha}},\quad i=0,\ldots, m-1,
\end{equation}
and the $m$ rescaled functions 
	\begin{equation}\label{scsol}\apice{\xi}{m}_{i,\alpha,p}(r):=
p	\frac{(-1)^i\  \apice{u}{m}_{\alpha,p}( \apice{\varepsilon}{m}_{i,\alpha,p} r)  -    |\apice{u}{m}_{\alpha,p}(\apice{s}{m}_{i,\alpha,p})| }{|\apice{u}{m}_{\alpha,p}(\apice{s}{m}_{i,\alpha,p})|}, \quad i=0,\ldots, m-1,
\end{equation}
for 
\[r\in\left\{\begin{array}{lr}
[0,\frac{\apice{r}{m}_{1,\alpha,p}}{\apice{\varepsilon}{m}_{0,\alpha,p}}], \qquad \quad \qquad \ \mbox{ if }i=0,
\\
\left[\frac{\apice{r}{m}_{i,\alpha,p}}{\apice{\varepsilon}{m}_{i,\alpha,p}},\frac{\apice{r}{m}_{i+1,\alpha,p}}{\apice{\varepsilon}{m}_{i,\alpha,p}}
\right],\qquad \mbox{ if }i\geq 1.
\end{array}
\right.\]

Let
		\begin{equation}\label{bubblecomplete}
		Z_{i,\alpha}(r)=Z_{i,\alpha}(|x|):=\log\frac{2\theta_i^2\beta_i^{\theta_i}|x|^{\frac{(\alpha+2)}{2}(\theta_i-2)}}{(\beta_i^{\theta_i}+|x|^{\frac{(\alpha+2)}{2}\theta_i})^2},
		\end{equation}
		with $\beta_0:=2\sqrt{2} $, $\beta_i:=	\frac{1}{ \sqrt2}(\theta_i+2)^{\frac{\theta_i+2}{2\theta_i}}(\theta_i-2)^{\frac{\theta_i-2}{2\theta_i}} $ for $ i=1,\ldots,m-1$, then $Z_{i,\alpha}$ is a radial solution of
		\begin{equation}
		\label{LiouvilleSingularEquationAlpha}
		\left\{
		\begin{array}{lr}
		-\Delta Z=\left(\frac{\alpha +2 }{2}\right)^2|x|^{\alpha}e^Z+ (\alpha+2)\pi(2-\theta_i)\delta_{0}\quad\mbox{ in }\R^2,\\
		Z(\sigma_{i,\alpha})=0,\\
		\int_{\R^2}e^Z|x|^{\alpha}dx=\frac{8\pi\theta_i}{\alpha+2},
		\end{array}
		\right.
		\end{equation}
		where  $\delta_{0}$ is the Dirac measure centered at $0$ and $\theta_i \ (\geq 2)$ is given in \eqref{iterateAlphaM}. The limit profiles $Z_{i,\alpha}$'s are usually named \emph{bubbles}. 
\begin{theorem}[Tower of bubbles for the Dirichlet problem]
		\label{theorem:analisiAsintoticaRiscalateAlpha}
		Let $\alpha\geq 0$,  $m\in\mathbb N$, $m\geq 1$, then, as  $p\rightarrow +\infty$,  $\apice{\varepsilon}{m}_{i,\alpha,p}=o(1)$  and
	\begin{equation}
	\label{relaziParametri}
	\frac{\apice{r}{m}_{i,\alpha,p}}{\apice{\varepsilon}{m}_{i,\alpha,p}}=o(1)\ (i\neq 0),
	 \qquad\quad \frac{\apice{s}{m}_{i,\alpha,p}}{\apice{\varepsilon}{m}_{i,\alpha,p}}= \sigma_{i,\alpha}+ o(1), \qquad\quad \frac{\apice{\varepsilon}{m}_{i,\alpha,p}}{\apice{r}{m}_{i+1,\alpha,p}}=o(1),
	\end{equation}
for all $i=0,\ldots, m-1$ with $\sigma_{i,\alpha}:=\left(\frac{\theta_i^2-4}{2}\right)^{\frac{1}{2+\alpha}}$. Moreover,
		\begin{equation}
\label{convBubblesG}
	 \apice{\xi}{\emph m}_{i,\alpha,p} = Z_{i,\alpha}+o(1)\quad\mbox{in $C^1_{loc}(0, +\infty)  \qquad  \text{ for all }\, i=0,\ldots, m-1$ as $p\rightarrow +\infty$.}
		\end{equation}
	\end{theorem}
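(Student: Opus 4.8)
The plan is to exploit the precise ODE structure of radial solutions, treating \eqref{equationHenon} as the scalar second-order equation $(r^{\alpha+1}|u'|^{p-1}u')$-type balance and building the asymptotics nodal region by nodal region via an inductive (shooting/matching) argument. First I would set up, for each $i\in\{0,\ldots,m-1\}$, the rescaled function $\apice{\xi}{m}_{i,\alpha,p}$ as in \eqref{scsol} and compute directly that it solves an ODE of the form
\[
-\Big(r^{\alpha+1}(\apice{\xi}{m}_{i,\alpha,p})'\Big)'\!/r^{\alpha+1}
=\Big(\tfrac{\alpha+2}{2}\Big)^2 r^{\alpha}\Big(1+\tfrac{\apice{\xi}{m}_{i,\alpha,p}}{p}\Big)^{p}\,\mathrm{sgn}(\cdots)+(\text{boundary/singular terms}),
\]
where the scaling parameter $\apice{\varepsilon}{m}_{i,\alpha,p}$ in \eqref{scpar} has been chosen exactly so that the coefficient in front of the nonlinearity normalizes to $(\tfrac{\alpha+2}{2})^2$; the key elementary fact is $(1+t/p)^p\to e^t$ locally uniformly. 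The initial conditions at $r=\sigma_{i,\alpha}$ (where, by construction, $\apice{\xi}{m}_{i,\alpha,p}(\sigma_{i,\alpha}^{(p)})=0$ with $\sigma_{i,\alpha}^{(p)}:=\apice{s}{m}_{i,\alpha,p}/\apice{\varepsilon}{m}_{i,\alpha,p}$) together with $(\apice{\xi}{m}_{i,\alpha,p})'(\sigma_{i,\alpha}^{(p)})=0$ at a critical point give us enough compactness to pass to the limit on compact subsets of $(0,+\infty)$ and identify the limit, via uniqueness for the limiting singular Liouville ODE, as the explicit bubble $Z_{i,\alpha}$ in \eqref{bubblecomplete}.

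The heart of the matter is a careful \emph{induction on $i$} that simultaneously establishes: (a) that the rescaled variable at the inner radius $\apice{r}{m}_{i,\alpha,p}$ runs off to the singular point $0$ of $Z_{i,\alpha}$, i.e.\ $\apice{r}{m}_{i,\alpha,p}/\apice{\varepsilon}{m}_{i,\alpha,p}\to 0$, while at the outer radius it runs off to $+\infty$ (the two outer displayed limits in \eqref{relaziParametri}); (b) the value of $\sigma_{i,\alpha}=(\tfrac{\theta_i^2-4}{2})^{1/(2+\alpha)}$ as the location where $Z_{i,\alpha}$ attains its maximum, read off from $Z_{i,\alpha}'=0$ in \eqref{bubblecomplete}; and (c) the $C^1_{loc}(0,+\infty)$ convergence \eqref{convBubblesG} itself. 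The link between consecutive bubbles is the matching at the zero $\apice{r}{m}_{i,\alpha,p}$ of $\apice{u}{m}_{\alpha,p}$: the logarithmic behavior $Z_{i,\alpha}(r)\sim \tfrac{\alpha+2}{2}(\theta_i-2)\log r$ as $r\to 0$ of the outer tail of the $i$-th bubble, combined with the corresponding $\log r\to+\infty$ tail of the $(i{+}1)$-st bubble, produces exactly the iteration \eqref{iterateAlphaM} defining $\theta_{k}$ through the Lambert function (the equation $2\sqrt e\log t+t=0$ generalizing to $\tfrac{2}{2+\theta_{i-1}}e^{-2/(2+\theta_{i-1})}=Le^{L}$ with $L=-2/\theta_i$-type identities). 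Here I would lean heavily on Theorem \ref{theorem:mainDirichletG}: the sharp constants \eqref{raggioG}–\eqref{valoreInPuntoCriticoG} for $\apice{r}{m}_{i,\alpha,p}$, $\apice{s}{m}_{i,\alpha,p}$, $|(\apice{u}{m}_{\alpha,p})'(\apice{r}{m}_{i,\alpha,p})|$ and $|\apice{u}{m}_{\alpha,p}(\apice{s}{m}_{i,\alpha,p})|$ translate, after dividing by $\apice{\varepsilon}{m}_{i,\alpha,p}$ and using the definition \eqref{scpar}, precisely into \eqref{relaziParametri}, so a good portion of (a)–(b) is bookkeeping once the convergence \eqref{convBubblesG} is in hand. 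Conversely, the $o(1)$ claim $\apice{\varepsilon}{m}_{i,\alpha,p}=o(1)$ follows from \eqref{valoreInPuntoCriticoG} together with $|\apice{u}{m}_{\alpha,p}(\apice{s}{m}_{i,\alpha,p})|\to\apice{M}{m}_i>1$, since then $p|\apice{u}{m}_{\alpha,p}(\apice{s}{m}_{i,\alpha,p})|^{p-1}\to+\infty$ exponentially.

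For the compactness step I would obtain uniform-on-compacts bounds on $\apice{\xi}{m}_{i,\alpha,p}$ and its derivative by a standard argument: near the maximum $\sigma_{i,\alpha}$ the function is nonpositive and bounded by $0$ from above, while a lower bound and a gradient bound come from integrating the ODE and using a Harnack/comparison argument together with the a priori control that $\int e^{Z}|x|^\alpha dx$ stays finite (the energy identity \eqref{EnergiaTotaleG} of Theorem \ref{theorem:mainDirichletG} prevents mass from escaping and fixes the total integral $\tfrac{8\pi\theta_i}{\alpha+2}$ in \eqref{LiouvilleSingularEquationAlpha}). Passing to a subsequential limit in $C^1_{loc}(0,+\infty)$, the limit $W$ is a radial solution of $-\Delta W=(\tfrac{\alpha+2}{2})^2|x|^\alpha e^{W}$ on $(0,+\infty)$ with a possible point mass at the origin; the classification of such radial solutions (a one-parameter family after normalizing by the prescribed integral and the prescribed zero location) forces $W=Z_{i,\alpha}$, and since the limit is unique the full family converges, not merely a subsequence. \textbf{The main obstacle} I anticipate is precisely this classification/uniqueness together with ruling out loss of mass at $r=0$ and at $r=+\infty$ when taking the limit of $\apice{\xi}{m}_{i,\alpha,p}$: one must show that no nonlinear concentration occurs at intermediate scales inside the $i$-th nodal region (so that a single bubble, not a sub-tower, describes that region) and that the singular Dirac coefficient in \eqref{LiouvilleSingularEquationAlpha} is exactly $(\alpha+2)\pi(2-\theta_i)$ and not something smaller — this is where the sharpness of Theorem \ref{theorem:mainDirichletG}, rather than a mere $O$-bound, is indispensable, since the coefficient $\theta_i-2$ is read off from the slope of the logarithmic tail at $r\to 0$, i.e.\ from the limit of $p\apice{u}{m}_{\alpha,p}$ near $\apice{r}{m}_{i,\alpha,p}$, which in turn is controlled by \eqref{derivataInRaggioG}.
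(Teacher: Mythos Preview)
Your outline is a plausible direct attack, but it differs substantially from the paper's proof and contains one concrete gap.

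\textbf{The paper's route.} The paper does \emph{not} analyze each rescaling $\apice{\xi}{m}_{i,\alpha,p}$ separately. Instead it uses two reductions. First, the change of variable $\apice{u}{m}_{\alpha,p}(r)=(\tfrac{\alpha+2}{2})^{2/(p-1)}\apice{u}{m}_{0,p}(r^{(\alpha+2)/2})$ reduces $\alpha>0$ to $\alpha=0$ in one line (so $Z_{i,\alpha}(r)=Z_{i,0}(r^{(\alpha+2)/2})$ and all of \eqref{relaziParametri} for general $\alpha$ follows from the Lane--Emden case). Second, for $\alpha=0$ the key observation is the scaling
\[
\apice{u}{m-1}_{\!\!\!p}(r)=(\apice{r}{m}_{m-1,p})^{\frac{2}{p-1}}\,\apice{u}{m}_{p}(\apice{r}{m}_{m-1,p}\,r),
\]
which implies that the $i$-th rescaling $\apice{z}{m}_{i,p}$ of $\apice{u}{m}_p$ is \emph{identical} to the last rescaling $\apice{z}{i+1}_{\!\!i,p}$ of $\apice{u}{i+1}_{\!\!p}$. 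Hence only the \emph{last} bubble ever needs to be analyzed (Theorem \ref{th:ultimaBubble}), and that is done by induction on $m$, not on $i$. The identification $\theta=\theta_{m-1}$ comes from the integral identity in Proposition \ref{Proposition4.4} (multiplying the ODE by $\log r$ and integrating over the last nodal region), not from reading off the Dirac mass at $r=0$.

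\textbf{A genuine gap in your bookkeeping claim.} You assert that \eqref{raggioG}--\eqref{valoreInPuntoCriticoG} ``translate, after dividing by $\apice{\varepsilon}{m}_{i,\alpha,p}$ \ldots, precisely into \eqref{relaziParametri}''. This works for the two outer limits (since $(\apice{R}{m}_i\apice{M}{m}_i)^{2/(\alpha+2)}<1$ and $(\apice{R}{m}_{i+1}\apice{M}{m}_i)^{-2/(\alpha+2)}<1$, the corresponding ratios tend to $0$). But for the middle one you get only
\[
\Big(\tfrac{\apice{s}{m}_{i,\alpha,p}}{\apice{\varepsilon}{m}_{i,\alpha,p}}\Big)^{\frac{2}{p-1}}\longrightarrow \big(\apice{S}{m}_i\,\apice{M}{m}_i\big)^{\frac{2}{\alpha+2}}=1,
\]
since $\apice{S}{m}_i\apice{M}{m}_i=\apice{S}{i+1}_{\!i}\apice{M}{i+1}_{\!i}=1$; this gives no information on the limit of the ratio itself. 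The constant $\sigma_{i,\alpha}$ cannot be extracted from Theorem \ref{theorem:mainDirichletG} alone.

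\textbf{The obstacle you flag is exactly where your argument is incomplete.} To pin down $\theta_i$ you propose reading the logarithmic slope of the limit at $r\to 0$ via \eqref{derivataInRaggioG}. But \eqref{derivataInRaggioG} only controls $r(\apice{\xi}{m}_{i,\alpha,p})'(r)$ at the moving inner endpoint $\apice{r}{m}_{i,\alpha,p}/\apice{\varepsilon}{m}_{i,\alpha,p}\to 0$, which lies outside any compact set on which you have $C^1$-convergence; without a uniform estimate down to $r=0$ (ruling out mass loss there) you cannot pass this information to the limit $W$. The paper sidesteps this entirely: by the change-of-variable reduction, the inner boundary of the $i$-th nodal region becomes the \emph{outer} boundary $r=1$ for $\apice{u}{i}_{p}$, where the sharp constant is already known by the induction hypothesis, and the identification of $\theta$ then follows from the Pohozaev/log-weighted integral identities (Lemmas \ref{lemmaH} and \ref{Lemma4.3}, Proposition \ref{Proposition4.4}) rather than from asymptotics at the singular point.
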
	
Observe that $\frac{\apice{r}{m}_{i,\alpha,p}}{\apice{s}{m}_{i,\alpha,p}}=o(1)$ and  $ \frac{\apice{s}{m}_{i,\alpha,p}}{\apice{r}{m}_{i+1,\alpha,p}}=o(1)$
so that 
\[\frac{\apice{\varepsilon}{m}_{i,\alpha,p}}{\apice{\varepsilon}{m}_{i+1,\alpha,p}}=o(1)\quad \mbox{ as }p\rightarrow +\infty,\]
namely, in  each nodal region the \emph{bubble} appears at a \emph{different scale},  for this reason we say that the solution has a \emph{tower of bubbles} behavior.   
\\
Recall  that $\theta_0=2$, hence $\sigma_0=0$, as a consequence the \emph{first bubble $Z_{0,\alpha}$} is a regular solution of the Liouville equation	
	$-\Delta Z=\left(\frac{\alpha +2 }{2}\right)^2|x|^{\alpha}e^Z$ in $\R^2$ with $Z(0)=0$ (see Figure \ref{f1}).
	
	\
 
Using a suitable change of variables, we can obtain from Theorem \ref{theorem:mainDirichletG} a description of the radial solutions of the Neumann problem \eqref{equationHenon}-\eqref{NeumannBC}.

\begin{theorem}[Neumann problem]
\label{theoremNeumannMain} 
Let $\alpha\geq 0$,  $m\in\mathbb N$, $m\geq 2$ and $p>1$.
Then there exists a unique (up to a sign) radial solution of  \eqref{equationHenon}-\eqref{NeumannBC} with $m-1$ interior zeros. This solution does not vanish in the origin and between any two consecutive zeros it has exactly one critical point, which is either a minimum or a maximum.\\
Let us denote by $\apice{\bar u}{m}_{\alpha,p}$  the radial solution of \eqref{equationHenon}-\eqref{NeumannBC} with $m-1$ interior zeros and such that  $\apice{\bar u}{m}_{\alpha,p}(0)>0$.
Then 
\begin{equation}\label{changeuubar}
\apice{\bar u}{m}_{\alpha,p}(r)=(\apice{s}{m}_{m-1,\alpha,p})^{\frac{\alpha+2}{p-1}}\apice{u}{m}_{\alpha,p}(\apice{s}{m}_{m-1,\alpha,p} r), \qquad r \in [0,1],\end{equation}
where $\apice{ u}{m}_{\alpha,p}$ and $\apice{s}{m}_{m-1,\alpha,p}$ are as in Theorem \ref{theorem:mainDirichletG}.

Moreover,
\begin{equation}
\label{energiaNeumann}
p\int_0^1|(\apice{\bar u}{m}_{\alpha,p})'(r)|^2rdr=
p\int_0^1|\apice{\bar u}{m}_{\alpha,p}(r)|^{p+1}r^{1+\alpha}dr=\frac{\alpha+2}{8} (\theta_{m-1}+2)(\theta_{m-1}-2) +o(1)
\end{equation}
as $p\rightarrow +\infty$ and, letting $\apice{\bar r}{m}_{i,\alpha,p}$ and $\apice{\bar s}{m}_{i,\alpha,p}$ be the zeros and the  critical points of $\apice{\bar u}{m}_{\alpha,p}$ respectively, so that
 \[ 
 0=\apice{\bar s}{m}_{0,\alpha,p}<\apice{\bar r}{m}_{1,\alpha,p}<\apice{\bar s}{m}_{1,\alpha,p}<\apice{\bar r}{m}_{2,\alpha,p}<\ldots <\apice{\bar r}{m}_{m-1,\alpha,p}<\apice{\bar s}{m}_{m-1,\alpha,p}=1,
 \]
then 
	\begin{eqnarray}\label{raggioGN}
	&(\apice{\bar r}{m}_{i,\alpha,p})^{\frac{2}{p-1}}=\left(\apice{\bar R}{m}_{i}\right)^{\frac{2}{\alpha+2}} +o(1),\quad &  i=1,\ldots, m-1,
	\\ \label{derivataInRaggioGN}
	&p|(\apice{\bar u}{m}_{\alpha,p})'(\apice{\bar r}{m}_{i,\alpha,p})|(\apice{\bar r}{m}_{i,\alpha,p})=\frac{\alpha+2}{2}\apice{\bar D}{m}_i +o(1), \quad &  i=1,\ldots, m-1,
	\\\label{puntoCriticoGN}
	&(\apice{\bar s}{m}_{i,\alpha,p})^{\frac{2}{p-1}}=\left(\apice{\bar S}{m}_i\right)^{\frac{2}{\alpha+2}} +o(1), \quad& i=1,\ldots, m-2,
	\\\label{valoreInPuntoCriticoGN}
	&|\apice{\bar u}{m}_{\alpha,p}(\apice{\bar s}{m}_{i,\alpha,p})|=\apice{\bar M}{m}_{i} +o(1),\quad & i=0,\ldots, m-1,
	\end{eqnarray}
as $p\rightarrow +\infty$, where
\begin{equation}\label{raggioNeumann}
\apice{\bar R}{m}_{i}	:= \frac{\apice{R}{m}_{i} }{\apice{S}{m}_{m-1}},
\qquad
\apice{\bar D}{m}_i: =	\apice{S}{m}_{m-1} \apice{ D}{m}_i,
\qquad	
\apice{\bar S}{m}_i:=\frac{\apice{S}{m}_{i} }{\apice{S}{m}_{m-1}},
\qquad\apice{\bar M}{m}_{i}	:= 	\apice{S}{m}_{m-1} \apice{ M}{m}_i,
\end{equation}
and  $\apice{R}{m}_i$, $\apice{S}{m}_i$, $\apice{M}{m}_{i}$ and $\apice{D}{m}_i$ are the constants  in Definitions \ref{definition:Constants}-\ref{definition:Constants2}.
\end{theorem}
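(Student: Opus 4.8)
The plan is to derive the entire statement from Theorem \ref{theorem:mainDirichletG} via the scaling identity \eqref{changeuubar}, so the work splits into three parts: (i) the existence/uniqueness and qualitative structure of radial Neumann solutions, (ii) verifying that the change of variables \eqref{changeuubar} sends Dirichlet solutions to Neumann solutions (and conversely), and (iii) transporting each of the asymptotic formulas \eqref{raggioG}--\eqref{valoreInPuntoCriticoG}, \eqref{EnergiaTotaleG} through that change of variables to obtain \eqref{raggioGN}--\eqref{valoreInPuntoCriticoGN} and \eqref{energiaNeumann}. For (ii): if $u=\apice{u}{m}_{\alpha,p}$ solves \eqref{equationHenon} on $B$ and $s:=\apice{s}{m}_{m-1,\alpha,p}$ is its outermost critical point, set $\bar u(r):=s^{\frac{\alpha+2}{p-1}}u(sr)$ on $[0,1]$. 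A direct computation using the scaling invariance of the equation $-\Delta u=|x|^{\alpha}|u|^{p-1}u$ under $u\mapsto \lambda^{\frac{\alpha+2}{p-1}}u(\lambda\,\cdot\,)$ shows $\bar u$ solves the same equation on $B$; and since $\bar u'(1)=s^{\frac{\alpha+2}{p-1}+1}u'(s)=0$ because $s$ is a critical point of $u$, the function $\bar u$ satisfies the Neumann condition \eqref{NeumannBC}. Moreover $\bar u$ inherits from $u$ the property of having exactly $m-1$ interior zeros, one critical point between consecutive zeros, and $\bar u(0)=s^{\frac{\alpha+2}{p-1}}u(0)>0$. Conversely, given a radial Neumann solution $v$ with $m-1$ interior zeros, its restriction rescaled so that its first interior zero is mapped appropriately produces (after possibly a reflection/iteration) a Dirichlet solution on $B$ with the right number of zeros; by the uniqueness in Theorem \ref{theorem:mainDirichletG} this pins down $v$ uniquely up to sign, giving existence and uniqueness for the Neumann problem together with \eqref{changeuubar}.

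For (iii), the relation between the zeros, critical points and derivative values of $\bar u$ and those of $u$ is purely algebraic: from \eqref{changeuubar} one reads off
\[
\apice{\bar r}{m}_{i,\alpha,p}=\frac{\apice{r}{m}_{i,\alpha,p}}{\apice{s}{m}_{m-1,\alpha,p}},\qquad
\apice{\bar s}{m}_{i,\alpha,p}=\frac{\apice{s}{m}_{i,\alpha,p}}{\apice{s}{m}_{m-1,\alpha,p}},\qquad
\apice{\bar u}{m}_{\alpha,p}(\apice{\bar s}{m}_{i,\alpha,p})=(\apice{s}{m}_{m-1,\alpha,p})^{\frac{\alpha+2}{p-1}}\apice{u}{m}_{\alpha,p}(\apice{s}{m}_{i,\alpha,p}),
\]
and $(\apice{\bar u}{m}_{\alpha,p})'(\apice{\bar r}{m}_{i,\alpha,p})=(\apice{s}{m}_{m-1,\alpha,p})^{\frac{\alpha+2}{p-1}+1}(\apice{u}{m}_{\alpha,p})'(\apice{r}{m}_{i,\alpha,p})$. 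Raising the first two to the power $\frac{2}{p-1}$ and using \eqref{raggioG}, \eqref{puntoCriticoG}, together with the fact that $(\apice{s}{m}_{m-1,\alpha,p})^{\frac{2}{p-1}}\to (\apice{S}{m}_{m-1})^{\frac{2}{\alpha+2}}$ from \eqref{puntoCriticoG}, yields \eqref{raggioGN} and \eqref{puntoCriticoGN} with $\apice{\bar R}{m}_i=\apice{R}{m}_i/\apice{S}{m}_{m-1}$ and $\apice{\bar S}{m}_i=\apice{S}{m}_i/\apice{S}{m}_{m-1}$. For the product $p|(\apice{\bar u}{m}_{\alpha,p})'(\apice{\bar r}{m}_{i,\alpha,p})|\apice{\bar r}{m}_{i,\alpha,p}$, the scaling factors $(\apice{s}{m}_{m-1,\alpha,p})^{\frac{\alpha+2}{p-1}+1}$ from $u'$ and $(\apice{s}{m}_{m-1,\alpha,p})^{-1}$ from $\bar r$ combine to leave exactly $(\apice{s}{m}_{m-1,\alpha,p})^{\frac{\alpha+2}{p-1}}\to \apice{S}{m}_{m-1}$; combined with \eqref{derivataInRaggioG} this gives \eqref{derivataInRaggioGN} with $\apice{\bar D}{m}_i=\apice{S}{m}_{m-1}\apice{D}{m}_i$. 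Similarly $|\apice{\bar u}{m}_{\alpha,p}(\apice{\bar s}{m}_{i,\alpha,p})|=(\apice{s}{m}_{m-1,\alpha,p})^{\frac{\alpha+2}{p-1}}|\apice{u}{m}_{\alpha,p}(\apice{s}{m}_{i,\alpha,p})|\to \apice{S}{m}_{m-1}\apice{M}{m}_i$, which is \eqref{valoreInPuntoCriticoGN} and \eqref{raggioNeumann}.

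Finally, for the energy identity \eqref{energiaNeumann}: substituting \eqref{changeuubar} into $p\int_0^1|(\apice{\bar u}{m}_{\alpha,p})'(r)|^2 r\,dr$ and changing variables $\rho=\apice{s}{m}_{m-1,\alpha,p}r$ produces a factor $(\apice{s}{m}_{m-1,\alpha,p})^{2(\frac{\alpha+2}{p-1}+1)-2}=(\apice{s}{m}_{m-1,\alpha,p})^{\frac{2(\alpha+2)}{p-1}}\to (\apice{S}{m}_{m-1})^2$ times $p\int_0^{\apice{s}{m}_{m-1,\alpha,p}}|(\apice{u}{m}_{\alpha,p})'(\rho)|^2\rho\,d\rho$. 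The latter is the energy of $\apice{u}{m}_{\alpha,p}$ only over $[0,s]$, not over $[0,1]$; here one uses the Pohozaev/Rellich-type identity on the outer annulus $[\apice{s}{m}_{m-1,\alpha,p},1]$ together with the boundary decay from \eqref{pointwiseDirichletG} (which shows the contribution of the last nodal region to the energy is carried by the behavior near $s$, quantified through \eqref{derivataInRaggioG} and \eqref{valoreInPuntoCriticoG}) to express the partial energy in terms of $\theta_{m-1}$; the equality $p\int_0^1|\apice{\bar u}{m}_{\alpha,p}|^{p+1}r^{1+\alpha}dr=p\int_0^1|(\apice{\bar u}{m}_{\alpha,p})'|^2 r\,dr$ is just the Neumann Pohozaev identity for $\bar u$. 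Algebraic simplification using $\apice{S}{m}_{m-1}=(\apice{M}{m}_{m-1})^{-1}$ from \eqref{Sk}--\eqref{SoloUltimiSeM} and $\apice{D}{m}_m=\apice{M}{m}_{m-1}(\theta_{m-1}+2)$ from \eqref{Dk} turns $\frac{\alpha+2}{8}(\apice{S}{m}_{m-1})^2(\apice{M}{m}_{m-1})^2(\theta_{m-1}+2)^2$ into $\frac{\alpha+2}{8}(\theta_{m-1}+2)(\theta_{m-1}-2)$ (the factor $(\theta_{m-1}-2)/(\theta_{m-1}+2)$ accounting for the energy removed with the outer region). \textbf{Main obstacle:} the only genuinely non-bookkeeping point is part (ii)—checking carefully that the map \eqref{changeuubar} is a bijection between the two solution sets (so that uniqueness transfers), and, within (iii), justifying the passage from the full Dirichlet energy over $[0,1]$ to the partial energy over $[0,\apice{s}{m}_{m-1,\alpha,p}]$; everything else is substitution and limit-taking using Lemma \ref{lemma:PropertiesConstants} and Theorem \ref{theorem:mainDirichletG}.
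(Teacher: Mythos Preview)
Your proposal for (i), (ii), and the four pointwise asymptotics \eqref{raggioGN}--\eqref{valoreInPuntoCriticoGN} matches the paper's proof essentially line for line: everything is obtained from \eqref{changeuubar} by substitution and by reading off the limits from Theorem \ref{theorem:mainDirichletG}.

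The genuine gap is in your treatment of the energy limit \eqref{energiaNeumann}. You correctly reduce the Neumann energy to $(\apice{s}{m}_{m-1,\alpha,p})^{\frac{2(\alpha+2)}{p-1}}\cdot p\int_0^{\apice{s}{m}_{m-1,\alpha,p}}|\apice{u}{m}_{\alpha,p}|^{p+1}r^{1+\alpha}\,dr$, i.e.\ the partial Dirichlet energy on $[0,s]$ times $(\apice{S}{m}_{m-1})^2+o(1)$. But your proposed computation of that partial energy does not work as written: the expression $\frac{\alpha+2}{8}(\apice{S}{m}_{m-1})^2(\apice{M}{m}_{m-1})^2(\theta_{m-1}+2)^2$ equals $\frac{\alpha+2}{8}(\theta_{m-1}+2)^2$ (since $\apice{S}{m}_{m-1}\apice{M}{m}_{m-1}=1$), not $\frac{\alpha+2}{8}(\theta_{m-1}+2)(\theta_{m-1}-2)$. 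The missing factor $(\theta_{m-1}-2)/(\theta_{m-1}+2)$ is not an ``algebraic simplification'' from \eqref{Sk}--\eqref{Dk}; it encodes the fact that the outer region $[s,1]$ carries exactly $(\apice{M}{m}_{m-1})^2(\theta_{m-1}+2)$ of the energy, and neither a Pohozaev identity on the annulus nor the pointwise convergence \eqref{pointwiseDirichletG} gives you this number directly (the annulus shrinks to the origin, so the $C^1_{loc}$ information away from $0$ is irrelevant).

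The paper avoids this difficulty by computing the partial energy \emph{forward} rather than by subtraction: it passes from general $\alpha$ to $\alpha=0$ via \eqref{changeOfVariableHenonDirichletSection}, then uses two results already established inside the inductive proof of Theorem \ref{theorem:mainDirichlet}, namely $p\int_0^{r_p}|\apice{u}{m}_{0,p}|^{p+1}r\,dr\to(\apice{M}{m}_{m-1})^2(\theta_{m-1}-2)^2/4$ (equation \eqref{parzialeEnergia1}) and $p\int_{r_p}^{s_p}|\apice{u}{m}_{0,p}|^{p+1}r\,dr\to(\apice{M}{m}_{m-1})^2(\theta_{m-1}-2)$ (Lemma \ref{lemma:j_PENULTIMO_integr_p+1}). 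Summing these gives $(\apice{M}{m}_{m-1})^2(\theta_{m-1}-2)(\theta_{m-1}+2)/4$, and then multiplication by $(\apice{S}{m}_{m-1})^2=(\apice{M}{m}_{m-1})^{-2}$ yields \eqref{energiaNeumann}. So the energy asymptotic is not a corollary of the statement of Theorem \ref{theorem:mainDirichletG} alone; it requires these internal lemmas (which in turn rely on the bubble convergence of Theorem \ref{th:ultimaBubble}).
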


\

From Theorem \ref{theoremNeumannMain} and Lemma \ref{lemma:PropertiesConstants} we deduce that 
\[
\begin{array}{lr}
\apice{\bar r}{m}_{i,\alpha,p}\sim (\apice{\bar R}{m}_i)^{\frac{p-1}{\alpha+2}}\rightarrow 0, &\qquad i=1,\ldots, m-1,
\\
\apice{\bar s}{m}_{i,\alpha,p}\sim (\apice{\bar S}{m}_i)^{\frac{p-1}{\alpha+2}}\rightarrow 0, &\qquad i=1,\ldots, m-2, 
\\
|(\apice{\bar u}{m}_{\alpha,p})'(\apice{\bar r}{m}_{i,\alpha,p})|\sim
\frac{\frac{\alpha+2}{2}\apice{\bar D}{m}_i}{p(\apice{\bar R}{m}_i)^{\frac{p-1}{\alpha+2}}}\rightarrow +\infty,  & \qquad i=1,\ldots, m-1,
\\
|\apice{\bar u}{m}_{\alpha,p}(\apice{\bar s}{m}_{i,\alpha,p})|\sim\apice{\bar M}{m}_{i}>1, & \qquad i=0,\ldots, m-2,
\end{array}
\]
and that $|\apice{\bar u}{m}_{\alpha,p}(1)|\rightarrow 1$  as $p\rightarrow +\infty$. We expect  that  $\apice{\bar u}{m}_{\alpha,p}(x)\rightarrow 1$ uniformly on compact subsets of $B\setminus\{0\}$, but this does not follow directly from Theorem \ref{theorem:mainDirichletG} and would require further analysis.

\

Next we consider the radial solutions to \eqref{problemHenonWhole}. They oscillate infinitely many times and have a unique local maximum or minimum  between any two consecutive zeros. It is not difficult to see that they are linked by a suitable change of variables to the radial solutions of the Dirichlet and Neumann problems, hence as a consequence of our previous results we deduce the following characterization.
\begin{theorem}[Problem in  $\mathbb R^2$]\label{theoremMainWhole} Let $\alpha\geq 0$ and $w_{\alpha,p}$ be the radial solution of \eqref{problemHenonWhole} such that \[w_{\alpha,p}(0)=1.\] Then $w_{\alpha,p}$ has a sequence $(\rho_{m,\alpha,p})_{m\in\mathbb N}$ of zeros and a sequence 
$(\delta_{m,\alpha,p})_{m\in\mathbb N}$ of critical points such that
\[0=\delta_{0,\alpha,p}<\rho_{1,\alpha,p}<\delta_{1,\alpha,p}<\rho_{2,\alpha,p}<\ldots <\delta_{m-1,\alpha,p}<\rho_{m,\alpha,p}<\delta_{m,\alpha,p}<\ldots;\]
and
\begin{equation}\label{changeuw}
\apice{u}{m}_{\alpha,p}(r)=(\rho_{m,\alpha,p})^{\frac{\alpha+2}{p-1}}w_{\alpha,p}(\rho_{m,\alpha,p}r), \qquad r\in[0,1],\end{equation}
where $\apice{u}{m}_{\alpha,p}$ is as in Theorem \ref{theorem:mainDirichletG}.
Moreover,
\[\begin{array}{lr}
	(\rho_{m,\alpha,p})^{\frac{2}{p-1}}=
	\left(\apice{M}{m}_{0}\right)^{\frac{2}{\alpha +2}}+o(1),
	\\
	p|(w_{\alpha,p})'(\rho_{m,\alpha,p})|(\rho_{m,\alpha,p})=\frac{\alpha +2}{2}\frac{\apice{D}{m}_{m}}{\apice{M}{m}_0} +o(1),
	\\
(\delta_{m,\alpha,p})^{\frac{2}{p-1}}=\left(\apice{M}{m+1}_{\!\!0}\apice{S}{m+1}_{\!\!\!m}\right)^{\frac{2}{\alpha +2}} +o(1),
	\\
	|w_{\alpha,p}(\delta_{m,\alpha,p})|=\frac{\apice{M}{m+1}_{\!\!\!m}}{\apice{M}{m+1}_{\!\!0}}+o(1),
\end{array}\]
as $p\rightarrow +\infty$, where $\apice{M}{m}_0$,  $\apice{M}{m+1}_{\!\!0}$, $\apice{M}{m+1}_{\!\!\! m}$, $\apice{D}{m}_{m}$, $\apice{S}{m+1}_{\!\!\!m}$ are the constants in Definitions \ref{definition:Constants}-\ref{definition:Constants2}.
\end{theorem}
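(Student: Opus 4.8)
The plan is to deduce Theorem \ref{theoremMainWhole} entirely from Theorem \ref{theorem:mainDirichletG} by exploiting the scaling invariance of the equation \eqref{problemHenonWhole}. The key observation is that if $w$ solves $-\Delta w = |x|^\alpha|w|^{p-1}w$ in $\R^2$, then for any $\lambda>0$ the rescaled function $v(x):=\lambda^{\frac{\alpha+2}{p-1}}w(\lambda x)$ solves the same equation; this is a direct computation using $\Delta v(x)=\lambda^{2+\frac{\alpha+2}{p-1}}(\Delta w)(\lambda x)$ and $|x|^\alpha|v|^{p-1}v(x)=\lambda^{\alpha+\frac{(\alpha+2)p}{p-1}}|\lambda x|^{-\alpha}\cdot|\lambda x|^\alpha\,(|w|^{p-1}w)(\lambda x)$, noting the exponents match since $2+\frac{\alpha+2}{p-1}=\alpha+\frac{(\alpha+2)p}{p-1}-\alpha+\alpha$; more cleanly, $\frac{\alpha+2}{p-1}+2 = \frac{\alpha+2 + 2(p-1)}{p-1}=\frac{(\alpha+2)p}{p-1}$ and this equals $\frac{\alpha+2}{p-1}\cdot p$, as required. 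First I would establish existence and the qualitative oscillation properties of $w_{\alpha,p}$: the radial ODE $-(r^{N-1}w')' = r^{N-1+\alpha}|w|^{p-1}w$ with $N=2$, $w(0)=1$, $w'(0)=0$ has a unique global solution by standard ODE theory (the nonlinearity is locally Lipschitz away from obstructions and a priori bounds prevent blow-up in finite $r$), and a Sturm-type/energy argument shows it oscillates infinitely often with exactly one critical point strictly between consecutive zeros — this is the same phase-plane analysis already used (implicitly, via \cite{NiHenon}) to handle the Dirichlet radial solutions.

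Next, the structural identity \eqref{changeuw}. Given $w_{\alpha,p}$ with its $m$-th zero $\rho_{m,\alpha,p}$, define $v(r):=(\rho_{m,\alpha,p})^{\frac{\alpha+2}{p-1}}w_{\alpha,p}(\rho_{m,\alpha,p}r)$ for $r\in[0,1]$. By the scaling invariance, $v$ solves \eqref{equationHenon} on $B$; by construction $v(1)=(\rho_{m,\alpha,p})^{\frac{\alpha+2}{p-1}}w_{\alpha,p}(\rho_{m,\alpha,p})=0$, so $v$ satisfies the Dirichlet condition \eqref{DirichletBC}; and $v$ inherits from $w_{\alpha,p}$ exactly $m-1$ interior zeros (the images under $r\mapsto \rho_{m,\alpha,p}^{-1}\cdot(\text{zero})$ of $\rho_{1,\alpha,p},\ldots,\rho_{m-1,\alpha,p}$), with $v(0)=(\rho_{m,\alpha,p})^{\frac{\alpha+2}{p-1}}>0$. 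By the uniqueness-up-to-sign statement in Theorem \ref{theorem:mainDirichletG}, $v\equiv \apice{u}{m}_{\alpha,p}$, which is precisely \eqref{changeuw}. The same argument with $\rho_{m,\alpha,p}$ replaced by the critical point $\delta_{m-1,\alpha,p}$ (where $w_{\alpha,p}'$ vanishes) gives the analogous identity linking $w_{\alpha,p}$ to the Neumann solution $\apice{\bar u}{m}_{\alpha,p}$ of Theorem \ref{theoremNeumannMain}, providing a cross-check.

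With \eqref{changeuw} in hand, the asymptotic formulas are pure bookkeeping. Evaluating \eqref{changeuw} at $r=0$ gives $\apice{u}{m}_{\alpha,p}(0)=(\rho_{m,\alpha,p})^{\frac{\alpha+2}{p-1}}w_{\alpha,p}(0)=(\rho_{m,\alpha,p})^{\frac{\alpha+2}{p-1}}$, so $(\rho_{m,\alpha,p})^{\frac{2}{p-1}}=\left(\apice{u}{m}_{\alpha,p}(0)\right)^{\frac{2}{\alpha+2}}=(\apice{M}{m}_0)^{\frac{2}{\alpha+2}}+o(1)$ by \eqref{valoreInPuntoCriticoG} with $i=0$. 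The zeros transform as $\apice{r}{m}_{i,\alpha,p}=\rho_{m,\alpha,p}^{-1}\rho_{i,\alpha,p}$ — wait, the correct direction: $\rho_{i,\alpha,p}=\rho_{m,\alpha,p}\,\apice{r}{m}_{i,\alpha,p}$, but for the stated formulas one rather reads off $\rho_{m,\alpha,p}$ itself and the critical point $\delta_{m,\alpha,p}$, which sits between $\rho_{m,\alpha,p}$ and $\rho_{m+1,\alpha,p}$; using instead the rescaling anchored at $\rho_{m+1,\alpha,p}$ one has $\apice{s}{m+1}_{m,\alpha,p}=\delta_{m,\alpha,p}/\rho_{m+1,\alpha,p}$ and $\apice{u}{m+1}_{\alpha,p}(0)=(\rho_{m+1,\alpha,p})^{\frac{\alpha+2}{p-1}}$, whence $(\delta_{m,\alpha,p})^{\frac{2}{p-1}}=(\rho_{m+1,\alpha,p})^{\frac{2}{p-1}}(\apice{s}{m+1}_{m,\alpha,p})^{\frac{2}{p-1}}=\left(\apice{M}{m+1}_0\right)^{\frac{2}{\alpha+2}}\left(\apice{S}{m+1}_m\right)^{\frac{2}{\alpha+2}}+o(1)$ by \eqref{valoreInPuntoCriticoG} and \eqref{puntoCriticoG}. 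For the values, $|w_{\alpha,p}(\delta_{m,\alpha,p})|=(\rho_{m+1,\alpha,p})^{-\frac{\alpha+2}{p-1}}|\apice{u}{m+1}_{\alpha,p}(\apice{s}{m+1}_{m,\alpha,p})|=\frac{\apice{M}{m+1}_m}{\apice{M}{m+1}_0}+o(1)$, using that $(\rho_{m+1,\alpha,p})^{\frac{\alpha+2}{p-1}}\to\apice{M}{m+1}_0$. For the derivative, differentiating \eqref{changeuw} gives $(\apice{u}{m}_{\alpha,p})'(r)=(\rho_{m,\alpha,p})^{\frac{\alpha+2}{p-1}+1}w_{\alpha,p}'(\rho_{m,\alpha,p}r)$; evaluating at $r=1$ (so $\apice{r}{m}_{m,\alpha,p}=1$ maps to $\rho_{m,\alpha,p}$) and multiplying by $p$ yields $p|(\apice{u}{m}_{\alpha,p})'(1)|=(\rho_{m,\alpha,p})^{\frac{\alpha+2}{p-1}}\cdot p|w_{\alpha,p}'(\rho_{m,\alpha,p})|\rho_{m,\alpha,p}$, so $p|w_{\alpha,p}'(\rho_{m,\alpha,p})|\rho_{m,\alpha,p}=(\rho_{m,\alpha,p})^{-\frac{\alpha+2}{p-1}}\cdot p|(\apice{u}{m}_{\alpha,p})'(1)|=\frac{\alpha+2}{2}\frac{\apice{D}{m}_m}{\apice{M}{m}_0}+o(1)$ by \eqref{derivataInRaggioG} with $i=m$ together with $(\rho_{m,\alpha,p})^{\frac{\alpha+2}{p-1}}\to\apice{M}{m}_0$. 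The main (and really only) obstacle is the preliminary qualitative step: proving global existence of $w_{\alpha,p}$ and that its zeros/critical points interlace as claimed with infinitely many oscillations; once that is set up, everything else is a mechanical transfer of Theorem \ref{theorem:mainDirichletG} through the explicit change of variables, so I would expect this proof to be short.
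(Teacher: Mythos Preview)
Your proposal is correct and follows essentially the same approach as the paper: establish the change of variables \eqref{changeuw} via the scaling invariance and the uniqueness part of Theorem \ref{theorem:mainDirichletG}, then read off the four asymptotics by evaluating at $r=0$ (giving $(\rho_{m,\alpha,p})^{\frac{\alpha+2}{p-1}}=\apice{u}{m}_{\alpha,p}(0)$), differentiating and evaluating at $r=1$, and using the $(m{+}1)$-anchored rescaling for $\delta_{m,\alpha,p}$ and $w_{\alpha,p}(\delta_{m,\alpha,p})$. The paper's proof is likewise a short bookkeeping transfer from Theorem \ref{theorem:mainDirichletG}, with the oscillation of $w_{\alpha,p}$ quoted from \cite{N} for $\alpha=0$ and deduced for $\alpha>0$ via the change of variable \eqref{changeOfVariableHenonDirichlet}.
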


From the study of the constants performed in Section \ref{Constants:sec} (see Theorem \ref{a:t}), we deduce the following result which implies that there is no a priori bound for nodal solutions.
\begin{theorem}
	\label{theorem:NObounds}
Let $\apice{u}{m}_{\alpha,p}$ and $\apice{\bar u}{m}_{\alpha,p}$ be as in Theorem \ref{theorem:mainDirichletG} and Theorem \ref{theoremNeumannMain} respectively and let $c_1:=\sqrt{\pi}\approx 1.77$ and $c_2:= 6\frac{\Gamma(\frac{3}{4})}{\Gamma(\frac{1}{4})}\approx 2.02$. Then, for every $m\in \N$,
\begin{align}
c_1\ \frac{\Gamma(m)}{\Gamma(m-\frac{1}{2})}e^{\frac{1}{4m-1}} &< \lim_{p\to\infty}\|\apice{u}{m}_{\alpha,p}\|_{L^{\infty}}< c_2 \ \frac{\Gamma(m+\frac{1}{4})}{\Gamma(m-\frac{1}{4})}e^{\frac{1}{4m-2}},\label{D:bd}\\
c_1\ \frac{\Gamma(m)}{\Gamma(m-\frac{1}{2})}e^{\frac{1}{4m-1}-\frac{1}{4m-2}} &< \lim_{p\to\infty}\|\apice{\bar u}{m}_{\alpha,p}\|_{L^{\infty}}< c_2 \ \frac{\Gamma(m+\frac{1}{4})}{\Gamma(m-\frac{1}{4})}e^{\frac{1}{4m-2}-\frac{1}{4m-1}}\label{N:bd}.
\end{align}
Furthermore, 
\begin{align}
c_1\leq \liminf_{m\to\infty}\lim_{p\to\infty}\frac{\|\apice{u}{m}_{\alpha,p}\|_{L^{\infty}}}{\sqrt{m}}&\leq \limsup_{m\to\infty}\lim_{p\to\infty}\frac{\|\apice{u}{m}_{\alpha,p}\|_{L^{\infty}}}{\sqrt{m}}\leq c_2,\label{D:asym}\\
c_1\leq \liminf_{m\to\infty}\lim_{p\to\infty}\frac{\|\apice{\bar u}{m}_{\alpha,p}\|_{L^{\infty}}}{\sqrt{m}}&\leq \limsup_{m\to\infty}\lim_{p\to\infty}\frac{\|\apice{\bar u}{m}_{\alpha,p}\|_{L^{\infty}}}{\sqrt{m}}\leq c_2.\label{N:asym}
\end{align}
\end{theorem}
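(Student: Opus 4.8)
\textbf{Proof plan for Theorem \ref{theorem:NObounds}.}
The key observation is that, by \eqref{valoreInPuntoCriticoG} of Theorem \ref{theorem:mainDirichletG}, $\lim_{p\to\infty}\|\apice{u}{m}_{\alpha,p}\|_{L^\infty}=\apice{M}{m}_0$, and by \eqref{valoreInPuntoCriticoGN} of Theorem \ref{theoremNeumannMain} together with \eqref{raggioNeumann}, $\lim_{p\to\infty}\|\apice{\bar u}{m}_{\alpha,p}\|_{L^\infty}=\apice{\bar M}{m}_0=\apice{S}{m}_{m-1}\apice{M}{m}_0$. So all four claims are in fact claims about the purely numerical sequences $\apice{M}{m}_0$ and $\apice{S}{m}_{m-1}\apice{M}{m}_0$, which are explicit functions of the $\theta_k$ defined by the Lambert iteration \eqref{iterateAlphaM}. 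The whole proof therefore reduces to the combinatorial/analytic study of these constants, which is precisely what is carried out in Section \ref{Constants:sec}; in fact the statement explicitly says the bounds follow from Theorem \ref{a:t} there. So the plan is: (i) invoke Theorem \ref{a:t} to get two-sided estimates on $\theta_k$ (and hence on the finite products defining $\apice{M}{m}_0$ and $\apice{S}{m}_{m-1}$); (ii) telescope these into the Gamma-quotient bounds \eqref{D:bd}--\eqref{N:bd}; (iii) pass to the limit $m\to\infty$ to obtain \eqref{D:asym}--\eqref{N:asym}.

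For step (ii), the natural route is to first unwind the definitions. From \eqref{SoloUltimiSeM} with $i=0$ one has $\apice{M}{m}_0=\apice{M}{1}_0\big/\prod_{k=2}^m\apice{R}{k}_{k-1}$, and from \eqref{SoloUltimiReD}, \eqref{Rk}, \eqref{Mk} the product $\prod_{k=2}^m\apice{R}{k}_{k-1}$ telescopes (the $\apice{M}{k}_{k-1}$ factors mostly cancel), leaving a product over $k$ of the factors $(\theta_{k-2}+2)/(\theta_{k-1}-2)$ times a residual ratio of two $\apice{M}{k}_{k-1}$'s. Thus $\apice{M}{m}_0$ is, up to an explicit bounded factor $e^{\pm 2/(2+\theta_{m-1})}$, equal to $\prod_{k=1}^{m-1}\frac{\theta_k-2}{\theta_{k-1}+2}\cdot(\text{stuff})$ — one has to be careful bookkeeping the indices, but the upshot is a single product $\prod_{k=1}^{m}\big(1+\frac{c}{4k-1}+\cdots\big)$-type expression once one plugs in the asymptotics $\theta_k\approx 4k$ (more precisely the sharp bounds from Theorem \ref{a:t}: the claim structure with $\Gamma(m)/\Gamma(m-\tfrac12)\sim\sqrt m$ and $\Gamma(m+\tfrac14)/\Gamma(m-\tfrac14)\sim\sqrt m$ strongly suggests that Theorem \ref{a:t} provides $\theta_k\in(4k-2,\,4k+1)$ or similar, so that the relevant product is pinched between $\prod(1-\tfrac1{2(2k-1)})^{-1}$ and $\prod(1-\tfrac1{4k-1})^{-1}$, which are exactly the partial products of $\Gamma(m)/\Gamma(m-\tfrac12)$ and $\Gamma(m+\tfrac14)/\Gamma(m-\tfrac14)$ up to the constants $c_1=\sqrt\pi$, $c_2=6\Gamma(\tfrac34)/\Gamma(\tfrac14)$ and the correction exponentials $e^{1/(4m-1)}$, $e^{1/(4m-2)}$ coming from the residual $e^{2/(2+\theta_{m-1})}$ factor). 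For the Neumann case \eqref{N:bd} one multiplies through by $\apice{S}{m}_{m-1}=(\apice{M}{m}_{m-1})^{-1}=e^{-2/(2+\theta_{m-1})}$ (from \eqref{Sk}, \eqref{Mk}), which simply shifts the correction exponent by $\mp 2/(2+\theta_{m-1})$, i.e. replaces $e^{1/(4m-1)}$ by $e^{1/(4m-1)-1/(4m-2)}$ and $e^{1/(4m-2)}$ by $e^{1/(4m-2)-1/(4m-1)}$, matching \eqref{N:bd}.

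For step (iii): by Stirling (or Wallis' formula) $\Gamma(m)/\big(\sqrt m\,\Gamma(m-\tfrac12)\big)\to 1$ and $\Gamma(m+\tfrac14)/\big(\sqrt m\,\Gamma(m-\tfrac14)\big)\to 1$ as $m\to\infty$, while the exponential corrections $e^{1/(4m-1)}$, $e^{1/(4m-2)}$ (and their Neumann analogues) tend to $1$. Dividing \eqref{D:bd} and \eqref{N:bd} by $\sqrt m$ and taking $\liminf$, $\limsup$ in $m$ then yields \eqref{D:asym}--\eqref{N:asym} immediately.

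The main obstacle is step (ii): getting the sharp constants $c_1=\sqrt\pi$ and $c_2=6\Gamma(\tfrac34)/\Gamma(\tfrac14)$ and the exact correction exponentials out of the telescoping requires Theorem \ref{a:t} to deliver bounds on $\theta_k$ that are tight enough and of exactly the right form (two-sided, with the leading $4k$ term and a controlled $O(1)$ deviation), and then matching the resulting infinite product against the Gamma-function identities without slack. In other words, all the analytic difficulty has been pushed into Section \ref{Constants:sec}/Theorem \ref{a:t}; granting that, the proof here is a bounded bookkeeping of telescoping products plus an application of Stirling's formula. I would write it in exactly that order: reduce the $L^\infty$-limits to $\apice{M}{m}_0$ and $\apice{S}{m}_{m-1}\apice{M}{m}_0$ via Theorems \ref{theorem:mainDirichletG}--\ref{theoremNeumannMain}; telescope to a single product; insert the bounds of Theorem \ref{a:t}; recognize the Gamma quotients; and conclude the $m\to\infty$ asymptotics by Stirling.
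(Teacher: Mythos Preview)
Your plan is correct and matches the paper's approach: reduce the $L^\infty$-limits to $\apice{M}{m}_0$ and $\apice{S}{m}_{m-1}\apice{M}{m}_0$ via Theorems \ref{theorem:mainDirichletG} and \ref{theoremNeumannMain}, telescope the product defining $\apice{M}{m}_0$ (this is Lemma \ref{lemma:M0Theta}), insert sharp two-sided bounds on $\theta_k$, convert the resulting products into Gamma quotients via \eqref{p:l}, and use \eqref{G:p:2} (equivalently Stirling) for the $m\to\infty$ asymptotics. Two small corrections on bookkeeping: the bounds on $\theta_k$ are given by Theorem \ref{theta:t} (Theorem \ref{a:t} already packages everything into the Gamma-quotient bounds on $\apice{M}{m+1}_{0}$, so in the paper the proof of Theorem \ref{theorem:NObounds} is just a one-line citation of it plus the estimate $e^{-1/(4m-2)}<\apice{S}{m}_{m-1}<e^{-1/(4m-1)}$), and those bounds read $8k+2<\theta_k<8k+4$, not $\theta_k\approx 4k$ --- your heuristic is off by a factor of $2$, though this does not affect the structure of the argument.
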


An immediate consequence is that the $i$-th local maximum or minimum is also unbounded for any $i\in\N$ as $m,p\to\infty$ and with the same growth rate.
\begin{corollary}\label{coro} Using the notation of  Theorems \ref{theorem:mainDirichletG} and \ref{theoremNeumannMain}. Then
\begin{align*}
0<\liminf_{m\to\infty}\lim_{p\to\infty}\frac{\apice{u}{m}_{\alpha,p}(\apice{s}{m}_{i,\alpha,p})}{\sqrt{m}}&\leq \limsup_{m\to\infty}\lim_{p\to\infty}\frac{\apice{u}{m}_{\alpha,p}(\apice{s}{m}_{i,\alpha,p})}{\sqrt{m}}<\infty\qquad \text{ for every }i\in\N,
\end{align*}
and the same result holds with $\apice{\bar u}{m}_{\alpha,p}(\apice{\bar s}{m}_{i,\alpha,p})$ instead of $\apice{u}{m}_{\alpha,p}(\apice{s}{m}_{i,\alpha,p})$.
\end{corollary}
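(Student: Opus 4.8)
The plan is to deduce Corollary \ref{coro} directly from Theorem \ref{theorem:NObounds}, by observing that for each fixed index $i$ the limiting height of the $i$-th bump of $\apice{u}{m}_{\alpha,p}$ is a \emph{fixed positive multiple} of its limiting sup-norm, the multiplicative constant being independent of $m$.

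First I would record the two inputs from Theorem \ref{theorem:mainDirichletG}: by \eqref{valoreInPuntoCriticoG}, $\lim_{p\to\infty}|\apice{u}{m}_{\alpha,p}(\apice{s}{m}_{i,\alpha,p})|=\apice{M}{m}_{i}$ for $i=0,\dots,m-1$; and since $\apice{s}{m}_{0,\alpha,p}=0$ and $\|\apice{u}{m}_{\alpha,p}\|_{L^{\infty}}=|\apice{u}{m}_{\alpha,p}(0)|$, the case $i=0$ gives $\lim_{p\to\infty}\|\apice{u}{m}_{\alpha,p}\|_{L^{\infty}}=\apice{M}{m}_{0}$. Next I would compute, using the product formulas \eqref{SoloUltimiSeM}, that for every $m\ge i+2$
\[
\frac{\apice{M}{m}_{i}}{\apice{M}{m}_{0}}
=\frac{\apice{M}{i+1}_{i}}{\apice{M}{1}_{0}}\cdot\frac{\prod_{k=2}^{m}\apice{R}{k}_{k-1}}{\prod_{k=i+2}^{m}\apice{R}{k}_{k-1}}
=\frac{\apice{M}{i+1}_{i}}{\apice{M}{1}_{0}}\prod_{k=2}^{i+1}\apice{R}{k}_{k-1}=:c_i,
\]
the crucial point being that the tail factors $\apice{R}{k}_{k-1}$ with $k\ge i+2$ cancel, so that $c_i$ depends only on $i$, and $c_i>0$ because all the quantities $\apice{M}{m}_{j}$, $\apice{R}{m}_{j}$ are positive by Lemma \ref{lemma:PropertiesConstants}. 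Consequently, for $m\ge i+2$,
\[
\lim_{p\to\infty}\frac{|\apice{u}{m}_{\alpha,p}(\apice{s}{m}_{i,\alpha,p})|}{\sqrt m}
=c_i\,\frac{\apice{M}{m}_{0}}{\sqrt m}
=c_i\,\frac{\lim_{p\to\infty}\|\apice{u}{m}_{\alpha,p}\|_{L^{\infty}}}{\sqrt m},
\]
and then I would pass to $\liminf_{m\to\infty}$ and $\limsup_{m\to\infty}$ and feed in \eqref{D:asym}, obtaining $0<c_1c_i\le\liminf\le\limsup\le c_2c_i<\infty$, which is the Dirichlet assertion (the finitely many values $m<i+2$ being irrelevant for the limits as $m\to\infty$).

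For the Neumann solutions the same scheme applies once I check, via the change of variables \eqref{changeuubar}, that $\|\apice{\bar u}{m}_{\alpha,p}\|_{L^{\infty}}=|\apice{\bar u}{m}_{\alpha,p}(0)|$ (it is a positive scalar multiple of $\|\apice{u}{m}_{\alpha,p}\|_{L^{\infty}}=|\apice{u}{m}_{\alpha,p}(0)|$), so that \eqref{valoreInPuntoCriticoGN} yields $\lim_{p\to\infty}\|\apice{\bar u}{m}_{\alpha,p}\|_{L^{\infty}}=\apice{\bar M}{m}_{0}$; then by \eqref{raggioNeumann} the common factor $\apice{S}{m}_{m-1}$ cancels in $\apice{\bar M}{m}_{i}/\apice{\bar M}{m}_{0}=\apice{M}{m}_{i}/\apice{M}{m}_{0}=c_i$, and one concludes with \eqref{N:asym} exactly as above. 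I do not expect any genuine obstacle here: essentially all the work is already contained in Theorem \ref{theorem:NObounds}, and what remains is only the bookkeeping of the telescoping products in Definition \ref{definition:Constants2} needed to verify that the ratio $\apice{M}{m}_{i}/\apice{M}{m}_{0}$ is $m$-independent.
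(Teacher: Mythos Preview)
Your proposal is correct and follows essentially the same approach as the paper: the key step in both is the telescoping identity showing that $\apice{M}{m}_{i}/\apice{M}{m}_{0}$ is an $m$-independent positive constant (this is exactly what the paper records in the proof of Corollary \ref{thm:growthOtherConstants}), after which the Dirichlet case follows from the growth of $\apice{M}{m}_{0}$ and the Neumann case from the additional observation that the common factor $\apice{S}{m}_{m-1}$ either cancels in the ratio (your version) or tends to $1$ (the paper's version).
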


Theorem \ref{theorem:NObounds} implies that, for the $2$-dimensional Lane-Emden and  H\'enon problems with either Dirichlet or Neumann boundary conditions,  \emph{a priori uniform bounds do not hold true for  nodal solutions in general}, and in fact the supremum norm grows as $\sqrt{m}$, where $m-1$ is the number of interior zeros of the solution (see Figure \ref{f3}). We recall that positive solutions of Dirichlet Lane-Emden problems do satisfy an a priori uniform bound, see \cite{KamburovSirakov}.

\

To close this section, we describe the organization and main ideas of the paper. Theorems \ref{theorem:mainDirichletG} and \ref{theorem:analisiAsintoticaRiscalateAlpha} (Dirichlet Lane-Emden) are shown in Section \ref{section:Lane} for $\alpha=0$. Here, an inductive strategy is used together with the fact that $\apice{u}{m-1}_{\!\!\!0,p}$ and $\apice{u}{m}_{0,p}$ are related via a suitable \emph{change of variables}. 
More details on the structure of our proof is in Section \ref{section:strategiaRIcorsiva}. In Section \ref{section:Heno} we study the case $\alpha>0$ in Theorems \ref{theorem:mainDirichletG} and \ref{theorem:analisiAsintoticaRiscalateAlpha} (Dirichlet H\'enon), which is deduced from the case $\alpha=0$ using another change of variables which \emph{works only in dimension $2$}. Using  \eqref{changeuubar} and \eqref{changeuw}, we show Theorems \ref{theoremNeumannMain} (Neumann) and \ref{theoremMainWhole} (whole $\mathbb R^2$) in Section \ref{section:Neumann&R2}. In Section \ref{Constants:sec} we do a careful analysis of the constants and use this to show Theorem \ref{theorem:NObounds}, Corollary \ref{coro}, and Lemma \ref{lemma:PropertiesConstants}.  Finally, in the Appendix \ref{App:Morse} we discuss a conjecture on a Morse index formula for the solutions.

\section{The Dirichlet Lane-Emden problem in $B$}\label{section:Lane}

This section is devoted to the proof of the case $\alpha=0$ in Theorem \ref{theorem:mainDirichletG} and in Theorem \ref{theorem:analisiAsintoticaRiscalateAlpha}. Hence, we consider the Dirichlet Lane-Emden problem
\begin{equation}\label{problem}\left\{\begin{array}{lr}-\Delta u= |u|^{p-1}u\qquad  \mbox{ in }B,\\
	u=0\qquad\qquad\qquad\mbox{ on }\partial B.
	\end{array}\right.
	\end{equation}
It is well known that, for any  $p>1$ and any $m\in\N$, $m\geq 1$, there exists a unique (up to a sign) radial solution   to \eqref{problem}  with exactly $m-1$ interior zeros  (see for instance \cite[p. 263]{K}).
\\
This solution does not vanish in the origin so let us simply denote by $\apice{u}{m}_p$ the unique nodal radial solution of \eqref{problem} having $m-1$ interior zeros and satisfying
\begin{equation}
\label{MaxInZero}
\apice{u}{m}_p(0)>0
\end{equation}
(namely $\apice{u}{m}_p=\apice{u}{m}_{p,0}$).
With a slight abuse of notation, we often  write  $\apice{u}{m}_p(r)=\apice{u}{m}_p(|x|)$.

\subsection{Preliminary properties}\label{sectin:LabePreliminaries}
Let us denote by $\apice{r}{m}_{i,p}$, $i=1,\ldots m$, the nodal radii of $\apice{u}{m}_p$, \emph{i.e.,}
\begin{equation} \label{rp}
\begin{array}{lr}
0<\apice{r}{m}_{1,p}<\apice{r}{m}_{2,p}<\cdots <\apice{r}{m}_{m-1,p}<\apice{r}{m}_{m,p}=1,\\
\apice{u}{m}_p(\apice{r}{m}_{i,p})=0\qquad i=1,\ldots,m.
\end{array}
\end{equation}

\begin{proposition}\label{PropositionUnicoMaxeMin}
	Let $p>1$, then
	\begin{itemize}
		\item[(i)] $\apice{u}{m}_p(0)=\|\apice{u}{m}_p\|_{\infty}$.
		\item[(ii)] $0$ is the unique critical point of the map $r\mapsto \apice{u}{m}_p(r)$ in the interval $[0, \apice{r}{m}_{1,p}]$; in each interval $[\apice{r}{m}_{i,p}, \apice{r}{m}_{i+1,p}]$ the map $r\mapsto \apice{u}{m}_p(r)$ has exactly one critical point $\apice{s}{m}_{i,p}$, so
		\begin{equation} \label{sp}
		\begin{array}{lr}
		\apice{s}{m}_{0,p}:=0,\\
		\apice{s}{m}_{i,p} \in (\apice{r}{m}_{i,p}, \apice{r}{m}_{i+1,p}),\quad i=1,\ldots, m-1  \ (\mbox{if }m\geq 2),
		\end{array}
		\end{equation} 
		\[(\apice{u}{m}_p)'(\apice{s}{m}_{i,p})=0, \quad i=0,\ldots, m-1,\]
		$\apice{s}{m}_{2j,p}$ is  a local maximum and $\apice{s}{m}_{2j+1,p}$ is a local minimum point for $j=0,1,2,\ldots$.
		\item[(iii)]
		\begin{equation}\label{Mmonotoni}
		|\apice{u}{m}_{p}(\apice{s}{m}_{0,p})|>|\apice{u}{m}_{p}(\apice{s}{m}_{1,p})|>\ldots >|\apice{u}{m}_{p}(\apice{s}{m}_{m-1,p})|.
	\end{equation}
	\end{itemize}
\end{proposition}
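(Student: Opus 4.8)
The plan is to exploit the ODE satisfied by $\apice{u}{m}_p(r)$, namely $-(r(\apice{u}{m}_p)')' = r|\apice{u}{m}_p|^{p-1}\apice{u}{m}_p$ on $(0,1)$ with $(\apice{u}{m}_p)'(0)=0$, together with a Pohozaev-type identity on each nodal interval. For part (i), I would first argue that on $[0,\apice{r}{m}_{1,p}]$, where $\apice{u}{m}_p>0$ by the normalization \eqref{MaxInZero} and the definition of $\apice{r}{m}_{1,p}$ as the first zero, the function $r\mapsto r(\apice{u}{m}_p)'(r)$ is strictly decreasing (its derivative is $-r(\apice{u}{m}_p)^p<0$), so since it vanishes at $r=0$ it is negative on $(0,\apice{r}{m}_{1,p}]$; hence $\apice{u}{m}_p$ is strictly decreasing there and $0$ is its unique critical point on that interval, with $\apice{u}{m}_p(0)=\max_{[0,\apice{r}{m}_{1,p}]}\apice{u}{m}_p$. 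This simultaneously gives the first assertion of part (ii).

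For the interior intervals in part (ii), I would work on a fixed $[\apice{r}{m}_{i,p},\apice{r}{m}_{i+1,p}]$ where $\apice{u}{m}_p$ has constant sign, say positive (the negative case is symmetric, replacing $\apice{u}{m}_p$ by $-\apice{u}{m}_p$). Since $\apice{u}{m}_p$ vanishes at both endpoints and is positive inside, it has at least one interior maximum. To get \emph{exactly one} critical point I would again use that $g(r):=r(\apice{u}{m}_p)'(r)$ satisfies $g'(r)=-r(\apice{u}{m}_p(r))^p$, which is $<0$ wherever $\apice{u}{m}_p>0$; thus on the open interval $g$ is strictly decreasing, so it has at most one zero there, forcing a unique critical point $\apice{s}{m}_{i,p}$, which is necessarily a maximum (so in particular on intervals where $\apice{u}{m}_p<0$ the unique critical point is a minimum). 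Tracking the sign of $\apice{u}{m}_p$ across the zeros — which alternates because each $\apice{r}{m}_{i,p}$ is a simple zero (if $(\apice{u}{m}_p)'(\apice{r}{m}_{i,p})=0$ too then $\apice{u}{m}_p\equiv 0$ by uniqueness for the ODE initial value problem) — yields that $\apice{s}{m}_{2j,p}$ are maxima and $\apice{s}{m}_{2j+1,p}$ are minima, starting from $\apice{s}{m}_{0,p}=0$ which is a maximum by part (i).

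For part (iii), the monotonicity $|\apice{u}{m}_p(\apice{s}{m}_{i,p})|>|\apice{u}{m}_p(\apice{s}{m}_{i+1,p})|$ of the successive extremal values, I would use the energy function $E(r):=\tfrac12|(\apice{u}{m}_p)'(r)|^2+\tfrac{1}{p+1}|\apice{u}{m}_p(r)|^{p+1}$, whose derivative along the flow is $E'(r)=-\tfrac1r|(\apice{u}{m}_p)'(r)|^2\le 0$, so $E$ is non-increasing. Evaluating $E$ at two consecutive critical points $\apice{s}{m}_{i,p}$ and $\apice{s}{m}_{i+1,p}$, where $(\apice{u}{m}_p)'$ vanishes, gives $\tfrac{1}{p+1}|\apice{u}{m}_p(\apice{s}{m}_{i,p})|^{p+1}=E(\apice{s}{m}_{i,p})\ge E(\apice{s}{m}_{i+1,p})=\tfrac{1}{p+1}|\apice{u}{m}_p(\apice{s}{m}_{i+1,p})|^{p+1}$, and the inequality is strict because $(\apice{u}{m}_p)'$ does not vanish identically on the intervening nodal interval (again by uniqueness, since $\apice{u}{m}_p$ is nonconstant). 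Finally part (i) — that the global sup is attained at the origin — follows by combining the first step ($\apice{u}{m}_p(0)$ is the max on $[0,\apice{r}{m}_{1,p}]$) with part (iii): $\|\apice{u}{m}_p\|_\infty=\max_i|\apice{u}{m}_p(\apice{s}{m}_{i,p})|=|\apice{u}{m}_p(\apice{s}{m}_{0,p})|=\apice{u}{m}_p(0)$. I expect the only mildly delicate point to be the careful handling of the apparent singularity of the ODE at $r=0$ and the rigorous use of ODE uniqueness to exclude degenerate zeros; everything else is a direct monotonicity argument via $g$ and $E$.
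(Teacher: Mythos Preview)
Your proof is correct and follows essentially the same approach as the paper: the energy function $E(r)=\tfrac{1}{2}|(\apice{u}{m}_p)'|^2+\tfrac{1}{p+1}|\apice{u}{m}_p|^{p+1}$ with $E'(r)=-\tfrac{1}{r}|(\apice{u}{m}_p)'|^2\le 0$ is exactly the paper's tool for (i) and (iii). The only cosmetic difference is in (ii): the paper observes that at any critical point $(\apice{u}{m}_p)''=-|\apice{u}{m}_p|^{p-1}\apice{u}{m}_p$, so in a nodal region all critical points are strict maxima (or all strict minima), forcing uniqueness, whereas you reach the same conclusion via the monotonicity of $r(\apice{u}{m}_p)'(r)$; and for (i) the paper applies $E(0)\ge E(r)$ directly at every $r$ rather than passing through (iii).
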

\begin{proof}
	The one variable function $r\mapsto \apice{u}{m}_{p}$  satisfies
	\begin{equation}\label{ode}
	(\apice{u}{m}_{p})''+\frac{1}{r}(\apice{u}{m}_{p})'+|\apice{u}{m}_{p}|^{p-1}
	\apice{u}{m}_{p}=0,
	\end{equation}
	(ii) then follows immediately observing that for  critical points $(\apice{u}{m}_{p})''=-|\apice{u}{m}_{p}|^{p-1}\apice{u}{m}_{p}$.	
	Moreover, multiplying the equation \eqref{ode} by $(\apice{u}{m}_{p})'$ we have that $F'(r)=-\frac{1}{r}\left[(\apice{u}{m}_p)'(r) \right]^2\leq 0$, where \[F(r)=\frac{1}{2}|(\apice{u}{m}_{p})'|^2+\frac{1}{p+1}|\apice{u}{m}_{p}|^{p+1}.\]
	Thus $F$ is non increasing, in particular $F(0)\geq F(r)$ for all $r\in (0,1)$, which implies (i), and $F(\apice{s}{m}_{i,p})\geq F(\apice{s}{m}_{i+i,p})$, which implies (iii). 
\end{proof}

\begin{center}
\setlength{\unitlength}{1cm}
\thicklines
\begin{picture}(13,8.5)
\put(13.1,2.35){$\mid$}
\put(13,1.9){$\apice{r}{m}_{m,p}$}
\put(13.1,2.7){$1$}
\put(11.65,2.35){$\mid$}
\put(11.5,1.9){$\apice{s}{m}_{m-1,p}$}
\put(10.34,2.35){$\mid$}
\put(10.24,1.9){$\apice{r}{m}_{m-1,p}$}
\put(9.1,2.35){$\mid$}
\put(8.7,1.9){$\apice{s}{m}_{m-2,p}$}
\put(7.85,2.35){$\mid$}
\put(7.4,1.9){$\apice{r}{m}_{m-2,p}$}
\put(6.7,2.35){$\mid$}
\put(5.55,2.35){$\mid$}
\put(4.55,2.35){$\mid$}
\put(3.6,2.35){$\mid$}
\put(2.7,2.35){$\mid$}
\put(2.6,1.9){$\apice{s}{m}_{2,p}$}
\put(1.94,2.35){$\mid$}
\put(1.9,1.9){$\apice{r}{m}_{2,p}$}
\put(1.35,2.35){$\mid$}
\put(1.1,1.9){$\apice{s}{m}_{1,p}$}
\put(.75,2.35){$\mid$}
\put(.35,1.9){$\apice{r}{m}_{1,p}$}
\put(.5,7.8){$\apice{u}{m}_{p}(r)$}
\put(-.7,7.8){$\apice{M}{m}_{0,p}$}
\put(-.7,4.4){$\apice{M}{m}_{2,p}$}
\put(-.9,0){$-\apice{M}{m}_{1,p}$}
\put(-1.3,1.3){$-\apice{M}{m}_{m-2,p}$}
\put(-1.1,3.4){$\apice{M}{m}_{m-1,p}$}
\put(-1.15,2.1){$\apice{s}{m}_{0,p}=0$}
\includegraphics[width=.9\textwidth]{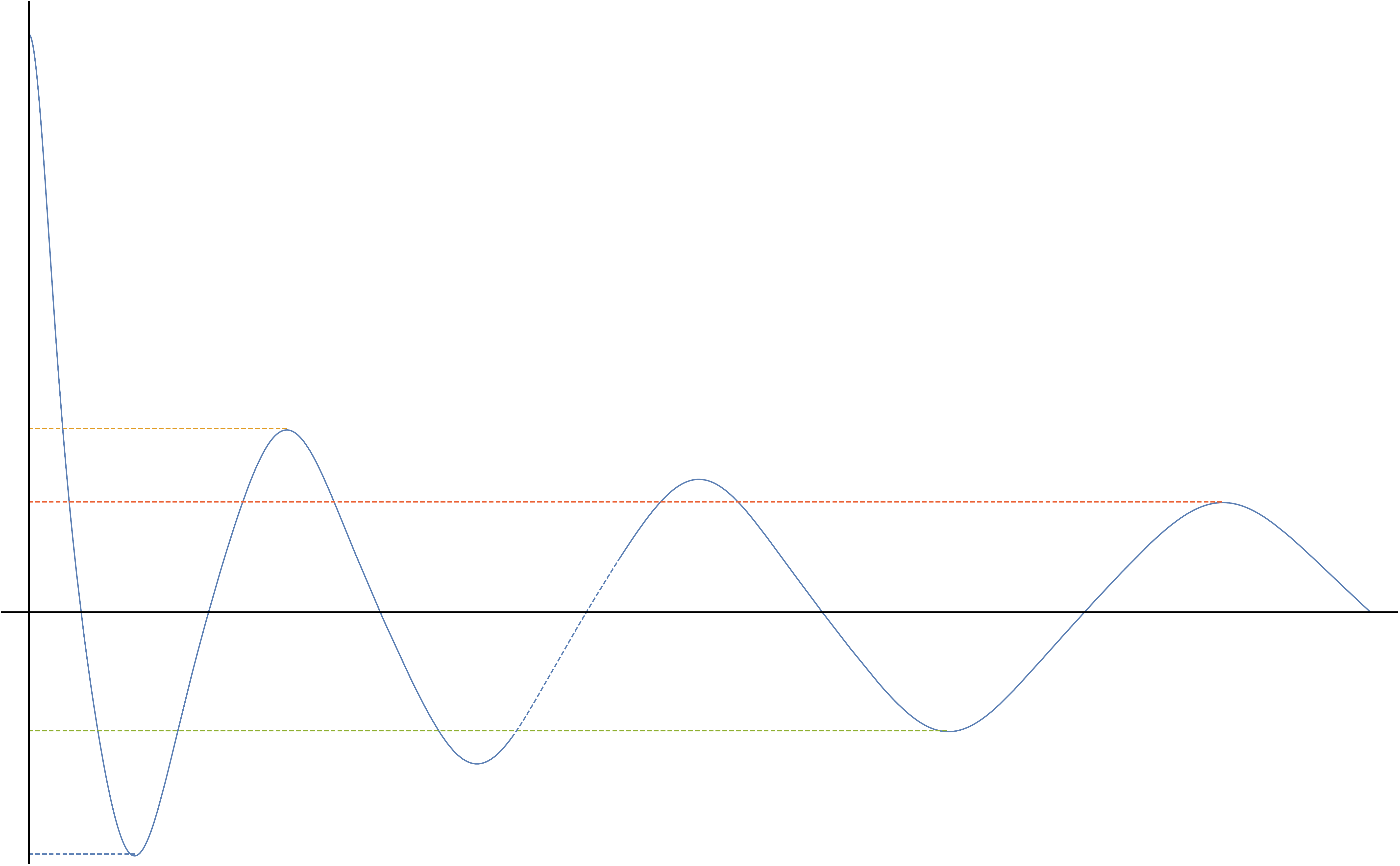}
\end{picture}
\captionof{figure}{In our notation, $\apice{s}{m}_{i,p}$ denote the critical points, $\apice{r}{m}_{i,p}$ are roots, and $\apice{M}{m}_{i,p}$ are the alternating local minima and maxima in absolute value.}
\label{im}
\end{center}

\medskip

From 	\cite[Proposition 2.1]{DicksteinPacellaSciunzi}  we have a bound of the total energy.
\begin{lemma}
	There exist $p_m>1$ and $E_m>0$ such that
	\begin{equation}\label{stimaenergiaupm}
	p\int_0^1|(\apice{u}{\emph m}_p)'(r)|^{2}rdr=p\int_0^1|\apice{u}{m}_p(r)|^{p+1}rdr\leq E_m \quad\mbox{ for }p\geq p_m.
	\end{equation}
\end{lemma}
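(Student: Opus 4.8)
The statement to prove is the energy bound: there exist $p_m>1$ and $E_m>0$ such that
\[
p\int_0^1|(\apice{u}{m}_p)'(r)|^{2}r\,dr=p\int_0^1|\apice{u}{m}_p(r)|^{p+1}r\,dr\leq E_m\quad\text{for }p\geq p_m.
\]
This is attributed to \cite[Proposition 2.1]{DicksteinPacellaSciunzi}, so the ``proof'' is essentially a citation together with the observation that the setting matches.

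\textbf{Plan.} The equality of the two integrals is immediate: multiply the radial ODE \eqref{ode} by $r\apice{u}{m}_p$ and integrate by parts on $[0,1]$, using the Dirichlet condition $\apice{u}{m}_p(1)=0$ and the regularity of $\apice{u}{m}_p$ at the origin (the boundary terms $r(\apice{u}{m}_p)'\apice{u}{m}_p$ vanish at both endpoints). So only one of the two quantities needs to be controlled. The first thing I would do is recall that $\apice{u}{m}_p$ can be realized variationally: up to sign it is, for instance, a critical point of the energy functional $J_p(u)=\frac12\int_B|\nabla u|^2-\frac1{p+1}\int_B|u|^{p+1}$ obtained by minimizing over the Nehari-type set of radial functions having a prescribed nodal structure ($m-1$ sign changes), or equivalently by the standard ODE shooting/gluing construction referenced at \cite[p.\ 263]{K}. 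On any such critical point one has the Nehari identity $\int_B|\nabla \apice{u}{m}_p|^2=\int_B|\apice{u}{m}_p|^{p+1}$, which is exactly the displayed equality again, and the energy level is $J_p(\apice{u}{m}_p)=\left(\frac12-\frac1{p+1}\right)\int_B|\apice{u}{m}_p|^{p+1}$. Hence it suffices to bound $p\,J_p(\apice{u}{m}_p)$ from above uniformly for large $p$.

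\textbf{The uniform bound.} The key point is that the $m$-nodal radial solution is obtained by a min-max (or iterated minimization over nodal components) whose value can be estimated by an explicit competitor. Following \cite{DicksteinPacellaSciunzi}: decompose $B$ into $m$ disjoint radial annuli $A_1,\dots,A_m$ (with $A_1$ a disk around $0$), build on each $A_j$ a test function $v_j$ supported there — for example a suitably scaled and translated ``standard bubble'' $\log\frac{1}{(1+|x-x_j|^2/\delta^2)^2}$-type profile, or even simpler radial tent/logarithmic functions adapted to the geometry of $A_j$ — normalize each $v_j$ on its Nehari manifold, and set $v=\sum_j \pm v_j$. Then $J_p(\apice{u}{m}_p)\le \max_{t_j\ge 0}J_p(\sum_j t_j v_j)=\sum_j \max_{t_j}J_p(t_j v_j)$ by disjoint supports, and each one-dimensional maximum is $\left(\frac12-\frac1{p+1}\right)\left(\int|\nabla v_j|^2\right)^{\frac{p+1}{p-1}}\left(\int|v_j|^{p+1}\right)^{-\frac{2}{p-1}}$. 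Choosing the competitors so that the ratios $\int|\nabla v_j|^2$ and $\int|v_j|^{p+1}$ stay of order $1$ in $p$ (this is where the planar scaling is exploited, exactly as in the concentration analysis of the rest of the paper), one gets $p\,J_p(\apice{u}{m}_p)\le C_m$ for $p$ large, i.e.\ the claimed $E_m$. The single genuine obstacle is this last quantitative step: one must exhibit competitors on each annulus whose $p$-dependence is mild enough that $\left(\int|\nabla v_j|^2\right)^{\frac{p+1}{p-1}}\left(\int|v_j|^{p+1}\right)^{-\frac{2}{p-1}}=O(1/p)$; this is standard in dimension $2$ (the critical Sobolev exponent is $+\infty$, so one has a lot of room, and Moser--Trudinker-type estimates make the logarithmic profiles work), but it is the part that actually requires a computation rather than soft arguments. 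Since \cite[Proposition 2.1]{DicksteinPacellaSciunzi} carries it out in precisely this radial nodal setting, I would simply invoke it, noting only that $\apice{u}{m}_p$ here coincides with the solution considered there.
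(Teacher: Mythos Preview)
Your proposal is correct and matches the paper's approach exactly: the paper does not prove this lemma at all but simply attributes it to \cite[Proposition 2.1]{DicksteinPacellaSciunzi}, and you do the same while additionally sketching the variational competitor argument behind that citation. The equality of the two integrals via multiplication by $r\apice{u}{m}_p$ and integration by parts is correct, and your outline of the min-max/competitor estimate on disjoint annuli is an accurate description of the mechanism in the cited reference; since the radial solution with $m-1$ interior zeros is unique up to sign, the variationally constructed radial nodal solution indeed coincides with $\apice{u}{m}_p$, closing the loop.
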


By the classical Strauss inequality for radial functions in $H^1(\mathbb R^2)$ (\cite{Strauss}) and the energy estimate in \eqref{stimaenergiaupm} we deduce the following pointwise bound.
\begin{lemma}\label{lemma:Strauss}
	There exists $C_m>0$ such that 
	\[|\apice{u}{m}_p(r)|\leq \frac{C_m}{\sqrt{r}}\qquad\mbox{ for any }r\neq 0 \text{ and }\ p\geq p_m.
	\]
\end{lemma}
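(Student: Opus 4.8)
The statement to prove is the pointwise bound $|\apice{u}{m}_p(r)|\le C_m/\sqrt r$ for $r\ne 0$ and $p\ge p_m$, and the natural route is exactly the one the authors announce: combine the radial Strauss inequality on $\R^2$ with the uniform energy estimate \eqref{stimaenergiaupm}. First I would recall that the Strauss lemma for radial $H^1(\R^2)$ functions (or more precisely its one-dimensional avatar for functions on $(0,\infty)$ with the measure $r\,dr$) gives a bound of the form $r|v(r)|^2\le c\,\|v\|_{H^1}^2$, or in the sharper scale-invariant version that is usually used in dimension $2$, $r|v(r)|^2\le c\,\|v'\|_{L^2(r\,dr)}\,\|v\|_{L^2(r\,dr)}$. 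To make this applicable I must take care that $\apice{u}{m}_p$ is defined only on $B$, so I would first extend it: since $\apice{u}{m}_p\in C^1(\overline B)$ vanishes on $\partial B$ with a finite derivative there, the extension by $0$ outside $B$ lies in $H^1_{rad}(\R^2)$, and its $H^1$ norm is controlled by $\|(\apice{u}{m}_p)'\|_{L^2(0,1;r\,dr)}$ plus the $L^2$ norm of $\apice{u}{m}_p$ over $B$.

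The second step is to bound these two quantities uniformly in $p$ (for $p\ge p_m$). The gradient term is immediate from \eqref{stimaenergiaupm}: $\int_0^1|(\apice{u}{m}_p)'|^2 r\,dr\le E_m/p\le E_m$. For the $L^2$-norm of the function itself I would use Poincaré's inequality on $B$ (with Dirichlet boundary data), $\|\apice{u}{m}_p\|_{L^2(B)}\le C\|\nabla \apice{u}{m}_p\|_{L^2(B)}$, which again is controlled by $E_m$. Plugging into the Strauss-type inequality then yields $r|\apice{u}{m}_p(r)|^2\le c\,E_m =: C_m^2$ for all $r\in(0,1]$, which is the claim; for $r\ge 1$ there is nothing to prove since the support is in $\overline B$. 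Alternatively — and this avoids invoking Poincaré — I could work directly with the one-dimensional identity $\frac{d}{dr}\big(r|\apice{u}{m}_p(r)|^2\big)=|\apice{u}{m}_p|^2+2r\apice{u}{m}_p(\apice{u}{m}_p)'$ and integrate from $r$ to $1$ using $\apice{u}{m}_p(1)=0$, estimating $\int_r^1 |\apice{u}{m}_p|^2\,ds$ by first bounding $|\apice{u}{m}_p|^2$ pointwise by $\int_s^1 2|\apice{u}{m}_p||(\apice{u}{m}_p)'|\,dt$ and then applying Cauchy–Schwarz with weight; but this recovers exactly a Strauss estimate, so the two approaches are essentially the same.

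I do not expect a genuine obstacle here: the only point requiring a little care is the uniform-in-$p$ control of the full $H^1$ norm (not just the gradient part), which is why one needs Poincaré or an equivalent boundary-term argument rather than \eqref{stimaenergiaupm} alone, and the fact that \eqref{stimaenergiaupm} holds only for $p\ge p_m$ is what forces the restriction $p\ge p_m$ in the conclusion. One should also note explicitly that $C_m$ depends on $m$ through $E_m$ (and on the Strauss constant, which is universal), consistent with the statement.
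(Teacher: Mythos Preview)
Your proposal is correct and follows exactly the route the paper indicates: the authors simply invoke the Strauss radial inequality in $H^1(\R^2)$ together with the energy bound \eqref{stimaenergiaupm}, without spelling out the details you provide (extension by zero using the Dirichlet condition, and Poincar\'e to control the $L^2$ part). There is nothing to add.
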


Moreover, integrating the  equation \eqref{problem}  written in polar coordinates, we have that
\[-\big((\apice{u}{m}_p)'(r)r \big)'=|\apice{u}{m}_p(r)|^{p-1}\apice{u}{m}_p(r)r\qquad \text{ in }(0,1).\]
As a consequence, we deduce the next identity.
\begin{lemma}\label{lemmaN}
Let $s,t\in(0,1)$, then
	\begin{equation}\label{N}
	(\apice{u}{m}_p)'(s)s-(\apice{u}{m}_p)'(t)t=\int_s^t|\apice{u}{m}_p(r)|^{p-1}\apice{u}{m}_p(r)r\,dr.
	\end{equation}
\end{lemma}

\

We then obtain the following estimate for the derivative.
\begin{lemma} \label{lemma:stimaDerivataInrp}
	There exists $C_m>0$  and $p_m>1$ such that
	\[
	p\big|(\apice{u}{m}_p)'(r)\big|\leq \frac{C_m}{r} \quad\  \text{ for all }\, r\in (0,1] \text{ and } p\geq p_m.
	\]
\end{lemma}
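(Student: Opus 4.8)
The goal is to prove the pointwise derivative bound $p|(\apice{u}{m}_p)'(r)|\leq C_m/r$ on $(0,1]$ for $p$ large. The natural starting point is the integral identity \eqref{N} of Lemma \ref{lemmaN}, which expresses $(\apice{u}{m}_p)'(r)r$ in terms of an integral of $|\apice{u}{m}_p|^{p-1}\apice{u}{m}_p$ against $r\,dr$. Evaluating \eqref{N} between a point where the derivative is controlled (say one of the critical points $\apice{s}{m}_{i,p}$, where $(\apice{u}{m}_p)'$ vanishes, or the endpoint) and a general $r$, one obtains
\[
|(\apice{u}{m}_p)'(r)|\,r \;\leq\; \int_0^1 |\apice{u}{m}_p(\sigma)|^{p}\,\sigma\, d\sigma,
\]
since between consecutive zeros $\apice{u}{m}_p$ does not change sign and the integrand has a fixed sign on each nodal interval, so summing the absolute values over all nodal intervals (or simply bounding crudely) gives an estimate by $\int_0^1 |\apice{u}{m}_p|^p\sigma\,d\sigma$. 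The plan is then to show that $p\int_0^1 |\apice{u}{m}_p(\sigma)|^{p}\sigma\, d\sigma$ stays bounded as $p\to\infty$.

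To bound $p\int_0^1|\apice{u}{m}_p|^p\sigma\,d\sigma$, I would interpolate between the energy bound \eqref{stimaenergiaupm}, which controls $p\int_0^1|\apice{u}{m}_p|^{p+1}\sigma\,d\sigma\leq E_m$, and an $L^\infty$-type control. Writing $|\apice{u}{m}_p|^p = |\apice{u}{m}_p|^{p+1}\cdot|\apice{u}{m}_p|^{-1}$ does not work directly near the zeros, so instead split the domain: on the region where $|\apice{u}{m}_p(\sigma)|\geq 1$ one has $|\apice{u}{m}_p|^p\leq|\apice{u}{m}_p|^{p+1}$, hence $p\int_{\{|u|\geq 1\}}|\apice{u}{m}_p|^p\sigma\,d\sigma\leq p\int_0^1|\apice{u}{m}_p|^{p+1}\sigma\,d\sigma\leq E_m$; on the region where $|\apice{u}{m}_p(\sigma)|<1$ one uses the Strauss-type decay $|\apice{u}{m}_p(\sigma)|\leq C_m/\sqrt{\sigma}$ together with $|\apice{u}{m}_p|<1$ to get, for any fixed exponent, $|\apice{u}{m}_p(\sigma)|^p\leq |\apice{u}{m}_p(\sigma)|^{q}\leq (C_m)^q\sigma^{-q/2}$ for $p\geq q$; choosing $q=1$ (say) gives $|\apice{u}{m}_p(\sigma)|^p\sigma\leq C_m\sqrt\sigma$, which integrates to a constant — but this loses the factor $p$. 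To recover the $p$, one instead notes that on $\{|u|<1\}$ the set where $|\apice{u}{m}_p|$ is bounded \emph{below}, say by $\delta>0$, has controlled measure because of the energy bound, and where $|\apice{u}{m}_p|<\delta$ we can take $q$ large: more carefully, $p\int_{\{|u|<1\}}|\apice{u}{m}_p|^p\sigma\,d\sigma\leq p\,\delta^{p-q}\int_0^1|\apice{u}{m}_p|^{q}\sigma\,d\sigma$ for any $q\leq p$ when $|u|<\delta$, and for the complementary part $\{\delta\leq|u|<1\}$ use again $|u|^p\leq \delta^{-1}|u|^{p+1}$ to absorb into the energy bound up to the constant $\delta^{-1}$. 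Optimizing, the term $p\,\delta^{p-q}\to 0$ for fixed $q$, giving the needed bound $p\int_0^1|\apice{u}{m}_p|^p\sigma\,d\sigma\leq C_m$ for $p\geq p_m$.

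The main obstacle I anticipate is the extra factor of $p$: the energy estimate naturally controls $\int|u|^{p+1}$ with a factor $p$ in front, and one needs the same $p$ in front of $\int|u|^p$, which is only slightly smaller but not obviously so on the set where $|u|\lesssim 1$ — precisely the region far from the concentration point where the Strauss bound $|u|\leq C_m/\sqrt r$ and the boundedness $|u|<1$ must be combined quantitatively. The cleanest route is probably: first establish an a priori $L^\infty$-bound $\|\apice{u}{m}_p\|_\infty\leq K_m$ uniform in $p$ (which should follow from the energy bound plus the Strauss inequality and a bootstrap on the equation near the origin, or may already be available from earlier inductive steps), then for $|u|\leq K_m$ write $|u|^p\leq K_m\,|u|^{p-1}\leq K_m\,|u|^{p+1}/|u|^2$ only where $|u|$ is not small, and handle $\{|u|\leq\ep\}$ by $p\ep^{p-2}\int_0^1|u|^2\sigma\,d\sigma$, with $\int_0^1|u|^2\sigma\,d\sigma$ bounded by the (trivial, $p$-independent) $H^1$ bound and $p\ep^{p-2}\to0$. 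Once $p\int_0^1|\apice{u}{m}_p|^p\sigma\,d\sigma\leq C_m$ is in hand, plugging back into the bound derived from \eqref{N} gives $p|(\apice{u}{m}_p)'(r)|\,r\leq C_m$ directly, completing the proof.
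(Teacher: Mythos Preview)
Your proof is correct, but you work considerably harder than the paper does. Both arguments begin the same way: take $s=0$ and $t=r$ in \eqref{N} (using $(\apice{u}{m}_p)'(0)=0$) to get
\[
p\big|(\apice{u}{m}_p)'(r)\big|\,r \;\leq\; p\int_0^1 |\apice{u}{m}_p(\sigma)|^p\,\sigma\,d\sigma.
\]
From here, however, the paper finishes in one line via H\"older's inequality with exponents $\frac{p+1}{p}$ and $p+1$:
\[
p\int_0^1 |\apice{u}{m}_p|^p\,\sigma\,d\sigma
\;\leq\; p\left(\int_0^1 |\apice{u}{m}_p|^{p+1}\,\sigma\,d\sigma\right)^{\frac{p}{p+1}}
\;\leq\; p\,(E_m/p)^{\frac{p}{p+1}}
\;=\; p^{\frac{1}{p+1}}\,E_m^{\frac{p}{p+1}},
\]
which is bounded as $p\to\infty$ by \eqref{stimaenergiaupm}. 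Your three-region decomposition $\{|u|\geq 1\}$, $\{\delta\leq|u|<1\}$, $\{|u|<\delta\}$ ultimately yields the same bound, and each piece is handled correctly (on the last region $p\,\delta^{p-2}\int_0^1|u|^2\sigma\,d\sigma\to 0$ does work), but it obscures the fact that a single interpolation inequality already absorbs the extra factor of $p$. In short: your instinct that the ``extra factor of $p$'' is the crux is right, but H\"older dispatches it without any splitting, Strauss bound, or $L^\infty$ control.
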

\begin{proof} Let $r\in (0,1)$. Choosing $s=0$ and $t=r$ in the identity \eqref{N} (recall that $(\apice{u}{m}_p)'(0)=0$), by H\"older's inequality,
	\[p\big|(\apice{u}{m}_p)'(r)\big|\,r\leq p\int_{0}^r|\apice{u}{m}_p(s)|^ps \, ds\leq  p\left[\int_{0}^1|\apice{u}{m}_p(s)|^{p+1}s \, ds\right]^{\frac{p}{p+1}}
	\]
	and the conclusion follows from \eqref{stimaenergiaupm}. The case $r=1$ is obtained   by continuity.
\end{proof}
Moreover, 	choosing $s=\apice{s}{m}_{m-1,p}$ and $t=1$ into \eqref{N}, we also have the following.
\begin{lemma}
	\label{lemmaC}	
	It holds that
	\begin{equation} 	\label{LemmaD}
	-p\,(\apice{u}{m}_p)'(1)=p\int_{\apice{s}{m}_{m-1,p}}^1 |\apice{u}{m}_p|^{p-1}\apice{u}{m}_pr\,dr
	=(-1)^{m-1}p\int_{\apice{s}{m}_{m-1,p}}^1 |\apice{u}{m}_p|^{p}r\,dr.
	\end{equation}	
\end{lemma}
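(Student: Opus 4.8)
The final statement in the excerpt is Lemma~\ref{lemmaC}, which asserts the identity
\[
-p\,(\apice{u}{m}_p)'(1)=p\int_{\apice{s}{m}_{m-1,p}}^1 |\apice{u}{m}_p|^{p-1}\apice{u}{m}_pr\,dr
=(-1)^{m-1}p\int_{\apice{s}{m}_{m-1,p}}^1 |\apice{u}{m}_p|^{p}r\,dr.
\]
This is a short bookkeeping consequence of the already-established integral identity in Lemma~\ref{lemmaN}, so the plan is essentially to specialize and simplify that identity.

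\textbf{Step 1: Apply Lemma~\ref{lemmaN} at the right endpoints.} In the identity \eqref{N}, namely $(\apice{u}{m}_p)'(s)s-(\apice{u}{m}_p)'(t)t=\int_s^t|\apice{u}{m}_p(r)|^{p-1}\apice{u}{m}_p(r)r\,dr$, I would take $s=\apice{s}{m}_{m-1,p}$ and $t=1$. Since $\apice{s}{m}_{m-1,p}$ is a critical point of $r\mapsto\apice{u}{m}_p(r)$ by Proposition~\ref{PropositionUnicoMaxeMin}(ii), the boundary term at $s$ vanishes: $(\apice{u}{m}_p)'(\apice{s}{m}_{m-1,p})=0$. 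At $t=1$ we simply have $(\apice{u}{m}_p)'(1)\cdot 1$. This yields $-(\apice{u}{m}_p)'(1)=\int_{\apice{s}{m}_{m-1,p}}^1|\apice{u}{m}_p(r)|^{p-1}\apice{u}{m}_p(r)r\,dr$; multiplying by $p$ gives the first equality. (A small technical caveat: Lemma~\ref{lemmaN} is stated for $s,t\in(0,1)$, so strictly one should apply it on $[\apice{s}{m}_{m-1,p},1-\delta]$ and let $\delta\to0^+$ using continuity of $(\apice{u}{m}_p)'$ up to the boundary; this is routine.)

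\textbf{Step 2: Resolve the sign on the last nodal interval.} For the second equality I need $|\apice{u}{m}_p(r)|^{p-1}\apice{u}{m}_p(r)=(-1)^{m-1}|\apice{u}{m}_p(r)|^{p}$ on $(\apice{s}{m}_{m-1,p},1)$. This is precisely a statement about the sign of $\apice{u}{m}_p$ on its last nodal region. By the structure described in Proposition~\ref{PropositionUnicoMaxeMin} (and \eqref{rp}, \eqref{sp}), $\apice{u}{m}_p$ has $m-1$ interior zeros with $\apice{r}{m}_{m-1,p}<\apice{s}{m}_{m-1,p}<\apice{r}{m}_{m,p}=1$, it starts positive at the origin, and it changes sign exactly at each $\apice{r}{m}_{i,p}$; hence on the last interval $(\apice{r}{m}_{m-1,p},1)\supset(\apice{s}{m}_{m-1,p},1)$ it has the sign $(-1)^{m-1}$ (positive if $m$ odd, negative if $m$ even), consistent with $\apice{s}{m}_{m-1,p}$ being a maximum when $m-1$ is even and a minimum when $m-1$ is odd. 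Therefore $\apice{u}{m}_p(r)=(-1)^{m-1}|\apice{u}{m}_p(r)|$ there, and $|\apice{u}{m}_p(r)|^{p-1}\apice{u}{m}_p(r)=(-1)^{m-1}|\apice{u}{m}_p(r)|^{p}$. Substituting into the integral from Step~1 and pulling the constant out finishes the proof.

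\textbf{Main obstacle.} There is essentially no hard step here; the only thing requiring minimal care is the sign tracking in Step~2 (pinning down that the sign on the final nodal interval is $(-1)^{m-1}$ from the ordering of zeros and critical points in Proposition~\ref{PropositionUnicoMaxeMin}), and the harmless endpoint/limiting argument needed to apply Lemma~\ref{lemmaN} with $t=1$. The whole proof is two lines once those conventions are in place.
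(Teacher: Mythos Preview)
Your proof is correct and follows exactly the same approach as the paper, which simply states that the lemma follows by choosing $s=\apice{s}{m}_{m-1,p}$ and $t=1$ in \eqref{N}. You have merely spelled out the vanishing of the boundary term at the critical point and the sign tracking on the last nodal interval that the paper leaves implicit.
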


\begin{lemma}[Pohozaev identity]\label{lemmaP}It holds that
	\[p\int_0^1 |\apice{u}{m}_p|^{p+1}r\,dr=\left(1+\frac{1}{p}\right)\frac{1}{4}\left[p(\apice{u}{m}_p)'(1) \right]^2.\]
\end{lemma}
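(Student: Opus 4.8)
The plan is to obtain a Rellich--Pohozaev identity directly from the radial ODE. Writing $v:=\apice{u}{m}_p$, recall from \eqref{ode} (equivalently, the divergence form used just before Lemma \ref{lemmaN}) that $v$ solves $(rv')'=-r|v|^{p-1}v$ on $(0,1)$. The key algebraic observation I would exploit is that, after multiplying by the Pohozaev multiplier $rv'$, both sides become exact derivatives:
\[
-(rv')'(rv')=-\tfrac12\tfrac{d}{dr}\big[(rv')^2\big],\qquad r^2v'\,|v|^{p-1}v=\tfrac{r^2}{p+1}\tfrac{d}{dr}\big(|v|^{p+1}\big).
\]
Hence $-\tfrac12\tfrac{d}{dr}\big[(rv')^2\big]=\tfrac{r^2}{p+1}\tfrac{d}{dr}\big(|v|^{p+1}\big)$ on $(0,1)$.

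Next I would integrate this identity over $[0,1]$. On the left the result is $-\tfrac12\big[v'(1)\big]^2$, since $rv'(r)\to 0$ as $r\to 0^+$. On the right I would integrate by parts once, producing $\tfrac1{p+1}\big[r^2|v|^{p+1}\big]_0^1-\tfrac{2}{p+1}\int_0^1 r|v|^{p+1}\,dr$, where the boundary term vanishes at $r=1$ by the Dirichlet condition $v(1)=0$ and at $r=0$ by continuity of $v$. This gives $-\tfrac12\big[v'(1)\big]^2=-\tfrac{2}{p+1}\int_0^1 r|v|^{p+1}\,dr$, i.e. $\int_0^1 r|v|^{p+1}\,dr=\tfrac{p+1}{4}\big[v'(1)\big]^2$. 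Multiplying by $p$ and rewriting $\tfrac{p(p+1)}{4}=\big(1+\tfrac1p\big)\tfrac{p^2}{4}$ yields the stated formula.

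The computation is entirely elementary and the only point needing (very mild) justification is the vanishing of the two boundary contributions at $r=0$, which is immediate since $\apice{u}{m}_p$ is a classical radial solution with $(\apice{u}{m}_p)'(0)=0$; alternatively one could invoke Lemmas \ref{lemma:Strauss} and \ref{lemma:stimaDerivataInrp} for uniform control near the origin. As a sanity check, this argument is simply the radial form of the standard Pohozaev identity obtained by testing $-\Delta u=|u|^{p-1}u$ with $x\cdot\nabla u$ over $B$: since $N=2$ the term $\tfrac{N-2}{2}\int_B|\nabla u|^2$ disappears and $x\cdot\nu\equiv 1$ on $\partial B$, leaving exactly $\tfrac{2}{p+1}\int_B|u|^{p+1}=\tfrac12\int_{\partial B}|\partial_\nu u|^2$, which in polar coordinates is the displayed identity. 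I do not expect any genuine obstacle here.
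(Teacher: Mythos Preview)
Your proof is correct and is essentially the same as the paper's: the paper simply invokes the standard Pohozaev identity $\tfrac{2}{p+1}\int_B|u|^{p+1}=\tfrac12\int_{\partial B}(x\cdot\nu)(\partial_\nu u)^2$ on the unit ball (which you also mention as a sanity check), while you carry out the equivalent radial-ODE computation explicitly. Both arguments are the same Pohozaev identity, just presented at different levels of detail.
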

\begin{proof}
	The claim follows from the Pohozaev identity \[\frac{2}{p+1}\int_B|\apice{u}{m}_p(x)|^{p+1}dx=\frac{1}{2}\int_{\partial B}(x\cdot \nu(x))\left(\frac{\partial \apice{u}{m}_p(x)}{\partial \nu}  \right)^2d\sigma_x,\]
	where $\nu$ denotes the exterior unit normal vector on $\partial B$.
\end{proof}

We conclude this paragraph recalling some known asymptotic results. Thanks to the energy bound \eqref{stimaenergiaupm}, the general asymptotic analysis  in \cite{DeMarchisIanniPacellaJEMS} apply to the solution $\apice{u}{m}_p$ (see \cite[Proposition 2.2]{DeMarchisIanniPacellaJEMS} or also   \cite[Proposition 2.4, Corollary 2.6]{DIPpositive}). In particular, since the domain is now a ball and $\apice{u}{m}_p$ is  radial, 
it follows that it concentrates at only $1$ point, which is the origin (i.e. $k=1$  and $\mathcal S=\{0\}$ in  the notation of \cite{DeMarchisIanniPacellaJEMS,DIPpositive}). In our notations:
\begin{lemma}\label{theorem: k=1EConvDeboleAZeroENEICOMPATTI}
	There exists $C_m, \widetilde C_m>0$ such that, for $p$ large,
	\[1\leq\|\apice{u}{m}_p\|_{\infty}=\apice{u}{m}_p(0)\leq C_m,\]
\begin{equation}\label{P3estimate}
pr^2|\apice{u}{m}_p(r)|^{p-1}\leq \widetilde C_m, \ \text{ for all } r\in [0,1].
\end{equation}	
Moreover, there exists a function $v\in C^1(B\setminus\{0\})$ such that
\[p\apice{u}{m}_p=v+o(1)\ \mbox{ in }C^1_{loc}(B\setminus\{0\}), \mbox{ as }p\rightarrow +\infty.\]
In particular,
\begin{equation}\label{ZeroSuCompatti}
p(\apice{u}{m}_p)^p =o(1) \text{ uniformly in }K \text{ for all compact sets } K\subset \mathcal B\setminus\{0\}.
\end{equation}
\end{lemma}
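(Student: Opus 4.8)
The plan is to deduce this lemma almost entirely from the general asymptotic machinery developed in \cite{DeMarchisIanniPacellaJEMS} (and the streamlined version in \cite{DIPpositive}), specialized to the radial situation in the ball. The crucial input that makes that machinery applicable is the uniform energy bound \eqref{stimaenergiaupm}, which holds for $p\geq p_m$. First I would recall that, under a uniform bound on $p\int_B|u_p|^{p+1}$, the results of \cite{DeMarchisIanniPacellaJEMS} give the existence of a finite ``concentration set'' $\mathcal S$ (the blow-up set of the suitably rescaled solutions), together with the facts that $\|u_p\|_\infty\to$ a finite limit (in fact $\|u_p\|_\infty\geq 1$ always, since at an interior maximum point $x_0$ one has $0\le-\Delta u_p(x_0)=|u_p(x_0)|^{p-1}u_p(x_0)$, forcing $u_p(x_0)\geq 1$ when $u_p(x_0)>0$; the same at the negative minimum), that $pu_p\to v$ in $C^1_{loc}(\bar B\setminus\mathcal S)$ for some $v$, and that $pu_p^p\to 0$ locally uniformly away from $\mathcal S$. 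The estimate \eqref{P3estimate} is exactly property $(\mathcal P_3)$ from \cite{DeMarchisIanniPacellaJEMS} and is part of that package.

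The second step is to pin down $\mathcal S=\{0\}$. Since $\apice{u}{m}_p$ is radial, $\mathcal S$ must be a radially symmetric finite subset of $\bar B$, hence either empty, or $\{0\}$, or a single sphere $\{|x|=r_*\}$. A sphere is excluded because it is not finite; and $\mathcal S=\emptyset$ is excluded because then $pu_p$ would be bounded in $C^1_{loc}(\bar B)$ and, combined with the Dirichlet condition and the equation, one would get $u_p\to 0$ uniformly, contradicting $\|u_p\|_\infty\geq 1$ (equivalently, the energy would vanish in the limit, contradicting that the $m$-nodal solution carries at least a fixed positive rescaled energy — this is the quantization statement in \cite{DeMarchisIanniPacellaJEMS}). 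Hence $\mathcal S=\{0\}$. With $\mathcal S=\{0\}$ fixed, all the stated conclusions — the sandwich $1\le\|\apice{u}{m}_p\|_\infty=\apice{u}{m}_p(0)\le C_m$ (using Proposition \ref{PropositionUnicoMaxeMin}(i) for the identification of the sup with the value at the origin, and the energy-based upper bound from \cite{DeMarchisIanniPacellaJEMS}), the pointwise bound \eqref{P3estimate}, the $C^1_{loc}(B\setminus\{0\})$ convergence of $p\apice{u}{m}_p$ to some $v\in C^1(B\setminus\{0\})$, and consequently \eqref{ZeroSuCompatti}, which follows by writing $p|u_p|^p=\frac1p\,\bigl(p|u_p|\bigr)^{p-1}\cdot p|u_p|$ on a compact set $K\subset B\setminus\{0\}$ where $p|u_p|$ is uniformly bounded, so $\bigl(p|u_p|\bigr)^{p-1}/p\le C_K^{p-1}/p$ and, after shrinking via \eqref{P3estimate} type arguments if $C_K\ge1$, one sees $p|u_p|^p\to0$ — are immediate transcriptions of the cited propositions.

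The main obstacle, such as it is, is not a genuine analytic difficulty but rather the bookkeeping of translating the notation and hypotheses of \cite{DeMarchisIanniPacellaJEMS,DIPpositive} (general smooth bounded planar domains, general sign-changing solutions with energy bounds) into the present radial-in-the-ball setting, and in particular checking carefully that the only scenario compatible with radial symmetry and a nontrivial $m$-nodal solution is single-point concentration at the origin. Once $\mathcal S=\{0\}$ is established, there is nothing left to prove: the lemma is a corollary. I would therefore present the argument briefly, citing \cite[Proposition 2.2]{DeMarchisIanniPacellaJEMS} and \cite[Proposition 2.4, Corollary 2.6]{DIPpositive} for the quantitative statements, and spend the few lines of actual argument on the exclusion of $\mathcal S=\emptyset$ and of concentration spheres, plus the elementary derivation of \eqref{ZeroSuCompatti} from the $C^1_{loc}$ convergence.
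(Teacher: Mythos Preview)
Your approach is exactly the paper's: the lemma is stated there without proof, just preceded by the remark that it follows from \cite[Proposition 2.2]{DeMarchisIanniPacellaJEMS} and \cite[Proposition 2.4, Corollary 2.6]{DIPpositive} together with the observation that, in the radial setting on the ball, the finite concentration set $\mathcal S$ must reduce to $\{0\}$. Your added detail on why $\mathcal S$ is neither empty nor a sphere is fine. One small slip: the identity $p|u_p|^p=\tfrac1p(p|u_p|)^{p-1}\cdot p|u_p|$ is off (the right-hand side equals $p^{p-1}|u_p|^p$); the clean derivation of \eqref{ZeroSuCompatti} is that $p|u_p|\le C_K$ on $K$ gives $|u_p|\le C_K/p$ and hence $p|u_p|^p\le C_K^p\,p^{1-p}\to 0$.
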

Observe that from \cite{DeMarchisIanniPacellaJEMS}  we also know that a \emph{first bubble} always appears. Indeed, \cite[Proposition 2.2]{DeMarchisIanniPacellaJEMS} also says that, when $\alpha=0$,   $\apice{\varepsilon}{m}_{0,\alpha,p}=o(1)$, where $\apice{\varepsilon}{m}_{0,\alpha,p}$ is the first scaling parameter as defined in \eqref{scpar}, 
and that $\apice{\eta}{m}_{0,\alpha,p}=Z_{0,\alpha} +o(1)$ in  $C^1_{loc}(0,+\infty)$
as $p\rightarrow +\infty$,  where   $\apice{\eta}{m}_{0,\alpha,p}$ is a first rescaling of $\apice{u}{m}_p$ defined formally as  the first rescaling $\apice{\xi}{m}_{0,\alpha,p}$  in \eqref{scsol}, but in a larger domain, namely for any  $r\in [0,\frac{1}{\apice{\varepsilon}{m}_{0,\alpha,p}}]$. Next, we restrict to the first nodal region to prove the convergence  of $\apice{\xi}{m}_{0,\alpha,p}$ stated in \eqref{convBubblesG}.

\subsection{Main partial results and an iterative strategy}\label{section:strategiaRIcorsiva}
The proof for the case $\alpha=0$  in Theorem \ref{theorem:mainDirichletG} and in Theorem \ref{theorem:analisiAsintoticaRiscalateAlpha} (Dirichlet Lane-Emden) is postponed to the end of Section \ref{section:Lane} (see Sections \ref{section:conclusionDirichletLane_emden} and \ref{section:conclusionDirichletLane_emden2}, respectively). It is a consequence of some preliminary results from Section \ref{sectin:LabePreliminaries} and of the following two partial results.
\begin{theorem}[Sharp asymptotic constants]\label{theorem:mainDirichlet}
	Let $m\in\mathbb N$, $m\geq 1$ and let $\apice{u}{m}_{p}$ be the radial solution of \eqref{problem}-\eqref{MaxInZero} with $m-1$ interior zeros and let $\apice{r}{m}_{i,p}$ and $\apice{s}{m}_{i,p}$ be its zeros and critical points (see \eqref{rp},\eqref{sp}). Then 
	\begin{eqnarray}
	&(\apice{r}{m}_{i,p})^{\frac{2}{p-1}}=\apice{R}{m}_{i}  +o(1), & \ i=1,\ldots, m-1,
	\label{raggio}
	\\
	&p|(\apice{u}{m}_{p})'(\apice{r}{m}_{i,p})|(\apice{r}{m}_{i,p})=\apice{D}{m}_i  +o(1), &\  i=1,\ldots, m,
	\label{derivataInRaggio}
	\\
	&(\apice{s}{m}_{i,p})^{\frac{2}{p-1}}=\apice{S}{m}_i +o(1),& \ i=1,\ldots, m-1,
	\label{puntoCritico}
	\\
	&|\apice{u}{m}_{p}(\apice{s}{m}_{i,p})|=\apice{M}{m}_{i} +o(1),&\  i=0,\ldots, m-1,
	\label{valoreInPuntoCritico}
	\end{eqnarray}
	as $p\rightarrow +\infty$, where  $\apice{R}{m}_i, \apice{S}{m}_i, \apice{M}{m}_{i}, \apice{D}{m}_i>0$ are the constants in Definition \ref{definition:Constants2}.  
\end{theorem}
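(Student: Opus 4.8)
\textbf{Plan of proof for Theorem \ref{theorem:mainDirichlet}.}
The plan is to argue by induction on the number $m$ of nodal regions, exploiting a \emph{change of variables} that relates the solution $\apice{u}{m}_p$ with $m-1$ interior zeros (restricted to the region \emph{outside} its first zero) to a rescaled copy of $\apice{u}{m-1}_{p}$ with $m-2$ interior zeros. Concretely, if $\apice{r}{m}_{1,p}$ is the first zero of $\apice{u}{m}_p$, then $v(r):=c\,\apice{u}{m}_p(\apice{r}{m}_{1,p}\,r)$ (for a suitable normalising constant $c$ depending on $(\apice{r}{m}_{1,p})^{2/(p-1)}$ so that the equation is preserved) is, up to sign, a Dirichlet solution on $B$ with $m-2$ interior zeros; by uniqueness it must coincide with $\apice{u}{m-1}_p$ (or its negative). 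This identity transports the asymptotic constants $\apice{R}{m-1}_i,\apice{S}{m-1}_i,\apice{M}{m-1}_i,\apice{D}{m-1}_i$ for the $(m-1)$-solution to the corresponding objects for $\apice{u}{m}_p$ at radii $\apice{r}{m}_{i+1,p},\apice{s}{m}_{i+1,p}$, provided one also controls the two new pieces of data: the rate $(\apice{r}{m}_{1,p})^{2/(p-1)}$ and the value $\apice{u}{m}_p(0)\to\apice{M}{m}_0$. The recursive relations \eqref{Rk}, \eqref{Mk}, \eqref{SoloUltimiReD}, \eqref{SoloUltimiSeM} are designed precisely so that, once the base case and the first-nodal-region analysis are in place, the induction closes.

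The \textbf{base case} $m=1$ is the positive-solution analysis already available in the literature (cf. \cite{AdimurthiGrossi}, recalled in \eqref{conv11}--\eqref{conv111}): $\apice{u}{1}_p(0)=\|\apice{u}{1}_p\|_\infty\to\sqrt e=\apice{M}{1}_0$, $p|(\apice{u}{1}_p)'(1)|\to 4\sqrt e=\apice{D}{1}_1$, with no interior zeros or critical points to track. More generally, the heart of the proof is the \textbf{first-nodal-region analysis}: using the energy bound \eqref{stimaenergiaupm}, the Strauss bound (Lemma \ref{lemma:Strauss}), the derivative bound (Lemma \ref{lemma:stimaDerivataInrp}) and the estimate \eqref{P3estimate}, together with the known bubble convergence $\apice{\eta}{m}_{0,p}\to Z_{0}$ in $C^1_{loc}(0,+\infty)$ recalled at the end of Section \ref{sectin:LabePreliminaries}, one extracts the sharp asymptotics of $\apice{u}{m}_p$ at its first critical point $\apice{s}{m}_{1,p}$ and first interior zero $\apice{r}{m}_{1,p}$. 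The key is a Pohozaev-type / integral identity (Lemma \ref{lemmaN}, Lemma \ref{lemmaP}) applied on the first nodal region: writing $-\big(r(\apice{u}{m}_p)'(r)\big)'=r|\apice{u}{m}_p|^{p-1}\apice{u}{m}_p$ and integrating against $1$ and against $r(\apice{u}{m}_p)'$ yields, after passing to the limit through the rescaled bubble, that $p|(\apice{u}{m}_p)'(\apice{r}{m}_{1,p})|\,\apice{r}{m}_{1,p}\to \apice{D}{1}_1=4\sqrt e$ (the energy of the first bubble $Z_0$ is $8\pi$, and $\frac1{2\pi}\int_{\mathbb R^2}e^{Z_0}=4$), while the logarithmic decay $p\apice{u}{m}_p\to v$ outside the origin forces $(\apice{r}{m}_{1,p})^{2/(p-1)}\to \apice R{}{}$-type limit determined by the transcendental equation $2\sqrt e\log t+t=0$ — this is exactly where the Lambert function $\mathcal L$ and the number $\theta_1$ enter. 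One then identifies $\apice{u}{m}_p(\apice{s}{m}_{1,p})\to \apice{M}{m}_1$ by matching the outer limit $v(r)=\gamma\log(1/r)$ with the inner logarithmic behavior of the \emph{second} bubble $Z_{1,0}$, whose mass $4\pi\theta_1$ and singular coefficient $2\pi(2-\theta_1)$ are produced by the preceding accumulation.

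For the \textbf{inductive step}, one assumes the conclusions \eqref{raggio}--\eqref{valoreInPuntoCritico} for $\apice{u}{m-1}_p$ and feeds the change-of-variables identity: the zeros of $\apice{u}{m}_p$ beyond the first are $\apice{r}{m}_{i,p}=\apice{r}{m}_{1,p}\cdot(\text{zero of }\apice{u}{m-1}_p)$, and likewise for critical points, with the values $|\apice{u}{m}_p(\apice{s}{m}_{i,p})|$ multiplied by the normalising factor; the products $\prod_{k=i+1}^m \apice{R}{k}_{k-1}$ in \eqref{SoloUltimiReD}, \eqref{SoloUltimiSeM} are exactly the accumulated rescaling factors. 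The only genuinely new input at stage $m$ is, once more, the behavior of $\apice{u}{m}_p$ near the origin and its first zero; but this is governed by the \emph{same} first-nodal-region analysis as above, because the local picture at the origin depends only on $m$ through the height $\apice{u}{m}_p(0)$, and the iteration \eqref{iterateAlphaM} is precisely the fixed-point scheme that links $\apice{M}{m}_0$ to $\apice{M}{m}_1$ through $\theta_1,\dots$ — unwinding it gives $\apice{M}{m}_0=\apice{M}{m}_{m-1}\prod(\cdots)$ as in \eqref{SoloUltimiSeM}. The energy and derivative formulas \eqref{EnergiaTotaleG}, \eqref{derivataInRaggioG} then follow by summing the bubble masses over the $m$ nodal regions and using the Pohozaev identity Lemma \ref{lemmaP}.

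\textbf{Main obstacle.} The delicate point is \emph{separation of scales}: one must show $\apice{r}{m}_{1,p}/\apice{\varepsilon}{m}_{0,p}\to\infty$ and $\apice{\varepsilon}{m}_{0,p}/\apice{r}{m}_{2,p}\to 0$ — i.e. that after the first bubble the solution has genuinely ``come down'' to a new, strictly larger, scale before the next bubble forms — and simultaneously that the outer limit $v$ is \emph{exactly} the Green's function multiple with no loss of mass ``in between''. Equivalently, one must rule out the possibility that $(\apice{r}{m}_{1,p})^{2/(p-1)}$ or $|\apice{u}{m}_p(\apice{s}{m}_{1,p})|$ degenerate (to $0$, or to a value not matching the iteration). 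This requires combining the monotonicity of the energy function $F$ in Proposition \ref{PropositionUnicoMaxeMin}(iii), the uniform bounds of Lemmas \ref{lemma:Strauss}--\ref{lemma:stimaDerivataInrp}, and a careful ODE-shooting/continuity argument on the rescaled problem to pin down the limit profile as the singular Liouville bubble $Z_{1,0}$ with the precise mass prescribed by $\theta_1$; propagating this control uniformly through all $m$ stages of the induction is the technical core of Section \ref{section:Lane}.
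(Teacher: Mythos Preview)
Your inductive strategy has the right spirit but the change of variables is misidentified, and this error propagates through the whole plan. You write that $v(r):=c\,\apice{u}{m}_p(\apice{r}{m}_{1,p}\,r)$, $r\in[0,1]$, should be (up to sign) a Dirichlet solution with $m-2$ interior zeros. But the map $r\mapsto \apice{r}{m}_{1,p}r$ sends $[0,1]$ into $[0,\apice{r}{m}_{1,p}]$, which is the \emph{first} nodal region of $\apice{u}{m}_p$; on that region $\apice{u}{m}_p$ is one-signed, so $v$ has \emph{no} interior zeros and is (a scaled) $\apice{u}{1}_p$, not $\apice{u}{m-1}_p$. Your verbal description (``restricted to the region outside its first zero'') is inconsistent with the formula, and in fact no scaling $r\mapsto\lambda r$ can send the annulus $[\apice{r}{m}_{1,p},1]$ onto the ball $[0,1]$; more general changes of variable destroy the term $\frac{1}{r}(\apice{u}{m}_p)'$ in the ODE. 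Consequently, $\apice{u}{m}_p$ on $[\apice{r}{m}_{1,p},1]$ is \emph{not} a rescaled $\apice{u}{m-1}_p$, and your transport of the constants $\apice{R}{m-1}_i,\ldots$ to $\apice{r}{m}_{i+1,p},\ldots$ has no basis.

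The paper's induction uses the opposite end: the correct identity (Lemma~\ref{lemma:legami}) is
\[
\apice{u}{m-1}_{\!\!\!p}(r)=(\apice{r}{m}_{m-1,p})^{\frac{2}{p-1}}\,\apice{u}{m}_p(\apice{r}{m}_{m-1,p}\,r),\qquad r\in[0,1],
\]
i.e.\ $\apice{u}{m}_p$ restricted to $[0,\apice{r}{m}_{m-1,p}]$ (inside the \emph{last} interior zero) is a scaled $\apice{u}{m-1}_{\!\!\!p}$. The inductive hypothesis then determines everything for $i\le m-2$ up to the single unknown factor $(\apice{r}{m}_{m-1,p})^{2/(p-1)}$, and the genuinely new work at step $m$ is the analysis of the \emph{last} nodal region $[\apice{r}{m}_{m-1,p},1]$ (Propositions~\ref{prop:BmiValePerOgnim}--\ref{Proposition4.4}, Lemma~\ref{lemmaH}), where the singular Liouville bubble $Z_{m-1}$ with parameter $\theta_{m-1}$ appears. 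Your ``first-nodal-region analysis'' captures only the regular bubble $Z_0$ (this is indeed $\apice{u}{1}_p$-behaviour and gives no information beyond the base case), and leaves the determination of $(\apice{r}{m}_{1,p})^{2/(p-1)}$, and hence of all the other constants, completely open. Reversing the direction of the peeling---from the boundary inward, not from the origin outward---is what makes the induction close.
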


\begin{theorem}[Last bubble]
	\label{th:ultimaBubble}
	Let $m\in\mathbb N$, $m\geq 1$ and set
\begin{align*}
\apice{\varepsilon}{m}_{m-1,p}&:= \left[p |\apice{u}{m}_{p}(\apice{s}{m}_{m-1,p})|^{p-1}\right]^{-\frac{1}{2}},\\
\apice{z}{\emph m}_{m-1,p}(r)&:=
\frac{p}{|\apice{u}{m}_{p}(\apice{s}{m}_{m-1,p})|}\left[(-1)^{m-1} \apice{u}{m}_p(\apice{s}{m}_{m-1,p}+ \apice{\varepsilon}{m}_{m-1,p} r)  -    |\apice{u}{m}_{p}(\apice{s}{m}_{m-1,p})| \right], 
\end{align*}
for  $r\in (\apice{a}{m}_{m-1,p},\apice{b}{m}_{m-1,p})$	
where  
\[\apice{a}{m}_{m-1,p}:=\left\{\begin{array}{lr}
0, &\qquad \mbox{ if }m=1, \\
\frac{\apice{r}{m}_{m-1,p}-\apice{s}{m}_{m-1,p}}{\apice{\varepsilon}{m}_{m-1,p}}\; (<0), &\qquad \mbox{ if }   m\geq 2,
\end{array}
\right.\]	and
\[
\apice{b}{m}_{m-1,p}:=\frac{1-\apice{s}{m}_{m-1,p}}{ \apice{\varepsilon}{m}_{m-1,p}}\; (>0).\]
Then
\begin{equation}
\label{epsilon0Ultimo}
\apice{\varepsilon}{m}_{m-1,p}=o(1),
\end{equation}
\begin{eqnarray}
\label{repsilon0Ultimo}\label{Cmi}
&&\frac{\apice{r}{m}_{m-1,p}}{\apice{\varepsilon}{m}_{m-1,p}}=o(1)\quad (m\neq 1),
\\
\label{sepsilon0Ultimo}\label{Bmi}
&&\frac{\apice{s}{m}_{m-1,p}}{\apice{\varepsilon}{m}_{m-1,p}}= \sigma_{m-1} +o(1), 
\end{eqnarray}
and
\begin{equation}	\label{zpmenoNgUltimo}
\apice{z}{\emph m}_{m-1,p} = Z_{m-1}(\cdot +\sigma_{m-1})+o(1)\quad\mbox{in }C^1_{loc}(-\sigma_{m-1}, +\infty),
\end{equation}
as $p\rightarrow +\infty$, where \begin{equation}\label{sigmaEta}\sigma_{m-1}:=\sigma_{m-1,0}=\sqrt{\frac{\theta_{m-1}^2-4}{2}}\end{equation} and \begin{equation}\label{espressioneZ}Z_{m-1}:=Z_{m-1,0}=
\log\frac{2(\theta_{m-1})^2(\beta_{m-1})^{\theta_{m-1}}|x|^{(\theta_{m-1}-2)}}{((\beta_{m-1})^{\theta_{m-1}}+|x|^{\theta_{m-1}})^2},
\end{equation}
(with $\beta_0:=2\sqrt{2} $, $\beta_{m-1}:=	\frac{1}{ \sqrt2}(\theta_{m-1}+2)^{\frac{\theta_{m-1}+2}{2\theta_{m-1}}}(\theta_{m-1}-2)^{\frac{\theta_{m-1}-2}{2\theta_{m-1}}} $, $m\geq 2$)
  are as in Theorem \ref{theorem:analisiAsintoticaRiscalateAlpha} with $\alpha=0$.
\end{theorem}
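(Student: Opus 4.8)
The plan is to carry out a standard-but-delicate blow-up analysis around the last critical point $\apice{s}{m}_{m-1,p}$, following the scheme that is by now classical for $2$-dimensional Lane-Emden type asymptotics (Ren--Wei, Adimurthi--Grossi, De Marchis--Ianni--Pacella), but keeping careful track of the constants so that the exact limit $Z_{m-1}(\cdot+\sigma_{m-1})$ appears. First I would fix $\apice{M}{m}_{m-1,p}:=|\apice{u}{m}_p(\apice{s}{m}_{m-1,p})|$ and note that, by Lemma \ref{theorem: k=1EConvDeboleAZeroENEICOMPATTI}, $1\le \apice{M}{m}_{m-1,p}\le C_m$; hence $\apice{M}{m}_{m-1,p}^{p-1}\to\infty$ would follow once we rule out $\apice{M}{m}_{m-1,p}\to 1$, and in any case $p\apice{M}{m}_{m-1,p}^{p-1}\to\infty$, which already gives \eqref{epsilon0Ultimo}: $\apice{\varepsilon}{m}_{m-1,p}=[p(\apice{M}{m}_{m-1,p})^{p-1}]^{-1/2}=o(1)$. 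A direct computation from the ODE \eqref{ode}, written in the rescaled variable, shows that $\apice{z}{m}_{m-1,p}$ solves
\[
(\apice{z}{m}_{m-1,p})''+\frac{1}{r+\apice{s}{m}_{m-1,p}/\apice{\varepsilon}{m}_{m-1,p}}(\apice{z}{m}_{m-1,p})'
+\Big|1+\tfrac{\apice{z}{m}_{m-1,p}}{p}\Big|^{p-1}\Big(1+\tfrac{\apice{z}{m}_{m-1,p}}{p}\Big)=0,
\]
with $\apice{z}{m}_{m-1,p}\le 0$, $\apice{z}{m}_{m-1,p}(\apice{s}{m}_{m-1,p}/\apice{\varepsilon}{m}_{m-1,p})=0$ and derivative vanishing there. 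The nonlinear term $|1+z/p|^{p-1}(1+z/p)\to e^{z}$ locally uniformly; the singular coefficient $1/(r+\apice{s}{m}_{m-1,p}/\apice{\varepsilon}{m}_{m-1,p})$ is where the value $\sigma_{m-1}$ gets pinned down.

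The core step is to show $\apice{s}{m}_{m-1,p}/\apice{\varepsilon}{m}_{m-1,p}$ converges to a finite positive limit. I would first use the energy bound \eqref{stimaenergiaupm}, the pointwise bounds of Lemmas \ref{lemma:Strauss} and \ref{lemma:stimaDerivataInrp}, and Proposition \ref{PropositionUnicoMaxeMin} to get local uniform $C^1$ (hence, by the equation, $C^2$) bounds for $\apice{z}{m}_{m-1,p}$ on compact subsets of the limiting domain, extracting a subsequential limit $z$. If $\apice{s}{m}_{m-1,p}/\apice{\varepsilon}{m}_{m-1,p}\to\sigma\in(0,\infty)$ then $z$ solves, on $(-\sigma,\infty)$, the shifted Liouville problem $z''+\frac{1}{r+\sigma}z'+e^{z}=0$ with $z(0)=z'(0)=0$, $z\le 0$; equivalently $\tilde z(t):=z(t-\sigma)$ solves $\tilde z''+\frac1t\tilde z'+e^{\tilde z}=0$ on $(0,\infty)$, i.e. $-\Delta(\tilde z)=e^{\tilde z}$ radially, with a maximum at $t=\sigma$. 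The finite-mass / classification theory for radial solutions of the (possibly singular) Liouville equation forces $\tilde z$ to be one of the explicit profiles; matching the normalization $\tilde z(\sigma)=0=\tilde z'(\sigma)$ and the total mass $\int e^{\tilde z}|x|$, which is prescribed by the energy identity (Lemma \ref{lemmaC}, Lemma \ref{lemmaP}) and the inductively known behavior of the solution on $(\apice{r}{m}_{m-1,p},1)$, identifies $\tilde z=Z_{m-1}$ with $\theta_{m-1}$ exactly the constant defined by the iteration \eqref{iterateAlphaM}, and then $\sigma=\sigma_{m-1}=\sqrt{(\theta_{m-1}^2-4)/2}$. The ruling-out of the degenerate alternatives $\apice{s}{m}_{m-1,p}/\apice{\varepsilon}{m}_{m-1,p}\to 0$ and $\to\infty$ is done as usual: if the ratio $\to\infty$ the singular term disappears and the limit is the regular bubble $Z_0$, but then a Pohozaev/mass computation on the shrinking last annulus contradicts the energy split coming from the previous bubbles; if the ratio $\to 0$ one would produce too much mass concentrated at the origin relative to what \eqref{LemmaD} and the boundary derivative allow. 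Finally \eqref{repsilon0Ultimo}, for $m\ge 2$, follows from $\apice{r}{m}_{m-1,p}/\apice{\varepsilon}{m}_{m-1,p}=o(1)$ once we know $\apice{s}{m}_{m-1,p}/\apice{\varepsilon}{m}_{m-1,p}\to\sigma_{m-1}$ together with $\apice{r}{m}_{m-1,p}/\apice{s}{m}_{m-1,p}\to 0$, the latter being part of the separation-of-scales statement that is established (inductively) alongside Theorem \ref{theorem:mainDirichlet}; concretely, on $[\apice{r}{m}_{m-1,p},\apice{s}{m}_{m-1,p}]$ the function $|\apice{u}{m}_p|$ rises from $0$ to $\apice{M}{m}_{m-1,p}$, and Lemma \ref{lemmaN} applied on that interval gives the required bound on $\apice{r}{m}_{m-1,p}/\apice{\varepsilon}{m}_{m-1,p}$.

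The main obstacle is the exact identification of the limit and the constant $\theta_{m-1}$, i.e. proving that $\apice{s}{m}_{m-1,p}/\apice{\varepsilon}{m}_{m-1,p}$ has a limit at all and that it is the specific number $\sigma_{m-1}$ generated by the Lambert-function iteration \eqref{iterateAlphaM}. This is where the induction on $m$ must be fed in: the behavior of $\apice{u}{m}_p$ near its previous zero $\apice{r}{m}_{m-1,p}$ and near the boundary (the value $\apice{D}{m}_m$ of the scaled derivative at the boundary, and the matching logarithmic profile from \eqref{pointwiseDirichletG} / Lemma \ref{theorem: k=1EConvDeboleAZeroENEICOMPATTI}) provides exactly the two pieces of data — a ``height'' and a ``slope'' on the outer side of the last annulus — that a Pohozaev-type identity (Lemmas \ref{lemmaC}, \ref{lemmaP}) converts into the mass $\int_{\R^2}e^{Z_{m-1}}$, hence into $\theta_{m-1}$; solving the resulting transcendental relation is precisely the equation whose solution is encoded by $\mathcal L$ in \eqref{iterateAlphaM}. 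Once the mass and the normalization at the critical point are known, uniqueness of radial solutions of the singular Liouville equation with given mass (the profiles $Z_{m-1}$) closes the argument, and the $C^1_{loc}$ convergence in \eqref{zpmenoNgUltimo} upgrades from the $C^2$ bounds via the equation and ODE uniqueness on compacta.
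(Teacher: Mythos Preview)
Your overall architecture (blow-up around the last critical point, induction on $m$, identification of the limit profile) matches the paper's, but the inductive bookkeeping is off in a way that makes several steps circular. The quantities you feed in as data --- the scaled boundary derivative $\apice{D}{m}_m$, the pointwise behavior \eqref{pointwiseDirichletG}, and the ratio $\apice{r}{m}_{m-1,p}/\apice{s}{m}_{m-1,p}\to 0$ --- are all \emph{outputs} of Theorems \ref{theorem:mainDirichlet} and \ref{theorem:mainDirichletG} for $\apice{u}{m}_p$, and those are proved \emph{after} Theorem \ref{th:ultimaBubble}, using it as input. The only inductive hypothesis available is Theorem \ref{theorem:mainDirichlet} for $\apice{u}{m-1}_{\!\!p}$, and the paper accesses it through the change of variables $\apice{u}{m-1}_{\!\!p}(r)=(\apice{r}{m}_{m-1,p})^{2/(p-1)}\apice{u}{m}_p(\apice{r}{m}_{m-1,p}r)$ (Lemma \ref{lemma:legami}), which you never invoke. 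Concretely: the upper bound on $\apice{s}{m}_{m-1,p}/\apice{\varepsilon}{m}_{m-1,p}$ is a one-line consequence of \eqref{P3estimate} (no contradiction argument needed); the lower bound and the vanishing $\apice{r}{m}_{m-1,p}/\apice{\varepsilon}{m}_{m-1,p}\to 0$ are obtained by direct ODE/mean-value arguments (Propositions \ref{prop:BmiValePerOgnim} and \ref{prop:CmiValePerOgnim}) using only that $p|(\apice{u}{m}_p)'(\apice{r}{m}_{m-1,p})|\,\apice{r}{m}_{m-1,p}$ has a positive limit, which follows from level $m-1$ via Lemma \ref{lemma:legameCostantiPassiConsecutivi}.

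The identification $\theta=\theta_{m-1}$ also does not come from Pohozaev and the mass alone. After the convergence in Proposition \ref{prop:convergenzaUltimaRiscalata}, the paper first relates $\theta$ to level-$(m-1)$ constants through the derivative at $\apice{r}{m}_{m-1,p}$ (Lemma \ref{Lemma4.3}), and then pins it down via a \emph{log-weighted} integral identity: multiplying the radial ODE by $(\log r-\log \apice{r}{m}_{m-1,p})$ on $(\apice{r}{m}_{m-1,p},\apice{s}{m}_{m-1,p})$ and passing to the limit yields $1=-\tfrac{\theta-2}{2}\log(RM)$ (Proposition \ref{Proposition4.4}), which combined with \eqref{remark4.5} produces exactly the Lambert recursion \eqref{iterateAlphaM}. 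This log-testing step, not Pohozaev, is what forces the specific value $\sigma_{m-1}$. As a smaller issue, Lemma \ref{theorem: k=1EConvDeboleAZeroENEICOMPATTI} only gives $1\le \apice{u}{m}_p(0)$, not $|\apice{u}{m}_p(\apice{s}{m}_{m-1,p})|\ge 1$; the paper obtains $\apice{\varepsilon}{m}_{m-1,p}\to 0$ from the first-eigenvalue bound on the last annulus, \eqref{MassimiNonNulli}.
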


The proof of Theorem \ref{theorem:mainDirichlet} is done by an iterative argument on the number $m$. More precisely, it  is  obtained  once the following \emph{inductive basis} and \emph{inductive step} are shown.
\begin{proposition}[Inductive basis]\label{theorem:mainDirichletm=1}
	Theorem \ref{theorem:mainDirichlet} holds for $m=1$.
\end{proposition}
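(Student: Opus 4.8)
When $m=1$ the solution $\apice{u}{1}_p$ is the unique positive radial solution of \eqref{problem}, i.e. the least-energy Lane--Emden solution in the ball. There are no interior zeros, so the statement reduces to the single assertions \eqref{derivataInRaggio} with $i=1$ and \eqref{valoreInPuntoCritico} with $i=0$: namely $p|(\apice{u}{1}_p)'(1)|\to \apice{D}{1}_1=\apice{M}{1}_0(\theta_0+2)=4\sqrt e$ (recall $\theta_0=2$ and $\apice{M}{1}_0=e^{2/(2+\theta_0)}=\sqrt e$) and $\apice{u}{1}_p(0)=\|\apice{u}{1}_p\|_\infty\to \apice{M}{1}_0=\sqrt e$. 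The plan is to recall that these are precisely the classical facts \eqref{conv11}--\eqref{conv111} established by Adimurthi--Grossi \cite{AdimurthiGrossi} (and Ren--Wei \cite{RenWeiTAMS1994,RenWeiPAMS1996}); the convergence $pu_p(x)\to 4\sqrt e\,\log\frac1{|x|}$ in $C^1_{loc}(\bar B\setminus\{0\})$ immediately gives $p|(\apice{u}{1}_p)'(1)|\to 4\sqrt e$, and $u_p(0)\to\sqrt e$ is \eqref{conv11}. Since the paper wants a self-contained treatment, I would instead re-derive these limits from the tools already assembled in Section \ref{sectin:LabePreliminaries}, so that the same argument serves as a template for the inductive step.

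Concretely, first I would use Lemma \ref{theorem: k=1EConvDeboleAZeroENEICOMPATTI}: the first-bubble rescaling $\apice{\eta}{1}_{0,p}$ around the origin converges in $C^1_{loc}(0,\infty)$ to $Z_{0,0}$, the regular radial Liouville solution with $Z_{0,0}(0)=0$ and $\int_{\R^2}e^{Z_{0,0}}=8\pi$ (this is the content of the remark following that lemma, invoking \cite[Prop.~2.2]{DeMarchisIanniPacellaJEMS}). From $\int_{\R^2}e^{Z_{0,0}}=8\pi$ and the definition \eqref{scpar}--\eqref{scsol} of the scaling parameter $\apice{\varepsilon}{1}_{0,p}$ one reads off, via the mass identity $p\int_0^1|\apice{u}{1}_p|^{p+1}r\,dr = \apice{u}{1}_p(0)^2\!\int e^{Z_{0,0}}/(\,\cdot\,)+o(1)$ together with Lemma \ref{lemmaC} and Lemma \ref{lemmaP}, that $p\int_0^1|\apice{u}{1}_p|^{p+1}r\,dr\to 4e$ and hence by Pohozaev (Lemma \ref{lemmaP}) that $[\,p(\apice{u}{1}_p)'(1)\,]^2\to 16e$, i.e. \eqref{derivataInRaggio}. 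To pin down $\apice{u}{1}_p(0)$ I would combine this with the global behavior: the convergence $p\apice{u}{1}_p\to v$ in $C^1_{loc}(B\setminus\{0\})$ forces $v=c\,G(\cdot,0)$ with $-\Delta v = (\lim p\,|\apice{u}{1}_p|^{p-1}\apice{u}{1}_p)\rightharpoonup (\text{mass})\,\delta_0$; matching the total mass $\int_{\R^2}e^{Z_{0,0}}=8\pi$ scaled by $\apice{u}{1}_p(0)$ yields $c=4\sqrt e\cdot 2\pi\cdot\frac1{2\pi}$ and, reading the bubble's height against $\apice{u}{1}_p(0)$, that $\apice{u}{1}_p(0)\to\sqrt e$, which is \eqref{valoreInPuntoCritico} for $i=0$.

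The main obstacle is the bookkeeping that links the normalization $\int_{\R^2}e^{Z_{0,0}}=8\pi$ of the limit bubble to the precise constants $4\sqrt e$ and $\sqrt e$: one must track how the factor $\apice{u}{1}_p(0)^{p-1}$ inside $\apice{\varepsilon}{1}_{0,p}$ interacts with $\apice{u}{1}_p(0)\to\apice{M}{1}_0$, i.e. show that the only value of $\apice{M}{1}_0$ consistent simultaneously with the bubble mass, the energy identity, and the Pohozaev identity is $e^{2/(2+\theta_0)}=\sqrt e$. This is a closed algebraic loop (three relations in the two unknowns $\apice{M}{1}_0$ and $\lim p\!\int|\apice{u}{1}_p|^{p+1}r$) and is exactly the $m=1$ instance of the fixed-point/iteration that defines $\theta_1$ from $\theta_0$ in \eqref{iterateAlphaM} via the Lambert function; everything else (the a priori bounds, the $C^1_{loc}$ convergence, the concentration at the origin) is already supplied by Lemmas \ref{lemma:Strauss}--\ref{theorem: k=1EConvDeboleAZeroENEICOMPATTI}. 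I would therefore present the proof as: (1) quote the bubble convergence and its mass; (2) derive the energy limit; (3) apply Pohozaev to get \eqref{derivataInRaggio}; (4) solve the algebraic loop to get $\apice{u}{1}_p(0)\to\sqrt e$, i.e. \eqref{valoreInPuntoCritico}.
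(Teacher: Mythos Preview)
There is a genuine gap in your Step~(4): the ``closed algebraic loop'' you describe is in fact \emph{underdetermined} and does not pin down $\apice{M}{1}_0$. Write $M:=\lim \apice{u}{1}_p(0)$ and $D:=\lim p|(\apice{u}{1}_p)'(1)|$. Passing to the bubble limit in $p\int_0^1|\apice{u}{1}_p|^{p}r\,dr$ gives $D=M\cdot\int_0^\infty e^{Z_0}s\,ds=4M$ (this is your Lemma~\ref{lemmaC} route), while the analogous computation for $p\int_0^1|\apice{u}{1}_p|^{p+1}r\,dr$ gives energy~$=4M^2$, and Pohozaev (Lemma~\ref{lemmaP}) gives energy~$=D^2/4$. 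These three relations collapse to the tautology $4M^2=(4M)^2/4$, valid for every $M>0$. The Green's function matching you mention in Step~(4) is not an independent relation either: it just re-derives $D=4M$ from the total mass $\int p|\apice{u}{1}_p|^p=8\pi M$. So nothing in your scheme selects $M=\sqrt e$.

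The paper breaks this degeneracy by importing an \emph{independent} piece of information: the Ren--Wei energy limit $p\int_0^1|\apice{u}{1}_p|^{p+1}r\,dr\to 4e$, proved in \cite{RenWeiTAMS1994,RenWeiPAMS1996} by variational arguments (comparing against a Moser--Trudinger extremal), not by bubble analysis. With this in hand, one writes
\[
4e+o(1)=p\int_0^1|\apice{u}{1}_p|^{p+1}r\,dr=\apice{u}{1}_p(0)^2\int_0^{1/\varepsilon_p}\Big(1+\tfrac{z_p}{p}\Big)^{p+1}r\,dr,
\]
and passes to the limit in the last integral to get $4e=4M^2$, hence $M=\sqrt e$. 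Note that even this passage to the limit is nontrivial (the domain is unbounded and Fatou only gives $\geq$); the paper invokes the pointwise decay estimate \eqref{stimaIacopetti} from \cite{DIPpositive} to justify dominated convergence. You would need to supply one of these external inputs---either the Ren--Wei energy value or the Adimurthi--Grossi contradiction argument for $M=\sqrt e$---before your Steps~(1)--(3) can conclude anything.
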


\begin{proposition}[Inductive step]\label{theorem:mainDirichlet_PASSO_INDUTTIVO}
	Let $m\geq 2$. If Theorem \ref{theorem:mainDirichlet} holds for the solution $\apice{u}{m-1}_{\!\!\!p}$, then it holds for the solution  $\apice{u}{m}_p$.
\end{proposition}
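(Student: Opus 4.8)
The plan is to exploit the scaling/change-of-variables relating $\apice{u}{m}_p$ and $\apice{u}{m-1}_{\!\!\!p}$. Concretely, recall from Proposition \ref{PropositionUnicoMaxeMin} that $\apice{u}{m}_p$ is strictly monotone on $[0,\apice{r}{m}_{1,p}]$ with $\apice{u}{m}_p(\apice{r}{m}_{1,p})=0$, so $r\mapsto \apice{u}{m}_p$ restricted to $[\apice{r}{m}_{1,p},1]$ is itself (after rescaling the domain to $[0,1]$ and normalizing by the power dictated by the equation) precisely the radial Lane-Emden solution with $m-2$ interior zeros. That is, set $\rho:=\apice{r}{m}_{1,p}$ and define $v_p(t):=\rho^{\frac{2}{p-1}}\,\apice{u}{m}_p(\rho t)$ for $t\in[0,1]$; a direct computation using \eqref{ode} shows $v_p$ solves \eqref{problem} on $B$, it has $m-2$ interior zeros, and its first local extremum value is negative while $|v_p(0)|$ equals $\rho^{\frac{2}{p-1}}$ times the second extremal value of $\apice{u}{m}_p$. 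By the uniqueness part (stated in Section \ref{section:Lane}), $v_p = \pm\,\apice{u}{m-1}_{\!\!\!p}$, and its zeros and critical points are related to those of $\apice{u}{m}_p$ by $\apice{r}{m-1}_{\!\!\!i,p} = \apice{r}{m}_{i+1,p}/\rho$, $\apice{s}{m-1}_{\!\!\!i,p} = \apice{s}{m}_{i+1,p}/\rho$. First I would make this change of variables completely explicit and record the exact transformation laws for $\apice{r}{m}_{i,p}$, $\apice{s}{m}_{i,p}$, $|\apice{u}{m}_p(\apice{s}{m}_{i,p})|$, and $(\apice{u}{m}_p)'$ at these points, both for the rescaled solution $v_p$ and for its reflection.

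Next, the crucial input that is NOT provided by the inductive hypothesis is the asymptotic behavior of the innermost quantities: the first nodal radius $\apice{r}{m}_{1,p}$ (equivalently $\rho^{2/(p-1)}$), the value $\apice{u}{m}_p(0)=\apice{M}{m}_{0,p}$, and the derivative $p|(\apice{u}{m}_p)'(\apice{r}{m}_{1,p})|\apice{r}{m}_{1,p}$. These are governed by the \emph{first bubble}, whose convergence ($\apice{\varepsilon}{m}_{0,p}=o(1)$ and $\apice{\eta}{m}_{0,p}=Z_{0,0}+o(1)$ in $C^1_{loc}(0,+\infty)$) is already known from \cite{DeMarchisIanniPacellaJEMS} as recalled at the end of Section \ref{sectin:LabePreliminaries}. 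So the second main step is to extract from the first-bubble convergence a Pohozaev-type / energy balance on the first nodal region $[0,\apice{r}{m}_{1,p}]$: integrating the equation (via Lemma \ref{lemmaN}) and using \eqref{P3estimate}, \eqref{ZeroSuCompatti}, one should obtain $p|(\apice{u}{m}_p)'(\apice{r}{m}_{1,p})|\apice{r}{m}_{1,p}\to \apice{D}{1}_1 = \apice{M}{1}_0(\theta_0+2) = 4\sqrt e$ up to the correct normalization, and then identify $\rho^{2/(p-1)}$ by matching the outer limit $p\apice{u}{m}_p \to$ (const)$\cdot G(x,0)$ on the region just outside the first bubble against the value $0$ at $r=\apice{r}{m}_{1,p}$ and the value $-\apice{M}{m}_{1,p}$ at $r=\apice{s}{m}_{1,p}$. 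This is exactly the mechanism encoded in the Lambert-function iteration \eqref{iterateAlphaM}: the ratio $\apice{r}{m}_{1,p}/\apice{s}{m}_{1,p}$ to the power $2/(p-1)$ converges to something of the form $\tfrac{2}{2+\theta}e^{-2/(2+\theta)}$, whence $\theta_{m-1}$ appears via $\mathcal L$. I would carry this out using the second-order ODE \eqref{ode} and the monotonicity of $F$, plus the known first-bubble profile to pin down the leading constants, and then feed the results of the inductive hypothesis (applied to $v_p=\pm\apice{u}{m-1}_{\!\!\!p}$, which has $m-2$ interior zeros, hence "index $m-1$" in the paper's convention) through the transformation laws.

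The last step is bookkeeping: combine $\apice{r}{m}_{i,p} = \rho\,\apice{r}{m-1}_{\!\!\!i-1,p}$, $(\apice{r}{m-1}_{\!\!\!i-1,p})^{2/(p-1)}\to \apice{R}{m-1}_{i-1}$, and $\rho^{2/(p-1)}\to \apice{R}{m}_1$ (the value just computed) to get $(\apice{r}{m}_{i,p})^{2/(p-1)}\to \apice{R}{m}_1\,\apice{R}{m-1}_{i-1}$, and then check against Definition \ref{definition:Constants2} that this product is exactly $\apice{R}{m}_i = \prod_{k=2}^m \apice{R}{k}_{k-1}$; similarly for $\apice{S}{m}_i$, $\apice{M}{m}_i$ and $\apice{D}{m}_i$, where the normalization factor $\rho^{2/(p-1)}$ (resp. $\rho^{-2/(p-1)}$) appears in the value of $|\apice{u}{m}_p|$ at the critical points because of the $\rho^{2/(p-1)}$ prefactor in the definition of $v_p$. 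The algebraic identities to verify are precisely the defining relations \eqref{Mk}--\eqref{SoloUltimiSeM}, so this reduces to a consistency check that the recursion for $(\apice{R}{m}_i,\apice{S}{m}_i,\apice{M}{m}_i,\apice{D}{m}_i)$ is compatible with multiplying/dividing by one extra factor $\apice{R}{m}_{m-1}$ and shifting indices.

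\medskip

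\textbf{Main obstacle.} The hard part is the second step: rigorously deriving the sharp leading constants for $\apice{r}{m}_{1,p}$, $\apice{u}{m}_p(0)$ and $p|(\apice{u}{m}_p)'(\apice{r}{m}_{1,p})|\apice{r}{m}_{1,p}$ from the first-bubble convergence — in particular establishing the matching between the inner (bubble) scale and the outer (logarithmic, Green's function) regime, controlling the "neck" region $[\apice{\varepsilon}{m}_{0,p}\,R,\ \apice{r}{m}_{1,p}]$ uniformly in $p$, and showing that no energy is lost there. This is where the estimates of Section \ref{sectin:LabePreliminaries} (Lemmas \ref{lemma:Strauss}--\ref{lemmaP} and \eqref{P3estimate}--\eqref{ZeroSuCompatti}) must be combined carefully, and where the Lambert function genuinely enters; everything after that is transformation algebra that the definitions in Section \ref{section:strategiaRIcorsiva} were designed to absorb.
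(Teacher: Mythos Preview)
Your change of variables is set up in the wrong direction, and this is a genuine gap, not just a typo. With $\rho=\apice{r}{m}_{1,p}$ and $v_p(t)=\rho^{2/(p-1)}\apice{u}{m}_p(\rho t)$ for $t\in[0,1]$, the argument $\rho t$ ranges over the \emph{inner} ball $[0,\rho]$, which is the \emph{first} nodal region only; hence $v_p$ has no interior zeros and equals $\apice{u}{1}_p$, not $\apice{u}{m-1}_{\!\!\!p}$. The outer annulus $\{\rho\le r\le 1\}$ (which does carry $m-1$ nodal regions) cannot be rescaled to a ball solution of \eqref{problem}: the Lane--Emden equation admits the dilation symmetry $u\mapsto\lambda^{2/(p-1)}u(\lambda\,\cdot\,)$ but no translation in $r$, and any radial ball solution must satisfy $u'(0)=0$, whereas $(\apice{u}{m}_p)'(\rho)\neq 0$. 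Consequently your relations $\apice{r}{m-1}_{\!\!\!i,p}=\apice{r}{m}_{i+1,p}/\rho$, $\apice{s}{m-1}_{\!\!\!i,p}=\apice{s}{m}_{i+1,p}/\rho$ are false, and the whole ``first-bubble then bookkeeping'' scheme collapses.

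The paper does the recursion the other way around: it rescales by the \emph{last} interior zero $r_p:=\apice{r}{m}_{m-1,p}$ (Lemma \ref{lemma:legami}), so that $[0,r_p]$ maps to the unit ball and yields exactly $\apice{u}{m-1}_{\!\!\!p}$. The inductive hypothesis then controls the first $m-1$ nodal regions (Lemma \ref{lemma:legameCostantiPassiConsecutivi}), and the new work lies entirely in the \emph{last} nodal region $[r_p,1]$. There the relevant rescaling converges not to the regular Liouville bubble $Z_0$ but to a \emph{singular} one with parameter $\theta>2$ (Proposition \ref{prop:convergenzaUltimaRiscalata}); $\theta$ is identified as $\theta_{m-1}$ via a logarithmic integral identity (Proposition \ref{Proposition4.4}) combined with a Pohozaev balance on $I_p$ (Lemma \ref{lemmaH}), and this is where the Lambert recursion \eqref{iterateAlphaM} genuinely enters. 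In short, the four ``missing'' limits are those for $(\apice{r}{m}_{m-1,p})^{2/(p-1)}$, $(\apice{s}{m}_{m-1,p})^{2/(p-1)}$, $|\apice{u}{m}_p(\apice{s}{m}_{m-1,p})|$ and $p|(\apice{u}{m}_p)'(1)|$, not the ``innermost'' ones you single out, and they are obtained from the last (singular) bubble, not the first.
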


Proposition \ref{theorem:mainDirichletm=1} (\emph{inductive basis}) was essentially already known from the general works  \cite{RenWeiTAMS1994, RenWeiPAMS1996, AdimurthiGrossi} on the asymptotic behavior of the least energy solutions of the Dirichlet Lane-Emden problem. In Section \ref{section:base} we will sketch a proof for completeness. As we will see it exploits the following:
\begin{proposition}
	\label{theorem:analisiAsintoticaRiscalatem=1}
	Theorem \ref{th:ultimaBubble} holds for $m=1$.
\end{proposition}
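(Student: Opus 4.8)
The plan is a direct rescaling argument; for $m=1$ everything collapses because the solution $\apice{u}{1}_p$ is positive with its only critical point at the origin $\apice{s}{1}_{0,p}=0$, and $\theta_0=2$ forces $\sigma_0=\sqrt{(\theta_0^2-4)/2}=0$. Thus \eqref{repsilon0Ultimo} is vacuous, \eqref{sepsilon0Ultimo} is the trivial identity $\apice{s}{1}_{0,p}/\apice{\varepsilon}{1}_{0,p}=0=\sigma_0$, and for \eqref{epsilon0Ultimo} it is enough to recall from Lemma~\ref{theorem: k=1EConvDeboleAZeroENEICOMPATTI} that $\apice{u}{1}_p(0)=\|\apice{u}{1}_p\|_{\infty}\geq 1$, whence $\apice{\varepsilon}{1}_{0,p}=\big(p\,\apice{u}{1}_p(0)^{p-1}\big)^{-1/2}\leq p^{-1/2}=o(1)$.

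It remains to prove \eqref{zpmenoNgUltimo}, that is, $\apice{z}{1}_{0,p}\to Z_0$ in $C^1_{loc}(0,+\infty)$ (recall $\sigma_0=0$, so $Z_0(\cdot+\sigma_0)=Z_0$). Set $\mu_p:=\apice{u}{1}_p(0)$, $\varepsilon_p:=\apice{\varepsilon}{1}_{0,p}$, $w_p:=\apice{z}{1}_{0,p}$, so that $\apice{u}{1}_p(\varepsilon_p r)=\mu_p\big(1+w_p(r)/p\big)$ on $[0,1/\varepsilon_p)$, with $w_p(0)=0$ and $w_p'(0)=0$ (the latter since $(\apice{u}{1}_p)'(0)=0$). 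Plugging this identity into the radial equation \eqref{ode} and using $\varepsilon_p^2=\big(p\,\mu_p^{p-1}\big)^{-1}$, a one-line computation gives the rescaled equation
\[
-w_p''(r)-\tfrac{1}{r}\,w_p'(r)=\Big(1+\tfrac{w_p(r)}{p}\Big)^{p},
\]
whose right-hand side lies in $[0,1]$ because $1+w_p/p=\apice{u}{1}_p(\varepsilon_p\cdot)/\mu_p\in[0,1]$ by Proposition~\ref{PropositionUnicoMaxeMin}(i) and positivity of $\apice{u}{1}_p$. Rewriting this as $-(r\,w_p')'=r\,(1+w_p/p)^p$ and integrating twice from $0$ (using $w_p(0)=w_p'(0)=0$) yields the $p$-uniform bounds $0\leq -w_p'(r)\leq r/2$ and $0\leq -w_p(r)\leq r^2/4$ on the whole interval $[0,1/\varepsilon_p)$.

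With these bounds the rest is soft. Since the equation controls $w_p''$ as well (the term $w_p'/r$ is bounded by $1/2$ everywhere, including near $0$), the family $\{w_p\}$ is bounded in $C^2_{loc}([0,+\infty))$ uniformly in $p$; by Arzel\`a--Ascoli a subsequence converges in $C^1_{loc}([0,+\infty))$ to some $W$, and since $w_p\to W$ locally uniformly while $(1+t/p)^p\to e^{t}$ uniformly for $t$ in compact sets, the right-hand side passes to the limit and $W$ solves $-W''-\tfrac{1}{r}W'=e^{W}$ with $W(0)=W'(0)=0$. By standard uniqueness for this Liouville Cauchy problem, $W$ is the regular radial bubble $Z_0(r)=\log\frac{64}{(8+r^2)^2}$, which is exactly $Z_{0,0}$ of \eqref{espressioneZ} (with $\theta_0=2$, $\beta_0=2\sqrt2$); as the limit does not depend on the subsequence, $w_p\to Z_0$ in $C^1_{loc}([0,+\infty))$, which gives \eqref{zpmenoNgUltimo}. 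Alternatively, for $m=1$ the function $\apice{z}{1}_{0,p}$ is precisely the first rescaling on the maximal domain, so \eqref{epsilon0Ultimo} and \eqref{zpmenoNgUltimo} can also be quoted directly from \cite[Proposition~2.2]{DeMarchisIanniPacellaJEMS}.

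The only points needing a little attention are the uniform regularity of $w_p$ up to $r=0$ and the passage $(1+w_p/p)^p\to e^{W}$ locally uniformly, so that both the limit equation and the initial conditions $W(0)=W'(0)=0$ are inherited; the a priori bounds above make both routine. In short, $m=1$ is the easy case — the genuine difficulties in Theorem~\ref{th:ultimaBubble} arise only for $m\geq2$, where the last bubble sits on top of a tower and $\theta_{m-1}>2$ produces a singular Liouville limit.
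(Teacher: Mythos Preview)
Your argument is correct and in fact slightly cleaner than the paper's. The paper proceeds by citing \cite[Theorem~1.1]{AdimurthiGrossi} for the local uniform boundedness of $\apice{z}{1}_{0,p}$ in $(0,+\infty)$, then upgrades via elliptic estimates to $C^2_{loc}(0,+\infty)$ compactness, and finally identifies the limit as $Z_0$ by applying Fatou's lemma (to get $\int_{\R^2}e^{z}\,dx<\infty$) together with the classification of finite--energy radial solutions of the Liouville equation. You instead derive the boundedness directly from the rescaled equation: integrating $-(rw_p')'=r(1+w_p/p)^p\in[0,r]$ twice from the origin gives the explicit a priori bounds $0\le -w_p'(r)\le r/2$ and $0\le -w_p(r)\le r^2/4$, which are valid up to $r=0$ and immediately yield $C^2_{loc}([0,\infty))$ compactness; then you identify the limit via uniqueness of the Cauchy problem $-W''-\tfrac{1}{r}W'=e^W$, $W(0)=W'(0)=0$, rather than via the Liouville classification.

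Both routes are valid. Your approach is more self-contained (no external citation for boundedness, no appeal to the classification theorem) and has the minor advantage that the convergence holds in $C^1_{loc}([0,+\infty))$, so the initial conditions $W(0)=0$, $W'(0)=0$ are inherited directly rather than recovered a posteriori. The paper's route, on the other hand, is the one that generalises naturally to $m\ge 2$: there the rescaling is centred at $\apice{s}{m}_{m-1,p}>0$, the simple quadratic bound is unavailable, and the limit is a \emph{singular} solution of Liouville's equation, so one really needs the energy argument and the classification (cf.\ Proposition~\ref{prop:convergenzaUltimaRiscalata}). Your closing remark that the genuine difficulties appear only for $m\ge 2$ is exactly right.
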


The proof of Proposition \ref{theorem:mainDirichlet_PASSO_INDUTTIVO} (\emph{inductive step}) is based on the key observation that the radial solution $ \apice{u}{m}_{p}$ with $m-1$ interior zeros and the radial solution $ \apice{u}{m-1}_{\!\!\!p}$ with $m-2$ interior zeros are related by the following change of variable
\[
\apice{u}{m-1}_{\!\!\!p}(r)= (\apice{r}{\emph m}_{m-1,p})^{\frac{2}{p-1}} \apice{u}{m}_{p}(\apice{r}{\emph m}_{m-1,p} r), \quad r\in[0,1].\]
As a consequence, it can be proved that, if one knows that Theorem \ref{theorem:mainDirichlet} holds for the solution $\apice{u}{m-1}_{\!\!p}$, then, in order to prove that it holds
for the solution $\apice{u}{m}_p$, one has to show only the \emph{last} $4$ relations
	\begin{eqnarray*}
&&(\apice{r}{m}_{m-1,p})^{\frac{2}{p-1}}=\apice{R}{m}_{m-1} +o(1),  
\\
&&p|(\apice{u}{m}_{p})'(1)|=\apice{D}{m}_m  +o(1),
\\
&&(\apice{s}{m}_{m-1,p})^{\frac{2}{p-1}}=\apice{S}{m}_{m-1}  +o(1),
\\
&&|\apice{u}{m}_p(\apice{s}{m}_{m-1,p})|=\apice{M}{m}_{m-1} +o(1), 
\end{eqnarray*}
as $p\rightarrow +\infty$. The proof of these $4$ sharp limits is at the core of Section \ref{section:stepInduttivo}. It is mainly based on integral estimates and  ODE techniques, and we exploit many arguments from \cite{GrossiGrumiauPacella2}, where the case $m=2$ was investigated. One of the ingredients of the proof is the 
description of the bubble behavior for the \emph{last} rescaled function $\apice{z}{m}_{m-1,p}$ of $\apice{u}{m}_{p}$ as $p\rightarrow +\infty$.
\begin{proposition}
	\label{theorem:ultimaBubble_PASSO_INDUTTIVO}
	Let $m\geq 2.$ If Theorem \ref{theorem:mainDirichlet} holds for radial solutions $\apice{u}{m-1}_{\!\!\!p}$ with $m-2$ interior zeros, then Theorem \ref{th:ultimaBubble} holds.
\end{proposition}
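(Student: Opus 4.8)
The statement to prove is Proposition \ref{theorem:ultimaBubble_PASSO_INDUTTIVO}: assuming Theorem \ref{theorem:mainDirichlet} holds for $\apice{u}{m-1}_{\!\!\!p}$, establish the last-bubble description of Theorem \ref{th:ultimaBubble} for $\apice{u}{m}_p$. The strategy is a blow-up analysis around the last critical point $\apice{s}{m}_{m-1,p}$, following the scheme of \cite{GrossiGrumiauPacella2} (case $m=2$) but now using the inductive hypothesis to control everything happening on $[0,\apice{r}{m}_{m-1,p}]$. First I would record what the inductive hypothesis gives, via the change of variables $\apice{u}{m-1}_{\!\!\!p}(r)= (\apice{r}{m}_{m-1,p})^{2/(p-1)}\apice{u}{m}_p(\apice{r}{m}_{m-1,p}r)$: in particular $p|(\apice{u}{m}_p)'(\apice{r}{m}_{m-1,p})|\apice{r}{m}_{m-1,p}=\apice{D}{m-1}_{m-1}+o(1)=\apice{M}{m-1}_{\!\!\!m-2}(\theta_{m-2}+2)+o(1)$ and, from \eqref{stimaenergiaupm} applied to $\apice{u}{m-1}_{\!\!\!p}$, a bound on the energy of $\apice{u}{m}_p$ concentrated on $[0,\apice{r}{m}_{m-1,p}]$. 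Together with Lemma \ref{lemma:stimaDerivataInrp}, Lemma \ref{lemmaN}, Lemma \ref{lemmaC} and the pointwise bounds of Lemma \ref{theorem: k=1EConvDeboleAZeroENEICOMPATTI} (notably $pr^2|\apice{u}{m}_p(r)|^{p-1}\le \widetilde C_m$), this pins down the total mass $p\int_{\apice{r}{m}_{m-1,p}}^1|\apice{u}{m}_p|^p r\,dr$ that feeds the last bubble.

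\textbf{Key steps.} (1) Show $\apice{\varepsilon}{m}_{m-1,p}=o(1)$: since $|\apice{u}{m}_p(\apice{s}{m}_{m-1,p})|\ge 1$ for $p$ large (Lemma \ref{theorem: k=1EConvDeboleAZeroENEICOMPATTI} gives $\|\apice{u}{m}_p\|_\infty\le C_m$, and a lower bound on the last maximum comes from the energy not all escaping earlier), the quantity $p|\apice{u}{m}_p(\apice{s}{m}_{m-1,p})|^{p-1}\to\infty$, which is \eqref{epsilon0Ultimo}. (2) Prove the rescaled functions $\apice{z}{m}_{m-1,p}$ satisfy, on compact subsets, the ODE $z''+\frac1{r+\sigma_{m-1,p}}z'+(\dots)e^{z+o(1)}=0$ obtained from \eqref{ode} after the scaling, where $\sigma_{m-1,p}:=\apice{s}{m}_{m-1,p}/\apice{\varepsilon}{m}_{m-1,p}$; here the crucial point is to show $\apice{r}{m}_{m-1,p}/\apice{\varepsilon}{m}_{m-1,p}=o(1)$, i.e. the previous nodal radius is negligible on the blow-up scale, so that on the blow-up picture we see the \emph{singular} Liouville equation on all of $(0,\infty)$ rather than a truncated domain. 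This is where the inductive control of $p|(\apice{u}{m}_p)'(\apice{r}{m}_{m-1,p})|\apice{r}{m}_{m-1,p}$ enters: it forces the logarithmic-derivative mismatch across the scales and identifies the weight $\theta_{m-1}$ through the iteration \eqref{iterateAlphaM}. (3) Pass to the limit using standard compactness (equi-boundedness from Lemma \ref{lemma:stimaDerivataInrp} rescaled, equicontinuity, and $C^1_{loc}$ bounds), obtaining a limit $z_\infty$ with $z_\infty(0)=0$, $z_\infty'(0)=0$ (since $\apice{s}{m}_{m-1,p}$ is a critical point), finite mass; classify it via the classical Chen–Li / Prajapat–Tarantello classification of solutions of the singular Liouville equation to get $z_\infty=Z_{m-1}(\cdot+\sigma_{m-1})$ with the explicit $\beta_{m-1},\sigma_{m-1}$ of \eqref{sigmaEta}-\eqref{espressioneZ}. (4) Read off \eqref{sepsilon0Ultimo} from the location of the maximum of $Z_{m-1}$.

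\textbf{Main obstacle.} The delicate part is step (2): showing that the last nodal radius $\apice{r}{m}_{m-1,p}$ is $o(\apice{\varepsilon}{m}_{m-1,p})$ while simultaneously capturing the \emph{exact} singular mass $(\alpha+2)\pi(2-\theta_{m-1})$ — here $\alpha=0$ — carried by the Dirac term, so that the iteration constant $\theta_{m-1}$ emerges correctly rather than some a priori unknown parameter. This requires a careful Pohozaev-type / integral balance on the annulus $[\apice{r}{m}_{m-1,p},1]$ (using Lemma \ref{lemmaP} and \eqref{LemmaD}) matched with the behavior just inside $\apice{r}{m}_{m-1,p}$ supplied by the inductive hypothesis, and it is the technical heart of Section \ref{section:stepInduttivo}; the rest is, modulo bookkeeping, the by-now-standard blow-up machinery already used for $m=2$ in \cite{GrossiGrumiauPacella2}.
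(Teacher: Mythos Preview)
Your plan is essentially the paper's own route: $\varepsilon_p\to 0$ via a Poincar\'e bound, $s_p/\varepsilon_p\to\sigma>0$ (this is where the inductive control of $p|(\apice{u}{m}_p)'(r_p)|r_p$ actually enters, not in the proof of $r_p/\varepsilon_p\to 0$, which is a separate mean-value/ODE contradiction), then $C^1_{loc}$ convergence of $z_p$ to a singular Liouville profile $Z$ with an a priori \emph{unknown} parameter $\theta=\sqrt{2(\sigma^2+2)}$, and finally the identification $\theta=\theta_{m-1}$. One clarification on your ``main obstacle'': the classification in step (3) only gives a one-parameter family, so it cannot by itself produce $Z_{m-1}$; the paper singles out $\theta$ not through Pohozaev but through a \emph{log-weighted} integral identity---multiply the equation by $(\log r-\log r_p)$ and integrate on $[r_p,s_p]$---which yields $1=-\tfrac{\theta-2}{2}\log(RM)$ and, combined with the derivative relation $RM(\theta-2)=\apice{M}{m-1}_{\!\!\!m-2}(\theta_{m-2}+2)$ coming from the inductive hypothesis, forces $\theta=\theta_{m-1}$ via the Lambert iteration \eqref{iterateAlphaM}.
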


Observe that, once Propositions  \ref{theorem:analisiAsintoticaRiscalatem=1} and 
\ref{theorem:ultimaBubble_PASSO_INDUTTIVO} are proven, 
an iterative argument on $m$ gives also the proof of Theorem \ref{th:ultimaBubble}, using  the propositions as  \emph{inductive basis} and   \emph{inductive step} respectively.

\

In conclusion in order to show Theorems \ref{theorem:mainDirichlet} and  \ref{th:ultimaBubble} it remains to show
\begin{itemize}
	\item the \emph{inductive basis}: Propositions \ref{theorem:mainDirichletm=1} and \ref{theorem:analisiAsintoticaRiscalatem=1} (see Section \ref{section:base});
\item the \emph{inductive steps}: Propositions \ref{theorem:mainDirichlet_PASSO_INDUTTIVO} and \ref{theorem:ultimaBubble_PASSO_INDUTTIVO} (see Section \ref{section:stepInduttivo}).
\end{itemize}
\
\subsection{The case $m=1$} \label{section:base} Here we prove Propositions \ref{theorem:mainDirichletm=1} and \ref{theorem:analisiAsintoticaRiscalatem=1}.

\begin{proof}[Proof of Proposition \ref{theorem:analisiAsintoticaRiscalatem=1}] We have to show that
	\begin{eqnarray}
	\label{ahp}
	&\apice{\varepsilon}{1}_{0,p}=o(1),
	\\
	\label{ahahp}
	&\apice{z}{1}_{0,p}=Z_0 +o(1).
	\end{eqnarray}
as $p\rightarrow +\infty$ (indeed, since $\theta_0=2$, $\sigma_0=\sqrt{\frac{\theta_0^2-4}{2}}=0$). \eqref{ahp} was known from \cite{RenWeiTAMS1994}, we repeat the proof for completeness. By Poincare inequality,
\[\int_{\Omega} |\nabla u_p|^2\,dx=\int_{\Omega}|u_p|^{p+1}\,dx\leq \|u_p\|_{L^{\infty}(\Omega)}^{p-1}\int_{\Omega}|u_p|^2\,dx\leq \frac{\|u_p\|_{L^{\infty}(\Omega)}^{p-1}}{\lambda_1(\Omega)}\int_{\Omega}|\nabla u_p|^2\,dx,\]
where $\lambda_1(\Omega)$ is the first eigenvalue of $-\Delta$ in $H^1_0(\Omega)$, hence
\begin{equation}\label{Mnzero}
u_p(0)^{p-1}=\|u_p\|_{L^{\infty}(\Omega)}^{p-1}\geq \lambda_1(\Omega)>0\end{equation}
and, as a consequence, recalling that 
$
\apice{\varepsilon}{1}_{0,p}:=\left[p|\apice{u}{1}_p(0)|^{p-1}\right]^{-\frac{1}{2}}
$, we derive \eqref{ahp}.\\
The proof of \eqref{ahahp} can be found  in \cite{AdimurthiGrossi}.
Observe that the functions $\apice{z}{1}_{0,p}$ satisfy 
\[
\left\{
\begin{array}{lr}
\displaystyle
(\apice{z}{1}_{0,p})''+\frac{1}{r}(\apice{z}{1}_{0,p})'+\left|1+\frac{\apice{z}{1}_{0,p}}{p}\right|^{p-1}\left(1+\frac{\apice{z}{1}_{0,p}}{p}\right) =0, \quad r\in(0,\apice{b}{1}_{0,p}),
\\
\displaystyle
\apice{z}{1}_{0,p}(0)=(\apice{z}{1}_{0,p})'(0)=0,
\\
\displaystyle
\apice{z}{1}_{0,p}(\apice{b}{1}_{0,p})=-p,\\
\displaystyle
\apice{z}{1}_{0,p}< 0,
\end{array}\right..
\] 	
with $\left|1 + \frac{\apice{z}{1}_{0,p}}{p}\right|\leq1$ and  $\apice{b}{1}_{0,p}=(\apice{\varepsilon}{1}_{0,p})^{-1}\rightarrow +\infty$, as $p\rightarrow +\infty$. In \cite[Theorem 1.1]{AdimurthiGrossi} it is  shown that $\apice{z}{1}_{0,p}$ are locally uniformly bounded in $(0,+\infty)$. 
The rest of the proof is then standard:	by elliptic estimates, we have that  $\apice{z}{1}_{0,p}$ are uniformly
bounded in $C^2_{loc}(0, +\infty)$, so they  converge in $C^1_{loc}(0, +\infty)$ to a non-positive solution  of $-z''-\frac{z'}{r}=e^z$ with $z(0)=0$. Observe that the radial function $\mathbb R^2\ni x\mapsto z(|x|)$ has finite energy by Fatou's lemma  so, by the classification results for the Liouville equation, it must necessarily be the function $Z_0$.
\end{proof}

\begin{proof}[Proof of Proposition \ref{theorem:mainDirichletm=1}] 
	We have to show
	\begin{eqnarray}
	&& \lim_{p\rightarrow +\infty}|\apice{u}{1}_p(0)|=\apice{M}{1}_0,
	\label{limiteNormaLInfinitom=1}
	\\
	&& \lim_{p\rightarrow +\infty}p|(\apice{u}{1}_{p})'(1)|=\apice{D}{1}_1.
	\label{limiteenergiam=1altra}
	\end{eqnarray}
From \cite{RenWeiTAMS1994, RenWeiPAMS1996}  we know that  the least energy solutions  satisfy the following energy condition
	\begin{equation}
	\label{energylimit8pie}
p\int_0^1 |\apice{u}{1}_p|^{p+1} rdr= 4e +o(1),\quad\mbox{as $p\to+\infty$}.
	\end{equation}
Identity \eqref{limiteNormaLInfinitom=1} was obtained in \cite{AdimurthiGrossi} by a contradiction argument which uses \eqref{energylimit8pie} and Proposition \ref{theorem:analisiAsintoticaRiscalatem=1}. Here we write a more direct proof
which exploits also a pointwise estimate recently obtained in \cite{DIPpositive} for the rescaled function $\apice{z}{1}_{0,p}$ of the positive solution $\apice{u}{1}_p$. Indeed, in \cite[Lemma 4.4 and Proposition 4.3]{DIPpositive} it is proven that, for any $\delta>0$, there exist $R_{\delta}>1$, $C_{\delta}>0$ and $p_{\delta}>1$ such that
\begin{equation}\label{stimaIacopetti}
\apice{z}{1}_{0,p}(y)\leq \left(4-\delta\right)\log\frac{1}{|y|}+C_{\delta}, \quad 2R_\delta\leq |y|\leq \frac{r}{\apice{\varepsilon}{1}_{0,p}},
\end{equation}
provided  $p\geq p_{\delta}$. From \eqref{stimaIacopetti} one immediately derives  \eqref{limiteNormaLInfinitom=1} using the energy estimate \eqref{energylimit8pie}, changing variable in the integral and passing to the limit by Lebesgue's theorem (thanks to \eqref{stimaIacopetti}) and the results in Proposition \ref{theorem:analisiAsintoticaRiscalatem=1}:
\begin{eqnarray*}
	4e+o(1)&\overset{\eqref{energylimit8pie}}{=}& p \int_{0}^1|\apice{u}{\emph 1}_p(r)|^{p+1}rdr	\\
	&{=}&
	|\apice{u}{1}_{p}(0)|^2\int_0^{1/\apice{ \varepsilon}{1}_{0,p}}\left(1+\frac{\apice{z}{1}_{0,p}}{p} \right)^{p+1}\!\!\!\!\!\!r dr
	\\
	&\overset{\eqref{stimaIacopetti}+\text{Prop.}\ref{theorem:analisiAsintoticaRiscalatem=1} }{=}&
	|\apice{u}{1}_{p}(0)|^2 \left(\int_{0}^{\infty} e^{Z_0}rdr+o(1)\right)
	\\
	&=& |\apice{u}{1}_{p}(0)|^2 \left(2\theta_0+o(1)\right)
\end{eqnarray*}
as $p\rightarrow +\infty$. Identity \eqref{limiteNormaLInfinitom=1} follows recalling that $\theta_0=2$, and that $\apice{M}{1}_0=\sqrt{e}$.
\\
Similarly, recalling that $\apice{D}{1}_1=(2+\theta_0)\sqrt{e}=4\sqrt{e}$, one has that \eqref{limiteenergiam=1altra}:	
\begin{eqnarray*}
	p|(\apice{u}{1}_{p})'(1)|&\overset{\text{Lemma }\ref{lemmaN}}{=}& p \int_{0}^1|\apice{u}{\emph 1}_p(r)|^{p}rdr =
	\apice{u}{1}_{p}(0)\int_0^{1/\apice{ \varepsilon}{1}_{0,p}}\left(1+\frac{\apice{z}{1}_{0,p}}{p} \right)^{p}r dr
	\\
	&\overset{\eqref{limiteNormaLInfinitom=1}}{=}&
	\apice{M}{1}_0 \int_{0}^{\infty} e^{Z_0}rdr+o(1)
	= (2+\theta_0)\apice{M}{1}_0+o(1) = 4\sqrt{e}+o(1),
\end{eqnarray*}
as $p\rightarrow +\infty$.
\end{proof}
\
\subsection{The inductive step}\label{section:stepInduttivo}
In this section we prove Propositions \ref{theorem:mainDirichlet_PASSO_INDUTTIVO} and \ref{theorem:ultimaBubble_PASSO_INDUTTIVO}.
In order to shorten the notations let us set 
\[
\begin{array}{lr}
\varepsilon_p:=\apice{\varepsilon}{m}_{m-1,p},
\\
s_p:=\apice{s}{m}_{m-1,p},
\\
r_p:= \apice{r}{m}_{m-1,p},
\end{array}
\] 
	\begin{equation}\label{zpmenoNsimplify}  
	z_p:=\apice{z}{m}_{m-1,p},
	\end{equation} 
	\[\begin{array}{lr}
	a_p:=\apice{a}{m}_{m-1,p}\ (<0),\\
	b_p:=\apice{b}{m}_{m-1,p}\ (>0).
	\end{array}
	\]
Observe that  $z_{p}$ solves 
\begin{equation}\label{equazionizp}
\left\{
\begin{array}{lr}
\displaystyle
(z_{p})''+\frac{1}{r +\frac{s_p}{\varepsilon_{p}}}(z_{p})'+\left|1+\frac{z_{p}}{p}\right|^{p-1}\left(1+\frac{z_{p}}{p}\right) =0, \quad r\in(a_p,b_p)
\\
\displaystyle
z_{p}(0)=(z_{p})'(0)=0
\\
\displaystyle
z_{p}(a_p)=-p\\
z_{p}(b_p)=-p\\
\displaystyle
z_{p}< 0. 
\end{array}\right.
\end{equation}

\begin{lemma}[Relation between $ \apice{u}{m}_{p}$ and $ \apice{u}{m-1}_{\!\!\!p}$] \label{lemma:legami}
		Let $m\in\mathbb N$, $m\geq 2$, then 
		\begin{equation}\label{legame}
		\apice{u}{m-1}_{\!\!\!p}(r)= (r_p)^{\frac{2}{p-1}} \apice{u}{m}_{p}(r_p r).\end{equation}
		As a consequence,
		\begin{eqnarray}
		\label{SecondoLegame}
		&& \apice{r}{\emph m-1}_{\!\!\!j,p}=\frac{\apice{r}{\emph m}_{j,p}}{r_p}, \qquad 1\leq j\leq m-1,
		\\
		\label{PrimoLegame}
		&& \apice{s}{\emph m-1}_{\!\!\!j,p}=\frac{\apice{s}{\emph m}_{j,p}}{r_p} , \qquad 0\leq j< m-1,
		\\
		\label{TerzoLegame}
		&&  \frac{1}{\apice{\varepsilon}{\emph m-1}_{\!\!\!j,p}}=\frac{r_p}{\apice{\varepsilon}{\emph m}_{j,p}}  , \qquad 0\leq j< m-1.
		\end{eqnarray}
\end{lemma}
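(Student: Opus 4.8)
The plan is to verify the scaling identity \eqref{legame} directly and then read off \eqref{SecondoLegame}--\eqref{TerzoLegame} as immediate consequences. First I would define the candidate function
\[
v(r):=(r_p)^{\frac{2}{p-1}}\,\apice{u}{m}_{p}(r_p r),\qquad r\in[0,1],
\]
and check three things: that $v$ is a radial solution of the Lane--Emden equation in $B$, that $v$ satisfies the Dirichlet condition $v=0$ on $\partial B$, and that $v$ has exactly $m-2$ interior zeros with $v(0)>0$. By the uniqueness statement recalled at the start of Section~\ref{section:Lane} (one radial solution up to sign for each prescribed number of interior zeros, see \cite[p.~263]{K}), these three properties force $v=\apice{u}{m-1}_{\!\!\!p}$, which is \eqref{legame}.

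For the PDE check, since $r\mapsto \apice{u}{m}_p(r)$ solves \eqref{ode}, a direct computation using the chain rule shows $v''(r)+\frac1r v'(r)=(r_p)^{\frac{2}{p-1}+2}\big[(\apice{u}{m}_p)''+\frac1{r_p r}(\apice{u}{m}_p)'\big](r_p r)=-(r_p)^{\frac{2}{p-1}+2}|\apice{u}{m}_p(r_p r)|^{p-1}\apice{u}{m}_p(r_p r)$; on the other hand the exponent $\frac{2}{p-1}$ is exactly chosen so that $(r_p)^{\frac{2}{p-1}+2}=\big((r_p)^{\frac{2}{p-1}}\big)^p$, hence the right-hand side equals $-|v(r)|^{p-1}v(r)$, as required. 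For the boundary condition, $v(1)=(r_p)^{\frac{2}{p-1}}\apice{u}{m}_p(r_p)=(r_p)^{\frac{2}{p-1}}\apice{u}{m}_p(\apice{r}{m}_{m-1,p})=0$ by the definition of $r_p=\apice{r}{m}_{m-1,p}$ as the $(m-1)$-th zero. For the zero count, the map $r\mapsto \apice{u}{m}_p(r_p r)$ on $[0,1]$ is a rescaling that stretches the interval $[0,r_p]$ onto $[0,1]$, so its interior zeros are exactly the points $\apice{r}{m}_{j,p}/r_p$ for $j=1,\dots,m-2$ (the zeros $\apice{r}{m}_{j,p}$ with $j<m-1$), which gives $m-2$ interior zeros, and $v(0)=(r_p)^{\frac{2}{p-1}}\apice{u}{m}_p(0)>0$ by \eqref{MaxInZero}. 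This establishes \eqref{legame}.

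Finally, \eqref{SecondoLegame}, \eqref{PrimoLegame}, \eqref{TerzoLegame} follow by matching structures under the change of variables $r\mapsto r_p r$. Since $\apice{u}{m-1}_{\!\!\!p}(r)=(r_p)^{\frac{2}{p-1}}\apice{u}{m}_p(r_p r)$, the zeros of $\apice{u}{m-1}_{\!\!\!p}$ are the preimages under $r\mapsto r_p r$ of the zeros of $\apice{u}{m}_p$ in $(0,r_p)$, giving $\apice{r}{m-1}_{\!\!\!j,p}=\apice{r}{m}_{j,p}/r_p$ for $1\le j\le m-1$ (noting $\apice{r}{m}_{m-1,p}/r_p=1$ is the boundary zero); differentiating \eqref{legame} shows the critical points transform the same way, yielding \eqref{PrimoLegame}; and for \eqref{TerzoLegame} one compares the scaling parameters in \eqref{scpar}: from \eqref{PrimoLegame} and \eqref{legame} one has $|\apice{u}{m-1}_{\!\!\!p}(\apice{s}{m-1}_{\!\!\!j,p})|=(r_p)^{\frac{2}{p-1}}|\apice{u}{m}_p(\apice{s}{m}_{j,p})|$, so $p|\apice{u}{m-1}_{\!\!\!p}(\apice{s}{m-1}_{\!\!\!j,p})|^{p-1}=(r_p)^2\,p|\apice{u}{m}_p(\apice{s}{m}_{j,p})|^{p-1}$, and taking the $(-1/2)$-power (with $\alpha=0$, so the exponent in \eqref{scpar} is $-\tfrac12$) gives $\apice{\varepsilon}{m-1}_{\!\!\!j,p}=\apice{\varepsilon}{m}_{j,p}/r_p$, i.e.\ \eqref{TerzoLegame}. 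The computations here are all elementary; the only genuinely conceptual point — and the one I would flag as the crux — is the exponent bookkeeping $(r_p)^{\frac{2}{p-1}+2}=\big((r_p)^{\frac{2}{p-1}}\big)^p$, which is precisely what makes the power $\tfrac{2}{p-1}$ the right normalization and which is special to the scaling-invariance structure of this equation.
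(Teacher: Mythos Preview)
Your proof is correct and follows essentially the same approach as the paper: define the rescaled function, verify directly that it solves the Dirichlet Lane--Emden problem with $m-2$ interior zeros and positive value at the origin, then invoke uniqueness to identify it with $\apice{u}{m-1}_{\!\!\!p}$; the consequences \eqref{SecondoLegame}--\eqref{TerzoLegame} are then read off exactly as you do. The paper's proof is a one-line sketch of this same argument, while you have filled in the details (in particular the exponent identity $(r_p)^{\frac{2}{p-1}+2}=\big((r_p)^{\frac{2}{p-1}}\big)^p$ and the derivation of \eqref{TerzoLegame}).
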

\begin{proof}
By a direct computation it is easy to verify that the function
$w(r):=(r_p)^{\frac{2}{p-1}} \apice{u}{m}_{p}(r_p r)$, $r\in (0,1)$, solves \eqref{problem} and \eqref{MaxInZero}. Moreover, it has $m-1$ nodal regions and so, by the uniqueness, $w=\apice{u}{m-1}_{\!\!\!p}$.
\end{proof}

	Hence, we have the following.
\begin{lemma} 	Let $m\in\mathbb N$, $ m\geq 2$, then 
		\begin{eqnarray}	\label{QuartoLegame}
		&& \frac{\apice{s}{\emph m-1}_{\!\!\!j,p} }{\apice{\varepsilon}{\emph m-1}_{\!\!\!j,p}}
		=\frac{\apice{s}{\emph m}_{j,p}}{
			\apice{\varepsilon}{\emph m}_{j,p}},\qquad 0\leq j< m-1,
		\\ 
		\label{QuintoLegame}
		&& \frac{\apice{r}{\emph m-1}_{\!\!\!j,p} }{\apice{\varepsilon}{\emph m-1}_{\!\!\!j,p}}
		=\frac{\apice{r}{\emph m}_{j,p}}{
			\apice{\varepsilon}{\emph m}_{j,p}},\qquad 1\leq j< m-1,
		\\ 
		\label{legamiRaggi}
		&& r_p=\frac{\apice{\varepsilon}{\emph m}_{j,p}}{\apice{\varepsilon}{\emph m-1}_{\!\!\!j,p}} \overset{(*)}{=}\frac{\apice{s}{\emph m}_{j,p}}{\apice{s}{\emph m-1}_{\!\!\!j,p}}\overset{(*)}{=} \frac{\apice{r}{\emph m}_{j,p}}{\apice{r}{\emph m-1}_{\!\!\!j,p}},\qquad 0\leq j< m-1, j\neq 0\mbox{ in $(*)$.}
		\end{eqnarray}
		As a consequence,
		\begin{equation}\label{scalingMj}
		|\apice{u}{m-1}_{\!\!\!p}(\apice{s}{m-1}_{\!\!\!j,p})|=  ( r_p)^{\frac{2}{p-1}}|\apice{u}{m}_{p}(\apice{s}{m}_{j,p})|<|\apice{u}{m}_{p}(\apice{s}{m}_{j,p})|   ,\qquad 0\leq j< m-1.\end{equation}
\end{lemma}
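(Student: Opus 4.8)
The plan is to derive all six displayed relations as direct consequences of the scaling identity \eqref{legame} established in Lemma \ref{lemma:legami}, together with the definitions \eqref{scpar} (with $\alpha=0$) of the scaling parameters $\apice{\varepsilon}{m}_{j,p}$. The key point is that \eqref{legame} is an equality of functions, so every notion attached to $\apice{u}{m}_p$ (zeros, critical points, values at critical points) transfers to $\apice{u}{m-1}_{\!\!\!p}$ after rescaling the variable by $r_p$. First I would prove \eqref{QuartoLegame} and \eqref{QuintoLegame}: combining \eqref{PrimoLegame} (resp.\ \eqref{SecondoLegame}) with \eqref{TerzoLegame} gives $\frac{\apice{s}{m-1}_{\!\!\!j,p}}{\apice{\varepsilon}{m-1}_{\!\!\!j,p}}=\frac{\apice{s}{m}_{j,p}/r_p}{\apice{\varepsilon}{m}_{j,p}/r_p}=\frac{\apice{s}{m}_{j,p}}{\apice{\varepsilon}{m}_{j,p}}$, and identically for the roots; here the factors of $r_p$ cancel, which is the whole content.

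Next I would establish \eqref{legamiRaggi}. The first equality $r_p=\frac{\apice{\varepsilon}{m}_{j,p}}{\apice{\varepsilon}{m-1}_{\!\!\!j,p}}$ is just a rearrangement of \eqref{TerzoLegame}; the two starred equalities $r_p=\frac{\apice{s}{m}_{j,p}}{\apice{s}{m-1}_{\!\!\!j,p}}=\frac{\apice{r}{m}_{j,p}}{\apice{r}{m-1}_{\!\!\!j,p}}$ (valid for $j\neq 0$, since $\apice{s}{m}_{0,p}=\apice{s}{m-1}_{\!\!\!0,p}=0$) are rearrangements of \eqref{PrimoLegame} and \eqref{SecondoLegame} respectively. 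Finally, for \eqref{scalingMj}: evaluate \eqref{legame} at $r=\apice{s}{m-1}_{\!\!\!j,p}$ and use \eqref{PrimoLegame} to get $r_p\apice{s}{m-1}_{\!\!\!j,p}=\apice{s}{m}_{j,p}$, hence $\apice{u}{m-1}_{\!\!\!p}(\apice{s}{m-1}_{\!\!\!j,p})=(r_p)^{\frac{2}{p-1}}\apice{u}{m}_{p}(\apice{s}{m}_{j,p})$; taking absolute values and using $r_p<1$ (so $(r_p)^{\frac{2}{p-1}}<1$) together with $\apice{u}{m}_{p}(\apice{s}{m}_{j,p})\neq 0$ gives the strict inequality.

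I do not anticipate a genuine obstacle: the lemma is purely bookkeeping on top of Lemma \ref{lemma:legami}, and the only things to be careful about are the index ranges (the exclusion $j\neq 0$ in the starred equalities of \eqref{legamiRaggi}, forced by the vanishing of $\apice{s}{\cdot}_{0,p}$) and the direction of the inequality $(r_p)^{\frac{2}{p-1}}<1$, which needs $0<r_p<1$ — true because $r_p=\apice{r}{m}_{m-1,p}$ is an interior zero. If anything deserves a word, it is that the definition \eqref{scpar} of $\apice{\varepsilon}{m}_{j,p}$ involves $|\apice{u}{m}_p(\apice{s}{m}_{j,p})|^{p-1}$, so the identity \eqref{TerzoLegame} should itself be checked from \eqref{scalingMj}; in fact \eqref{TerzoLegame} and \eqref{scalingMj} are two ways of packaging the same computation, and one should present \eqref{scalingMj} first or simultaneously. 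I would write the proof in that order to avoid circularity: derive \eqref{scalingMj} from \eqref{legame} and \eqref{PrimoLegame}, note that \eqref{TerzoLegame} follows from \eqref{scalingMj} and the definition \eqref{scpar}, and then read off \eqref{QuartoLegame}, \eqref{QuintoLegame}, \eqref{legamiRaggi}.

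\begin{proof}
Identity \eqref{legamiRaggi} is an immediate rearrangement of \eqref{TerzoLegame}, \eqref{PrimoLegame} and \eqref{SecondoLegame} (the last two being valid for $j\neq 0$, since $\apice{s}{m}_{0,p}=\apice{s}{m-1}_{\!\!\!0,p}=0$). From \eqref{PrimoLegame} and \eqref{TerzoLegame} we get, for $0\leq j<m-1$,
\[
\frac{\apice{s}{m-1}_{\!\!\!j,p}}{\apice{\varepsilon}{m-1}_{\!\!\!j,p}}
=\frac{\apice{s}{m}_{j,p}/r_p}{\apice{\varepsilon}{m}_{j,p}/r_p}
=\frac{\apice{s}{m}_{j,p}}{\apice{\varepsilon}{m}_{j,p}},
\]
which is \eqref{QuartoLegame}, and likewise \eqref{SecondoLegame} and \eqref{TerzoLegame} give \eqref{QuintoLegame} for $1\leq j<m-1$. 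Finally, evaluating \eqref{legame} at $r=\apice{s}{m-1}_{\!\!\!j,p}$ and using $r_p\,\apice{s}{m-1}_{\!\!\!j,p}=\apice{s}{m}_{j,p}$ (from \eqref{PrimoLegame}) we obtain
\[
\apice{u}{m-1}_{\!\!\!p}(\apice{s}{m-1}_{\!\!\!j,p})=(r_p)^{\frac{2}{p-1}}\,\apice{u}{m}_{p}(\apice{s}{m}_{j,p}),\qquad 0\leq j<m-1;
\]
taking absolute values and recalling that $r_p=\apice{r}{m}_{m-1,p}\in(0,1)$, so that $(r_p)^{\frac{2}{p-1}}<1$, yields \eqref{scalingMj}.
\end{proof}
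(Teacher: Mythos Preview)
Your proof is correct and follows the same approach as the paper, which treats this lemma as an immediate consequence of Lemma~\ref{lemma:legami} and gives no separate proof beyond the phrase ``Hence, we have the following.'' Your concern about circularity between \eqref{TerzoLegame} and \eqref{scalingMj} is unnecessary here, since \eqref{TerzoLegame} is already established in Lemma~\ref{lemma:legami} and you may simply cite it.
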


\begin{lemma}\label{lemma:legameCostantiPassiConsecutivi}
	Let $m\geq 2.$ If Theorem \ref{theorem:mainDirichlet} holds for radial solutions $\apice{u}{m-1}_{\!\!\!p}$ with $m-2$ interior zeros, then
\begin{eqnarray*}
	&\displaystyle\lim_{p\rightarrow +\infty}
(r_p)^{-\frac{2}{p-1}}(\apice{r}{m}_{i,p})^{\frac{2}{p-1}}=\apice{R}{m-1}_{\!\!\!i}, & \quad i=1,\ldots, m-2 \ (\mbox{if }m\geq 3),
	\\
	&\displaystyle
	\lim_{p\rightarrow +\infty}(r_p)^{\frac{2}{p-1}}p|(\apice{u}{m}_{p})'(\apice{r}{m}_{i,p})|(\apice{r}{m}_{i,p})
	=\apice{D}{m-1}_{\!\!i}, & \quad i=1,\ldots, m-1,
	\\
	&\displaystyle\lim_{p\rightarrow +\infty}
	(r_p)^{-\frac{2}{p-1}}(\apice{s}{m}_{i,p})^{\frac{2}{p-1}}=\apice{S}{m-1}_{\!\!\!i},
	 &\quad i=0,\ldots, m-2,
	\\
	&\displaystyle\
	\lim_{p\rightarrow +\infty} (r_p)^{\frac{2}{p-1}}|\apice{u}{m}_p(\apice{s}{m}_{i,p})|=\apice{M}{m-1}_{\!\!\!i}, &\quad i=0,\ldots, m-2,
\end{eqnarray*}
where $\apice{R}{m-1}_{\!\!\!i}$, $\apice{D}{m-1}_{\!\!i}$, $\apice{S}{m-1}_{\!\!\!i}$, $\apice{M}{m-1}_{\!\!\!i}$ are the constants in Theorem \ref{theorem:mainDirichlet}.
\end{lemma}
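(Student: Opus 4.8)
The key point is that Lemma \ref{lemma:legami} gives us the exact change of variables $\apice{u}{m-1}_{\!\!\!p}(r)=(r_p)^{\frac{2}{p-1}}\apice{u}{m}_{p}(r_p r)$, and relations \eqref{SecondoLegame}, \eqref{PrimoLegame}, \eqref{scalingMj} translate the zeros, critical points and critical values of $\apice{u}{m}_p$ into those of $\apice{u}{m-1}_{\!\!\!p}$ by multiplication/division by $r_p$ (resp.\ $(r_p)^{2/(p-1)}$). The plan is therefore simply to apply the inductive hypothesis — Theorem \ref{theorem:mainDirichlet} for $\apice{u}{m-1}_{\!\!\!p}$ — and rewrite each of its four conclusions \eqref{raggio}--\eqref{valoreInPuntoCritico} using these relations.

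First I would treat the radii. By \eqref{SecondoLegame}, $\apice{r}{m-1}_{\!\!\!i,p}=\apice{r}{m}_{i,p}/r_p$ for $1\leq i\leq m-2$, so $(\apice{r}{m-1}_{\!\!\!i,p})^{2/(p-1)}=(r_p)^{-2/(p-1)}(\apice{r}{m}_{i,p})^{2/(p-1)}$; applying \eqref{raggio} at level $m-1$ gives the first claimed limit $\apice{R}{m-1}_{\!\!\!i}$. For the critical points one argues identically from \eqref{PrimoLegame}, obtaining $(r_p)^{-2/(p-1)}(\apice{s}{m}_{i,p})^{2/(p-1)}\to \apice{S}{m-1}_{\!\!\!i}$ for $0\leq i\leq m-2$ (the case $i=0$ being trivial since $\apice{s}{m}_{0,p}=0=\apice{S}{m-1}_{\!\!\!0}$, consistently with Definition \ref{definition:Constants2}). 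For the critical values, \eqref{scalingMj} states exactly $|\apice{u}{m-1}_{\!\!\!p}(\apice{s}{m-1}_{\!\!\!i,p})|=(r_p)^{2/(p-1)}|\apice{u}{m}_{p}(\apice{s}{m}_{i,p})|$, so applying \eqref{valoreInPuntoCritico} at level $m-1$ yields $(r_p)^{2/(p-1)}|\apice{u}{m}_p(\apice{s}{m}_{i,p})|\to \apice{M}{m-1}_{\!\!\!i}$ for $0\leq i\leq m-2$.

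Finally, for the derivative at the radii, differentiating the change of variables gives $(\apice{u}{m-1}_{\!\!\!p})'(r)=(r_p)^{\frac{2}{p-1}+1}(\apice{u}{m}_{p})'(r_p r)$, hence evaluating at $r=\apice{r}{m-1}_{\!\!\!i,p}=\apice{r}{m}_{i,p}/r_p$ and multiplying by $\apice{r}{m-1}_{\!\!\!i,p}$, the two extra factors of $r_p$ cancel and one obtains $p|(\apice{u}{m-1}_{\!\!\!p})'(\apice{r}{m-1}_{\!\!\!i,p})|\apice{r}{m-1}_{\!\!\!i,p}=(r_p)^{2/(p-1)}\,p|(\apice{u}{m}_{p})'(\apice{r}{m}_{i,p})|\apice{r}{m}_{i,p}$ for $1\leq i\leq m-1$ (the index $i=m-1$ being allowed because $\apice{r}{m-1}_{\!\!\!m-1,p}=1$, i.e.\ $\apice{r}{m}_{m-1,p}/r_p=1$); applying \eqref{derivataInRaggio} at level $m-1$ gives the limit $\apice{D}{m-1}_{\!\!i}$. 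This exhausts all four statements. There is no real obstacle here — this lemma is a bookkeeping step that repackages the inductive hypothesis; the only care needed is to check the ranges of the indices and to confirm the differentiated change-of-variables identity, which is immediate from the chain rule.
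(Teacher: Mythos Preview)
Your proposal is correct and follows essentially the same approach as the paper: both use the change of variables from Lemma \ref{lemma:legami} (relations \eqref{legame}, \eqref{SecondoLegame}, \eqref{PrimoLegame}) to rewrite each quantity in terms of the corresponding quantity for $\apice{u}{m-1}_{\!\!\!p}$, and then apply the inductive hypothesis \eqref{raggio}--\eqref{valoreInPuntoCritico} at level $m-1$. Your treatment is in fact slightly more explicit than the paper's, since you spell out the chain-rule computation for the derivative and handle the trivial case $i=0$ for the critical points separately.
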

\begin{proof}
By \eqref{legame}, \eqref{SecondoLegame}, \eqref{PrimoLegame}, and the inductive assumption,
	\begin{eqnarray*}
	&&\displaystyle\lim_{p\rightarrow +\infty}
		(r_p)^{-\frac{2}{p-1}}(\apice{r}{m}_{i,p})^{\frac{2}{p-1}}\overset{\eqref{SecondoLegame}}{=}\lim_{p\rightarrow +\infty}(\apice{r}{m-1}_{\!\!\!i,p})^{\frac{2}{p-1}}=\apice{R}{m-1}_{\!\!\!i},
		\\
	&&\displaystyle
		\lim_{p\rightarrow +\infty}(r_p)^{\frac{2}{p-1}}p|(\apice{u}{m}_{p})'(\apice{r}{m}_{i,p})|(\apice{r}{m}_{i,p})
	\overset{\begin{subarray}{l}\eqref{legame}\\
			\eqref{SecondoLegame}	\end{subarray}}{=}
		\lim_{p\rightarrow +\infty}p|(\apice{u}{m-1}_{\!\!\!p})'(\apice{r}{m-1}_{\!\!\!i,p})|(\apice{r}{m-1}_{\!\!\!i,p})=\apice{D}{m-1}_{\!\!i},
		\\
	&&\displaystyle\lim_{p\rightarrow +\infty}
		(r_p)^{-\frac{2}{p-1}}(\apice{s}{m}_{i,p})^{\frac{2}{p-1}}\overset{\eqref{PrimoLegame}}{=}\lim_{p\rightarrow +\infty}(\apice{s}{m-1}_{\!\!\!i,p})^{\frac{2}{p-1}}=\apice{S}{m-1}_{\!\!\!i},
		\\
	&&\displaystyle\
		\lim_{p\rightarrow +\infty} (r_p)^{\frac{2}{p-1}}|\apice{u}{m}_p(\apice{s}{m}_{i,p})|
		\overset{\begin{subarray}{l}\eqref{legame}\\
				\eqref{PrimoLegame}	\end{subarray}}{=}		
		\lim_{p\rightarrow +\infty}|\apice{u}{m-1}_{\!\!\!p}(\apice{s}{m-1}_{\!\!\!i,p})|	=\apice{M}{m-1}_{\!\!\!i}. 
	\end{eqnarray*}
\end{proof}

The previous result implies that, in order to prove Proposition \ref{theorem:mainDirichlet_PASSO_INDUTTIVO}, it is enough to study the asymptotic behavior of the $4$ quantities
\[(r_p)^{\frac{2}{p-1}}, \quad p|(\apice{u}{m}_p)'(1) |,\quad(s_p)^{\frac{2}{p-1}}, \quad |\apice{u}{m}_p(s_p)|. \]
We start with a preliminary result.
	\begin{lemma} Let $m\in\mathbb N$, $m\geq 2$. For any $p>1$,
		\begin{equation}\label{MassimiNonNulli}
		|\apice{u}{m}_{p}(s_p)|^{p-1}\geq \lambda_1>0, 
		\end{equation}
	where $\lambda_1$ is the first eigenvalue of $-\Delta$ in $H^1_0(\Omega)$,	as a consequence 
		\begin{equation}\label{epsilonVaAZero}
		\varepsilon_{p} =\left[p|\apice{u}{m}_p(s_p)|^{p-1}\right]^{-\frac{1}{2}} =o(1)\qquad\mbox{ as }p\rightarrow +\infty.
		\end{equation}
	\end{lemma}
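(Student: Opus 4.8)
The plan is to prove \eqref{MassimiNonNulli} by exactly the same Poincar\'e--type argument already used in the inductive basis (see the derivation of \eqref{Mnzero}), but applied on the nodal subdomain where the last extremum is attained. First I would fix the outermost nodal annulus $A_p:=\{x\in B:\apice{r}{m}_{m-1,p}<|x|<1\}$ (which reduces to the punctured ball minus nothing when $m=1$, but here $m\ge2$), on which $\apice{u}{m}_p$ has constant sign, vanishes on $\partial A_p$, and attains its extremum $|\apice{u}{m}_p(s_p)|=\|\apice{u}{m}_p\|_{L^\infty(A_p)}$ at $s_p=\apice{s}{m}_{m-1,p}$. Testing the equation $-\Delta \apice{u}{m}_p=|\apice{u}{m}_p|^{p-1}\apice{u}{m}_p$ against $\apice{u}{m}_p\in H^1_0(A_p)$ gives
\[
\int_{A_p}|\nabla \apice{u}{m}_p|^2\,dx=\int_{A_p}|\apice{u}{m}_p|^{p+1}\,dx\le \|\apice{u}{m}_p\|_{L^\infty(A_p)}^{p-1}\int_{A_p}|\apice{u}{m}_p|^2\,dx\le \frac{|\apice{u}{m}_p(s_p)|^{p-1}}{\lambda_1(A_p)}\int_{A_p}|\nabla \apice{u}{m}_p|^2\,dx,
\]
and since $\int_{A_p}|\nabla \apice{u}{m}_p|^2\,dx>0$ we may divide to obtain $|\apice{u}{m}_p(s_p)|^{p-1}\ge \lambda_1(A_p)$. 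To get a bound independent of $p$ I would use domain monotonicity of the first Dirichlet eigenvalue: $A_p\subset B$, hence $\lambda_1(A_p)\ge \lambda_1(B)=\lambda_1>0$, giving \eqref{MassimiNonNulli}.

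Then \eqref{epsilonVaAZero} is immediate: by definition $\varepsilon_p=\apice{\varepsilon}{m}_{m-1,p}=[p|\apice{u}{m}_p(s_p)|^{p-1}]^{-1/2}$, and \eqref{MassimiNonNulli} yields $p|\apice{u}{m}_p(s_p)|^{p-1}\ge p\lambda_1\to+\infty$ as $p\to+\infty$, so $\varepsilon_p=o(1)$. (Alternatively, one can simply quote that $|\apice{u}{m}_p(s_p)|=\apice{M}{m}_{m-1,p}$ stays bounded below away from a value $\ge1$ by Lemma \ref{theorem: k=1EConvDeboleAZeroENEICOMPATTI} together with the monotonicity \eqref{Mmonotoni}, but the eigenvalue argument has the advantage of being elementary and self-contained, paralleling \eqref{Mnzero}.)

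There is essentially no hard part here; the only point requiring a word of care is that the outermost nodal region $A_p$ is an \emph{annulus} shrinking toward the origin as $p\to+\infty$, so its own first eigenvalue $\lambda_1(A_p)$ could a priori degenerate --- but in the wrong direction, since smaller domains have \emph{larger} eigenvalues, so the inclusion $A_p\subset B$ gives a uniform lower bound for free. One should just make sure $A_p$ has positive measure and $\apice{u}{m}_p\not\equiv0$ on it, which is guaranteed by the structure of $\apice{u}{m}_p$ recalled in Proposition \ref{PropositionUnicoMaxeMin}.
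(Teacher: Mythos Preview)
Your proof is correct and follows essentially the same approach as the paper: define the last nodal annulus $\Omega_p=\{r_p<|x|<1\}$, test the equation against $\apice{u}{m}_p$ there, and combine the Poincar\'e inequality with the domain monotonicity $\lambda_1(\Omega_p)\ge\lambda_1(B)$ to obtain \eqref{MassimiNonNulli}, from which \eqref{epsilonVaAZero} is immediate. The only cosmetic difference is notation ($A_p$ versus $\Omega_p$); the argument is identical.
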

\begin{proof}\eqref{epsilonVaAZero} clearly follows from \eqref{MassimiNonNulli}. The proof of \eqref{MassimiNonNulli} is similar to the one of \eqref{Mnzero}. Indeed let $\Omega_p:=\{r_p<|x|<1\}$, then by the monotonicity property of the first eigenvalue $\lambda_1(\Omega)\leq \lambda_1(\Omega_p)$, moreover  $|u_p(s_p)|=\|u_p\|_{L^{\infty}(\Omega_p)}$ and so, by Poincare inequality:
	\[\int_{\Omega_p} |\nabla u_p|^2\,dx=\int_{\Omega_p}|u_p|^{p+1}\,dx\leq |u_p(s_p)|^{p-1}\int_{\Omega_p}|u_p|^2\,dx\leq \frac{|u_p(s_p)|^{p-1}}{\lambda_1(\Omega)}\int_{\Omega_p}|\nabla u_p|^2\,dx.\]
\end{proof}
	From this we easily deduce the following.
	\begin{lemma}[The nodal lines shrink to $(0,0)$]\label{Proposition:spVaAZero} Let $m\in\mathbb N^+$, $m\geq 2$. We have 
		\[s_p=o(1)   \mbox{\ (and so also $r_p=o(1)$) \qquad as }\ p\rightarrow +\infty.\]
	\end{lemma}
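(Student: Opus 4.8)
Proof plan for Lemma \ref{Proposition:spVaAZero} ($s_p=o(1)$, hence $r_p=o(1)$).

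The plan is to argue by contradiction. Suppose that $s_p$ does not tend to $0$; then, along a subsequence (not relabelled), $s_p\to \bar s>0$ for some $\bar s\in(0,1]$. The idea is to combine the two key facts already established: on the one hand, by \eqref{epsilonVaAZero} the scaling parameter $\varepsilon_p=\apice{\varepsilon}{m}_{m-1,p}\to 0$, which forces $|\apice{u}{m}_p(s_p)|^{p-1}\to+\infty$ faster than $p^{-1}$, so in particular $\liminf_p|\apice{u}{m}_p(s_p)|\geq 1$ and $|\apice{u}{m}_p(s_p)|$ stays uniformly away from $0$; on the other hand, by the Strauss-type pointwise bound in Lemma \ref{lemma:Strauss} we have $|\apice{u}{m}_p(s_p)|\leq C_m/\sqrt{s_p}$ for $p\geq p_m$. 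The first of these two bounds gives no contradiction by itself, so the real input must come from the $C^1_{loc}$ convergence $p\apice{u}{m}_p=v+o(1)$ on $B\setminus\{0\}$ from Lemma \ref{theorem: k=1EConvDeboleAZeroENEICOMPATTI}.

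Here is the mechanism. If $\bar s>0$, then $s_p$ eventually lies in a fixed compact subset $K$ of $B\setminus\{0\}$ (say $K=[\bar s/2,(1+\bar s)/2]$ for $p$ large). By the convergence $p\apice{u}{m}_p\to v$ in $C^0(K)$, the sequence $p|\apice{u}{m}_p(s_p)|$ is bounded, i.e. $|\apice{u}{m}_p(s_p)|\leq \widetilde C/p$ for some constant $\widetilde C$ and $p$ large. But then
\[
p\,|\apice{u}{m}_p(s_p)|^{p-1}\leq p\left(\frac{\widetilde C}{p}\right)^{p-1}=\widetilde C^{\,p-1}p^{2-p}\longrightarrow 0\qquad\text{as }p\to+\infty,
\]
which contradicts \eqref{epsilonVaAZero}, since $\varepsilon_p=[p|\apice{u}{m}_p(s_p)|^{p-1}]^{-1/2}\to 0$ requires $p|\apice{u}{m}_p(s_p)|^{p-1}\to+\infty$. (One could equally use \eqref{P3estimate} together with the convergence to conclude that $s_p$ cannot stay away from $0$; the argument is the same in spirit.) This forces $\bar s=0$, and since the subsequence was arbitrary, $s_p\to 0$. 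Finally $r_p<s_p$ by \eqref{sp}, so $r_p=o(1)$ as well.

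The step I expect to require the most care is making rigorous the passage "$s_p\in K$ for $p$ large $\Rightarrow$ $p|\apice{u}{m}_p(s_p)|$ bounded": one must be slightly careful because $s_p$ moves with $p$, so one uses that $p\apice{u}{m}_p\to v$ \emph{uniformly} on the fixed compact $K$ (which is what $C^1_{loc}$, a fortiori $C^0_{loc}$, convergence provides) together with continuity of the limit $v$ on $K$, to get $\sup_{p\ \mathrm{large}}\sup_{K}p|\apice{u}{m}_p|<\infty$. Everything else is a direct combination of \eqref{epsilonVaAZero} and elementary estimates on $p^{2-p}$, and it parallels exactly the proof of \eqref{Mnzero} in spirit (an eigenvalue/Poincar\'e lower bound preventing the maximum value on an annulus from decaying too fast).
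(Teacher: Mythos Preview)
Your argument is correct, but it takes a different route from the paper. The paper's proof is shorter and more elementary: arguing by contradiction that $s_{p_n}\geq\alpha>0$ along a subsequence, it combines the Strauss-type bound of Lemma~\ref{lemma:Strauss} (sharpened by the energy estimate \eqref{stimaenergiaupm} to carry a factor $\sqrt{p}$) to obtain that $\sqrt{p_n}\,|\apice{u}{m}_{p_n}(s_{p_n})|\leq C/\sqrt{\alpha}$ is bounded; on the other hand \eqref{MassimiNonNulli} gives $|\apice{u}{m}_p(s_p)|\geq\lambda_1^{1/(p-1)}\to 1$, so $\sqrt{p_n}\,|\apice{u}{m}_{p_n}(s_{p_n})|\to\infty$, a contradiction.

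You instead invoke the deeper input of Lemma~\ref{theorem: k=1EConvDeboleAZeroENEICOMPATTI} (the $C^1_{loc}$ convergence of $p\,\apice{u}{m}_p$ away from the origin), which yields the stronger bound $p\,|\apice{u}{m}_p(s_p)|\leq C$ and hence an even sharper decay, still contradicting \eqref{epsilonVaAZero}. Your parenthetical remark about \eqref{P3estimate} is in fact the cleanest route of all three: if $s_p\geq\alpha>0$ then \eqref{P3estimate} gives directly $p\,|\apice{u}{m}_p(s_p)|^{p-1}\leq \widetilde C_m/\alpha^2$, contradicting \eqref{epsilonVaAZero} with no further work (and sidestepping any worry about whether the $C^1_{loc}$ convergence reaches $r=1$). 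The paper's approach has the virtue of relying only on Strauss and the energy bound rather than on the asymptotic analysis imported from \cite{DeMarchisIanniPacellaJEMS}; your approach is logically sound since Lemma~\ref{theorem: k=1EConvDeboleAZeroENEICOMPATTI} is established independently and earlier.
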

	\begin{proof} 
		If, by contradiction, $s_{p_n}\geq \alpha>0$  for a sequence $p_n\rightarrow +\infty$ as $n\rightarrow +\infty$, then, by  Lemma \ref{lemma:Strauss}, 
		\[\sqrt{p_n}|\apice{u}{m}_p(s_{p_n})|\leq \frac{C}{|\alpha|^{\frac{1}{2}}}\ \mbox{ for }n \mbox{ large}.\]
		So, the sequence $(\sqrt{p_n}|\apice{u}{m}_{p}(s_{p_n})|)_n$ would be bounded in contradiction with \eqref{MassimiNonNulli}.
	\end{proof}

\begin{lemma} Let $m\in\mathbb N$, $m\geq 2$. If Theorem \ref{theorem:mainDirichlet} holds for the solution $\apice{u}{m-1}_{\!\!p}$ with $m-2$ interior zeros, then there exist $p_m>1$, $K_m,C_m>0$ such that, for $p\geq p_m$,
	\begin{equation}\label{raggioLimitatoBasso}
	 0< K_m\leq  (r_p)^{\frac{2}{p-1}}(<1),\end{equation}
\begin{equation}\label{normaLInfinitoLimitataAlto}
(0<C\leq)\ |\apice{u}{m}_p(s_p)|\leq C_m.
\end{equation}
\end{lemma}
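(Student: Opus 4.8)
The plan is to obtain both estimates directly from facts already established in this section, with no new machinery: I will only use the change of variables of Lemma~\ref{lemma:legami}, the $L^{\infty}$-bounds of Lemma~\ref{theorem: k=1EConvDeboleAZeroENEICOMPATTI}, and the Poincar\'e-type lower bound \eqref{MassimiNonNulli}. In fact, the inductive hypothesis on $\apice{u}{m-1}_{\!\!\!p}$ (i.e. Theorem~\ref{theorem:mainDirichlet} for $\apice{u}{m-1}_{\!\!\!p}$) will not really be needed for these two bounds; it would only supply the sharper asymptotics $\apice{u}{m-1}_{\!\!\!p}(0)\to\apice{M}{m-1}_{\!\!\!0}$, which is not required here.

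For \eqref{normaLInfinitoLimitataAlto} I would argue as follows. Since $s_p$ is a critical point of $\apice{u}{m}_p$, one has $|\apice{u}{m}_p(s_p)|\leq\|\apice{u}{m}_p\|_{\infty}\leq C_m$ for $p\geq p_m$ by Lemma~\ref{theorem: k=1EConvDeboleAZeroENEICOMPATTI}, which is the upper bound. For the parenthetical lower bound, \eqref{MassimiNonNulli} yields $|\apice{u}{m}_p(s_p)|^{p-1}\geq\lambda_1>0$, hence $|\apice{u}{m}_p(s_p)|\geq\lambda_1^{1/(p-1)}$; since $\lambda_1^{1/(p-1)}\to1$ as $p\to\infty$, this stays bounded below by a positive constant $C$ for $p\geq p_m$ (enlarging $p_m$ if necessary).

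For \eqref{raggioLimitatoBasso} the key step will be to evaluate the change of variables \eqref{legame} (equivalently, \eqref{scalingMj} with $j=0$) at the origin: recalling the sign normalization \eqref{MaxInZero}, both $\apice{u}{m}_p(0)$ and $\apice{u}{m-1}_{\!\!\!p}(0)$ are positive, so
\[(r_p)^{\frac{2}{p-1}}=\frac{\apice{u}{m-1}_{\!\!\!p}(0)}{\apice{u}{m}_p(0)}.\]
Then, applying Lemma~\ref{theorem: k=1EConvDeboleAZeroENEICOMPATTI} once with $m-1$ and once with $m$, for $p\geq p_m$ we have $\apice{u}{m-1}_{\!\!\!p}(0)=\|\apice{u}{m-1}_{\!\!\!p}\|_{\infty}\geq1$ and $\apice{u}{m}_p(0)=\|\apice{u}{m}_p\|_{\infty}\leq C_m$, which gives $(r_p)^{\frac{2}{p-1}}\geq C_m^{-1}=:K_m>0$; the bound $(r_p)^{\frac{2}{p-1}}<1$ is immediate since $0<r_p<1$. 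Since all the needed ingredients are already available, there is no real obstacle in this lemma; the only point that deserves a word is the uniformity in $p$ of the lower bound on $|\apice{u}{m}_p(s_p)|$, which is taken care of by the remark $\lambda_1^{1/(p-1)}\to1$ above.
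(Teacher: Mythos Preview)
Your proof is correct and in fact simpler than the paper's. The paper obtains \eqref{raggioLimitatoBasso} by comparing energies: it combines the Pohozaev identity (Lemma~\ref{lemmaP}) with the inductive hypothesis, which gives $p\int_0^1|\apice{u}{m-1}_{\!\!p}|^{p+1}r\,dr\to\frac14(\apice{D}{m-1}_{\!\!m-1})^2>0$, and then uses the change of variables together with the global energy bound \eqref{stimaenergiaupm} to deduce that this quantity is $\leq (r_p)^{4/(p-1)}E_m$. For \eqref{normaLInfinitoLimitataAlto} the paper argues via Lemma~\ref{lemma:legameCostantiPassiConsecutivi} (hence again the inductive hypothesis) and the monotonicity \eqref{Mmonotoni} of the local extrema. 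Your route bypasses both uses of the inductive hypothesis: evaluating \eqref{legame} at the origin to get $(r_p)^{2/(p-1)}=\apice{u}{m-1}_{\!\!\!p}(0)/\apice{u}{m}_p(0)$ and then applying the uniform $L^\infty$ bounds of Lemma~\ref{theorem: k=1EConvDeboleAZeroENEICOMPATTI} (which hold for every $m$) immediately yields both estimates. The gain is conceptual clarity---this particular lemma does not actually rely on the inductive scheme---while the paper's argument has the minor advantage of staying within the circle of ideas (Pohozaev, energy quantization) that drive the sharp constants later on.
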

\begin{proof} The upper bound in \eqref{raggioLimitatoBasso} is immediate, since $r_p<1$, moreover
		\begin{eqnarray}
		\label{papo}
		p \int_{0}^1|\apice{u}{m-1}_{\!\!p}(r)|^{p+1}rdr &\overset{\text{Lemma \ref{lemmaP}}}{=} & \left(1+\frac{1}{p}\right)\frac{1}{4}\left[p (\apice{u}{m-1}_{\!\!p})'(1)\right]^2 \nonumber\\
		&\overset{\text{Thm \ref{theorem:mainDirichlet}  for $\apice{u}{m-1}_{\!\!\!\!p}$}}{=}& \frac{1}{4}\left[\apice{D}{m-1}_{\!\!m-1}\right]^2+o(1),
		\end{eqnarray}
as $p\rightarrow +\infty$, where for the last equality we have used the assumption that Theorem \ref{theorem:mainDirichlet} holds for  $\apice{u}{m-1}_{\!\!p}$. Moreover,
	\begin{eqnarray}
	\label{test}
		 p \int_{0}^1|\apice{u}{m-1}_{\!\!p}(r)|^{p+1}rdr &\overset{\eqref{legame}}{=}& p (r_p)^{\frac{2(p+1)}{p-1}}    \int_{0}^1|\apice{u}{\emph m}_p(r_p r)|^{p+1}rdr 
	\nonumber	\\
	& =& p (r_p)^{\frac{4}{p-1}}    \int_{0}^{r_p}|\apice{u}{\emph m}_p(s)|^{p+1}sds\nonumber \\
	&\leq&   (r_p)^{\frac{4}{p-1}}    p \int_{0}^{1}|\apice{u}{\emph m}_p(s)|^{p+1}sds
	\nonumber\\
	&\overset{\eqref{stimaenergiaupm}}{\leq}& (r_p)^{\frac{4}{p-1}}  E_m
		\end{eqnarray}
	for $p\geq p_m$. The lower bound in \eqref{raggioLimitatoBasso} then follows combining  \eqref{papo} with \eqref{test}. \\
	The lower bound in \eqref{normaLInfinitoLimitataAlto}   is due to \eqref{MassimiNonNulli}, moreover from \eqref{raggioLimitatoBasso} and from Lemma  \ref{lemma:legameCostantiPassiConsecutivi} (for which again we need the assumption on the validity of Theorem \ref{theorem:mainDirichlet} for  $\apice{u}{m-1}_{\!\!p}$) we also obtain the upper bound in \eqref{normaLInfinitoLimitataAlto}:
	\[\apice{M}{m-1}_{\!\!m-2}+o(1)\overset{\text{Lemma  \ref{lemma:legameCostantiPassiConsecutivi}}}{=} (r_p)^{\frac{2}{p-1}}|\apice{u}{m}_p(\apice{s}{m}_{m-2,p})|\overset{\eqref{raggioLimitatoBasso}}{\geq}K_m |\apice{u}{m}_p(\apice{s}{m}_{m-2,p})|\overset{\eqref{Mmonotoni}}{>}K_m |\apice{u}{m}_p(s_p)|.\]
\end{proof}

\begin{definition}
Let $m\in\mathbb N$, $m\geq 2$. If Theorem \ref{theorem:mainDirichlet} holds for the solution $\apice{u}{m-1}_{\!\!p}$ with $m-2$ interior zeros then, by \eqref{raggioLimitatoBasso} it is well defined, up to a subsequence,
\begin{equation}
\label{rmiinfty}
R:=\lim_{p\rightarrow +\infty} (r_p)^{\frac{2}{p-1}}\ (\in(0,1]).
\end{equation}
Moreover, by  \eqref{normaLInfinitoLimitataAlto} we can also define, up to a subsequence,
\begin{equation}
\label{Mmiinfty} 
M:=\lim_{p\rightarrow +\infty}|\apice{u}{m}_p(s_p)|\ (>0).
\end{equation}
\end{definition}

Next we prove that $R=\apice{R}{m}_{m-1}$ and $M=\apice{M}{m}_{m-1}$.

\begin{proposition}\label{prop:BmiValePerOgnim} Let $m\in\mathbb N^+$, $m\geq 2$. If Theorem \ref{theorem:mainDirichlet} holds for the solution $\apice{u}{m-1}_{\!\!\!p}$ with $m-2$ interior zeros, then there exists $\sigma>0$ such that
		\begin{equation}\label{tesibh}\lim_{p\rightarrow +\infty}\frac{s_p}{\varepsilon_p}= \sigma>0.
		\end{equation}
\end{proposition}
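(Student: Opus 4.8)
The plan is to analyze the last rescaled function $z_p$, which solves \eqref{equazionizp}, and track the relationship between $s_p$ (the last critical point), $r_p = \apice{r}{m}_{m-1,p}$, and the scaling parameter $\varepsilon_p = \apice{\varepsilon}{m}_{m-1,p}$. Since $s_p$ is a critical point, the function $\tilde z_p(r) := z_p(r - s_p/\varepsilon_p)$ vanishes together with its derivative at $r = s_p/\varepsilon_p$; equivalently, $z_p$ as written has a translated origin. The first step is to establish local uniform bounds on $z_p$ on compact subsets of $(a_p, b_p)$ shifted appropriately, so that elliptic (ODE) estimates give convergence in $C^1_{loc}$ along subsequences to some limit profile $z$. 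By the equation, $|1 + z_p/p| \le 1$ and the nonlinearity $|1+z_p/p|^{p-1}(1+z_p/p) \to e^{z}$ locally uniformly wherever $z_p$ stays bounded, so the limit $z$ solves a Liouville-type equation $-z'' - \frac{1}{r + \tau}z' = e^z$ where $\tau := \lim s_p/\varepsilon_p \in [0,+\infty]$ (a priori the limit could be $+\infty$ or $0$, and ruling out these degenerate cases is part of the argument).

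The key point to prove is precisely that $\tau = \lim_{p\to\infty} s_p/\varepsilon_p$ exists, is finite, and is strictly positive. For the \emph{lower} bound $\sigma > 0$: suppose $s_p/\varepsilon_p \to 0$ along a subsequence. Then near $r = s_p/\varepsilon_p$ the coefficient $\frac{1}{r + s_p/\varepsilon_p}$ blows up, and since $z_p$ has a critical point there with $z_p \le 0$ and $z_p$ passing through the translated origin where it equals $0$... actually the cleanest route is to use that between $r_p$ and $s_p$ the function $|\apice{u}{m}_p|$ increases from $0$ to its maximum $|\apice{u}{m}_p(s_p)|$, so in rescaled variables $z_p$ rises from $-p$ (at $a_p$) to $0$ (at the shifted origin $\sigma_p := s_p/\varepsilon_p$). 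If $\sigma_p \to 0$, then $a_p = (r_p - s_p)/\varepsilon_p \to 0$ as well (since $|a_p| < \sigma_p$), meaning $z_p$ would have to climb from $-p$ to $0$ over a vanishingly short interval, contradicting the derivative bound from Lemma \ref{lemma:stimaDerivataInrp} (in rescaled variables $|z_p'|$ is controlled). I would make this precise by integrating the equation for $z_p$ over $(a_p, \sigma_p)$ and using $z_p' \cdot (r + \sigma_p)$ being the integral of a bounded-in-$L^1$ quantity; the energy bound \eqref{stimaenergiaupm} transported via \eqref{test}-type scalings controls $\int |1+z_p/p|^{p+1}(r+\sigma_p)\,dr$, forcing $\sigma_p$ to stay bounded away from $0$.

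For the \emph{upper} bound (finiteness of $\sigma$) and existence of the limit: I would show that $\sigma_p = s_p/\varepsilon_p$ stays bounded by a compactness/energy argument — the total energy carried by the last bubble is at most $E_m$ by \eqref{stimaenergiaupm}, and a bubble of the Liouville equation $-\Delta Z = e^Z$ concentrated at scale $\varepsilon_p$ around a point at distance $s_p$ from the origin, with $s_p/\varepsilon_p \to +\infty$, would force the origin to be "invisible" to the bubble, yielding the regular Liouville profile $Z_0$ with mass $8\pi$; but then the behavior of $\apice{u}{m}_p$ near $r_p$ (where it vanishes, coming down from the previous bubble at scale $\apice{\varepsilon}{m}_{m-2,p} \ll \varepsilon_p$) would be inconsistent — more concretely, the relation \eqref{legamiRaggi} and \eqref{scalingMj} pin down $r_p$ relative to $\varepsilon_p$ through the inductive hypothesis, and matching the inner bubble expansion with the value $|\apice{u}{m}_p(s_p)|$ fixes $\sigma_p$ to converge. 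Since the limiting equation $-z'' - \frac{z'}{r+\sigma} = e^z$ with $z(\sigma) = 0$, $z'(\sigma) = 0$, $z \le 0$, and finite energy has a \emph{unique} solution (the translated singular-Liouville bubble $Z_{m-1}(\cdot + \sigma_{m-1})$ with $\sigma$ forced to equal $\sigma_{m-1} = \sqrt{(\theta_{m-1}^2-4)/2}$ only once $\theta_{m-1}$ is identified — but at this stage of the proof we only need $\sigma > 0$), the limit is independent of the subsequence, giving existence of $\lim \sigma_p =: \sigma > 0$. The main obstacle I anticipate is ruling out the degenerate alternative $\sigma_p \to +\infty$: this requires genuinely using the inductive information about the previous nodal region (via Lemma \ref{lemma:legameCostantiPassiConsecutivi}) to control how fast $|\apice{u}{m}_p|$ can grow from $r_p$ to $s_p$, rather than a soft compactness argument, and it is where the careful integral estimates of Section \ref{section:stepInduttivo} carry the weight.
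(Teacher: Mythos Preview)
Your proposal has the roles of the two bounds reversed, and the mechanisms you propose for each are not the ones that actually work.

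\textbf{Upper bound (ruling out $s_p/\varepsilon_p\to+\infty$).} You describe this as the main obstacle, requiring bubble-matching and the inductive information from Lemma~\ref{lemma:legameCostantiPassiConsecutivi}. In fact it is a one-line consequence of the preliminary estimate \eqref{P3estimate}: since $\varepsilon_p^{-2}=p|\apice{u}{m}_p(s_p)|^{p-1}$, one has $(s_p/\varepsilon_p)^2=p\,s_p^2\,|\apice{u}{m}_p(s_p)|^{p-1}\le \widetilde C_m$. No inductive hypothesis is needed here at all.

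\textbf{Lower bound (ruling out $s_p/\varepsilon_p\to 0$).} Your idea that $z_p$ must climb from $-p$ to $0$ over a vanishing interval is correct, but the tool you invoke---the pointwise derivative bound of Lemma~\ref{lemma:stimaDerivataInrp}---does not close the argument. In rescaled variables that bound reads $|z_p'(r)|\le C/\big(|\apice{u}{m}_p(s_p)|\,(s_p/\varepsilon_p+r)\big)$, and on $(a_p,0)$ the denominator $s_p/\varepsilon_p+r$ lies in $(r_p/\varepsilon_p,\,s_p/\varepsilon_p)$, both of which tend to $0$; so the bound degenerates exactly where you need it. The paper instead uses the \emph{integral} identity \eqref{N} with $s=r_p$, $t=s_p$: this gives $p\int_{r_p}^{s_p}|\apice{u}{m}_p|^p\,r\,dr = p|(\apice{u}{m}_p)'(r_p)|\,r_p$, which by Lemma~\ref{lemma:legameCostantiPassiConsecutivi} and \eqref{rmiinfty} converges to $\apice{D}{m-1}_{\!\!m-1}/R>0$. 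On the other hand, a direct rescaling (using $|\apice{u}{m}_p|\le|\apice{u}{m}_p(s_p)|$ on that interval and the boundedness from \eqref{normaLInfinitoLimitataAlto}) shows the same integral is $\le |\apice{u}{m}_p(s_p)|\big(\tfrac{s_p}{\varepsilon_p}|a_p|-\tfrac{a_p^2}{2}\big)\to 0$, a contradiction. So the inductive hypothesis is used precisely here, for the lower bound---opposite to what you anticipated.

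Finally, a small coordinate confusion: with $z_p$ as in \eqref{zpmenoNsimplify} one has $z_p(0)=z_p'(0)=0$ and $z_p(a_p)=-p$; the point $r=0$ already corresponds to $s_p$, not to the origin in the original variable, so the ``shifted origin $\sigma_p$'' in your write-up is misplaced.
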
 
\begin{proof}
By \eqref{P3estimate}, there exists $C>0$ independent of $p$ such that
		\begin{equation}
		\label{noinfi}
		\frac{s_p}{\varepsilon_p}\leq C.
		\end{equation}	
Assume, by contradiction, that, up to a subsequence, $\frac{s_p}{\varepsilon_p}
	\rightarrow 0$ as $p\rightarrow +\infty$. Then, since $0<r_p<s_p$, necessarily also  
	$a_p=\frac{r_p-s_p}{\varepsilon_p}\rightarrow 0$ and, by the change of variables, $r=s_p+\varepsilon_ps$, since $|\apice{u}{m}_p(s_p)|$ is bounded (see  \eqref{normaLInfinitoLimitataAlto}), we deduce that
	\begin{eqnarray*}
		p \int_{r_p}^{s_p}|\apice{u}{m}_p(r)|^{p}r\,dr &\leq&	p |\apice{u}{m}_p(s_p)|^p
		\int_{r_p}^{s_p}r\,dr
		\\
		&=&
		p |\apice{u}{m}_p(s_p)|^p (\varepsilon_p)^2\int_{a_p}^{0}\Big(\frac{s_p}{\varepsilon_p}+s\Big)\,ds
		\\
		&=&|\apice{u}{m}_p(s_p)|\left(\frac{s_p}{\varepsilon_p}|a_p|-\frac{a_p^2}{2}\right)\longrightarrow 0
	\end{eqnarray*}
	as $p\rightarrow +\infty$.
	But  choosing $s=r_{p}$ and $t=s_p$ into  	\eqref{N} one obtains, from Lemma \ref{lemma:legameCostantiPassiConsecutivi} (which holds true by the assumption on the validity of Theorem \ref{theorem:mainDirichlet} for  $\apice{u}{m-1}_{\!\!p}$) and \eqref{rmiinfty}, that	\[p\int_{r_p}^{s_p}  |\apice{u}{m}_p(r) |^p r \,dr	\overset{\eqref{N}}{=}p\left|(\apice{u}{m}_p)'(r_p) \right|r_p\longrightarrow \frac{\apice{D}{m-1}_{\!\!m-1}}{R}>0 \quad \text{as $p\rightarrow +\infty$,	}\]
	which gives a contradiction. 
\end{proof}

\begin{proposition}\label{prop:CmiValePerOgnim} Let $m\in\mathbb N^+$, $m\geq 2$. If  Theorem \ref{theorem:mainDirichlet} holds for the solution $\apice{u}{m-1}_p$ with $m-2$ interior zeros, then
\begin{equation}\label{tesiCh}
\frac{r_p}{\varepsilon_p} =o(1) \quad\mbox{ as }p\rightarrow +\infty.
\end{equation}
\end{proposition}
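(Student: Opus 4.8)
The plan is to argue by contradiction, using the rescaled equation \eqref{equazionizp} for $z_p$ on $(a_p,0)$ together with Proposition~\ref{prop:BmiValePerOgnim}, which already gives $s_p/\varepsilon_p\to\sigma>0$. Since $0<r_p<s_p$, the quantity $r_p/\varepsilon_p$ is bounded above by $s_p/\varepsilon_p$, hence bounded; if it did not vanish as $p\to+\infty$, we could pass to a subsequence along which $r_p/\varepsilon_p\to\ell$ for some $\ell\in(0,\sigma]$. Along this subsequence $a_p=\frac{r_p-s_p}{\varepsilon_p}\to\ell-\sigma$ stays bounded, and — this is the point — for every $r\in[a_p,0]$ and every large $p$ one has $r+\frac{s_p}{\varepsilon_p}\ge\frac{r_p}{\varepsilon_p}\ge\frac{\ell}{2}>0$, so the first-order coefficient $\frac{1}{r+s_p/\varepsilon_p}$ in \eqref{equazionizp} stays bounded on the (bounded) interval $[a_p,0]$.

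First I would rewrite \eqref{equazionizp} in divergence form on $(a_p,0)$, namely $\big[(r+\tfrac{s_p}{\varepsilon_p})z_p'\big]'=-(r+\tfrac{s_p}{\varepsilon_p})g_p$ with $g_p:=|1+z_p/p|^{p-1}(1+z_p/p)$. Because $-p\le z_p\le 0$ on $(a_p,b_p)$ — recall $z_p<0$ from \eqref{equazionizp}, and $z_p\ge-p$ since $s_p$ is the maximum point of $|\apice{u}{m}_p|$ on the last nodal region (Proposition~\ref{PropositionUnicoMaxeMin}) — one has $0\le g_p\le1$. Integrating from $r\in(a_p,0)$ to $0$ and using $z_p'(0)=0$,
\[
\Big(r+\tfrac{s_p}{\varepsilon_p}\Big)z_p'(r)=\int_r^0\Big(t+\tfrac{s_p}{\varepsilon_p}\Big)g_p(t)\,dt\ \in\ \Big[\,0,\ -\tfrac{s_p}{\varepsilon_p}\,r\,\Big],
\]
since $0\le t+\tfrac{s_p}{\varepsilon_p}\le\tfrac{s_p}{\varepsilon_p}$ for $t\in(a_p,0)$. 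Hence $0\le z_p'(r)\le\dfrac{-(s_p/\varepsilon_p)\,r}{\,r+s_p/\varepsilon_p\,}$ on $(a_p,0)$.

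Then I would close the argument. Along the contradiction subsequence, for $p$ large and $r\in[a_p,0]$ the numerator satisfies $-(s_p/\varepsilon_p)r\le(s_p/\varepsilon_p)|a_p|\le(\sigma+1)^2$, while the denominator is $\ge\ell/2$, so $z_p'(r)\le\frac{2(\sigma+1)^2}{\ell}=:K$ uniformly in $r$. Integrating over $[a_p,0]$ and using $z_p(a_p)=-p$, $z_p(0)=0$,
\[
p=z_p(0)-z_p(a_p)=\int_{a_p}^0 z_p'(r)\,dr\le K\,|a_p|\le K(\sigma+1),
\]
a fixed constant, contradicting $p\to+\infty$. Thus every subsequential limit of $r_p/\varepsilon_p$ must be $0$, which is \eqref{tesiCh}.

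I do not expect a serious obstacle here; the entire content is in the setup. The crucial observation is that proving $r_p/\varepsilon_p=o(1)$ reduces to a one-sided derivative estimate: a positive lower bound on $r_p/\varepsilon_p$ keeps the transport coefficient of the rescaled ODE bounded on the bounded interval $[a_p,0]$, which forces $z_p$ to be uniformly Lipschitz there, incompatible with $z_p$ decreasing by $p\to+\infty$ across $[a_p,0]$. As in Proposition~\ref{prop:BmiValePerOgnim}, the precise values of $\sigma$ and of $R$ (see \eqref{rmiinfty}) play no role — only their positivity and finiteness are used, via the inductive hypothesis through Proposition~\ref{prop:BmiValePerOgnim}.
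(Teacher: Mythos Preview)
Your proof is correct and follows essentially the same approach as the paper's: both argue by contradiction, use the divergence form $[(r+s_p/\varepsilon_p)z_p']'=-(r+s_p/\varepsilon_p)g_p$ on $(a_p,0)$, and exploit that a positive lower bound on $r_p/\varepsilon_p$ keeps $r+s_p/\varepsilon_p$ bounded away from zero, forcing a derivative bound incompatible with $z_p(0)-z_p(a_p)=p$. The only cosmetic difference is that the paper invokes the mean value theorem to pick a single point $t_p\in(a_p,0)$ with $|z_p'(t_p)|=p/|a_p|\to\infty$ and then contradicts this via the integrated equation, whereas you obtain a uniform bound $z_p'\le K$ on $[a_p,0]$ and integrate directly; your version is arguably a touch cleaner.
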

\begin{proof}
Let $\sigma>0$ be the constant in Proposition  \ref{prop:BmiValePerOgnim} and let  $\rho:=\lim_{p\rightarrow +\infty}\frac{r_p}{\varepsilon_p}$, then $\rho\in [0,\sigma]$, since $\frac{s_p}{\varepsilon_p}\rightarrow \sigma$ by Proposition \ref{prop:BmiValePerOgnim} and $0<r_p<s_p$. 
As a consequence $a_p=\frac{r_p-s_p}{\varepsilon_p}\rightarrow h:=\rho-\sigma\in [-\sigma, 0]$.
Proving $\rho=0$ is then equivalent with showing that $h=-\sigma$.\\
Assume by contradiction that $h>-\sigma$.
As $z_p(0)=0$ and $z_p(a_p)=-p$, by the mean value theorem in $(a_p,0)$ we have the existence of $t_p\in (a_p,0)$ such that
\begin{equation}\label{infinitolo}
|(z_p)'(t_p)|=\frac{|z_p(a_p)-z_p(0)|}{|a_p|}=\frac{p}{|a_p|}=\frac{p}{h+o(1)}\rightarrow +\infty,
\end{equation}
as $p\rightarrow +\infty$. Moreover, since $h>-\sigma$ by assumption, there exists $C_1>0$ such that  \begin{equation}\label{tippi}
t_p \geq -\sigma +C_1
\end{equation} for $p$ large.  On the other hand, from the equation \eqref{equazionizp},
\[\left[ (z_p)'(r)\left(r+\frac{s_p}{\varepsilon_p}\right) \right]'=-\left(r+\frac{s_p}{\varepsilon_p}\right)\left|1+\frac{z_{p}}{p}\right|^{p-1}\left(1+\frac{z_{p}}{p}\right),\mbox{ for }r\in (a_p,b_p).\]
Recall that $(z_p)'(0)=0$ and that, by definition, $\left|1+\frac{z_p}{p}\right|\leq 1$. Then, integrating on $(t_p,0)$, we have for $p$ large that
\[|(z_p)'(t_p)|\left|t_p+\frac{s_p}{\varepsilon_p}\right|\leq\int_{t_p}^0 \left|s+\frac{s_p}{\varepsilon_p}\right|\, ds,
\]
from which we obtain
\begin{equation}
\label{prec}
|(z_p)'(t_p)|\left| t_p+\sigma +o(1)\right|\leq \int_{-\sigma}^0 \left(|s|+|\sigma|+o(1)\right)\, ds \leq C_2.
\end{equation}
But \eqref{tippi} implies that  $\left| t_p+\sigma +o(1)\right|\geq C_3>0$ for $p$ large and so, by \eqref{prec},
\[|(z_p)'(t_p)|\leq C_3\]
uniformly in $p$, for $p$ large,
 reaching a contradiction with \eqref{infinitolo}.	
\end{proof}

\

Using  Proposition \ref{prop:BmiValePerOgnim} and Proposition \ref{prop:CmiValePerOgnim}  we can prove the convergence of the last rescaling $z_{p}$ of $\apice{u}{m}_p$.
\begin{proposition}
\label{prop:convergenzaUltimaRiscalata} 
Let $m\in\mathbb N^+$, $m\geq 2$. Assume that Theorem  \ref{theorem:mainDirichlet} holds for the solution $\apice{u}{m-1}_{\!\!p}$ with $m-2$ interior zeros. Let $\sigma>0$ be the constant in Proposition \ref{prop:BmiValePerOgnim}, then
\begin{equation}\label{convIntermediaz}z_{p} \longrightarrow Z(\cdot +\sigma)\quad\mbox{in }C^1_{loc}(-\sigma, +\infty), \quad\mbox{as $p\rightarrow +\infty$,}
\end{equation}
where
\begin{equation}\label{Z} Z(r):=\log\left(\frac{2\theta^2\beta^{\theta}|x|^{\theta-2}}{(\beta^{\theta}+|x|^{\theta})^2}\right),
\end{equation}
with
\begin{equation}\label{Zvari}
\beta=\frac{1}{\sqrt 2} (\theta+2)^{\frac{\theta+2}{2\theta}}(\theta-2)^{\frac{\theta -2}{2\theta}}
\end{equation}
and
\begin{equation}\label{Zvarii}\theta:=\sqrt{2(\sigma^2+2)} \ (>2)\end{equation}
is a singular radial solution of \[
\left\{
\begin{array}{lr}
-\Delta Z=e^Z+ 2\pi(2-\theta)\delta_{0}\quad\mbox{ in }\R^2,\\
\int_{\R^2}e^Zdx=4\pi\theta,
\end{array}
\right.
\]
where  $\delta_{0}$ is the Dirac measure centered at $0$.
\end{proposition}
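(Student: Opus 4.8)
The plan is to pass to the limit, as $p\to+\infty$, in the second-order ODE \eqref{equazionizp} satisfied by $z_p$, after pinning down the limiting interval and establishing local uniform estimates. First I would record that the interval $(a_p,b_p)$ exhausts $(-\sigma,+\infty)$: indeed $a_p=(r_p-s_p)/\varepsilon_p\to-\sigma$ by Propositions \ref{prop:BmiValePerOgnim} and \ref{prop:CmiValePerOgnim} (which give $s_p/\varepsilon_p\to\sigma>0$ and $r_p/\varepsilon_p\to0$), while $b_p=(1-s_p)/\varepsilon_p\to+\infty$ since $\varepsilon_p=o(1)$ by \eqref{epsilonVaAZero} and $s_p=o(1)$ by Lemma \ref{Proposition:spVaAZero}.

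Next I would obtain uniform $C^1$ bounds for $z_p$ on every $[-\sigma+\delta,L]\subset(-\sigma,+\infty)$. Differentiating the definition of $z_p$ and using Lemma \ref{lemma:stimaDerivataInrp} gives
\[
|(z_p)'(r)|=\frac{p\,\varepsilon_p}{|\apice{u}{m}_p(s_p)|}\,\big|(\apice{u}{m}_p)'(s_p+\varepsilon_p r)\big|\leq\frac{C}{|\apice{u}{m}_p(s_p)|}\,\frac{1}{s_p/\varepsilon_p+r},
\]
and since $s_p/\varepsilon_p\to\sigma$ and $|\apice{u}{m}_p(s_p)|$ is bounded below by a positive constant (by \eqref{normaLInfinitoLimitataAlto} and \eqref{MassimiNonNulli}), the right-hand side is bounded on $[-\sigma+\delta,L]$ for $p$ large; together with $z_p(0)=0$ this bounds $z_p$ there too, and then \eqref{equazionizp} — in which $|1+z_p/p|\leq1$ and $1/(r+s_p/\varepsilon_p)$ stays bounded on $[-\sigma+\delta,L]$ — bounds $(z_p)''$ as well. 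By a compactness argument (Arzel\`a--Ascoli), along a subsequence $z_p\to z$ in $C^1_{loc}(-\sigma,+\infty)$ with $z\leq0$. Since $z_p\to z$ locally uniformly, $|1+z_p/p|^{p-1}(1+z_p/p)\to e^z$ and $1/(r+s_p/\varepsilon_p)\to1/(r+\sigma)$ locally uniformly, so $z$ solves $z''+\tfrac{1}{r+\sigma}z'+e^z=0$ on $(-\sigma,+\infty)$ with $z(0)=z'(0)=0$; equivalently $Z(t):=z(t-\sigma)$ is a radial solution of $-\Delta Z=e^Z$ on $\R^2\setminus\{0\}$ with $Z(\sigma)=Z'(\sigma)=0$.

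Then I would check that the limit has finite mass: changing variables $r=s_p+\varepsilon_p\rho$ and using $\varepsilon_p^2=[p|\apice{u}{m}_p(s_p)|^{p-1}]^{-1}$,
\[
p\int_{r_p}^1|\apice{u}{m}_p|^{p+1}r\,dr=|\apice{u}{m}_p(s_p)|^2\int_{a_p}^{b_p}\Big(1+\tfrac{z_p}{p}\Big)^{p+1}\Big(\rho+\tfrac{s_p}{\varepsilon_p}\Big)\,d\rho,
\]
whose left-hand side is $\leq E_m$ by \eqref{stimaenergiaupm} and whose prefactor is bounded below; Fatou's lemma then gives $\int_{-\sigma}^{+\infty}e^{z(\rho)}(\rho+\sigma)\,d\rho<+\infty$, i.e.\ $\int_{\R^2}e^Z\,dx<+\infty$. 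To identify $z$ I would set $\theta:=\sqrt{2(\sigma^2+2)}>2$ and $\beta$ as in \eqref{Zvari}, and verify by a direct computation that the explicit function in \eqref{Z}, shifted by $\sigma$, solves the same ODE and has the same Cauchy data $Z(\sigma)=0$, $Z'(\sigma)=0$ — the conditions $Z'(\sigma)=0$ and $Z(\sigma)=0$ being exactly the algebraic identities $\beta^\theta=\tfrac{\theta+2}{\theta-2}\sigma^\theta$ and $\theta^2-4=2\sigma^2$ that produce \eqref{Zvari}--\eqref{Zvarii}. Uniqueness for the Cauchy problem at the regular point $r=0$ (i.e.\ $t=\sigma>0$), followed by continuation along $(-\sigma,+\infty)$, then forces $z=Z(\cdot+\sigma)$ with $Z$, $\beta$, $\theta$ as in \eqref{Z}--\eqref{Zvarii}; the identities $\int_{\R^2}e^Z\,dx=4\pi\theta$ and $-\Delta Z=e^Z+2\pi(2-\theta)\delta_0$ in $\mathcal D'(\R^2)$ follow from the standard explicit computations for these singular Liouville profiles. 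Finally, since the limiting Cauchy problem has a unique solution, every subsequence yields the same limit, so the whole family converges and \eqref{convIntermediaz} holds.

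The main obstacle is not this argument but its two inputs, $s_p/\varepsilon_p\to\sigma>0$ and $r_p/\varepsilon_p\to0$ (Propositions \ref{prop:BmiValePerOgnim} and \ref{prop:CmiValePerOgnim}); granted those, the only genuine care needed here is the uniform control of $z_p$ as $r\to-\sigma^+$, which is precisely what the derivative estimate of Lemma \ref{lemma:stimaDerivataInrp} supplies, together with the bookkeeping of the constants $\theta$ and $\beta$ in the identification step.
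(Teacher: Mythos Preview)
Your argument is correct and follows the same overall scheme as the paper: show $a_p\to-\sigma$, $b_p\to+\infty$, obtain local compactness, pass to the limit in \eqref{equazionizp}, and identify the limit via the Cauchy data $Z(\sigma)=Z'(\sigma)=0$. The one notable difference is in how the a priori bounds are obtained: the paper simply invokes the argument of \cite[Theorem~1.1]{AdimurthiGrossi} for local uniform boundedness of $z_p$, whereas you give a direct and more self-contained derivative estimate via Lemma~\ref{lemma:stimaDerivataInrp}, which immediately yields the $C^1$ (hence $C^2$) control on compact subintervals of $(-\sigma,+\infty)$; you also make the finite-mass check and the full-sequence convergence (via uniqueness of the Cauchy problem at the regular point $r=0$) explicit, which the paper leaves implicit.
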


\begin{proof}
The rescaled function $z_p$ satisfies \eqref{equazionizp} in $(a_p, b_p)$  	
with $\left|1 + \frac{z_{p}}{p}\right|\leq1$.
Observe that, by Propositions \ref{prop:BmiValePerOgnim} and \ref{prop:CmiValePerOgnim},
\begin{eqnarray*}
a_p\rightarrow -\sigma <0\quad \text{ and }\quad b_p\rightarrow +\infty\qquad \text{as $p\rightarrow +\infty$.}
\end{eqnarray*} 
Following the proof of \cite[Theorem 1.1]{AdimurthiGrossi} one can then show that $z_p$ is locally uniformly bounded in $(-\sigma,+\infty)$. 	By standard elliptic estimates, we have that $z_p$ is uniformly
	bounded in $C^2_{loc}(-\sigma, +\infty)$, so $z_p\to z$ in $C^1_{loc}(-\sigma,+\infty)$, as $p\rightarrow +\infty$, where $z$ solves
	\begin{equation}
	\left\{
	\begin{array}{lr}
	\displaystyle
	z''+\frac{1}{r +\sigma} z'+e^z=0, \qquad r\in (-\sigma,+\infty),
	\\
	\\
	\displaystyle
z(0)=z'(0)=0,
	\\
	\\
	\displaystyle
z\leq 0. 
	\end{array}\right.
	\end{equation}
Let us set $Z(r):=z(r-\sigma)$, so 	$Z$ satisfies
\begin{equation}
\left\{
\begin{array}{lr}
\displaystyle
Z''+\frac{1}{r } Z'+e^{Z}=0, \qquad r\in (0,+\infty),
\\
\\
\displaystyle
Z(\sigma)=Z'(\sigma)=0,
\\
\\
\displaystyle
Z\leq 0. 
\end{array}\right.
\end{equation}
Hence $Z$ must be the function in  \eqref{Z} with \eqref{Zvari} and \eqref{Zvarii}, which satisfies  
\[
-\Delta Z=e^Z+2\pi H\delta_{0}\quad\mbox{ in }\R^2,
\]
where  $\delta_{0}$ is the Dirac measure centered at $0$ and $H:=-\int_0^{\sigma}e^{Z(s)}sds$ (see  \cite[Proof of Proposition 3.1]{GrossiGrumiauPacella2}). \\
Using  \eqref{Z} with \eqref{Zvari} and \eqref{Zvarii} we have that
	\begin{equation}\label{intzerosigmaZ}
	-H=\int_0^{\sigma}e^{Z(s)}sds\overset{\eqref{Z}}{=}   
	2\theta^2\beta^{\theta}\int_0^{\sigma}
	\frac{s^{\theta-1}}{(\beta^{\theta}+s^{\theta})^2}ds
	=\frac{2\theta\sigma^{\theta}}{\beta^{\theta}+\sigma^{\theta}}\overset{ \eqref{Zvari}\&  \eqref{Zvarii}}{=}\theta-2
	\end{equation}
and similarly that 
\[\int_{\R^2}e^Zdx=4\pi\theta.\]
\end{proof}

Now we aim to show that $\sigma=\sigma_{m-1}$. Once this equality has been established, Proposition \ref{theorem:ultimaBubble_PASSO_INDUTTIVO} follows by combining \eqref{epsilonVaAZero} and Propositions \ref{prop:BmiValePerOgnim},  \ref{prop:CmiValePerOgnim}  and \ref{prop:convergenzaUltimaRiscalata}.  Observe that, by \eqref{Zvarii} and \eqref{sigmaEta}, the equality $\sigma=\sigma_{m-1}$ is equivalent to  $\theta=\theta_{m-1}$. We begin with some auxiliary lemmas.

\

\begin{lemma}\label{lemmaG} Let $m\in\mathbb N^+$, $m\geq 2$. Assume that Theorem  \ref{theorem:mainDirichlet} holds for the solution $\apice{u}{m-1}_{\!\!p}$ with $m-2$ interior zeros. Then
	\[p\int_{s_p}^1 	 |\apice{u}{m}_p(r)|^{p}r\,dr=\left(M+o(1)\right)I_p
	\]
	as $p\rightarrow +\infty$, where  $M>0$ is the constant in \eqref{Mmiinfty} and
	\begin{equation}\label{defIp}
	I_p:=
\int_0^{b_p}   \left|1+\frac{z_p (r )}{p}\right|^{p} \left(\frac{s_p}{\varepsilon_p} +  r \right)\,dr.
	\end{equation}
\end{lemma}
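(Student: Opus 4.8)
The statement of Lemma \ref{lemmaG} is just a change of variables combined with the uniform convergence $|\apice{u}{m}_p(s_p)|\to M$. The plan is as follows.

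\textbf{Approach.} The quantity $p\int_{s_p}^1|\apice{u}{m}_p(r)|^p r\,dr$ is an integral over the last nodal region, where the solution has been rescaled via $r=s_p+\varepsilon_p\rho$ (equivalently, writing $\rho=r$ in the rescaled variable, $r=\varepsilon_p\rho$ shifted so that the critical point $s_p$ corresponds to $\rho=s_p/\varepsilon_p$; in the notation of Theorem \ref{th:ultimaBubble}, $z_p(\rho)$ is defined for $\rho\in(a_p,b_p)$ with $s_p+\varepsilon_p\rho$ ranging over $[r_p,1]$). First I would perform exactly this substitution in the integral. We have $|\apice{u}{m}_p(s_p+\varepsilon_p\rho)| = |\apice{u}{m}_p(s_p)|\,|1+z_p(\rho)/p|$ directly from the definition \eqref{zpmenoNsimplify}--\eqref{equazionizp} of $z_p$, and $dr=\varepsilon_p\,d\rho$, while $r=s_p+\varepsilon_p\rho=\varepsilon_p(s_p/\varepsilon_p+\rho)$. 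Hence
\[
p\int_{s_p}^1|\apice{u}{m}_p(r)|^p r\,dr
= p\,|\apice{u}{m}_p(s_p)|^p\,\varepsilon_p^2\int_0^{b_p}\Bigl|1+\tfrac{z_p(\rho)}{p}\Bigr|^p\Bigl(\tfrac{s_p}{\varepsilon_p}+\rho\Bigr)\,d\rho,
\]
where the lower limit of integration is $\rho=0$ because $s_p$ corresponds to $\rho=0$ in this rescaling, and the upper limit $b_p=(1-s_p)/\varepsilon_p$ corresponds to $r=1$.

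\textbf{Key step.} Now invoke the definition $\varepsilon_p^2 = \bigl[p|\apice{u}{m}_p(s_p)|^{p-1}\bigr]^{-1}$ from \eqref{epsilonVaAZero}, so that the prefactor collapses:
\[
p\,|\apice{u}{m}_p(s_p)|^p\,\varepsilon_p^2 = p\,|\apice{u}{m}_p(s_p)|^p\,\frac{1}{p\,|\apice{u}{m}_p(s_p)|^{p-1}} = |\apice{u}{m}_p(s_p)|.
\]
Therefore
\[
p\int_{s_p}^1|\apice{u}{m}_p(r)|^p r\,dr = |\apice{u}{m}_p(s_p)|\int_0^{b_p}\Bigl|1+\tfrac{z_p(\rho)}{p}\Bigr|^p\Bigl(\tfrac{s_p}{\varepsilon_p}+\rho\Bigr)\,d\rho = |\apice{u}{m}_p(s_p)|\,I_p,
\]
with $I_p$ exactly as in \eqref{defIp} (note $|1+z_p/p|=1+z_p/p$ is automatic since $|1+z_p/p|\le 1$, but keeping the absolute value as in \eqref{defIp} is harmless). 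Finally, $|\apice{u}{m}_p(s_p)| = M+o(1)$ as $p\to+\infty$ by the definition of $M$ in \eqref{Mmiinfty} (which is available here because Theorem \ref{theorem:mainDirichlet} is assumed to hold for $\apice{u}{m-1}_{\!\!p}$, giving the uniform bound \eqref{normaLInfinitoLimitataAlto} that makes $M$ well-defined up to a subsequence). This yields $p\int_{s_p}^1|\apice{u}{m}_p(r)|^p r\,dr = (M+o(1))\,I_p$, which is the claim.

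\textbf{Main obstacle.} There is essentially no analytic difficulty here; the only point requiring care is bookkeeping the change of variables so that the lower limit is correctly $\rho=0$ (the rescaling in \eqref{zpmenoNsimplify} is centered at $s_p$, not at $r_p$) and that the Jacobian factor $r\,dr$ becomes $\varepsilon_p^2(s_p/\varepsilon_p+\rho)\,d\rho$. One should also note that, unlike in later lemmas, $I_p$ is \emph{not} claimed to converge here — the statement is merely the exact algebraic identity $p\int_{s_p}^1|\apice{u}{m}_p|^p r\,dr=|\apice{u}{m}_p(s_p)|\,I_p$ together with $|\apice{u}{m}_p(s_p)|\to M$, so no passage to the limit inside the integral (and hence no domination argument) is needed at this stage.
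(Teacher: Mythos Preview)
Your proposal is correct and follows essentially the same approach as the paper: the paper's proof is exactly the change of variable $s=s_p+\varepsilon_p r$, the identity $p|\apice{u}{m}_p(s_p)|^p\varepsilon_p^2=|\apice{u}{m}_p(s_p)|$ from the definition of $\varepsilon_p$, and then $|\apice{u}{m}_p(s_p)|=M+o(1)$ from \eqref{Mmiinfty}. Your remark that no passage to the limit inside the integral is needed here is also accurate and worth keeping.
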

\begin{proof} The claim follows from the change of variable $s=s_p + \varepsilon_p r $ and the definitions of $z_p$ and $\varepsilon_p$, because 
	\begin{eqnarray}\label{comeQui} p\int_{s_p}^1 	 |\apice{u}{m}_p(s)|^{p}s\,ds
&=& p\varepsilon_{p}\int_0^{b_p}   |\apice{u}{m}_p (s_p + \varepsilon_p r )|^{p} (s_p+ \varepsilon_p r )\,dr  
\nonumber\\
&=& p \left(\varepsilon_p\right)^2\int_0^{b_p}   |\apice{u}{m}_p (s_p + \varepsilon_p r )|^{p} \left(\frac{s_p}{\varepsilon_p} +  r \right)\,dr  
\nonumber\\
&\overset{\eqref{defEpsilon}}{=}& 
|\apice{u}{m}_p(s_p)|\int_0^{b_p}   \left|1+\frac{z_p (r )}{p}\right|^{p} \left(\frac{s_p}{\varepsilon_p} +  r \right)\,dr 
\\
&\overset{\eqref{Mmiinfty}}{=}& 
\left(M+o(1)\right)\int_0^{b_p}    \left|1+\frac{z_p (r )}{p}\right|^{p} \left(\frac{s_p}{\varepsilon_p} +  r \right)\,dr. 
\nonumber
	\end{eqnarray} 
\end{proof}
Using Proposition \ref{prop:convergenzaUltimaRiscalata} we can now integrate $|\apice{u}{m}_p(r)|^{p+1}r$.
\begin{lemma}\label{lemma:j_PENULTIMO_integr_p+1}
Let $m\in\mathbb N^+$, $m\geq 2$. Assume that Theorem  \ref{theorem:mainDirichlet} holds for the solution $\apice{u}{m-1}_{\!\!p}$ with $m-2$ interior zeros. Then
\begin{align}\label{tint}
p\int_{r_{p}}^{s_p} |\apice{u}{m}_p|^{p+1}r\,dr= M^2( \theta-2) 	+ o(1),
\end{align}
as $p\rightarrow +\infty$, where $M>0$ is the constant in \eqref{Mmiinfty} and $\theta\ (>2)$ is the constant in \eqref{Zvarii}.
\end{lemma}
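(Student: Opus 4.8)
The plan is to pass to the rescaled variable $r = s_p + \varepsilon_p s$ in the integral $p\int_{r_p}^{s_p}|\apice{u}{m}_p|^{p+1}r\,dr$, so that it becomes, after using the definition of $\varepsilon_p$ exactly as in the proof of Lemma \ref{lemmaG},
\[
p\int_{r_p}^{s_p}|\apice{u}{m}_p(r)|^{p+1}r\,dr
= |\apice{u}{m}_p(s_p)|^2\int_{a_p}^{0}\left|1+\frac{z_p(s)}{p}\right|^{p+1}\left(\frac{s_p}{\varepsilon_p}+s\right)ds.
\]
By \eqref{Mmiinfty} the prefactor tends to $M^2$, and by Propositions \ref{prop:BmiValePerOgnim} and \ref{prop:CmiValePerOgnim} the domain of integration $(a_p,0)$ converges to $(-\sigma,0)$, while $\frac{s_p}{\varepsilon_p}\to\sigma$. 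The pointwise convergence $z_p\to Z(\cdot+\sigma)$ in $C^1_{loc}(-\sigma,+\infty)$ from Proposition \ref{prop:convergenzaUltimaRiscalata} gives $\left(1+\frac{z_p(s)}{p}\right)^{p+1}\to e^{Z(s+\sigma)}$ pointwise on $(-\sigma,0)$. Hence, provided the passage to the limit under the integral sign can be justified,
\[
p\int_{r_p}^{s_p}|\apice{u}{m}_p|^{p+1}r\,dr = M^2\int_{-\sigma}^{0}e^{Z(s+\sigma)}(\sigma+s)\,ds + o(1)
= M^2\int_0^{\sigma}e^{Z(t)}t\,dt + o(1),
\]
and the last integral equals $\theta-2$ by the computation \eqref{intzerosigmaZ} already carried out in the proof of Proposition \ref{prop:convergenzaUltimaRiscalata}. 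This yields \eqref{tint}.

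The step that needs care is the justification of the limit exchange, i.e.\ a dominated-convergence argument on the bounded interval $(a_p,0)$. Since $\left|1+\frac{z_p}{p}\right|\le 1$ throughout and the weight $\frac{s_p}{\varepsilon_p}+s$ is bounded on $(a_p,0)$ (it is nonnegative and bounded above by $\sigma+o(1)$), the integrand is uniformly bounded, and since the interval lengths $|a_p|$ are bounded, one may simply bound the integrand by the constant $\sigma+1$ on a fixed slightly larger interval (extending $z_p$ by the trivial bound) and apply Lebesgue's theorem; alternatively one splits off a neighborhood $(a_p,a_p+\delta)$ of the left endpoint, where $\left|1+\frac{z_p}{p}\right|^{p+1}\le 1$ controls the contribution by $C\delta$ uniformly in $p$, and uses $C^1_{loc}$-convergence on the complement. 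Either way this is routine because the problematic factor is uniformly bounded by $1$; there is no concentration to contend with here, in contrast with the $(s_p,1)$ region treated via $I_p$ in Lemma \ref{lemmaG}. I therefore expect no genuine obstacle in this lemma; its role is to be one of the energy-accounting identities feeding into the identification $\sigma=\sigma_{m-1}$ (equivalently $\theta=\theta_{m-1}$), which is the actual hard point of the section.
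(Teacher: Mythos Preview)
Your proposal is correct and follows essentially the same approach as the paper: the same change of variable $r=s_p+\varepsilon_p s$, the same use of Propositions \ref{prop:BmiValePerOgnim}, \ref{prop:CmiValePerOgnim}, \ref{prop:convergenzaUltimaRiscalata} and \eqref{Mmiinfty}, the same dominated-convergence justification via $\left|1+\frac{z_p}{p}\right|\le 1$ on a bounded interval, and the same final evaluation through \eqref{intzerosigmaZ}.
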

\begin{proof}
To compute \eqref{tint} we use the change of variable $r= s_p+\varepsilon_p s$,  exploit the definition of $z_p$  and $\varepsilon_p$, use that $\frac{s_p}{\varepsilon_p}=\sigma+o(1)$, $a_p=-\sigma +o(1)$ (by Propositions \ref{prop:CmiValePerOgnim} and \ref{prop:BmiValePerOgnim}), use the convergence of $z_p$ to $Z$ given by Proposition \ref{prop:convergenzaUltimaRiscalata} and the fact that $|\apice{u}{m}_p(s_p)|^2=M^2+o(1)$ by \eqref{Mmiinfty} (observe that, by assumption, Theorem \ref{theorem:mainDirichlet} holds for the solution $\apice{u}{m-1}_{\!\!\!p}$).  Namely, we have that
\begin{eqnarray*}p\int_{r_p}^{s_p} |\apice{u}{m}_p|^{p+1}r\,dr
&=&  p(\varepsilon_{p})^2\int_{a_p}^0  |\apice{u}{m}_p (s_p + \varepsilon_p s )|^{p+1} \left(\frac{s_p}{\varepsilon_p}+s\right)\,ds  
\\
&=&
|\apice{u}{m}_p(s_p)|^{2}\int_{a_p}^0\left|1+\frac{z_p}{p}  \right|^{p+1}\left(\frac{s_p}{\varepsilon_p}+s\right)\,ds
\\
&\overset{(\ast)}{=}& M^{2}\int_{-\sigma}^0  e^{Z(\sigma+s)}(\sigma+s)ds + o(1)
\\
	&=&
	M^{2}\int^{\sigma}_{0} e^{Z(s)}sds + o(1)
	\\
	&=&M^{2}( \theta-2)  + o(1),
\end{eqnarray*}
as $p\rightarrow +\infty$, where the passage to the limit in $(\ast)$ is due to Lebesgue's theorem ( $(1+\frac{z_p}{p})\leq 1$ and the integration is on a bounded interval), while  the last equality is a consequence of \eqref{Z} with \eqref{Zvari} and \eqref{Zvarii} in Proposition \ref{prop:convergenzaUltimaRiscalata}  (as already observed in \eqref{intzerosigmaZ}).
\end{proof}

\begin{lemma}\label{lemma:j_ULTIMO_integr_p+1}Let $m\in\mathbb N^+$, $m\geq 2$. Assume that Theorem  \ref{theorem:mainDirichlet} holds for the solution $\apice{u}{m-1}_{\!\!p}$ with $m-2$ interior zeros. Then
	\[p\int_{s_p}^{1} |\apice{u}{m}_p|^{p+1}r\,dr\leq  \left(M^2+o(1)\right)I_p,\]
as $p\rightarrow +\infty$, where $M>0$ is the constant in \eqref{Mmiinfty} and $I_p$ is defined in \eqref{defIp}.
\end{lemma}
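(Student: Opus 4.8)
The plan is to run exactly the same change of variables $r = s_p + \varepsilon_p s$ as in the previous two lemmas, but now on the full interval $[s_p,1]$ rather than just $[r_p,s_p]$, and to bound $|\apice{u}{m}_p|^{p+1}$ from above by $|\apice{u}{m}_p(s_p)|\cdot|\apice{u}{m}_p|^{p}$, since $s_p$ is a critical point where $r\mapsto|\apice{u}{m}_p(r)|$ attains its maximum over $(r_p,1)$ (this is part (iii) of Proposition \ref{PropositionUnicoMaxeMin}, together with (ii): on $[s_p,1]$ the function $|\apice{u}{m}_p|$ is monotone decreasing away from $s_p$, and in any case $|\apice{u}{m}_p(s_p)|\geq|\apice{u}{m}_p(r)|$ for all $r\in[s_p,1]$). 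Concretely, first I would write
\[
p\int_{s_p}^1|\apice{u}{m}_p(r)|^{p+1}r\,dr \le |\apice{u}{m}_p(s_p)|\, p\int_{s_p}^1|\apice{u}{m}_p(r)|^{p}r\,dr,
\]
and then apply Lemma \ref{lemmaG} to the integral on the right-hand side, which gives $p\int_{s_p}^1|\apice{u}{m}_p(r)|^{p}r\,dr = (M+o(1))I_p$ with $I_p$ as in \eqref{defIp}. Combining this with $|\apice{u}{m}_p(s_p)| = M + o(1)$ (which holds by \eqref{Mmiinfty}, valid under the inductive hypothesis), we obtain
\[
p\int_{s_p}^1|\apice{u}{m}_p(r)|^{p+1}r\,dr \le (M+o(1))(M+o(1))I_p = (M^2+o(1))I_p,
\]
which is exactly the claimed bound. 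Note that $I_p$ need not converge — in fact understanding its asymptotics is presumably the point of a later argument — so the estimate is deliberately left in this form; nothing more is asserted here.

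The only mild subtlety is to make sure the inequality $|\apice{u}{m}_p(r)|\le |\apice{u}{m}_p(s_p)|$ is legitimately available on all of $[s_p,1]$: on $[s_p,r_{m,p}^{(m)}]=[s_p,1]$ the map $r\mapsto\apice{u}{m}_p(r)$ is the last nodal "lobe" and by Proposition \ref{PropositionUnicoMaxeMin}(ii) it has no critical point in the open interval $(s_p,1)$, so $|\apice{u}{m}_p|$ is strictly monotone there and hence bounded by its value $|\apice{u}{m}_p(s_p)|$ at the left endpoint (at $r=1$ it vanishes by the Dirichlet condition). So the pointwise bound used to pass from the $(p+1)$-power integral to the $p$-power integral is justified. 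I do not expect any real obstacle in this lemma: it is a short consequence of Lemma \ref{lemmaG}, the monotonicity in Proposition \ref{PropositionUnicoMaxeMin}, and the already-established convergence $|\apice{u}{m}_p(s_p)|\to M$; the genuine work (identifying $\sigma=\sigma_{m-1}$, i.e. $\theta=\theta_{m-1}$) is carried out afterward using this lemma together with Lemma \ref{lemma:j_PENULTIMO_integr_p+1} and the Pohozaev-type identities.
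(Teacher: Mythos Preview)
Your proof is correct and essentially identical to the paper's. The paper performs the change of variables first and then uses $|1+z_p/p|\le 1$ to bound $|1+z_p/p|^{p+1}\le |1+z_p/p|^p$; you instead pull out one factor of $|\apice{u}{m}_p(s_p)|$ before the change of variables and invoke Lemma \ref{lemmaG}, which amounts to the same inequality after rescaling.
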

\begin{proof}
We make the change of variable $r= s_p+\varepsilon_p s$ in the left hand side and use the definition of $z_p$  and of $\varepsilon_p$, then we observe that the inequality $(1+\frac{z_p}{p})\leq 1$ implies $(1+\frac{z_p}{p})^{p+1}\leq (1+\frac{z_p}{p})^p$, which allows to estimate the integral with $I_p$, that is,
\begin{eqnarray*}
	p\int_{s_p}^{1} |\apice{u}{m}_p(r)|^{p+1}r\,dr
&=&
|\apice{u}{m}_p(s_p)|^{2}\int_0^{b_p}\left|1+\frac{z_p}{p}  \right|^{p+1}\left(\frac{s_p}{\varepsilon_p}+s\right)\,ds
\\
&\leq & |\apice{u}{m}_p(s_p)|^{2} I_p
\\
&\overset{\eqref{Mmiinfty}}{=}& 
\left(M^2+o(1)\right)I_p.
\end{eqnarray*}
\end{proof}	
We are also able to compute the value of the derivative in the last zero of a solution.
\begin{lemma}\label{Lemma4.3} 
Let $m\in\mathbb N$, $m\geq 2$. If Theorem \ref{theorem:mainDirichlet} holds for radial solutions $\apice{u}{m-1}_{\!\!\!p}$ with $m-2$ interior zeros, then
\begin{equation}
	\label{corollarioLemma4.3A}
\apice{M}{m-1}_{\!\!m-2}\left(\theta_{m-2}+2 \right)=RM\left(\theta-2 \right),	
\end{equation}
where $R\in (0,1]$ is the constant in \eqref{rmiinfty}, $M>0$ is the constant in \eqref{Mmiinfty}, $\theta\ (>2)$ is the constant in \eqref{Zvarii} and  $\apice{M}{m-1}_{\!\!m-2}$ and $\theta_{m-2}$ are given in Theorem \ref{theorem:mainDirichlet}.
\end{lemma}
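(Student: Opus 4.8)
The plan is to compute the quantity $p\,|(\apice{u}{m}_p)'(r_p)|\,r_p$ in two different ways and then equate the two expressions. On the one hand, by Lemma \ref{lemmaN} applied with $s=r_p$ and $t=s_p$ (recall $(\apice{u}{m}_p)'(s_p)=0$), we have
\[
p\,|(\apice{u}{m}_p)'(r_p)|\,r_p = p\int_{r_p}^{s_p}|\apice{u}{m}_p(r)|^{p-1}\apice{u}{m}_p(r)\,r\,dr,
\]
which in absolute value equals $p\int_{r_p}^{s_p}|\apice{u}{m}_p(r)|^{p}r\,dr$. So the quantity I really want to identify is this integral. First I would relate it to the known inductive data: by Lemma \ref{lemma:legameCostantiPassiConsecutivi} (which is available because we are assuming Theorem \ref{theorem:mainDirichlet} holds for $\apice{u}{m-1}_{\!\!\!p}$) together with \eqref{rmiinfty}, the left-hand side $p\,|(\apice{u}{m}_p)'(\apice{r}{m}_{m-1,p})|\,\apice{r}{m}_{m-1,p}$ converges to $\dfrac{\apice{D}{m-1}_{\!\!m-1}}{R}$ as $p\to\infty$, since $\apice{r}{m-1}_{\!\!\!m-1,p}$ is the last zero of $\apice{u}{m-1}_{\!\!\!p}$ (i.e.\ $=1$) and the derivative there converges to $\apice{D}{m-1}_{\!\!m-1}$. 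Now, by the definition \eqref{Dk} of $\apice{D}{k}_k$ one has $\apice{D}{m-1}_{\!\!m-1}=\apice{M}{m-1}_{\!\!m-2}(\theta_{m-2}+2)$, so this route gives
\[
\lim_{p\to\infty} p\int_{r_p}^{s_p}|\apice{u}{m}_p(r)|^{p}r\,dr = \frac{\apice{M}{m-1}_{\!\!m-2}(\theta_{m-2}+2)}{R}.
\]

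On the other hand I would evaluate the same integral $p\int_{r_p}^{s_p}|\apice{u}{m}_p|^{p}r\,dr$ via the \emph{last-bubble rescaling}. This is essentially the computation in Lemma \ref{lemma:j_PENULTIMO_integr_p+1}, but with the exponent $p$ instead of $p+1$: using the change of variables $r=s_p+\varepsilon_p s$, the definition of $z_p$ and $\varepsilon_p$, Propositions \ref{prop:BmiValePerOgnim} and \ref{prop:CmiValePerOgnim} (so that $\tfrac{s_p}{\varepsilon_p}\to\sigma$ and $a_p\to-\sigma$), the convergence $z_p\to Z(\cdot+\sigma)$ from Proposition \ref{prop:convergenzaUltimaRiscalata}, and $|\apice{u}{m}_p(s_p)|^2\to M^2$ from \eqref{Mmiinfty}, I obtain
\[
p\int_{r_p}^{s_p}|\apice{u}{m}_p|^{p}r\,dr
= |\apice{u}{m}_p(s_p)|^{2}\int_{a_p}^{0}\Bigl|1+\tfrac{z_p}{p}\Bigr|^{p}\Bigl(\tfrac{s_p}{\varepsilon_p}+s\Bigr)\,ds
\to M^{2}\int_{-\sigma}^{0}e^{Z(\sigma+s)}(\sigma+s)\,ds = M^{2}\int_{0}^{\sigma}e^{Z(s)}s\,ds,
\]
where the passage to the limit is justified by Lebesgue's theorem exactly as in Lemma \ref{lemma:j_PENULTIMO_integr_p+1} (the integrand is bounded by $\tfrac{s_p}{\varepsilon_p}+s$ on a bounded interval). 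By the explicit computation \eqref{intzerosigmaZ} in the proof of Proposition \ref{prop:convergenzaUltimaRiscalata}, $\int_0^{\sigma}e^{Z(s)}s\,ds=\theta-2$, so this second route gives $\lim_{p\to\infty}p\int_{r_p}^{s_p}|\apice{u}{m}_p|^{p}r\,dr = M^2(\theta-2)$.

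Equating the two limits yields $\dfrac{\apice{M}{m-1}_{\!\!m-2}(\theta_{m-2}+2)}{R}=M^2(\theta-2)$, and hence $\apice{M}{m-1}_{\!\!m-2}(\theta_{m-2}+2)=RM^2(\theta-2)$. This is \emph{not} quite the claimed identity \eqref{corollarioLemma4.3A}, which has $RM(\theta-2)$ rather than $RM^2(\theta-2)$ on the right — so either there is a typo in the statement and the correct right-hand side is $RM^2(\theta-2)$, or (more likely) one should instead pair Lemma \ref{lemmaN} with the integral $\int_{r_p}^{s_p}|\apice{u}{m}_p|^{p}r\,dr$ where the extra factor $|\apice{u}{m}_p(s_p)|$ has already been pulled out, as in the statement of Lemma \ref{lemmaG}; i.e.\ one should compare $p\int_{r_p}^{s_p}|\apice{u}{m}_p|^p r\,dr$ with $(M+o(1))\int_{a_p}^0|1+\tfrac{z_p}{p}|^p(\tfrac{s_p}{\varepsilon_p}+s)\,ds$, which produces exactly one factor of $M$. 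I would re-derive the last-bubble computation in that normalization, obtaining $\lim p\int_{r_p}^{s_p}|\apice{u}{m}_p|^p r\,dr = M(\theta-2)$, and then the equality with $\tfrac{\apice{M}{m-1}_{\!\!m-2}(\theta_{m-2}+2)}{R}$ gives precisely \eqref{corollarioLemma4.3A}. \textbf{The main obstacle} is bookkeeping: making sure the power of $|\apice{u}{m}_p(s_p)|$ that is extracted matches the convention used elsewhere in Section \ref{section:stepInduttivo} (Lemmas \ref{lemmaG}, \ref{lemma:j_PENULTIMO_integr_p+1}, \ref{lemma:j_ULTIMO_integr_p+1}), and checking that the dominated-convergence step is legitimate all the way down to $r_p$ (which is fine, since $a_p=\tfrac{r_p-s_p}{\varepsilon_p}\to-\sigma$ is finite and the integrand is uniformly bounded there). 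Everything else is a direct substitution using results already proven in the excerpt.
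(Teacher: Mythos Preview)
Your approach is exactly the paper's: compute $p|(\apice{u}{m}_p)'(r_p)|r_p$ two ways, once via Lemma~\ref{lemma:legameCostantiPassiConsecutivi} and \eqref{rmiinfty} to get $R^{-1}\apice{D}{m-1}_{\!\!m-1}=R^{-1}\apice{M}{m-1}_{\!\!m-2}(\theta_{m-2}+2)$, once via the last-bubble rescaling to get $M(\theta-2)$.

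The only issue is your intermediate wobble. There is no typo in the statement: your first computation of the rescaled integral is simply wrong. With $r=s_p+\varepsilon_p s$ one has $p\,\varepsilon_p^2=|\apice{u}{m}_p(s_p)|^{-(p-1)}$ and $|\apice{u}{m}_p(s_p+\varepsilon_p s)|^{p}=|\apice{u}{m}_p(s_p)|^{p}\big|1+\tfrac{z_p}{p}\big|^{p}$, so the prefactor is $|\apice{u}{m}_p(s_p)|^{p-(p-1)}=|\apice{u}{m}_p(s_p)|$, not $|\apice{u}{m}_p(s_p)|^{2}$; the squared factor appears only in Lemma~\ref{lemma:j_PENULTIMO_integr_p+1} because the exponent there is $p+1$. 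Your self-correction at the end is the right computation, and with it the proof is complete and identical to the paper's.
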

\begin{proof}
	Choosing $s=r_p$ and $t=s_p$ into \eqref{N} one has that
	\begin{equation}\label{inti1}
	p|(\apice{u}{m}_p)'(r_p)| (r_p)  = p\int^{s_p}_{r_p} |\apice{u}{m}_{p}(r)|^{p}r\,dr.
	\end{equation}
	We can now compute this integral similarly as we did in the proof of Lemma \ref{lemma:j_PENULTIMO_integr_p+1}, making the change of variable $r= s_p+\varepsilon_p s$ and passing to the limit as $p\rightarrow +\infty$. We use   that $\frac{s_p}{\varepsilon_p}=\sigma+o(1)$, $a_p=-\sigma +o(1)$ (by Proposition \ref{prop:CmiValePerOgnim} and Proposition \ref{prop:BmiValePerOgnim}), the convergence of $z_p$ to $Z$ in Proposition \ref{prop:convergenzaUltimaRiscalata} and also that $|\apice{u}{m}_p(s_p)|=M+o(1)$ by \eqref{Mmiinfty} (by assumption  Theorem \ref{theorem:mainDirichlet} holds for the solution $\apice{u}{m-1}_{\!\!\!p}$, so all these results hold), hence we obtain that
	\begin{eqnarray}
	\label{inti2}
	p\int_{r_p}^{s_p} |\apice{u}{m}_p(r)|^{p}r\,dr&=&
		p(\varepsilon_p )^2
		\int_{a_p}^0   |\apice{u}{m}_p(s_p+s\varepsilon_p  )|^{p}\left(\frac{s_p}{\varepsilon_p}+s\right)\,ds
		\nonumber\\
		&=&
	|\apice{u}{m}_p(s_p)|\int_{a_p}^0\left|1+\frac{z_p}{p}  \right|^{p}\left(\frac{s_p}{\varepsilon_p}+s\right)\,ds
		\nonumber\\
		&\overset{(\ast)}{=}& M\int_{-\sigma}^0  e^{Z(\sigma+s)}(\sigma+s)ds + o(1)
	\nonumber	\\
		&=&
		M\int^{\sigma}_{0} e^{Z(s)}sds + o(1)
\nonumber\\
		&\overset{(\ast\ast)}{=}&M( \theta-2)  + o(1),
	\end{eqnarray}
as $p\rightarrow +\infty$, where the passage to the limit in $(\ast)$ is again due to Lebesgue's theorem ( $(1+\frac{z_p}{p})\leq 1$ and the integration is on a bounded interval), while the equality in $(\ast\ast)$
is again a consequence of \eqref{Z} with \eqref{Zvari} and \eqref{Zvarii} in Proposition \ref{prop:convergenzaUltimaRiscalata}  (as already observed in \eqref{intzerosigmaZ}).
Putting together \eqref{inti1} with \eqref{inti2} one has
\begin{equation}\label{laderivata}
	p|(\apice{u}{m}_p)'(r_p)| (r_p)   =M( \theta-2)  + o(1)
\end{equation}
as $p\rightarrow +\infty$. On the other side, since by assumption Theorem \ref{theorem:mainDirichlet} holds for the solution $\apice{u}{m-1}_{\!\!\!p}$, one can use  Lemma \ref{lemma:legameCostantiPassiConsecutivi},   the convergence in \eqref{rmiinfty}  and exploit the expression of the known constant $\apice{D}{m-1}_{\!\!m-1}$ (see \eqref{Dk}) to obtain
\begin{eqnarray}
p|(\apice{u}{m}_{p})'(r_p)|(r_p)&\overset{\begin{subarray}{c}\text{Lemma } \ref{lemma:legameCostantiPassiConsecutivi}\\\eqref{rmiinfty}\end{subarray}}{=}&R^{-1}\apice{D}{m-1}_{\!\!m-1} +o(1),
\nonumber
\\
&\overset{\eqref{Dk}}{=}& R^{-1} \apice{M}{m-1}_{m-2}\left( \theta_{m-2}+2\right) +o(1),
\label{al}
\end{eqnarray} 
as $p\rightarrow +\infty.$  Identity \eqref{corollarioLemma4.3A} then follows from \eqref{al} and \eqref{laderivata}.
\end{proof}

\begin{lemma}
	\label{lemmaH}
Let $m\in\mathbb N$, $m\geq 2$. Assume that Theorem \ref{theorem:mainDirichlet} holds for radial solutions $\apice{u}{m-1}_{\!\!\!p}$ with $m-2$ interior zeros. 
Then
\begin{equation}\label{??}
I_p= \theta+2 + o(1),
\end{equation}
as $p\rightarrow +\infty$, where $I_p$  is defined in \eqref{defIp} and  $\theta\ (>2)$ is the constant in Proposition \ref{prop:convergenzaUltimaRiscalata}.
\end{lemma}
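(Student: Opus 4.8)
The plan is to split $I_p$ into two pieces: the integral over a fixed bounded interval $[0,L]$ (or $[a_p,0]$ together with $[0,L]$, i.e.\ the ``bubble region'' near $\sigma$), where the convergence $z_p\to Z(\cdot+\sigma)$ of Proposition \ref{prop:convergenzaUltimaRiscalata} applies, and the integral over the ``tail'' $[L,b_p]$, which must be shown to be $o(1)$ as $L\to\infty$ uniformly in $p$. First, on the bounded part: changing nothing, since $\frac{s_p}{\varepsilon_p}=\sigma+o(1)$ and $z_p\to Z(\cdot+\sigma)$ in $C^1_{loc}$, Lebesgue's theorem gives
\[
\int_0^L \Bigl|1+\tfrac{z_p(r)}{p}\Bigr|^p\Bigl(\tfrac{s_p}{\varepsilon_p}+r\Bigr)\,dr
= \int_0^L e^{Z(\sigma+r)}(\sigma+r)\,dr+o(1)
= \int_\sigma^\infty e^{Z(s)}s\,ds - \int_0^\sigma e^{Z(s)}s\,ds + (\text{error})+o(1),
\]
and by the computation already recorded in \eqref{intzerosigmaZ} and the normalization $\int_{\R^2}e^Z\,dx=4\pi\theta$, i.e.\ $\int_0^\infty e^{Z(s)}s\,ds=2\theta$, together with $\int_0^\sigma e^{Z(s)}s\,ds=\theta-2$, the full integral $\int_\sigma^\infty e^{Z(s)}s\,ds$ equals $2\theta-(\theta-2)=\theta+2$. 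So the claimed limit $\theta+2$ is exactly $\int_0^\infty e^{Z(\sigma+r)}(\sigma+r)\,dr$, and it remains to control the tail.

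For the tail estimate, the key is a pointwise upper bound on $z_p$ of the form $z_p(r)\le (4-\delta)\log\frac{1}{r}+C_\delta$ for $r$ large (up to $b_p$), analogous to \eqref{stimaIacopetti}: this is exactly the type of estimate proven in \cite{DIPpositive} for the first bubble, and the same argument applies here (or one invokes the analogous statement from \cite{GrossiGrumiauPacella2} for the singular case). Such a bound forces $\bigl(1+\frac{z_p}{p}\bigr)^p\le e^{z_p}\le C_\delta\, r^{-(4-\delta)}$ for $r\ge R_\delta$, so that
\[
\int_L^{b_p}\Bigl|1+\tfrac{z_p(r)}{p}\Bigr|^p\Bigl(\tfrac{s_p}{\varepsilon_p}+r\Bigr)\,dr
\le C_\delta\int_L^\infty r^{-(4-\delta)}(\sigma+1+r)\,dr
= C_\delta\int_L^\infty \bigl(r^{-(3-\delta)}+(\sigma+1)r^{-(4-\delta)}\bigr)\,dr,
\]
which tends to $0$ as $L\to\infty$ provided $\delta<2$, uniformly in $p$. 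Combining the bounded part and the tail, and letting $L\to\infty$ after $p\to\infty$, gives $I_p=\int_0^\infty e^{Z(\sigma+r)}(\sigma+r)\,dr+o(1)=\theta+2+o(1)$.

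The main obstacle is the uniform tail decay: one needs the $r^{-(4-\delta)}$-type decay of $\bigl(1+\frac{z_p}{p}\bigr)^p$ to hold \emph{up to the moving endpoint $b_p\to\infty$} and uniformly in $p$, not merely the $C^1_{loc}$ convergence, since the integrand is only locally controlled by the limit profile $e^Z$ whose own integral $\int^\infty e^Z(\sigma+r)(\sigma+r)\,dr$ is finite but not obviously dominating the pre-limit integrands near infinity. Establishing (or quoting) this pointwise estimate — via the ODE \eqref{equazionizp} satisfied by $z_p$, the bound $|1+z_p/p|\le 1$, and a comparison/barrier argument as in \cite[Lemma 4.4, Proposition 4.3]{DIPpositive} adapted to the shifted linear coefficient $\frac{1}{r+s_p/\varepsilon_p}$ — is the technical heart; once it is in hand, the rest is the routine split-and-pass-to-the-limit argument above. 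I would also double-check that the bound is needed only for $r$ bounded away from $-\sigma$ (where $z_p$ is well-behaved by Proposition \ref{prop:convergenzaUltimaRiscalata}), so the singular coefficient causes no trouble on $[L,b_p]$.
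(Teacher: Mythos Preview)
Your approach is \emph{different} from the paper's, and the difference is instructive. You propose to prove $I_p\to\theta+2$ by dominated convergence, using a uniform pointwise decay estimate $z_p(r)\le (4-\delta)\log\frac{1}{r}+C_\delta$ on the tail $[L,b_p]$. The paper explicitly avoids this route, noting at the outset that ``we cannot pass to the limit into the integral that defines $I_p$, since now Lebesgue's Dominated Convergence Theorem does not apply.'' Instead, the paper obtains the lower bound $\liminf I_p\ge \theta+2$ by Fatou's lemma (exactly as you would on the bounded part), and the upper bound by a Pohozaev-based quadratic inequality: from Lemma~\ref{lemmaP} and Lemma~\ref{lemmaG} one gets $\tfrac{1}{4}I_p^2 = \tfrac{1}{M^2}\,p\int_0^1|\apice{u}{m}_p|^{p+1}r\,dr+o(1)$, then splits this energy as $\int_0^{r_p}+\int_{r_p}^{s_p}+\int_{s_p}^1$, computes the first two pieces exactly (via the inductive hypothesis and Lemma~\ref{lemma:j_PENULTIMO_integr_p+1}), and bounds the third by $M^2 I_p$ using $|1+z_p/p|^{p+1}\le|1+z_p/p|^p$. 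This yields $\tfrac14 I_p^2 - I_p \le \tfrac{(\theta-2)^2}{4}+(\theta-2)+o(1)=\tfrac{(\theta+2)^2}{4}-(\theta+2)+o(1)$, and since $F(x)=\tfrac{x^2}{4}-x$ is increasing for $x\ge 2$ and $I_\infty\ge \theta+2$, equality follows.

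Your route is conceptually cleaner but rests entirely on the tail estimate, which you have not proven and whose availability you overstate. The estimate \eqref{stimaIacopetti} from \cite{DIPpositive} is for the \emph{first} (regular) bubble of a \emph{positive} solution; adapting it to the present singular bubble, with the shifted coefficient $\frac{1}{r+s_p/\varepsilon_p}$ and with $z_p$ living on the last nodal region of a sign-changing solution, is not automatic. Moreover, \cite{GrossiGrumiauPacella2} does not supply such a pointwise bound in this setting --- it is precisely the source of the Pohozaev argument the paper follows. So your proof has a genuine gap at the ``technical heart'' you identify: you would need to actually carry out the barrier/comparison argument for the shifted singular ODE, which is nontrivial. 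The paper's approach buys you a complete proof using only integral identities already in hand; your approach, if the decay estimate were established, would buy a more direct passage to the limit and would likely also simplify later arguments (e.g.\ the computation of $\apice{J}{m}_p=o(1)$ in the proof of \eqref{Mok}).
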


\begin{proof} Observe that, differently from the previous proofs, we cannot  pass to the limit into the integral that defines $I_p$, since now Lebesgue's Dominated Convergence Theorem does not apply. We follow similar ideas as in \cite{GrossiGrumiauPacella2}.\\\\
\emph{ STEP 1.  We show that $I_p$ is bounded.}
\\\\
Recalling the definition of $I_p$ and repeating the same computations as in \eqref{comeQui} one has that
\[(0\leq)\ I_p\overset{\eqref{comeQui}}{=}\frac{ p\int_{s_p}^1 	 |\apice{u}{m}_p(r)|^{p}r\,dr}{|\apice{u}{m}(s_p)|}\overset{\eqref{normaLInfinitoLimitataAlto}}{\leq} C p\int_0^1|\apice{u}{m}_p|^prdr\overset{\text{H\"older}}{\leq}C p\left(\int_0^1|\apice{u}{m}_p|^{p+1}rdr \right)^{\frac{p}{p+1}}\overset{\eqref{stimaenergiaupm}}{\leq} C.
\] 
	\\
\emph{STEP 2.  We show that \[\liminf_{p\rightarrow +\infty} I_p\geq  \theta+2.\]}

By the convergence of $z_p$ to $Z$ in Proposition \ref{prop:convergenzaUltimaRiscalata} and the one
 of $\frac{s_p}{\varepsilon_p}$ to $\sigma$ in  Proposition \ref{prop:BmiValePerOgnim}, using Fatou's lemma we obtain
	\begin{eqnarray*}\liminf_{p\rightarrow +\infty}I_p
		&=& \liminf_{p\rightarrow +\infty} \int_0^{b_p}\left|1+\frac{z_p}{p}  \right|^p\left( \frac{s_p}{\varepsilon_p}+r\right)dr
		\\
		&\overset{Fatou}{\geq}&
		\int_0^{+\infty} e^{Z(\sigma+r)}(\sigma+r)dr 
		\\
		&=&
		\int_{\sigma}^{+\infty} e^{Z(s)}sds
		\\
		&\overset{\eqref{Z}}{=}&2\theta^2\beta^{\theta}\int_{\sigma}^{+\infty}
		\frac{s^{\theta-1}}{(\beta^{\theta}+s^{\theta})^2}ds
		\\
		&=& \frac{2\theta\beta^{\theta}}{\beta^{\theta}+\sigma^{\theta}}
		\\
		&\overset{\eqref{Zvari},\eqref{Zvarii}}{=}& \theta+2.
	\end{eqnarray*}
\emph{STEP 3.  We prove that	
\begin{equation}
\label{UguaglianzaTESIStep2}
\frac{1}{4} \left[ I_p\right]^2 -  I_p \leq  \frac{(\theta-2)^2}{4}+( \theta-2)+o(1),
\end{equation}
as $p\rightarrow +\infty$.}\\
	
From  \eqref{LemmaD} and Lemma \ref{lemmaG}	
	\[
		M^2\left[ I_p\right]^2 +o(1)  \overset{\text{Lemma \ref{lemmaG}}}{=}  \left[p\int_{s_p}^1 	 |\apice{u}{m}_p(r)|^{p}r\,dr\right]^2\overset{\text{\eqref{LemmaD}}}{=} 
		p^2\left[(\apice{u}{m}_p)'(1)\right]^2,
	\]
as $p\rightarrow +\infty$, moreover, by Lemma \ref{lemmaP},
	\[\left[(\apice{u}{m}_p)'(1)\right]^2=\frac{4}{(p+1)}\int_0^1 |\apice{u}{m}_p(r)|^{p+1}r\, dr\]
	and so
	\begin{equation}\label{LegameIpConIntegralep+1}
\frac{1}{4}\left[ I_p\right]^2 =\frac{1}{M^2}\frac{p}{(p+1)}p\int_0^1 |\apice{u}{m}_p(r)|^{p+1}r\, dr+o(1)
	\end{equation}
as $p\rightarrow +\infty$.	We decompose the integral in \eqref{LegameIpConIntegralep+1} into the sum of three terms:	
\begin{eqnarray}\label{spezzoIntegralep+1}
p\int_0^1 |\apice{u}{m}_p(r)|^{p+1}r\, dr
&=& 
p\int_{0}^{r_p} |\apice{u}{m}_p|^{p+1}r\,dr
+
  p \int_{r_p}^{s_p}  |\apice{u}{m}_p|^{p+1}r\,dr  + p \int_{s_p}^{1}  |\apice{u}{m}_p|^{p+1}r\,dr
\end{eqnarray}		
For the first term  we make the change of variable  $r=s r_p$ and reduce it to the computation of the total energy for the  solution $\apice{u}{m-1}_{\!\!\!p}$, which can be done using Lemma \ref{lemmaP} and the inductive assumption that Theorem \ref{theorem:mainDirichlet} holds for the solution $\apice{u}{m-1}_{\!\!\!p}$. Finally, we have the dependence on $\theta$ and $M$ by exploiting Lemma \ref{Lemma4.3}, namely,
\begin{eqnarray}\label{parzialeEnergia1}
p\int_{0}^{r_p} |\apice{u}{m}_p|^{p+1}r\,dr
&=&(r_p)^2p\int_{0}^{1} |\apice{u}{m}_p(s r_p)|^{p+1}s\,ds
\nonumber\\
&\overset{\eqref{legame}}{=}& (r_p)^{-\frac{4}{p-1}}p\int_{0}^{1} |\apice{u}{m-1}_{\!\!\!p}(s)|^{p+1}s\,ds
\nonumber\\
&\overset{\eqref{rmiinfty} }{=}& R^{-2}p\int_{0}^{1}  |\apice{u}{m-1}_{\!\!\!p}(s)|^{p+1}s\,ds +o(1)
\nonumber\\
& \overset{\text{Lemma }\ref{lemmaP}}{=}&  R^{-2} \frac{1}{4}\left[p (\apice{u}{m-1}_{\!\!\! p})'(1)\right]^2+o(1)
\nonumber\\
& \overset{\text{Theorem \ref{theorem:mainDirichlet} for $\apice{u}{m-1}_{\!\!\!p}$}}{=}&\frac{1}{4}\frac{(\apice{D}{m-1}_{\!\! m-1})^2}{R^{2}}+o(1)
\nonumber\\
& \overset{\eqref{Dk}}{=}&\frac{1}{4}\frac{(\apice{M}{m-1}_{\!\! m-2})^2}{R^{2}}(\theta_{m-2}+2)^2+o(1)
\nonumber\\
&\overset{\text{Lemma }\ref{Lemma4.3}}{=}& M^2\frac{(\theta-2)^2}{4}+o(1),
\end{eqnarray}
as $p\rightarrow +\infty$.   We use  Lemma \ref{lemma:j_PENULTIMO_integr_p+1} and Lemma \ref{lemma:j_ULTIMO_integr_p+1} to estimate the last two terms in \eqref{spezzoIntegralep+1}, hence 
\begin{equation}\label{spezzoIntegralep+1CONTI_FATTI}
p\int_0^1 |\apice{u}{m}_p(r)|^{p+1}r\, dr
\leq M^2\left[\frac{(\theta-2)^2}{4}+( \theta-2)+I_p\right] +o(1),
\end{equation} 
as $p\rightarrow +\infty$. Substituting \eqref{spezzoIntegralep+1CONTI_FATTI} into  \eqref{LegameIpConIntegralep+1} we deduce \eqref{UguaglianzaTESIStep2}.
\\
\\
\emph{STEP 4. We show \eqref{??}.}  Note that
\begin{eqnarray}\label{functionIncreasing}
\frac{1}{4} \left[ I_p\right]^2 -  I_p 
&\overset{\text{ STEP 3}}{\leq} &\frac{(\theta-2)^2}{4}+( \theta-2)+o(1),
\nonumber\\
 &=& \frac{(\theta+2)^2}{4}-   ( \theta+2)    + o(1),
\end{eqnarray}
as $p\rightarrow +\infty$. Since $ I_p$ is bounded by \emph{STEP 1}, then, up to a subsequence, it converges to $I_{\infty}$, as $p\rightarrow +\infty$.  From \eqref{functionIncreasing} we then have that \[F( I_{\infty})\leq F(\theta+2),\]
where $F:\mathbb R\rightarrow \mathbb R$ is the function $F(x):=\frac{x^2}{4}-x$. Since $F$  is increasing for $x\geq 2$ and $I_{\infty}\geq \theta+2(\geq 2)$ by \emph{STEP 2}, it then follows that
\[I_{\infty}= \theta+2.\]
\end{proof}

\begin{definition} Let $m\in\mathbb N$, $m\geq 2$. If Theorem \ref{theorem:mainDirichlet} holds for the solution $\apice{u}{m-1}_{\!\!p}$ with $m-2$ interior zeros, then we can define 
\begin{equation}\label{deft}
t:=RM \ (>0),
\end{equation}
where $R\in (0,1]$ is the constant in \eqref{rmiinfty} and $M>0$ is the constant in \eqref{Mmiinfty}.\\ 
Then \eqref{corollarioLemma4.3A} can be written as
\begin{equation}
\label{remark4.5}
\theta=
\frac{\apice{M}{m-1}_{\!\!m-2} (\theta_{m-2}+2)}{t}+2,
\end{equation}
where  $\theta\ (>2)$ is the constant in \eqref{Zvarii}.
\end{definition}

\begin{proposition}\label{Proposition4.4}
Let $m\in\mathbb N$, $m\geq 2$. If Theorem \ref{theorem:mainDirichlet} holds for the solution $\apice{u}{m-1}_{\!\!p}$ with $m-2$ interior zeros.
Then the constant $t$  defined as in \eqref{deft} is the unique root of the equation
	\[\frac{\apice{M}{m-1}_{\!\!m-2}(\theta_{m-2}+2)}{2}\log x + x=0,\]
	namely
	\[t=  \frac{\apice{M}{m-1}_{\!\!m-2}\ (\theta_{m-2}+2)}{2} \mathcal L\left[ \frac{2}{\apice{M}{m-1}_{\!\!m-2}\ (\theta_{m-2}+2)}\right],
	\]
	where $\mathcal L$ is the Lambert function (i.e. the inverse function of $f(L)=Le^L$).\\
		
\end{proposition}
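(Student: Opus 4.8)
The plan is to combine the identity \eqref{remark4.5} with one further relation between $t$ and $\theta$ and then invert. Set $A:=\apice{M}{m-1}_{\!\!m-2}(\theta_{m-2}+2)$, a \emph{known} positive constant by the inductive hypothesis; then \eqref{remark4.5} (equivalently \eqref{corollarioLemma4.3A}) reads $A=t(\theta-2)$. The extra relation I would establish is
\begin{equation}\label{eq:keyrel}
(\theta-2)\log t=-2.
\end{equation}
Granting \eqref{eq:keyrel}, substituting $\theta-2=A/t$ gives $\frac{A}{t}\log t=-2$, i.e. $\frac{A}{2}\log t+t=0$; since $x\mapsto\frac{A}{2}\log x+x$ is continuous and strictly increasing on $(0,+\infty)$ with range $\R$, this equation has a unique root, and the elementary chain
\[
\tfrac{A}{2}\log t+t=0\ \Longleftrightarrow\ t\,e^{2t/A}=1\ \Longleftrightarrow\ \tfrac{2t}{A}\,e^{2t/A}=\tfrac{2}{A}\ \Longleftrightarrow\ \tfrac{2t}{A}=\mathcal L\!\left(\tfrac{2}{A}\right)
\]
gives $t=\frac{A}{2}\mathcal L(2/A)$, the asserted formula. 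One should also remark that $R$ and $M$ were defined only along a subsequence; the uniqueness of the root (with $A$ subsequence-independent) then shows that $t$, and a posteriori $R$ and $M$, are genuine limits.

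To prove \eqref{eq:keyrel} I would first recast it as an exact rate of decay of $r_p/\varepsilon_p$. Since $\varepsilon_p=\big[p\,|\apice{u}{m}_p(s_p)|^{p-1}\big]^{-1/2}$ and $|\apice{u}{m}_p(s_p)|\to M$, one has $(\varepsilon_p)^{2/(p-1)}\to 1/M$, so with $(r_p)^{2/(p-1)}\to R$ and $t=RM$ we get $t=\lim_{p\to\infty}(r_p/\varepsilon_p)^{2/(p-1)}$; thus \eqref{eq:keyrel} is equivalent to $\frac{2}{p-1}\log\frac{r_p}{\varepsilon_p}\To-\frac{2}{\theta-2}$. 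We only know so far that $\frac{r_p}{\varepsilon_p}\to 0$ (Proposition \ref{prop:CmiValePerOgnim}); the point is the exponential rate. The tool I would use is the flux $\phi_p(r):=\big(r+\frac{s_p}{\varepsilon_p}\big)(z_p)'(r)$ on $(a_p,0)$. From \eqref{equazionizp}, $\phi_p'(r)=-\big(r+\frac{s_p}{\varepsilon_p}\big)\big|1+\frac{z_p}{p}\big|^{p-1}\big(1+\frac{z_p}{p}\big)\le 0$, so $\phi_p$ is non-increasing on $(a_p,0)$; it is also $\ge 0$ there (both factors are nonnegative, as $z_p$ is monotone on $(a_p,0)$), with $\phi_p(0)=0$, $|\phi_p'|\le\frac{s_p}{\varepsilon_p}$, and, crucially,
\[
\phi_p(a_p)=\tfrac{r_p}{\varepsilon_p}\,(z_p)'(a_p)=\frac{p\,r_p\,|(\apice{u}{m}_p)'(r_p)|}{|\apice{u}{m}_p(s_p)|}\To\theta-2
\]
by \eqref{laderivata}. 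Integrating the identity $(z_p)'=\phi_p/\big(r+\frac{s_p}{\varepsilon_p}\big)$ over $(a_p,0)$, using $z_p(0)-z_p(a_p)=p$ and the substitution $u=r+\frac{s_p}{\varepsilon_p}$ (so $u\in[\frac{r_p}{\varepsilon_p},\frac{s_p}{\varepsilon_p}]$), gives $p=\int_{r_p/\varepsilon_p}^{s_p/\varepsilon_p}\widetilde\phi_p(u)\,u^{-1}\,du$ with $\widetilde\phi_p(u):=\phi_p(u-\frac{s_p}{\varepsilon_p})$. The bound $\widetilde\phi_p\le\phi_p(a_p)=\theta-2+o(1)$ yields $p\le(\theta-2+o(1))\log\frac{s_p}{r_p}$, hence $\limsup_p\frac{2}{p-1}\log\frac{r_p}{\varepsilon_p}\le-\frac{2}{\theta-2}$ (using $\frac{s_p}{\varepsilon_p}\to\sigma$); splitting the integral at a fixed small $\delta>0$ (with $\theta-2-\sigma\delta>0$) and combining the Lipschitz bound with $\widetilde\phi_p(\frac{r_p}{\varepsilon_p})\to\theta-2$ gives $\widetilde\phi_p(u)\ge\theta-2-\sigma\delta+o(1)$ on $[\frac{r_p}{\varepsilon_p},\delta]$, whence $p\ge(\theta-2-\sigma\delta+o(1))\log\frac{\delta\,\varepsilon_p}{r_p}$ and $\liminf_p\frac{2}{p-1}\log\frac{r_p}{\varepsilon_p}\ge-\frac{2}{\theta-2-\sigma\delta}$; letting $\delta\to 0$ closes the gap and proves \eqref{eq:keyrel}.

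The hard part is precisely this second step. The qualitative statement $r_p/\varepsilon_p\to 0$ from Proposition \ref{prop:CmiValePerOgnim} is not enough; the sharp exponential rate has to be extracted through the ODE \eqref{equazionizp}, with the monotone flux $\phi_p$ anchored at its left endpoint by the limit $\phi_p(a_p)\to\theta-2$ from Lemma \ref{Lemma4.3} and controlled in the interior by the trivial estimate $|1+\frac{z_p}{p}|\le 1$. Everything downstream of \eqref{eq:keyrel} --- the uniqueness of the root and the Lambert inversion --- is then routine.
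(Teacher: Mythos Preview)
Your proof is correct and arrives at the same key identity $(\theta-2)\log t=-2$ (equation \eqref{lemma4.2} in the paper), but by a genuinely different route. The paper obtains this identity by multiplying the equation $-[(\apice{u}{m}_p)'(r)r]'=|\apice{u}{m}_p|^{p-1}\apice{u}{m}_p\, r$ by the test function $\log r-\log r_p$, integrating by parts on $(r_p,s_p)$ to get $\int_{r_p}^{s_p}|\apice{u}{m}_p|^p r(\log r-\log r_p)\,dr=|\apice{u}{m}_p(s_p)|$, then rescaling and passing to the limit via Lebesgue's theorem (the interval is bounded and $|1+z_p/p|\le 1$), using the full $C^1_{loc}$ convergence $z_p\to Z(\cdot+\sigma)$ from Proposition~\ref{prop:convergenzaUltimaRiscalata}. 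You instead work directly with the monotone flux $\phi_p(r)=(r+\frac{s_p}{\varepsilon_p})(z_p)'(r)$ on $(a_p,0)$, write $p=\int_{r_p/\varepsilon_p}^{s_p/\varepsilon_p}\widetilde\phi_p(u)\,u^{-1}du$, and squeeze using only the endpoint value $\phi_p(a_p)\to\theta-2$ (from Lemma~\ref{Lemma4.3}), monotonicity, and the trivial Lipschitz bound $|\phi_p'|\le s_p/\varepsilon_p$. Your argument is more elementary in that it avoids passing to the limit inside an integral and does not invoke the pointwise convergence of $z_p$ beyond what is already encoded in \eqref{laderivata}; the paper's approach is computationally more direct once that convergence is in hand and yields the constant in one stroke rather than via a $\limsup/\liminf$ sandwich with an auxiliary parameter $\delta$. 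The two are in fact dual: your identity is the integration-by-parts companion of the paper's, with $\phi_p$ playing the role of $(\apice{u}{m}_p)'(r)r$ in rescaled variables. The downstream algebra (uniqueness of the root, Lambert inversion) is identical.
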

\begin{proof} It is enough to show the following equality
		\begin{equation}\label{lemma4.2}
		1=-\frac{ ( \theta-2)}{2} \log t,\end{equation}
the conclusion then follows by combining it with \eqref{remark4.5}.  To prove \eqref{lemma4.2}, observe that, for $s>\max\{-\frac{s_p}{\varepsilon_p},-\sigma\}$,
\begin{eqnarray}
		\label{logMenoLog}
		\log (s_p+\varepsilon_p s)-\log r_p&\overset{\text{Prop.} \ref{prop:BmiValePerOgnim}}{=}&
		\log (\sigma \varepsilon_p+\varepsilon_p s+o(1)\varepsilon_p)-\log r_p
		\nonumber
		\\
		&=& \log (\sigma +s)+\log \frac{\varepsilon_p}{r_p}+o(1)
		\nonumber
		\\
		&=& 
		\log (\sigma +s)-\frac{p-1}{2}\log \left(p^{\frac{1}{(p-1)}} |\apice{u}{m}_p(s_p)|(r_p)^{\frac{2}{(p-1)}}\right)+o(1)
		\nonumber
		\\
		&\overset{\begin{subarray}{c}\eqref{rmiinfty}\\\eqref{Mmiinfty}\end{subarray}}{=}& \log (\sigma +s)-\frac{p-1}{2}\left(\log t +o(1)\right) +o(1),
		\end{eqnarray}
as $p\rightarrow +\infty$. Moreover, by multiplying both sides of the equation $-\left[(\apice{u}{m}_p)'(r)r\right]'=|\apice{u}{m}_p(r)|^{p-1}\apice{u}{m}_p(r)r$ by  $(\log r-\log r_p)$ and integrating by parts, we have that
		\begin{eqnarray}
		\label{intmax}
		\int_{ r_p}^{ s_p}|\apice{u}{m}_p(r)|^{p}r (\log r-\log r_p) dr
		&=& (-1)^{m-1}
		\int_{ r_p}^{ s_p}|\apice{u}{m}_p(r)|^{p-1}\apice{u}{m}_p(r)r (\log r-\log r_p) dr
		\nonumber
		\\
		&=&(-1)^{m}\int_{ r_p}^{ s_p} 
		\left[(\apice{u}{m}_p)'(r)r\right]'(\log r-\log r_p)dr
		\nonumber
		\\
		&=& (-1)^{m}(\apice{u}{m}_p)'(s_p)s_p(\log s_p-\log r_p) + (-1)^{m-1}\int_{ r_p}^{ s_p} (\apice{u}{m}_p)'(r)r\frac{1}{r}dr
		\nonumber
		\\
		&=&  (-1)^{m-1}\left(\apice{u}{m}_p(s_p)-\apice{u}{m}_p(r_p)\right)= |\apice{u}{m}_p(s_p)|.
		\end{eqnarray}
In order to compute the integral on the left hand side of \eqref{intmax} we make the change of variable $r=s_p+\varepsilon_p t$ and recall the definition of $z_p$, to obtain that
	\begin{eqnarray*} 
	\int_{ r_p}^{s_p}|\apice{u}{m}_p(r)|^{p}r (\log r-\log r_p) dr
	&=&
	(\varepsilon_p)^2	\int_{a_p}^{0}
	|\apice{u}{m}_p(s_p+\varepsilon_p t)|^{p}\left(\frac{s_p}{\varepsilon_p}+t\right) \left(\log (s_p+\varepsilon_p t)-\log r_p\right) dt
	\\
	&=& \frac{|\apice{u}{m}_p(s_p)|}{p}    	\int_{a_p}^{0}    \left|1+\frac{z_p(t)}{p}  \right|^{p}\left(\frac{s_p}{\varepsilon_p}+t\right) \left(\log (s_p+\varepsilon_p t)-\log r_p\right) dt.
	\end{eqnarray*}
We then pass to the limit as $p\rightarrow +\infty$, using the convergence of $z_p$ to $Z$ (Proposition \ref{prop:convergenzaUltimaRiscalata}), the computation already obtained in \eqref{logMenoLog} and Lebesgue's convergence theorem:
	\begin{eqnarray*}	
		1&\overset{\eqref{intmax}}{=}&\frac{1}{p}    	\int_{a_p}^{0}    \left|1+\frac{z_p(t)}{p}  \right|^{p}\left(\frac{s_p}{\varepsilon_p}+t\right) \left(\log (s_p+\varepsilon_p t)-\log r_p\right) dt	\\
			&{=}&  
			\frac{1}{p} \int_{0}^{\sigma}  e^{Z(s)}s\log s\;ds
			-
			\frac{p-1}{2p} \int_{-\sigma}^0  e^{Z(\sigma+t)}(\sigma+t)
			\log\left( t +o(1)\right)dt + o(1)
			\\
			&=&
			-\frac{1}{2} \log t	\int^{\sigma}_{0} e^{Z(s)}sds + o(1) 
			\\
			&=&-\frac{( \theta-2)}{2} \log t   + o(1),
		\end{eqnarray*}
as $p\rightarrow +\infty$, where the last equality is a consequence of \eqref{Z} with \eqref{Zvari} and \eqref{Zvarii} in Proposition \ref{prop:convergenzaUltimaRiscalata}  (as already observed in \eqref{intzerosigmaZ}).
\end{proof}

\

\begin{proposition}\label{proposition:alphaok} Let $m\in\mathbb N^+$, $m\geq 2$. Assume that Theorem  \ref{theorem:mainDirichlet} holds for the solution $\apice{u}{m-1}_{\!\!p}$ with $m-2$ interior zeros and let $\theta\ (>2)$ be the constant in \eqref{Zvarii}. Then 
\begin{equation}\label{alphaok} 
\theta=\theta_{m-1},
\end{equation}
	where $\theta_{m-1}$ is the constant in  \eqref{iterateAlphaM}.
\end{proposition}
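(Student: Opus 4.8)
The plan is to unwind the definitions: feed the explicit Lambert-function formula for $t$ from Proposition~\ref{Proposition4.4} into the expression for $\theta$ recorded in \eqref{remark4.5}, and then recognize the outcome as exactly one step of the iteration \eqref{iterateAlphaM}. To organize the computation, set
\[
A:=\tfrac12\,\apice{M}{m-1}_{\!\!m-2}\,(\theta_{m-2}+2)\ (>0).
\]
By \eqref{remark4.5} (a rewriting of the identity \eqref{corollarioLemma4.3A} of Lemma~\ref{Lemma4.3}, using $t=RM$) we have $\theta=\dfrac{2A}{t}+2$, while Proposition~\ref{Proposition4.4} gives $t=A\,\mathcal L\!\left(\tfrac1A\right)$. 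Substituting and cancelling $A$ yields
\[
\theta=\frac{2}{\mathcal L\!\left(\tfrac1A\right)}+2 .
\]

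Next I would plug in the closed form of the ``diagonal'' constant from \eqref{Mk}, namely $\apice{M}{m-1}_{\!\!m-2}=e^{2/(2+\theta_{m-2})}$ (this is \eqref{Mk} with $k=m-1$, which is legitimate since $m\ge 2$). Then
\[
\frac1A=\frac{2}{\apice{M}{m-1}_{\!\!m-2}\,(\theta_{m-2}+2)}=\frac{2}{2+\theta_{m-2}}\,e^{-2/(2+\theta_{m-2})},
\]
so that
\[
\theta=\frac{2}{\mathcal L\!\left[\dfrac{2}{2+\theta_{m-2}}\,e^{-2/(2+\theta_{m-2})}\right]}+2 .
\]
Comparing with the defining iteration \eqref{iterateAlphaM} evaluated at $k=m-1$ (so that $\theta_{k-1}=\theta_{m-2}$), the right-hand side is exactly $\theta_{m-1}$, which is \eqref{alphaok}.

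I do not expect a genuine obstacle in this proposition itself: once Proposition~\ref{Proposition4.4} is available, it reduces to a bookkeeping identity. The substantive content sits upstream — in the limit relation \eqref{corollarioLemma4.3A} among $R$, $M$, $\theta$ coming from the derivative estimate of Lemma~\ref{Lemma4.3}, and in the $\log$-weighted integral identity \eqref{lemma4.2}, which together force $t$ to be the Lambert root computed in Proposition~\ref{Proposition4.4}. The only care needed at this step is the algebraic matching of the constant delivered by the inductive hypothesis with the closed form $\apice{M}{m-1}_{\!\!m-2}=e^{2/(2+\theta_{m-2})}$ from Definition~\ref{definition:Constants2}, after which the equivalence $\theta=\theta_{m-1}\Leftrightarrow\sigma=\sigma_{m-1}$ (via \eqref{Zvarii} and \eqref{sigmaEta}) closes the loop and completes Proposition~\ref{theorem:ultimaBubble_PASSO_INDUTTIVO}.
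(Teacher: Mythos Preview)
Your proof is correct and follows essentially the same approach as the paper: substitute the Lambert-function expression for $t$ from Proposition~\ref{Proposition4.4} into \eqref{remark4.5}, then use $\apice{M}{m-1}_{\!\!m-2}=e^{2/(2+\theta_{m-2})}$ to recognize the result as the iteration \eqref{iterateAlphaM} at $k=m-1$. Your auxiliary abbreviation $A$ makes the algebra slightly more transparent, but the argument is otherwise identical.
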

\begin{proof}
	Substituting the value of $t$ obtained in Proposition \ref{Proposition4.4} in \eqref{remark4.5}, we have that
	\begin{align*}
	 \theta&=
\frac{\apice{M}{m-1}_{\!\!m-2} (\theta_{m-2}+2)}{t}+2
=\frac{\apice{M}{m-1}_{\!\!m-2} (\theta_{m-2}+2)}{\frac{\apice{M}{m-1}_{\!\!m-2}\ (\theta_{m-2}+2)}{2} \mathcal L\left[ \frac{2}{\apice{M}{m-1}_{\!\!m-2}\ (\theta_{m-2}+2)}\right]}+2\\
&=\frac{2}{\mathcal L\left[ \frac{2}{(\theta_{m-2}+2)} e^{-{2}/(2+\theta_{m-2})} \right]}+2=\theta_{m-1}.
\end{align*}
\end{proof}

\begin{proof}[Proof of Proposition \ref{theorem:ultimaBubble_PASSO_INDUTTIVO}]
 Let $m\geq 2$ and we assume that Theorem \ref{theorem:mainDirichlet} holds for the radial solution $\apice{u}{m-1}_{\!\!\!p}$ with $m-2$ interior zeros. We want to show that Theorem \ref{th:ultimaBubble} holds, namely that \eqref{epsilon0Ultimo}, \eqref{repsilon0Ultimo}, \eqref{sepsilon0Ultimo} and \eqref{zpmenoNgUltimo} are satisfied.  Observe that
\eqref{epsilon0Ultimo} is \eqref{epsilonVaAZero}, while \eqref{repsilon0Ultimo} follows from Proposition \ref{prop:CmiValePerOgnim}.
By \eqref{alphaok} in Proposition \ref{proposition:alphaok} and \eqref{Zvarii}, $\sigma=\sigma_{m-1}$.  Then, by Proposition \ref{prop:BmiValePerOgnim}, \eqref{sepsilon0Ultimo} holds true and therefore, by \eqref{Z} and \eqref{Zvari}, we have that $Z=Z_{m-1}.$ But then the convergence in \eqref{convIntermediaz} implies \eqref{zpmenoNgUltimo}.
\end{proof}

\begin{proof}[Proof of Proposition \ref{theorem:mainDirichlet_PASSO_INDUTTIVO}]
By assumption, Theorem \ref{theorem:mainDirichlet} holds for the solution $\apice{u}{m-1}_{\!\!\!p}$ with $m-2$ interior zeros. We want to show that it holds for the solution $\apice{u}{m}_p$ with $m-1$ interior zeros, $m\geq 2$.
\[\]
\emph{STEP 1. We  prove the last convergence in  \eqref{raggio},  \eqref{derivataInRaggio},  \eqref{puntoCritico} and  \eqref{valoreInPuntoCritico}. Namely we show that:
	\begin{eqnarray}
	\label{raggioUltimo}
	&&\displaystyle\lim_{p\rightarrow +\infty}(r_p)^{\frac{2}{p-1}}=\apice{R}{m}_{m-1},  
	\\
	\label{derivataInRaggioUltimo}
	&&\displaystyle\lim_{p\rightarrow +\infty}p|(\apice{u}{m}_{p})'(1)|=\apice{D}{m}_m,
	\\
	\label{puntoCriticoUltimo}
	&&\displaystyle\lim_{p\rightarrow +\infty}(s_p)^{\frac{2}{p-1}}=\apice{S}{m}_{m-1}, 
	\\
	\label{valoreInPuntoCriticoUltimo}
	&&\displaystyle\lim_{p\rightarrow +\infty}|\apice{u}{m}_p(s_p)|=\apice{M}{m}_{m-1}, 
	\end{eqnarray}
	where $\apice{M}{m}_{m-1}$, $\apice{R}{m}_{m-1}$, $\apice{D}{m}_m$ and $\apice{S}{m}_{m-1}$ are the constants  in  Theorem \ref{theorem:mainDirichlet}.
}
\\\\
\emph{Proof of STEP 1.} Thanks to Lemma \ref{lemma:stimaDerivataInrp}, one can define
\begin{equation}
\label{Dmiinfty} 
D:=\lim_{p\rightarrow +\infty}p|(\apice{u}{m}_p)'(1)|\ (\geq 0);
\end{equation}	
moreover, since  by \eqref{raggioLimitatoBasso} $0< K_m\leq (s_p)^{\frac{2}{p-1}} (< 1)$, also
\begin{equation}
\label{smiinfty}
S:=\lim_{p\rightarrow +\infty} (s_p)^{\frac{2}{p-1}}\ (\in(0,1]).
\end{equation} Hence \eqref{raggioUltimo}, \eqref{derivataInRaggioUltimo}, \eqref{puntoCriticoUltimo}, \eqref{valoreInPuntoCriticoUltimo} are equivalent to prove that
		\begin{equation} \label{Mok} M=\apice{M}{m}_{m-1},
	\end{equation}
	\begin{equation} \label{Rok} R=\apice{R}{m}_{m-1},
	\end{equation}
	\begin{equation} \label{Dok} D=\apice{D}{m}_{m},
	\end{equation}
	\begin{equation} \label{Sok} S=\apice{S}{m}_{m-1},
	\end{equation}
where $M$ and $R$ have been defined in \eqref{Mmiinfty} and \eqref{rmiinfty}.
	Following similar ideas as in the proof of \eqref{logMenoLog} we obtain, for $s>0$,
	\begin{eqnarray}
	\label{logMenoLogSIM}
	\log (s_p+\varepsilon_p s)&\overset{\eqref{Bmi}}{=}&
	\log (\sigma_{m-1}\varepsilon_p+\varepsilon_ps+o(1)\varepsilon_p)
	\nonumber
	\\
	&=& \log (\sigma_{m-1}+s)+\frac{1}{2}\log (\varepsilon_p)^2+o(1)
	\nonumber
	\\
	&\overset{\eqref{defEpsilon}}{=}& 
	\log (\sigma_{m-1}+s)+\frac{1}{2}\log \frac{1}{p}-\frac{p-1}{2}\log |\apice{u}{m}_p(s_p)|+o(1).
		\end{eqnarray}
Then, as in the proof of \eqref{intmax}, we  multiply the equation
	$-\left[(\apice{u}{m}_p)'(r)r\right]'=|\apice{u}{m}_p(r)|^{p-1}\apice{u}{m}_p(r)r$ by $\log r$ and integrate by parts on the interval $(s_p,1)$ to obtain that
	\begin{eqnarray}\label{intmaxSIM}
	\int^{1}_{ s_p}|\apice{u}{m}_p(r)|^{p}r \log r \,dr
	&=& (-1)^{m-1}
	\int^{1}_{ s_p}|\apice{u}{m}_p(r)|^{p-1}\apice{u}{m}_p(r)r \log r \,dr
	\nonumber
	\\
	&=&(-1)^{m}\int^{1}_{ s_p} 
	\left[(\apice{u}{m}_p)'(r)r\right]'\log r\,dr
	\nonumber
	\\
	&=& (-1)^{m-1}(\apice{u}{m}_p)'(s_p)s_p\log s_{p} + (-1)^{m-1}\int^{1}_{s_p}(\apice{u}{m}_p)'(r)r\frac{1}{r}\,dr
	\nonumber
	\\
	&=&  (-1)^{m}\apice{u}{m}_p(s_p)=-|\apice{u}{m}_p(s_p)|.
	\end{eqnarray}
By the change of variable $r=s_p+\varepsilon_pt$ and recalling the definition of $z_p$ it follows that
	\begin{eqnarray}\label{st} 1&\overset{\eqref{intmaxSIM}}{=}&-\frac{1}{|\apice{u}{m}_p(s_p)|}
		\int^{1}_{ s_p}|\apice{u}{m}_p(r)|^{p}r \log r\, dr\nonumber\\
		&=&
		-\frac{(\varepsilon_p)^2}{|\apice{u}{m}_p(s_p)|}
			\int^{b_p}_{0}
		|\apice{u}{m}_p(s_p+\varepsilon_p t)|^{p}\left(\frac{s_p}{\varepsilon_p}+t\right) \log (s_p+\varepsilon_pt)\, dt
		\nonumber\\
		&=& -\frac{1}{p}    	\int^{b_p}_{0} \left|1+\frac{z_p(t)}{p}  \right|^{p}\left(\frac{s_p}{\varepsilon_p}+t\right) \log (s_p+\varepsilon_pt)\, dt
	\nonumber	\\
		&\overset{\eqref{logMenoLogSIM}}{=}& \frac{p-1}{2p}I_p\log |\apice{u}{m}_p(s_p)| + I_po(1) - \apice{J}{m}_p,
	\end{eqnarray}
where 
\[\apice{J}{m}_p:=\frac{1}{p}    	\int^{ b_p}_{0} \left|1+\frac{z_p(t)}{p}  \right|^{p}\left(\frac{s_p}{\varepsilon_p}+t\right)\log (\sigma_{m-1}+t)\, dt.\]
Following \cite[Lemma 4.8]{GrossiGrumiauPacella2} it is possible to show that $\apice{J}{m}_p=o(1)$ as $p\rightarrow +\infty$; moreover, $I_p=\theta_{m-1}+2+o(1)$ as $p\rightarrow +\infty$ by Lemma \ref{lemmaH} and \eqref{alphaok}.  Then, passing to the limit into \eqref{st},
\[1=\frac{\theta_{m-1}+2}{2}\log M, \]
namely, \[M=e^{\frac{2}{2+\theta_{m-1}}}\overset{\eqref{Mk}}{=}\apice{M}{m}_{m-1},\]
which ends the proof of \eqref{Mok}. Identity \eqref{Mok} implies \eqref{Rok}, because
\[R\overset{\begin{subarray}{l}\eqref{deft}\\\eqref{Mok}\end{subarray}}{=}\frac{t}{\apice{M}{m}_{m-1}}\overset{\begin{subarray}{l}\eqref{remark4.5}\\\eqref{alphaok}\end{subarray}}{=} \frac{\apice{M}{m-1}_{m-2} (\theta_{m-2}+2)}{\apice{M}{m}_{m-1}(\theta_{m-1} -2)}\overset{\eqref{Rk}}{=}\apice{R}{m}_{m-1}. \]
Next we prove \eqref{Dok}.  Observe that
\begin{eqnarray*}D&\overset{\eqref{Dmiinfty}}{=}&\lim_{p\rightarrow +\infty}p|(\apice{u}{m}_p)'(1)|
\\
	&\overset{\eqref{LemmaD}}{=}&
\lim_{p\rightarrow +\infty}p\int_{s_p}^1 	 |\apice{u}{m}_p(r)|^{p}r\,dr
\\
&\overset{\begin{subarray}{c}\text{Lemma } \ref{lemmaG}\\\eqref{Mok}\end{subarray}}{=}&
[\apice{M}{m}_{m-1}+o(1)]\lim_{p\rightarrow +\infty}I_p
\\
&\overset{\begin{subarray}{c}\text{Lemma }\ref{lemmaH}\\\eqref{alphaok}\end{subarray}}{=}& \apice{M}{m}_{m-1}(\theta_{m-1}+2)\overset{\eqref{Dk}}{=}\apice{D}{m}_{m}.
\end{eqnarray*}
The proof of \eqref{Sok}  also follows, since
\[S(\apice{M}{m}_{m-1})\overset{\eqref{Mok}}{=}\lim_{p\rightarrow +\infty} (s_p)^{\frac{2}{p-1}}|\apice{u}{m}_p(s_p)|
=\lim_{p\rightarrow +\infty} \left(\frac{s_p}{\varepsilon_p}\right)^{\frac{2}{p-1}}
\overset{\eqref{sepsilon0Ultimo} }{=}\lim_{p\rightarrow +\infty}\left(\sigma_{m-1}+o(1)\right)^{\frac{2}{p-1}}=1,
\]
so \[S=(\apice{M}{m}_{m-1})^{-1}\overset{\eqref{Sk}}{=}\apice{S}{m}_{m-1}.\]
\[\]
\emph{STEP 2. We prove all the other convergences in  \eqref{raggio}, \eqref{derivataInRaggio}, \eqref{puntoCritico}, \eqref{valoreInPuntoCritico}.}\\

\emph{Proof of STEP 2.}\\
The proof is obtained combining Lemma \ref{lemma:legameCostantiPassiConsecutivi} with the convergence of $(r_p)^{\frac{2}{p-1}}$ proved in \eqref{raggioUltimo} in STEP 1 and using also the explicit values  of the constants (in  \eqref{SoloUltimiReD} and \eqref{SoloUltimiSeM}) related to the solution $\apice{u}{m-1}_{\!\! p}$ with $m-2$ interior zeros, which are known since  Theorem \ref{theorem:mainDirichlet} holds for it  by assumption. We have that
\begin{eqnarray*}
\displaystyle\lim_{p\rightarrow +\infty} (\apice{r}{m}_{i,p})^{\frac{2}{p-1}}
&\overset{\text{Lemma } \ref{lemma:legameCostantiPassiConsecutivi}}{=}& \lim_{p\rightarrow +\infty }(r_p)^{\frac{2}{p-1}} \apice{R}{m-1}_{\!\!i}\overset{\eqref{raggioUltimo}}{=} \apice{R}{m}_{m-1}\apice{R}{m-1}_{\!\!i}\overset{\eqref{SoloUltimiReD}}{=}\apice{R}{m}_{m-1} \prod_{k=i+1}^{m-1}\!\!\apice{R}{k}_{k-1}
	\\
&=& \prod_{k=i+1}^{m}\!\!\apice{R}{k}_{k-1}\overset{\eqref{SoloUltimiReD}}{=} \apice{R}{m}_{i}, \qquad\quad   i=1,\ldots, m-2.  
\end{eqnarray*}
\begin{eqnarray*}
\displaystyle\lim_{p\rightarrow +\infty}p |(\apice{u}{m}_{p})'(\apice{r}{m}_{i,p})|(\apice{r}{m}_{i,p})&\overset{\text{Lemma } \ref{lemma:legameCostantiPassiConsecutivi}}{=}& \frac{ \apice{D}{m-1}_{\!\!i}}{\displaystyle\lim_{p\rightarrow +\infty }(r_p)^{\frac{2}{p-1}}}
\overset{\eqref{raggioUltimo}}{=} \displaystyle\frac{\apice{D}{m-1}_{\!\!i}}{\apice{R}{m}_{m-1}}\overset{\eqref{SoloUltimiReD}}{=}
 \frac{\apice{D}{i}_i}{	\apice{R}{m}_{m-1}\displaystyle\prod_{k=i+1}^{m-1}\!\!\apice{R}{k}_{k-1}}
 \\
 &=& \frac{\apice{D}{i}_i}{	\displaystyle\prod_{k=i+1}^{m}\!\!\apice{R}{k}_{k-1}}
\overset{\eqref{SoloUltimiReD}}{=}\apice{D}{m}_{i},\quad 	 \qquad i=1,\ldots, m-1.
\end{eqnarray*}
\begin{eqnarray*}
	\displaystyle\lim_{p\rightarrow +\infty} (\apice{s}{m}_{i,p})^{\frac{2}{p-1}}&\overset{\text{Lemma } \ref{lemma:legameCostantiPassiConsecutivi}}{=} &\lim_{p\rightarrow +\infty }(r_p)^{\frac{2}{p-1}} \apice{S}{m-1}_{\!\!i}\overset{\eqref{raggioUltimo}}{=}\apice{R}{m}_{m-1}\apice{S}{m-1}_{\!\!i}\overset{\eqref{SoloUltimiSeM}}{=}
	\apice{R}{m}_{m-1}\apice{S}{i+1}_{\!\!i} \prod_{k=i+2}^{m-1}\!\!\apice{R}{k}_{k-1}
	\\
	&=&\apice{S}{i+1}_{\!\!i} \prod_{k=i+2}^{m}\!\!\apice{R}{k}_{k-1}\overset{\eqref{SoloUltimiSeM}}{=}\apice{S}{m}_{i},\quad \qquad i=0,\ldots, m-2.
\end{eqnarray*}
\begin{eqnarray*}
	\displaystyle\lim_{p\rightarrow +\infty} |\apice{u}{m}_p(\apice{s}{m}_{i,p})|&\overset{\text{Lemma } \ref{lemma:legameCostantiPassiConsecutivi}}{=} &
\displaystyle\frac{\apice{M}{m-1}_{i}}{\displaystyle\lim_{p\rightarrow +\infty} (r_p)^{\frac{2}{p-1}}}\overset{\eqref{raggioUltimo}}{=}\frac{\apice{M}{m-1}_{i}}{\apice{R}{m}_{m-1}}
\overset{\eqref{SoloUltimiSeM}}{=}
	\frac{\apice{M}{i+1}_{i}}{\apice{R}{m}_{m-1}
	\displaystyle\prod_{k=i+2}^{m-1}\!\!\apice{R}{k}_{k-1}}\\
&=&
\frac{\apice{M}{i+1}_{i}}{
	\displaystyle\prod_{k=i+2}^{m}\!\!\apice{R}{k}_{k-1}}
\overset{\eqref{SoloUltimiSeM}}{=}
\apice{M}{m}_{i}, 
	\quad\qquad i=0,\ldots, m-2.
\end{eqnarray*}
\end{proof}

\subsection{Proof of Theorem \ref{theorem:mainDirichletG} -  case $\alpha=0$}
\label{section:conclusionDirichletLane_emden}
The proof is a consequence of  Theorem \ref{theorem:mainDirichlet}, Theorem \ref{th:ultimaBubble} and some of the  preliminary results in Section \ref{sectin:LabePreliminaries}.

\

We keep the notation of Section \ref{section:Lane}, namely $\apice{u}{m}_p=\apice{u}{m}_{0,p}$, $\apice{r}{m}_{i,p}=\apice{r}{m}_{i,0,p}$, $\apice{s}{m}_{i,p}=\apice{s}{m}_{i,0,p}$.\\
 \eqref{raggio},
\eqref{derivataInRaggio},
\eqref{puntoCritico} and
\eqref{valoreInPuntoCritico} in Theorem \ref{theorem:mainDirichlet} are exactly the case $\alpha=0$ of the sharp limits in \eqref{raggioG},
\eqref{derivataInRaggioG},
\eqref{puntoCriticoG} and
\eqref{valoreInPuntoCriticoG} in Theorem \ref{theorem:mainDirichletG}.
The proof of the case $\alpha=0$ in \eqref{EnergiaTotaleG}  follows instead from \eqref{derivataInRaggio}  in Theorem \ref{theorem:mainDirichlet}  combined with the preliminary Lemma \ref{lemmaP}, indeed
\[p\int_0^1 |\apice{u}{m}_p(r)|^{p+1}r\,dr
\overset{\text{Lemma \ref{lemmaP}}}{=} \frac{1}{4}\left[p(\apice{u}{m}_p)'(1)\right]^2 +o(1)\overset{\eqref{derivataInRaggio}}{=}\frac{(\apice{D}{m}_m)^2}{4}+o(1).\]
In order to conclude the proof of Theorem \ref{theorem:mainDirichletG}, we need to show the case $\alpha=0$ in the convergence in \eqref{pointwiseDirichletG}, namely that 
\begin{equation}
\label{pointwiseDirichletalpa0}
p\apice{u}{m}_{p}(x)=2\pi (-1)^{m-1}\apice{M}{m}_{m-1} (\theta_{m-1}+2) G(x,0) +o(1), \mbox{ in } C^1_{loc }(B\setminus\{0\}),
\end{equation}
as $p\rightarrow +\infty$. The proof of \eqref{pointwiseDirichletalpa0} exploits not only Theorem \ref{theorem:mainDirichlet} but also Theorem \ref{th:ultimaBubble} and some preliminary results in Section \ref{sectin:LabePreliminaries} (in particular the asymptotic  results in Lemma \ref{theorem: k=1EConvDeboleAZeroENEICOMPATTI}). We start from the representation formula and splitting the integral into $2$ terms:
	\begin{eqnarray*}p\apice{u}{m}_p(x)&=&p\int_B G(x,y)|\apice{u}{m}_p(y)|^{p-1}\apice{u}{m}_p(y)dy
	\\
	&=&p\int_{\{|y|<\apice{s}{m}_{m-1,p}\}} \!\!\!\!\!\!\!\!\!\!\!\!\!\!\!
	G(x,y)|\apice{u}{m}_p(y)|^{p-1}\apice{u}{m}_p(y)dy+p\int_{\{\apice{s}{m}_{m-1,p}<|y|<1\}} \!\!\!\!\!\!\!\!\!\!\!\!\!\!\!
	G(x,y)|\apice{u}{m}_p(y)|^{p-1}\apice{u}{m}_p(y)dy.
	\end{eqnarray*}	
Let $\delta\in(0,1)$. For the first term we use that for any $x,y$ such that $0<\delta\leq|x|\leq 1$ and $|y|<\apice{s}{m}_{m-1,p} $ ($<\frac{\delta}{2}$ since $\apice{s}{m}_{m-1,p}\rightarrow 0$  as $p\rightarrow +\infty$), 	
\begin{equation}\label{equaqui}\left| G(x,y)-G(x,0)\right|\leq |y|\sup_{\substack{ \delta\leq  |x|\leq 1\\ |y|\leq \frac{\delta}{2}}}|\nabla G(x,y)|=C|y|\leq C \apice{s}{m}_{m-1,p}   =o(1),\end{equation} (where the constant $C$ depends on $\delta$); moreover, exploiting  Lemma \ref{lemmaN} we also have that
\begin{equation}\label{enos}	p\int_{0}^{\apice{s}{m}_{m-1,p}}
|\apice{u}{m}_p(r)|^{p-1}\apice{u}{m}_p(r)rdr
\overset{\text{Lemma \ref{lemmaN}}}{=}	p(\apice{u}{m}_p)'(\apice{s}{m}_{m-1,p})\apice{s}{m}_{m-1,p}=0.\end{equation} 
Then	\begin{eqnarray}\label{pap}
	p\int_{\{|y|<\apice{s}{m}_{m-1,p}\}} \!\!\!\!\!\!\!\!\!\!\!\!\!\!\!
	G(x,y)|\apice{u}{m}_p(y)|^{p-1}\apice{u}{m}_p(y)dy&\overset{\eqref{equaqui}}{=}&
	(G(x,0)+o(1))p\int_{\{|y|<\apice{s}{m}_{m-1,p}\}} \!\!\!\!\!\!\!\!\!\!\!\!\!\!\!
	|\apice{u}{m}_p(y)|^{p-1}\apice{u}{m}_p(y)dy
	\nonumber\\
&=& 2\pi (G(x,0)+o(1))	p\int_{0}^{\apice{s}{m}_{m-1,p}}
|\apice{u}{m}_p(r)|^{p-1}\apice{u}{m}_p(r)rdr
\nonumber\\
&\overset{\eqref{enos}}{=}&0.
	\end{eqnarray}
	For the second term, we consider $\tau_p\in (\apice{s}{m}_{m-1,p},1)$ such that
	\[\tau_p=o(1)\qquad \text{ and }\qquad\frac{\tau_p-\apice{s}{m}_{m-1,p}}{\apice{\varepsilon}{m}_{m-1,p}}\rightarrow +\infty, \] 
 as $p\rightarrow +\infty$,  	and decompose the integral in the following way:
	\begin{eqnarray*}p\int_{\{\apice{s}{m}_{m-1,p}<|y|<1\}} \!\!\!\!\!\!\!\!\!\!\!\!\!\!\!
	G(x,y)|\apice{u}{m}_p(y)|^{p-1}\apice{u}{m}_p(y)dy&=& p\int_{\{\apice{s}{m}_{m-1,p}<|y|<\tau_p\}} \!\!\!\!\!\!\!\!\!\!\!\!\!\!\!
	G(x,y)|\apice{u}{m}_p(y)|^{p-1}\apice{u}{m}_p(y)dy\\&&+\,  p\int_{\{\tau_p<|y|<1\}} \!\!\!\!\!\!\!\!\!\!\!\!\!\!\!
	G(x,y)|\apice{u}{m}_p(y)|^{p-1}\apice{u}{m}_p(y)dy.
\end{eqnarray*}	
We show that 
\begin{equation}\label{rap}p\int_{\{\apice{s}{m}_{m-1,p}<|y|<\tau_p\}} \!\!\!\!\!\!\!\!\!\!\!\!\!\!\!
G(x,y)|\apice{u}{m}_p(y)|^{p-1}\apice{u}{m}_p(y)dy=(-1)^{m-1}2\pi G(x,0)\apice{M}{m}_{m-1} (\theta_{m-1}+2)+o(1)\end{equation}
and that
\begin{equation}\label{tap}p\int_{\{\tau_p<|y|<1\}} \!\!\!\!\!\!\!\!\!\!\!\!\!\!\!
G(x,y)|\apice{u}{m}_p(y)|^{p-1}\apice{u}{m}_p(y)dy=o(1),\end{equation}
uniformly in $\{\delta<|x|<1\}$, as $p\rightarrow +\infty$. The local uniform convergence of $pu_p$ in \eqref{pointwiseDirichletalpa0} then follows from \eqref{pap}, \eqref{rap} and \eqref{tap} (the uniform convergence of the derivative follows in a similar way, we omit it) and this will conclude the proof.\\	\\
Proof of \eqref{rap}:\\	
observe that for any $x,y$ such that $0<\delta\leq|x|\leq 1$ and $|y|<\tau_p $ ($<\frac{\delta}{2}$ since $\tau_p\rightarrow 0$,  as $p\rightarrow +\infty$), 	
\[\left| G(x,y)-G(x,0)\right|\leq |y|\sup_{\substack{ \delta\leq  |x|\leq 1\\ |y|\leq \frac{\delta}{2}}}|\nabla G(x,y)|=C|y|\leq C \tau_p   =o(1)\]
 as $p\rightarrow +\infty$, 
hence
	\begin{eqnarray*}
	p\int_{\{\apice{s}{m}_{m-1,p}<|y|<\tau_p\}} \!\!\!\!\!\!\!\!\!\!\!\!\!\!\!
	G(x,y)|\apice{u}{m}_p(y)|^{p-1}\apice{u}{m}_p(y)dy
		&{=}&
	(G(x,0)+o(1))p\int_{\{\apice{s}{m}_{m-1,p}<|y|<\tau_p\}} \!\!\!\!\!\!\!\!\!\!\!\!\!\!\!
	|\apice{u}{m}_p(y)|^{p-1}\apice{u}{m}_p(y)dy
		\\
	&=&2\pi (G(x,0)+o(1)) p\int_{\apice{s}{m}_{m-1,p}}^{\tau_p}
	|\apice{u}{m}_p(r)|^{p-1}\apice{u}{m}_p(r)rdr.
\end{eqnarray*}
We prove that 
\begin{equation}\label{enst}p\int_{\apice{s}{m}_{m-1,p}}^{\tau_p}
|\apice{u}{m}_p(r)|^{p-1}\apice{u}{m}_p(r)rdr=(-1)^{m-1}\apice{M}{m}_{m-1}(\theta_{m-1}+2) +o(1),\end{equation}  as $p\rightarrow +\infty$. 
On the one hand, since $\tau_p<1$,
\begin{eqnarray*}\limsup_p p\int_{\apice{s}{m}_{m-1,p}}^{\tau_p}
|\apice{u}{m}_p(r)|^{p}rdr&\leq& \limsup_pp\int_{\apice{s}{m}_{m-1,p}}^{1}
|\apice{u}{m}_p(r)|^{p}rdr
\\&\overset{\text{Lemma \ref{lemmaN}}}{=}&\limsup_p p|(\apice{u}{m}_p)'(1)|
\\
&\overset{\text{Thm \ref{theorem:mainDirichlet}}}{=}& \apice{D}{m}_m= \apice{M}{m}_{m-1}(\theta_{m-1}+2).
\end{eqnarray*}
On the other hand, since $\frac{\tau_p-\apice{s}{m}_{m-1,p}}{\apice{\varepsilon}{m}_{m-1,p}}\rightarrow +\infty$ as $p\rightarrow +\infty$, by making a change of variable $r=\apice{\varepsilon}{m}_{m-1,p} s+\apice{s}{m}_{m-1,p}$, recalling the definition of $z_p$, exploiting Theorem \ref{th:ultimaBubble} and using  Fatou's lemma (similarly as in the proof of STEP 2 in Lemma \ref{lemmaH}), we obtain that
\begin{eqnarray*}
\liminf_p p\int_{\apice{s}{m}_{m-1,p}}^{\tau_p}
|\apice{u}{m}_p(r)|^{p}rdr&=& \liminf_p|\apice{u}{m}_p(\apice{s}{m}_{m-1,p})|\int_{0}^{\frac{\tau_p-\apice{s}{m}_{m-1,p}}{\apice{\varepsilon}{m}_{m-1,p}}}  \left|1+\frac{z_p}{p}\right|^{p}\left(s+\frac{\apice{s}{m}_{m-1,p}}{\apice{\varepsilon}{m}_{m-1,p}}\right)ds 
\\
&\overset{\text{Thm \ref{th:ultimaBubble}}}{\geq}&\apice{M}{m}_{m-1}\int_0^{+\infty}
 e^{Z_{m-1}(s+\sigma_{m-1})}(s+\sigma_{m-1})ds  
\\
&=&\apice{M}{m}_{m-1}\int_{\sigma_{m-1}}^{+\infty}  e^{Z_{m-1}(s)}sds  
\\
&\overset{\eqref{espressioneZ}}{=}& \apice{M}{m}_{m-1} (\theta_{m-1})^2(\beta_{m-1})^{\theta_{m-1}}\int_{\sigma_{m-1}}^{+\infty}
\frac{s^{\theta_{m-1}-1}}{((\beta_{m-1})^{\theta_{m-1}}+s^{\theta_{m-1}})^2}ds
\\
&=& \apice{M}{m}_{m-1}\frac{2\theta_{m-1}(\beta_{m-1})^{\theta_{m-1}}}{(\beta_{m-1})^{\theta_{m-1}}+(\sigma_{m-1})^{\theta_{m-1}}}
\\
&=& \apice{M}{m}_{m-1} \left(\theta_{m-1}+2\right),
\end{eqnarray*}	
which ends the proof of \eqref{enst} and, therefore, of \eqref{rap}.\\
\\
Proof of  \eqref{tap}: 
\begin{eqnarray*}
p\int_{0}^1|\apice{u}{m}_p(r)|^{p-1}\apice{u}{m}_p(r) rdr&\overset{\text{Lemma } \ref{lemmaN}}{=}&
-p(\apice{u}{m}_p)'(1) \overset{\text{Thm \ref{theorem:mainDirichlet}}}{=} (-1)^{m-1}\apice{D}{m}_m  +o(1) \\
&=&(-1)^{m-1}\apice{M}{m}_{m-1}(\theta_{m-1}+2) +o(1)
\end{eqnarray*}
 as $p\rightarrow +\infty$ and so, by \eqref{enos} and \eqref{enst},
\begin{eqnarray*}p\int_{\tau_p}^1|\apice{u}{m}_p(r)|^{p-1}\apice{u}{m}_p(r) rdr&=&p\int_{0}^1|\apice{u}{m}_p(r)|^{p-1}\apice{u}{m}_p(r) rdr+\\
	&&-\, p\int_{0}^{\apice{s}{m}_{m-1,p}}|\apice{u}{m}_p(r)|^{p-1}\apice{u}{m}_p(r) rdr-p\int_{\apice{s}{m}_{m-1,p}}^{\tau_p}|\apice{u}{m}_p(r)|^{p-1}\apice{u}{m}_p(r) rdr
\\
&\overset{\begin{subarray}{l}\eqref{enos}\\\eqref{enst}\end{subarray}}{=}&o(1),
\end{eqnarray*}
 as $p\rightarrow +\infty$. Moreover, $|G(x,y)|\leq C$ for any $x,y$ such that $0<\delta\leq|x|\leq 1$ and $|y|<\frac{\delta}{2}$. As a consequence,
\begin{eqnarray}\label{ao}
\nonumber
	\left|p\int_{\{\tau_p<|y|<\frac{\delta}{2}\}} \!\!\!\!\!\!\!\!\!\!\!\!\!\!\!
	G(x,y)|\apice{u}{m}_p(y)|^{p-1}\apice{u}{m}_p(y)dy\right|
	&\leq&
	p\int_{\{\tau_p<|y|<\frac{\delta}{2}\}}\!\!\!\!\!\!\!\!\!\!\!\!\!\!\!
|G(x,y)||\apice{u}{m}_p(y)|^{p}dy\\
\nonumber&\leq &C p\int_{\{\tau_p<|y|<\frac{\delta}{2}\}}\!\!\!\!\!\!\!\!\!\!\!\!\!\!\!
|\apice{u}{m}_p(y)|^{p}dy\\
\nonumber&\leq& C p\int_{\{\tau_p<|y|<1\}}\!\!\!\!\!\!\!\!\!\!\!\!\!\!\!
|\apice{u}{m}_p(y)|^{p}dy
\\
&=& 2\pi(-1)^{m-1}C p\int_{\tau_p}^1
|\apice{u}{m}_{p}(r)|^{p-1}\apice{u}{m}_{p}(r)rdr\nonumber\\
&=&o(1).\end{eqnarray}
Finally, for the last term, we use \eqref{ZeroSuCompatti} in Lemma \ref{theorem: k=1EConvDeboleAZeroENEICOMPATTI}, namely,
\begin{eqnarray}\label{aoo}
\nonumber
\left|p\int_{\{\frac{\delta}{2}<|y|<1\}}	
 \!\!\!\!\!\!\!\!\!\!\!\!\!\!\!
G(x,y)|\apice{u}{m}_p(y)|^{p-1}\apice{u}{m}_p(y)dy\right|
&\leq& 
p\int_{\{\frac{\delta}{2}<|y|<1\}} \!\!\!\!\!\!\!\!\!\!\!\!\!\!\!
|G(x,y)||\apice{u}{m}_p(y)|^{p}dy
\\
\nonumber&\leq& \|p|\apice{u}{m}_p|^p\|_{L^{\infty}(\{\frac{\delta}{2}\leq|y|\leq 1\})}\int_{B}|G(x,y)|dy
\\
&= &C  \|p|\apice{u}{m}_p|^p\|_{L^{\infty}(\{\frac{\delta}{2}\leq|y|\leq 1\})}
\nonumber
\\
&\overset{\eqref{ZeroSuCompatti}}{=}&o(1)
\end{eqnarray}  as $p\rightarrow +\infty$. Identity \eqref{tap} follows from \eqref{ao} and \eqref{aoo}.

\subsection{Proof of  Theorem \ref{theorem:analisiAsintoticaRiscalateAlpha} - case $\alpha=0$}
\label{section:conclusionDirichletLane_emden2}
We keep the notation of Section \ref{section:Lane}, namely $\apice{u}{m}_p=\apice{u}{m}_{0,p}$, $\apice{r}{m}_{i,p}=\apice{r}{m}_{i,0,p}$, $\apice{s}{m}_{i,p}=\apice{s}{m}_{i,0,p}$; similarly we set  
\begin{equation}\label{defEpsilon}\apice{\varepsilon}{m}_{i,p}:=\apice{\varepsilon}{m}_{i,0,p},\quad i=0,\ldots, m-1,
\end{equation}
where $\apice{\varepsilon}{m}_{i,0,p}$ are the parameters defined in \eqref{scpar} in the case $\alpha=0$. Let us define the rescaled functions 
\begin{eqnarray}\label{zeta}
\apice{z}{m}_{i,p}(r):=
\frac{p}{|\apice{u}{m}_{p}(\apice{s}{m}_{i,p})|}\left[(-1)^i \apice{u}{m}_p(\apice{s}{m}_{i,p}+ \apice{\varepsilon}{m}_{i,p} r)  -    |\apice{u}{m}_{p}(\apice{s}{m}_{i,p})| \right], \quad r\in (\apice{a}{m}_{i,p},\apice{b}{m}_{i,p}),
\end{eqnarray}
for $i=0,\ldots, m-1,$ where  
\begin{equation}\label{cuore}\apice{a}{m}_{i,p}:=\left\{\begin{array}{lr}
0, &\qquad \mbox{ if }i=0, \\
\frac{\apice{r}{m}_{i,p}-\apice{s}{m}_{i,p}}{\apice{\varepsilon}{m}_{i,p}}\; (<0), &\qquad \mbox{ if }  i=1, \ldots, m-1,
\end{array}
\right.\end{equation}	and
\begin{equation}\label{doppiocuore}
\apice{b}{m}_{i,p}:=\frac{\apice{r}{m}_{i+1,p}-\apice{s}{m}_{i,p}}{ \apice{\varepsilon}{m}_{i,p}}\; (>0), \qquad  i=0, \ldots, m-1.\end{equation}

\begin{theorem}
	\label{theorem:analisiAsintoticaRiscalate}
	Let $m\in\mathbb N$, $m\geq 1$   then  $\apice{\varepsilon}{m}_{i,p}=o(1)$,  
	\begin{equation}\label{condizioniParam}
	\frac{\apice{r}{m}_{i,p}}{\apice{\varepsilon}{m}_{i,p}}=o(1)\ (i\neq 0),
	\qquad\qquad \frac{\apice{s}{m}_{i,p}}{\apice{\varepsilon}{m}_{i,p}}= \sigma_{i,0}+ o(1), \qquad\qquad \frac{\apice{\varepsilon}{m}_{i,p}}{\apice{r}{m}_{i+1,p}}=o(1),
\end{equation}
	and
	\begin{equation}	\label{zpmenoNg}
	\apice{z}{\emph m}_{i,p} = Z_{i,0}(\cdot +\sigma_{i,0})+o(1)\quad\mbox{in } C^1_{loc}(-\sigma_i, +\infty)\quad \text{as  $p\rightarrow +\infty$}
	\end{equation}
	 for all $i=0,\ldots, m-1$, where $\sigma_{i,0}$ and $Z_{i,0}$ are the constants and functions in the case $\alpha=0$ in Theorem \ref{theorem:analisiAsintoticaRiscalateAlpha}.
\end{theorem}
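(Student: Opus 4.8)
The plan is to fix a nodal index $i\in\{0,\dots,m-1\}$ and reduce the whole statement for that index to the already established last--bubble result, Theorem \ref{th:ultimaBubble}, applied to the solution $\apice{u}{i+1}_p$ (the radial solution of \eqref{problem}--\eqref{MaxInZero} with $i$ interior zeros). The guiding observation is that, after a single dilation, the $i$--th nodal region of $\apice{u}{m}_p$ is exactly the \emph{last} nodal region of $\apice{u}{i+1}_p$, and the two carry the same rescaled function.

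First I would record the dilation identity generalizing \eqref{legame}: for every $i\in\{0,\dots,m-1\}$,
\begin{equation*}
\apice{u}{i+1}_p(r)=(\apice{r}{m}_{i+1,p})^{\frac{2}{p-1}}\,\apice{u}{m}_p(\apice{r}{m}_{i+1,p}\,r),\qquad r\in[0,1].
\end{equation*}
Indeed the right--hand side solves \eqref{problem}, is positive at $0$, and has exactly $i$ interior zeros (the zeros $\apice{r}{m}_{1,p},\dots,\apice{r}{m}_{i,p}$ of $\apice{u}{m}_p$ are mapped into $(0,1)$ while $\apice{r}{m}_{i+1,p}$ is mapped to $1$), so this follows from the uniqueness recalled after \eqref{problem}; equivalently one iterates Lemma \ref{lemma:legami} using \eqref{SecondoLegame}. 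For $i=m-1$ it is the identity ($\apice{r}{m}_{m,p}=1$) and for $i=m-2$ it is \eqref{legame}. From this identity, arguing exactly as for \eqref{SecondoLegame}--\eqref{legamiRaggi}, one obtains the dictionary $\apice{r}{i+1}_{i,p}=\apice{r}{m}_{i,p}/\apice{r}{m}_{i+1,p}$, $\apice{s}{i+1}_{i,p}=\apice{s}{m}_{i,p}/\apice{r}{m}_{i+1,p}$, $\apice{\varepsilon}{i+1}_{i,p}=\apice{\varepsilon}{m}_{i,p}/\apice{r}{m}_{i+1,p}$ and $|\apice{u}{i+1}_p(\apice{s}{i+1}_{i,p})|=(\apice{r}{m}_{i+1,p})^{2/(p-1)}|\apice{u}{m}_p(\apice{s}{m}_{i,p})|$; consequently the endpoints in \eqref{cuore}--\eqref{doppiocuore} match, $\apice{a}{i+1}_{i,p}=\apice{a}{m}_{i,p}$ and $\apice{b}{i+1}_{i,p}=\apice{b}{m}_{i,p}$, and a direct substitution in \eqref{zeta} shows that the dilation factor $(\apice{r}{m}_{i+1,p})^{2/(p-1)}$ and the normalizing value cancel, so that $\apice{z}{i+1}_{i,p}=\apice{z}{m}_{i,p}$ on their common domain.

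Next, since Theorems \ref{theorem:mainDirichlet} and \ref{th:ultimaBubble} have already been proved for solutions with any number of nodal regions, I would apply Theorem \ref{th:ultimaBubble} to $\apice{u}{i+1}_p$. This yields at once $\apice{\varepsilon}{i+1}_{i,p}=o(1)$, $\apice{r}{i+1}_{i,p}/\apice{\varepsilon}{i+1}_{i,p}=o(1)$ for $i\ge1$, $\apice{s}{i+1}_{i,p}/\apice{\varepsilon}{i+1}_{i,p}=\sigma_{i,0}+o(1)$ with $\sigma_{i,0}=\sqrt{(\theta_i^2-4)/2}$ as in \eqref{sigmaEta}, and $\apice{z}{i+1}_{i,p}=Z_{i,0}(\cdot+\sigma_{i,0})+o(1)$ in $C^1_{loc}(-\sigma_{i,0},+\infty)$ with $Z_{i,0}$ as in \eqref{espressioneZ}. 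Translating back through the dictionary: $\apice{\varepsilon}{m}_{i,p}=\apice{r}{m}_{i+1,p}\,\apice{\varepsilon}{i+1}_{i,p}=o(1)$ (using $\apice{r}{m}_{i+1,p}\le 1$); the ratios $\apice{r}{m}_{i,p}/\apice{\varepsilon}{m}_{i,p}$ and $\apice{s}{m}_{i,p}/\apice{\varepsilon}{m}_{i,p}$ are invariant under the dilation and therefore equal $o(1)$ and $\sigma_{i,0}+o(1)$; $\apice{\varepsilon}{m}_{i,p}/\apice{r}{m}_{i+1,p}=\apice{\varepsilon}{i+1}_{i,p}=o(1)$; and $\apice{z}{m}_{i,p}=\apice{z}{i+1}_{i,p}=Z_{i,0}(\cdot+\sigma_{i,0})+o(1)$. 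These are precisely $\apice{\varepsilon}{m}_{i,p}=o(1)$, the three relations in \eqref{condizioniParam}, and \eqref{zpmenoNg}. The remaining case $i=m-1$ is literally Theorem \ref{th:ultimaBubble} for $\apice{u}{m}_p$ itself (equations \eqref{epsilon0Ultimo}--\eqref{zpmenoNgUltimo}).

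The only genuine work will be the elementary bookkeeping of the second paragraph: counting the interior zeros of the dilated solution, checking that the domains of the rescaled functions coincide, and verifying that the power $(\apice{r}{m}_{i+1,p})^{2/(p-1)}$ together with the shift by the critical value cancels in \eqref{zeta}. Nothing here is deep, but the normalizations must be handled carefully. An equivalent presentation, if preferred, is an induction on $m$: the base case $m=1$ is Proposition \ref{theorem:analisiAsintoticaRiscalatem=1}, and in the inductive step the indices $i\le m-2$ are transferred from $\apice{u}{m-1}_p$ to $\apice{u}{m}_p$ via \eqref{legame} and \eqref{QuartoLegame}--\eqref{legamiRaggi} exactly as above, while $i=m-1$ is covered by Theorem \ref{th:ultimaBubble}.
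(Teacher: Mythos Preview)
Your proposal is correct and follows essentially the same approach as the paper: reduce the $i$-th bubble of $\apice{u}{m}_p$ to the last bubble of $\apice{u}{i+1}_p$ via the dilation that maps the $(i{+}1)$-st nodal radius to $1$, check that the rescaled functions and the three ratios are invariant under this dilation, and then invoke Theorem \ref{th:ultimaBubble}. The only cosmetic difference is that the paper writes the reduction as an $(m{-}i{-}1)$-fold iteration of the single-step relation \eqref{legame} (showing $\apice{z}{m}_{i,p}=\apice{z}{m-1}_{\!\!\!i,p}=\cdots=\apice{z}{i+1}_{\!\!i,p}$), whereas you write the composite dilation in one shot; you explicitly note this equivalence yourself.
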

Before proving Theorem \ref{theorem:analisiAsintoticaRiscalate}, we show how the case $\alpha=0$ in  Theorem \ref{theorem:analisiAsintoticaRiscalateAlpha} is a consequence of it.

\begin{proof}[Proof of Theorem \ref{theorem:analisiAsintoticaRiscalateAlpha} - case $\alpha=0$]
If $i=0$ then $\sigma_{i,0}=0$ and $\apice{\xi}{ m}_{i,0,p}=\apice{z}{m}_{i,p}$ so there is nothing to prove. Let $i\geq 1$. We show the local uniform convergence, the convergence of the derivatives may be proved in a similar way.
Let $K\subset (0,+\infty)$ be a compact set, we want to show that
\begin{equation}
\label{tesisuK}
\sup_{r\in K} |\apice{\xi}{ m}_{i,0,p}(r)-Z_{i,0}(r)|=o(1),
\end{equation}
as $p\rightarrow +\infty$.
For any $c>0$ let 
\begin{align*}
K-c:=\{s\in\mathbb R\ :\ s+c\in K\}\qquad \text{ and }\qquad K_{c}:=\{s\in\mathbb R\ :\ \min_{y\in K}|s-y|\leq c\},  
\end{align*}
clearly $K-c$ and $K_c$ are compact sets. Observe that, by the second equality in \eqref{condizioniParam}, for $\delta>0$ there exists  $p_{\delta}>1$ such that 
\[K-\frac{\apice{s}{m}_{i,p}}{\apice{\varepsilon}{m}_{i,p}}\subseteq \widetilde K:=(K-\sigma_{i,0})_\delta, \qquad \mbox{ for }p\geq p_{\delta}.\]
Moreover, note that
	\[\apice{\xi}{ m}_{i,0,p}(r)=\apice{z}{m}_{i,p}(r-\frac{\apice{s}{m}_{i,p}}{\apice{\varepsilon}{m}_{i,p}}).\] 
Hence,
\begin{eqnarray*}
	\sup_{r\in K} |\apice{\xi}{ m}_{i,0,p}(r)-Z_{i,0}(r)|&=& \sup_{r\in K} \left|\apice{z}{m}_{i,p}(r-\frac{\apice{s}{m}_{i,p}}{\apice{\varepsilon}{m}_{i,p}})-Z_{i,0}(r)\right|
	\\
	&=& \sup_{t\in K-\frac{\apice{s}{m}_{i,p}}{\apice{\varepsilon}{m}_{i,p}}} \left|\apice{z}{m}_{i,p}(t)-Z_{i,0}(t+\frac{\apice{s}{m}_{i,p}}{\apice{\varepsilon}{m}_{i,p}})\right|
	\\
	&\leq& \sup_{t\in \widetilde K} \left|\apice{z}{m}_{i,p}(t)-Z_{i,0}(t+\frac{\apice{s}{m}_{i,p}}{\apice{\varepsilon}{m}_{i,p}})\right|
	\\
	&\leq & \sup_{t\in \widetilde K} \left|\apice{z}{m}_{i,p}(t)-Z_{i,0}(t+\sigma_{i,0})\right| + \sup_{t\in \widetilde K} \left|Z_{i,0}(t+\sigma_{i,0})-Z_{i,0}(t+\frac{\apice{s}{m}_{i,p}}{\apice{\varepsilon}{m}_{i,p}})\right|
	\\
	&\overset{
		\begin{subarray}{c}
	\eqref{zpmenoNg}	\\
	\eqref{condizioniParam}
\end{subarray}
}{=}& o(1)
\end{eqnarray*}
as $p\rightarrow +\infty$.
\end{proof}

\

\begin{proof}[Proof of Theorem \ref{theorem:analisiAsintoticaRiscalate}]

If $i=m-1$ then we directly apply Theorem \ref{th:ultimaBubble}. Hence, let us consider the case  $i<m-1$. Recall that the solutions $\apice{u}{m}_p$ and $\apice{u}{m-1}_{\!\!\!p}$ are linked by the following change of variable
\begin{equation}
\label{legamemucho}\apice{u}{m-1}_{\!\!\!p}(r)= (\apice{r}{\emph m}_{m-1,p})^{\frac{2}{p-1}} \apice{u}{m}_{p}(\apice{r}{\emph m}_{m-1,p} r), \qquad r\in [0,1].
\end{equation}
As a consequence, 
\[\frac{\apice{r}{m}_{i,p}}{\apice{\varepsilon}{m}_{i,p}} \overset{\eqref{QuintoLegame}}{=}
	\frac{\apice{r}{m-1}_{\!\!\!i,p}}{\apice{\varepsilon}{m-1}_{\!\!\!\!i,p}}, \qquad\quad
	\frac{\apice{s}{m}_{i,p}}{\apice{\varepsilon}{m}_{i,p}}
	\overset{\eqref{QuartoLegame}}{=}
	\frac{\apice{s}{m-1}_{\!\!\!i,p}}{\apice{\varepsilon}{m-1}_{\!\!\!i,p}}, \qquad \quad
	\frac{\apice{\varepsilon}{m}_{i,p}}{\apice{r}{m}_{i+1,p}}=
	\frac{\apice{\varepsilon}{m-1}_{\!\!\!i,p}}{\apice{r}{m-1}_{\!\!\!i+1,p}}.
	\]
So also	\[\apice{a}{m}_{i,p}= \apice{a}{m-1}_{\!\!\!i,p},\qquad\qquad\apice{b}{m}_{i,p}= \apice{b}{m-1}_{\!\!\!i,p},	\]
and the $i$-th rescaled function $\apice{z}{m}_{i,p}$ of the solution $\apice{u}{m}_p$ coincides with 
the $i$-th rescaled function $\apice{z}{m-1}_{\!\!\!i,p}$ of the solution $\apice{u}{m-1}_{\!\!\! p}$, because
\begin{eqnarray*}\apice{z}{m}_{i,p}(r)&=&
	p\left[ (-1)^i\frac{\apice{u}{m}_p( \apice{s}{m}_{i,p}+ \apice{\varepsilon}{m}_{i,p} r)}{|\apice{u}{m}_{p}(\apice{s}{m}_{i,p})|}  -    1 \right]\\
	&\overset{\eqref{legamemucho}}{=}&
	p\left[(-1)^i \frac{\apice{u}{m-1}_{\!\!\!p}\left(\frac{\apice{s}{m}_{i,p}}{\apice{r}{m}_{m-1,p}}+ \frac{\apice{\varepsilon}{m}_{i,p}}{\apice{r}{m}_{m-1,p}} r\right)}{(\apice{r}{m}_{m-1,p})^{2/(p-1)}|\apice{u}{m}_{p}(\apice{s}{m}_{i,p})|}  -    1 \right]
	\\ 
	&=& 	p\left[(-1)^i \frac{\apice{u}{m-1}_{\!\!\!p}(\apice{s}{m-1}_{\!\!\!i,p}+ \apice{\varepsilon}{m-1}_{\!\!\!i,p} r)}{|\apice{u}{m-1}_{\!\!p}(\apice{s}{m-1}_{\!\!i,p})|}  -    1 \right]=
	\apice{z}{m-1}_{\!\!\!i,p}(r).
\end{eqnarray*}
Iterating this procedure $m-i-1$ times, we have that the \emph{last} rescaled function of the solution $\apice{u}{i+1}_{\!\!\! p}$, namely,
\[\apice{z}{m}_{i,p}\underbrace{=\apice{z}{m-1}_{\!\!\!i,p}=\ldots=}_{m-i-1 \text{ times }}\apice{z}{i+1}_{\!\!\!i,p}\]
with
\[\apice{a}{m}_{i,p}= \apice{a}{m-1}_{\!\!\!i,p}=\ldots= \apice{a}{i+1}_{\!\!i,p};\qquad\qquad\apice{b}{m}_{i,p}= \apice{b}{m-1}_{\!\!\!i,p}=\ldots= \apice{b}{i+1}_{\!\!i,p};	\]
and similarly the relations
\begin{eqnarray*}
&& 
\frac{\apice{r}{m}_{i,p}}{\apice{\varepsilon}{m}_{i,p}} =
\frac{\apice{r}{m-1}_{\!\!\!i,p}}{\apice{\varepsilon}{m-1}_{\!\!\!\!i,p}}=\ldots=\frac{\apice{r}{i+1}_{\!\!i,p}}{\apice{\varepsilon}{i+1}_{\!\!i,p}},
\\
&&
\frac{\apice{s}{m}_{i,p}}{\apice{\varepsilon}{m}_{i,p}}
=
\frac{\apice{s}{m-1}_{\!\!\!i,p}}{\apice{\varepsilon}{m-1}_{\!\!\!i,p}}= \ldots=
\frac{\apice{s}{i+1}_{\!\!i,p}}{\apice{\varepsilon}{i+1}_{\!\!i,p}},
\\
&&
\frac{\apice{\varepsilon}{m}_{i,p}}{\apice{r}{m}_{i+1,p}}=
\frac{\apice{\varepsilon}{m-1}_{\!\!\!i,p}}{\apice{r}{m-1}_{\!\!\!i+1,p}}=\ldots=\apice{\varepsilon}{i+1}_{\!\!i,p}.
\end{eqnarray*}
The claim now follows from Theorem \ref{th:ultimaBubble} (with $m=i+1$).
\end{proof}

\

\section{The Dirichlet H\'enon problem in $B$}\label{section:Heno}
\addtocontents{toc}{\protect\setcounter{tocdepth}{1}}   
In this section we prove the case $\alpha>0$  in Theorems \ref{theorem:mainDirichletG} and \ref{theorem:analisiAsintoticaRiscalateAlpha}.\\
\\
The case $\alpha =0$ (Dirichlet Lane-Emden) has been proved in Section \ref{section:Lane}, the idea is to use a suitable \emph{change of variable} in order to derive the case $\alpha>0$ from the case $\alpha=0$.
\\
\\
To this aim let us observe that the Lane-Emden equation (case $\alpha=0$) and the H\'enon equation (case $\alpha>0$) are linked - \emph{in the radial setting and in dimension $2$} - by a change of variable first considered in \cite{Clem} and then also used in  \cite{CowGhou, GGN, SmetsSuWillem} (see also \cite{MoreiPacella, SmetsSuWillem} where it is exploited also in a non-radial setting).
	Indeed, setting
	\begin{equation}\label{changeOfVariableHenonDirichlet}v(r):=\left[\frac{2}{\alpha+2}\right]^{-\frac{2}{p-1}}u(r^\frac{\alpha +2 }{2}),\end{equation}
	then it is easy to see that $u$ is a solution of the Lane-Emden equation 
	\[
	-\Delta u= |u|^{p-1}u\qquad \mbox{ in }B
	\]
	if and only if $v$ is a solution of the H\'enon equation  
	\[
	-\Delta v=|x|^{\alpha} |v|^{p-1}v\qquad  \mbox{ or in }B ;
	\]
	moreover also the boundary conditions are preserved:
	\[
	u=0\quad\mbox{ on }\partial B \quad\mbox{ iff } \quad v=0\quad\mbox{ on }\partial B.
	\]
As a consequence, using the change of variable \eqref{changeOfVariableHenonDirichlet}, one immediately obtains the existence and uniqueness of the radial solutions with $m-1$ interior zeros for the Dirichlet H\'enon problems, since
\begin{equation}
\label{changeOfVariableHenonDirichletSection}
\apice{u}{m}_{\alpha,p}(r)=\left[\frac{2}{\alpha+2}\right]^{-\frac{2}{p-1}}\apice{u}{m}_{0,p}(r^\frac{\alpha +2 }{2}).
\end{equation}
We remark that \emph{this strategy cannot be used in higher dimension}.  From \eqref{changeOfVariableHenonDirichletSection} it follows that
\begin{equation}\label{rslink}\apice{r}{m}_{j,\alpha,p}= (\apice{r}{m}_{j,0,p})^{\frac{2}{\alpha +2}},\qquad\apice{s}{m}_{j,\alpha,p}= (\apice{s}{m}_{j,0,p})^{\frac{2}{\alpha +2}},\end{equation}
and then
\[|\apice{u}{m}_{\alpha,p}(\apice{s}{m}_{j,\alpha,p})|=\left[\frac{2}{\alpha+2}\right]^{-\frac{2}{p-1}}|\apice{u}{m}_{0,p}(\apice{s}{m}_{j,0,p})|.\]
Taking the derivative on both sides of \eqref{changeOfVariableHenonDirichletSection},
\[|(\apice{u}{m}_{\alpha,p})'(\apice{r}{m}_{j,\alpha,p})|\apice{r}{m}_{j,\alpha,p}=\frac{\alpha +2}{2}\left[\frac{2}{\alpha+2}\right]^{-\frac{2}{p-1}} |(\apice{u}{m}_{0,p})'(\apice{r}{m}_{j,0,p})| \apice{r}{m}_{j,0,p}.\]

\subsection{Proof of Theorem \ref{theorem:mainDirichletG} - case $\alpha>0$}

\eqref{raggioG}-\eqref{derivataInRaggioG}-\eqref{puntoCriticoG}-\eqref{valoreInPuntoCriticoG} follows passing to the limit as $p\rightarrow +\infty$ into the previous $4$ equalities  and using the corresponding convergences in the case $\alpha=0$.

\

Similarly one shows the convergence of the energy in \eqref{EnergiaTotaleG}, again using the change of variable \eqref{changeOfVariableHenonDirichletSection} and passing into the limit by exploiting the convergence already proved for the energy in the case $\alpha=0$, namely,
\begin{eqnarray*}
	p\int_0^1|\apice{u}{m}_{\alpha,p}(r)|^{p+1}r^{\alpha+1}dr&\overset{\eqref{changeOfVariableHenonDirichletSection}}{=}&   \left[\frac{2}{\alpha+2}\right]^{-\frac{2(p+1)}{p-1}}p\int_0^1|\apice{u}{m}_{0,p}(r^\frac{\alpha +2 }{2})|^{p+1}r^{\alpha +1} \ dr
\\
&=& \left[\frac{2}{\alpha +2} \right]^{-\frac{4}{p-1}}\frac{\alpha +2}{2} p\int_0^1|\apice{u}{m}_{0,p}(s)|^{p+1}    s\ ds
\\
&\overset{case\ \alpha=0}{=}&\frac{\alpha+2}{8}(\apice{M}{m}_{m-1})^2 (\theta_{m-1}+2)^2 +o(1).
\end{eqnarray*}

\

The convergence in \eqref{pointwiseDirichletG} also follows from \eqref{changeOfVariableHenonDirichletSection} and the corresponding $C^1_{loc}(B\setminus\{0\})$ convergence already proved for $\apice{u}{m}_{0,p}$ as $p\rightarrow +\infty$, because
\begin{eqnarray*}
p\apice{u}{m}_{\alpha,p}(|x|)&\overset{\eqref{changeOfVariableHenonDirichletSection}}{=}&  \left[\frac{2}{\alpha+2}\right]^{-\frac{2}{p-1}}p\apice{u}{m}_{0,p}(|x|^{\frac{\alpha +2 }{2}})
\\
&\overset{case\ \alpha=0}{=}& 2\pi\gamma_{0,m} G(|x|^{\frac{\alpha +2 }{2}},0) +o(1)
\\
&=&  \gamma_{0,m}\log |x|^{-\frac{\alpha +2 }{2}} +o(1)\\
&=& \frac{\alpha +2 }{2} \gamma_{0,m}\log |x|^{-1} +o(1)
\\
&=&2\pi\gamma_{\alpha,m} G(x,0) +o(1),\qquad \mbox{ in } C^1_{loc }(B\setminus\{0\}),
\end{eqnarray*}
with $\gamma_{\alpha,m}$ as in \eqref{gammaalm}.
\subsection{Proof of Theorem \ref{theorem:analisiAsintoticaRiscalateAlpha} - case $\alpha>0$}

An easy computation from \eqref{changeOfVariableHenonDirichletSection} gives
\begin{equation}\label{linkeps}\apice{\varepsilon}{m}_{i,\alpha,p}=(\apice{\varepsilon}{m}_{i,0,p})^{\frac{2}{\alpha+2}};\end{equation}
and hence,
\begin{equation}
\label{rescalHenonLane}
\apice{\xi}{m}_{i,\alpha,p}(r)=\apice{\xi}{m}_{i,0,p}(r^{\frac{\alpha+2}{2}}),\end{equation}
where $\apice{\varepsilon}{m}_{i,\alpha,p}$ and $\apice{\xi}{m}_{i,\alpha,p}$ are the scaling parameters and the rescaled solutions as defined in \eqref{scpar}, \eqref{scsol}. Using the result already proven in the case $\alpha=0$, from \eqref{linkeps} and \eqref{rslink} we then obtain \eqref{relaziParametri}, and from \eqref{rescalHenonLane} we obtain
\begin{align*}
\apice{\xi}{m}_{i,\alpha,p}(r)=
Z_{i,0}(r^{\frac{\alpha + 2}{2}})+o(1)\qquad \mbox{ in }C^1_{loc}(B\setminus\{0\})\quad \text{as $p\rightarrow +\infty$. } 
\end{align*}
The conclusion follows observing that $Z_{i,\alpha}(r)=Z_{i,0}(r^{\frac{\alpha + 2}{2}})$. Recalling that $Z_{i,0}$ solves
\[
\left\{
\begin{array}{lr}
-\Delta Z=e^Z+ 2\pi(2-\theta_i)\delta_{0}\quad\mbox{ in }\R^2,\\
Z(\sigma_{i,0})=0,\\
\int_{\R^2}e^Zdx=4\pi\theta_i,
\end{array}
\right.
\]
it is not difficult to check that $Z_{i,\alpha}$ solves \eqref{LiouvilleSingularEquationAlpha}.

\

\section{The Neumann problems and the equation in the whole $\mathbb R^2$} \label{section:Neumann&R2}
In this section we prove Theorems \ref{theoremNeumannMain} and \ref{theoremMainWhole}.
\subsection{The proof of Theorem \ref{theoremNeumannMain}}
Let us fix $m\in\mathbb N$, $m\geq 2$. The existence of a unique  (up to a sign)  radial solution $\apice{\bar u}{m}_{\alpha,p}$ for the Neumann problem  \eqref{equationHenon}-\eqref{NeumannBC} having $m$ nodal regions follows observing that the solutions $\apice{\bar u}{m}_{\alpha,p}$ of the Neumann problem  \eqref{equationHenon}-\eqref{NeumannBC} and the solution $\apice{u}{m}_{\alpha,p}$ of the  Dirichlet problem \eqref{equationHenon}-\eqref{DirichletBC} are linked via the following change of variable:
\begin{equation}\label{relazioneNeumann}\apice{\bar u}{m}_{\alpha,p}(r)=(s_{\alpha,p})^{\frac{\alpha+2}{p-1}}\apice{u}{m}_{\alpha,p}(s_{\alpha,p} r), \qquad r \in [0,1],
\end{equation}
where we have simplified the notation of  the last critical point of $\apice{u}{m}_{\alpha,p}$ setting \[s_{\alpha,p}:=\apice{s}{m}_{m-1,\alpha,p}.\]
The proof of  \eqref{raggioGN}-\eqref{derivataInRaggioGN}-\eqref{puntoCriticoGN}-\eqref{valoreInPuntoCriticoGN}, with the sharp constants in \eqref{raggioNeumann}, immediately follows  from \eqref{relazioneNeumann}. Indeed, we deduce the relations among the zeros and critical points of $\apice{\bar u}{m}_{\alpha,p}$ and $\apice{u}{m}_{\alpha,p}$ given by
\begin{eqnarray*}
\bar r_{i,\alpha,p}=\frac{r_{i,\alpha,p}}{s_{\alpha,p}}\quad \text{ and }\quad \bar s_{i,\alpha,p}=\frac{s_{i,\alpha,p}}{s_{\alpha,p}}.
\end{eqnarray*}
Here, in order to simplify the notation, we have dropped the dependence on $m$. Then 
\[|\apice{\bar u}{m}_{\alpha,p}(\bar s_{i,\alpha,p})|=(s_{\alpha,p})^{\frac{\alpha+2}{p-1}}|\apice{u}{m}_{\alpha,p}( s_{i,\alpha,p} )|.\]
Moreover, taking the derivative on both sides in \eqref{relazioneNeumann},
\[p|(\apice{\bar u}{m}_{\alpha,p})'(\bar r_{i,\alpha,p})|\bar r_{i,\alpha,p}=(s_{\alpha,p})^{\frac{\alpha+2}{p-1}}p
|(\apice{u}{m}_{\alpha,p})'( r_{i,\alpha,p} )|r_{i,\alpha,p}.\]
The claim now follows passing to the limit as $p\rightarrow +\infty$ into the previous equalities and applying \eqref{raggioG}-\eqref{derivataInRaggioG}-\eqref{puntoCriticoG}-\eqref{valoreInPuntoCriticoG}.
\\
In order to compute the energy and obtain \eqref{energiaNeumann} we use the relation \eqref{relazioneNeumann}, perform the change of variable $t= s_{\alpha,p} r$ and reduce it to the computation of  a partial energy of the Dirichlet solution $\apice{u}{m}_{\alpha, p}$, that is,
\begin{eqnarray}
\label{mare}
	p\int_0^1|(\apice{\bar u}{m}_{\alpha,p})'(r) |^2r\, dr&\overset{\eqref{equationHenon}-\eqref{NeumannBC}}{=}&
	p\int_0^1 |\apice{\bar u}{m}_{\alpha,p}(r)|^{p+1}r^{1+\alpha}\, dr
	\nonumber\\
	 &\overset{\eqref{relazioneNeumann}}{=} & (s_{\alpha,p})^{\frac{(\alpha+2)(p+1)}{p-1}}p\int_0^1|\apice{u}{m}_{\alpha,p}(s_{\alpha,p} r)|^{p+1}r^{1+\alpha}\, dr
	\nonumber\\
	&=&
	(s_{\alpha,p})^{\frac{2(\alpha +2)}{p-1}}p\int_0^{s_{\alpha,p}}|\apice{u}{m}_{\alpha,p}(t)|^{p+1}t^{1+\alpha}\, dt.
\end{eqnarray}
Using the change of variable \eqref{changeOfVariableHenonDirichletSection} we can then reduce this last integral for the solution $\apice{u}{m}_{\alpha,p}$ of the Dirichlet H\'enon problem ($\alpha>0$) to an integral for the solution $\apice{u}{m}_{0,p}$ of the Dirichlet Lane-Emden problem ($\alpha=0$), namely,
\begin{eqnarray*}p\int_0^{s_{\alpha,p}}|\apice{u}{m}_{\alpha,p}(t)|^{p+1}t^{1+\alpha}\, dt
	&\overset{\eqref{changeOfVariableHenonDirichletSection}}{=}& 
	\left(\frac{\alpha +2}{2}\right)^{\frac{2(p+1)}{p-1}}
		p\int_0^{s_{\alpha,p}}|\apice{u}{m}_{0,p}(t^{\frac{\alpha+2}{2}})|^{p+1}t^{1+\alpha}\,dt
		\\
		&=&
\left(\frac{\alpha +2}{2}\right)^{\frac{p+3}{p-1}}
		p\int_0^{s_{p}}|\apice{u}{m}_{0,p}(r)|^{p+1}r\,dr,
\end{eqnarray*}
where $s_p:=s_{0,p}$.
Finally, we compute this integral using the results of Section \ref{section:Lane}. More precisely, by \eqref{parzialeEnergia1}, \eqref{alphaok} and \eqref{Mok},
\[
p\int_{0}^{r_{p}} |\apice{u}{m}_{0,p}|^{p+1}r\,dr= (\apice{M}{m}_{m-1})^2\frac{(\theta_{m-1}-2)^2}{4}+o(1),
\]
and by Lemma \ref{lemma:j_PENULTIMO_integr_p+1}, \eqref{alphaok} and \eqref{Mok} we obtain that
\[p\int_{r_{p}}^{s_{p}} |\apice{u}{m}_{0,p}|^{p+1}r\,dr=  (\apice{M}{m}_{m-1})^2( \theta_{m-1}-2) 	+ o(1),\]
so that
\begin{eqnarray}\label{sole}
p\int_{0}^{s_{\alpha,p}} |\apice{u}{m}_{\alpha,p}|^{p+1}t^{1+\alpha}\,dt= \frac{\alpha +2}{8}  (\apice{M}{m}_{m-1})^2( \theta_{m-1}-2) (\theta_{m-1}+2)	+ o(1).
\end{eqnarray}
Collecting \eqref{mare} and \eqref{sole}, and recalling that, by \eqref{puntoCriticoG},
$ (s_{\alpha,p})^{\frac{2(\alpha+2)}{p-1}}=(\apice{S}{m}_{m-1})^{2} +o(1)$
as $p\rightarrow +\infty$, we obtain \eqref{energiaNeumann}.

\subsection{The proof of Theorem \ref{theoremMainWhole}}

A radial solution $w_{\alpha,p}$ of \eqref{problemHenonWhole} such that $w_{\alpha,p}(0)=1$ exists by ODE considerations. Moreover it oscillates infinitely many times and has a unique local maximum or minimum $\delta_{m,\alpha,p}$ between any two consecutive zeros $\rho_{m,\alpha,p}$ and $\rho_{m+1,\alpha,p}$ (see \cite[p. 294]{N} for the case $\alpha=0$ and use the change of variable \eqref{changeOfVariableHenonDirichlet} to obtain the same result for any $\alpha>0$).

It is easy to see that  \begin{equation}\label{legameconR2}\apice{u}{m}_{\alpha,p}(r)=(\rho_{m,\alpha,p})^{\frac{\alpha+2}{p-1}}w_{\alpha,p}(\rho_{m,\alpha,p}r), \qquad r\in[0,1],\end{equation}
is a radial solution of the Dirichlet problem \eqref{equationHenon}-\eqref{DirichletBC} with $m-1$ interior zeros.
The conclusion then follows as a corollary of \eqref{raggioG}-\eqref{derivataInRaggioG}-\eqref{puntoCriticoG}-\eqref{valoreInPuntoCriticoG} in  Theorem \ref{theorem:mainDirichletG}, observing that, from \eqref{legameconR2},
\[(\rho_{m,\alpha,p})^{\frac{\alpha+2}{p-1}}=   \apice{u}{m}_{\alpha,p}(0)
\quad \text{ and }\quad 
\delta_{m-1,\alpha,p}=\rho_{m,\alpha,p}\apice{s}{m}_{m-1,\alpha,p},\]
and then 
\[w_{\alpha,p}(\delta_{m-1,\alpha,p})=(\rho_{m,\alpha,p})^{-\frac{\alpha+2}{p-1}}\apice{u}{m}_{\alpha,p}(\apice{s}{m}_{m-1,\alpha,p}).\]
Taking the derivative on both sides of \eqref{legameconR2}, it follows that
\[p|(w_{\alpha,p})'(\rho_{m,\alpha,p})|\rho_{m,\alpha,p}=(\rho_{m,\alpha,p})^{-\frac{\alpha+2}{p-1}}p |(\apice{u}{m}_{\alpha,p})'(1)|.\]

\section{Analysis of the constants}\label{Constants:sec}
\addtocontents{toc}{\protect\setcounter{tocdepth}{2}} 
\newcommand{\Pl}{{\mathcal L}}

 In this section we do a careful study of the constants in Definition \ref{definition:Constants2} and show Theorem \ref{theorem:NObounds}.  The first result in this section characterizes the growth of $\theta_k$.
 \begin{theorem}[Linear growth of $\theta_k$]\label{theta:t} Let $\theta_k$ be the numbers in Definition \ref{definition:Constants}. It holds that 
 	\begin{align*}
 	2+8k < \theta_k < 2+\frac{2}{\Pl\left( \frac{1}{4k} \right)} < 4+8k\qquad \text{ for all }k\in\N.
 	\end{align*}
 	In particular, the sequence  $(\theta_k)_k$ is strictly increasing and 
 	\begin{align}\label{theta:ip}
 	\left[ \frac{\theta_k}{2} \right] = 4k+1\qquad \text{ for all }k\in\N,
 	\end{align}
 	where $[x]$ denotes the integer part of $x$.
 \end{theorem}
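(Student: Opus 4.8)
The plan is to establish the two-sided bound $2+8k < \theta_k < 2 + \frac{2}{\Pl(1/(4k))}$ by induction on $k$, and then deduce the remaining assertions as elementary consequences. Recall that $\theta_k$ is defined by $\theta_0 = 2$ and, for $k\geq 1$,
\[
\theta_k = \frac{2}{\Pl\!\left[ \dfrac{2}{2+\theta_{k-1}}\, e^{-2/(2+\theta_{k-1})} \right]} + 2.
\]
The key structural observation is that if we set $a_{k-1} := \frac{2}{2+\theta_{k-1}} \in (0,1)$, then the argument of the Lambert function is $a_{k-1} e^{-a_{k-1}}$. Since $\Pl$ is the inverse of $L\mapsto Le^L$, and $-a_{k-1} e^{-a_{k-1}} = (-a_{k-1}) e^{-a_{k-1}}$ shows $\Pl(-a_{k-1}e^{-a_{k-1}}) = -a_{k-1}$ on the appropriate branch — but here the argument $a_{k-1} e^{-a_{k-1}}$ is \emph{positive}, so $\Pl$ returns the unique positive value $w_{k-1}$ with $w_{k-1} e^{w_{k-1}} = a_{k-1} e^{-a_{k-1}}$; since the map $L\mapsto Le^L$ is increasing on $[0,\infty)$ and $a_{k-1}e^{-a_{k-1}} < a_{k-1} e^{a_{k-1}}$, we get $0 < w_{k-1} < a_{k-1}$. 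Hence $\theta_k = \frac{2}{w_{k-1}} + 2 > \frac{2}{a_{k-1}} + 2 = \theta_{k-1} + 4$, which already gives strict monotonicity and, iterating from $\theta_0 = 2$, the lower bound $\theta_k > 2 + 4k$. To sharpen $2+4k$ to $2+8k$ I would extract a better lower bound on $\theta_k$ in terms of $\theta_{k-1}$: writing $w = \Pl(a e^{-a})$ with $a = 2/(2+\theta_{k-1})$, one has the identity $w + \log w = \log a - a$, equivalently $\frac{1}{w} = \frac{1}{a}\,e^{a - (w-(-a))}\cdots$; more cleanly, from $we^{w} = ae^{-a}$ we get $\frac1w = \frac1a e^{a+w}$, so $\theta_k = 2 + \frac{2}{a} e^{a+w} = 2 + (2+\theta_{k-1}) e^{a+w}$. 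Since $a + w \geq a$ and $a = 2/(2+\theta_{k-1})$, a short estimate of $e^{a}\geq 1 + a$ yields $\theta_k \geq 2 + (2+\theta_{k-1})(1+a) = 4 + \theta_{k-1} + (2+\theta_{k-1})a = 6 + \theta_{k-1}$, i.e.\ an increment of at least $6$; iterating further, or bootstrapping with the improved bound $\theta_{k-1} > 2 + 8(k-1)$ to control $a$ and hence $e^{a+w}$ more precisely, gives the increment $\geq 8$ and thus $\theta_k > 2 + 8k$. This bootstrapping is where I expect the bulk of the careful (but routine) computation to sit.

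For the upper bound $\theta_k < 2 + \frac{2}{\Pl(1/(4k))}$, note $\Pl$ is increasing, so this is equivalent to $w_{k-1} = \Pl(a_{k-1} e^{-a_{k-1}}) > \Pl(1/(4k))$, i.e.\ to $a_{k-1} e^{-a_{k-1}} > \frac{1}{4k}$. Using the inductive lower bound $\theta_{k-1} > 2 + 8(k-1)$ (valid for $k\geq 1$, with the base case $k=1$ checked directly from $\theta_0 = 2$) we have $a_{k-1} = \frac{2}{2+\theta_{k-1}} < \frac{2}{4+8(k-1)} = \frac{1}{2+4(k-1)} = \frac{1}{4k-2}$; combined with the (also inductively available) inequality $\theta_{k-1} < 2 + \frac{2}{\Pl(1/(4(k-1)))}$ for $k\geq 2$, or a cruder elementary lower bound on $\theta_{k-1}$, one estimates $a_{k-1} e^{-a_{k-1}}$ from below. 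The cleanest route is: the function $t\mapsto t e^{-t}$ is decreasing for $t\geq 1$ but we are in the regime $t = a_{k-1} < 1$ where it is \emph{increasing}, so a lower bound on $a_{k-1}$ gives a lower bound on $a_{k-1}e^{-a_{k-1}}$. Thus I need $a_{k-1} > $ (something comparable to $\frac{1}{4k}$) together with $e^{-a_{k-1}} > $ (something close to $1$); since $a_{k-1} < \frac{1}{4k-2}$ is small, $e^{-a_{k-1}} > 1 - a_{k-1} > 1 - \frac{1}{4k-2}$, and a matching lower bound $a_{k-1} \geq \frac{1}{4k-2}\cdot(1-\varepsilon_k)$ coming from an upper bound on $\theta_{k-1}$ closes the estimate, verifying $a_{k-1}e^{-a_{k-1}} > \frac{1}{4k}$ for all $k\geq 1$. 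Finally the third inequality $\frac{2}{\Pl(1/(4k))} < 2 + 8k$, equivalently $\Pl(1/(4k)) > \frac{1}{1+4k}$, follows from the defining property of $\Pl$: one checks $\frac{1}{1+4k}\exp\!\big(\frac{1}{1+4k}\big) < \frac{1}{4k}$ using $e^{1/(1+4k)} < 1 + \frac{1}{4k} = \frac{1+4k}{4k}\cdot\frac{4k}{1+4k}\cdots$; concretely $e^x < \frac{1}{1-x}$ for $x\in(0,1)$ with $x = \frac{1}{1+4k}$ gives $\frac{1}{1+4k}e^{1/(1+4k)} < \frac{1}{1+4k}\cdot\frac{1+4k}{4k} = \frac{1}{4k}$, as desired, and monotonicity of $L\mapsto Le^L$ then yields $\Pl(1/(4k)) > \frac{1}{1+4k}$.

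With the chain $2+8k < \theta_k < 2 + \frac{2}{\Pl(1/(4k))} < 4+8k$ in hand, strict monotonicity of $(\theta_k)$ is immediate (each term lies in a window $(2+8k, 4+8k)$ that lies entirely to the right of the previous window $(2+8(k-1), 4+8(k-1))$, so $\theta_k > 2 + 8k \geq 4 + 8(k-1) > \theta_{k-1}$ — wait, $2+8k = 4 + 8(k-1) + (8k - 8k + 2 - 4)$; more simply $\theta_k > 2+8k > 4 + 8(k-1) > \theta_{k-1}$ since $2 + 8k = 2 + 8(k-1) + 8 > 4 + 8(k-1)$). For \eqref{theta:ip}, dividing by $2$ gives $1 + 4k < \theta_k/2 < 2 + 4k$, so $\theta_k/2$ lies strictly between two consecutive integers $4k+1$ and $4k+2$, whence $[\theta_k/2] = 4k+1$. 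The main obstacle, as indicated, is the inductive sharpening from the easy increment $\geq 4$ to the required increment $\geq 8$ in the lower bound, and the matching delicate two-sided control of $a_{k-1}e^{-a_{k-1}}$ versus $\frac{1}{4k}$ for the upper bound; both require keeping track of the inductive hypotheses at level $k-1$ carefully, but each individual estimate is an elementary exercise with $e^x$ and $\Pl$.
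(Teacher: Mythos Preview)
Your overall inductive framework matches the paper's: setting $A_k:=2/(\theta_k-2)$ (your $w_{k-1}$), both arguments target the two-sided bound $\Pl(1/(4k))<A_k<1/(4k)$ and deduce the rest from it. You handle the easy parts correctly: the inequality $\Pl(1/(4k))>1/(4k+1)$ via $e^x<1/(1-x)$, monotonicity, and the integer-part formula all follow as you say.

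The genuine gap is in the sharp lower bound $\theta_k>2+8k$. Your clean argument gives only the increment $\theta_k-\theta_{k-1}>6$, and your proposed ``bootstrapping'' does not close the gap to $8$: using the inductive hypothesis $\theta_{k-1}>2+8(k-1)$ yields an \emph{upper} bound on $a=a_{k-1}$, which makes the needed estimate $(e^{a+w}-1)/a>2$ harder rather than easier (indeed this ratio tends to exactly $2$ as $a\to 0$). What is actually required is the \emph{uniform} inequality $w=\Pl(ae^{-a})<\frac{a}{1+2a}$ for all $a\in(0,1)$; since $e^{a+w}=a/w$, this is equivalent to $e^{a+w}>1+2a$, and unwinding gives $\frac{2a(1+a)}{1+2a}>\log(1+2a)$, which holds for all $a>0$ but must be proved. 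The paper achieves the same end differently: it establishes the inductive step $A_k<\tfrac{1}{4k}\Rightarrow A_{k+1}<\tfrac{1}{4(k+1)}$ via the explicit comparison $\frac{1}{4k+2}e^{-1/(4k+2)}<\frac{1}{4k+4}e^{1/(4k+4)}$, proved by a short convexity argument. Your upper-bound sketch is closer to complete: you correctly reduce to $a_{k-1}e^{-a_{k-1}}>1/(4k)$ and correctly propose using the inductive upper bound on $\theta_{k-1}$ to get a lower bound on $a_{k-1}$; finishing this is precisely the paper's auxiliary inequality $\phi(\Pl(1/(4(k-1))))>1/(4k)$, which again needs a short but nontrivial verification you have not supplied.
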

Using Theorem \ref{theta:t} we can deduce information on all the constants in Definition \ref{definition:Constants2}. 
 \begin{theorem}[Sublinear growth of $\apice{M}{m}_{0}$]\label{a:t}
 	Let $c_1:=\sqrt{\pi}\approx 1.77245$ and $c_2:= 6\frac{\Gamma(\frac{3}{4})}{\Gamma(\frac{1}{4})}\approx 2.028$. Then, for every $m\in \N$, 
 	\begin{align}\label{stiM0}
 	c_1\ \frac{\Gamma(m+1)}{\Gamma(m+\frac{1}{2})}e^{\frac{1}{3+4m}} <\apice{M}{m+1}_{\!\!0}<
 	c_2 \ \frac{\Gamma(m+\frac{5}{4})}{\Gamma(m+\frac{3}{4})}e^{\frac{1}{2+4m}},
 	\end{align}
 	where $\Gamma$ is the usual Gamma function. Furthermore, 
 		\begin{align}\label{asym}
 		c_1\leq \liminf_{m\to\infty}\frac{\apice{M}{m+1}_{\!\!0}}{\sqrt{m}}\leq\limsup_{m\to\infty}\frac{\apice{M}{m+1}_{\!\!0}}{\sqrt{m}}\leq c_2.
 		\end{align}
 \end{theorem}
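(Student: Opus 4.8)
The plan is to derive Theorem~\ref{a:t} directly from the linear growth of $\theta_k$ established in Theorem~\ref{theta:t}, by first obtaining a \emph{closed product formula} for $\apice{M}{m+1}_{\!\!0}$ in terms of the $\theta_k$'s and then estimating that product. First I would unwind the definitions: from \eqref{SoloUltimiSeM} we have $\apice{M}{m}_{0}=\apice{M}{1}_{0}/\prod_{k=2}^{m}\apice{R}{k}_{k-1}$, and from \eqref{Rk}, $\apice{R}{k}_{k-1}=\dfrac{\apice{M}{k-1}_{k-2}(\theta_{k-2}+2)}{\apice{M}{k}_{k-1}(\theta_{k-1}-2)}$ with $\apice{M}{k}_{k-1}=e^{2/(2+\theta_{k-1})}$. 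The product $\prod_{k=2}^{m}\apice{R}{k}_{k-1}$ telescopes in the $\apice{M}{j}_{j-1}$ factors, leaving
\[
\prod_{k=2}^{m}\apice{R}{k}_{k-1}=\frac{\apice{M}{1}_{0}}{\apice{M}{m}_{m-1}}\ \prod_{k=2}^{m}\frac{\theta_{k-2}+2}{\theta_{k-1}-2}
=\frac{e^{2/(2+\theta_0)}}{e^{2/(2+\theta_{m-1})}}\ \prod_{k=0}^{m-2}\frac{\theta_{k}+2}{\theta_{k+1}-2},
\]
so that, after shifting $m\mapsto m+1$ and using $\theta_0=2$,
\[
\apice{M}{m+1}_{\!\!0}=e^{2/(2+\theta_{m})}\ \prod_{k=0}^{m-1}\frac{\theta_{k+1}-2}{\theta_{k}+2}.
\]
This is the key identity; I would state and prove it as a short lemma.

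Next, I would estimate the product $P_m:=\prod_{k=0}^{m-1}\dfrac{\theta_{k+1}-2}{\theta_{k}+2}$ using the two-sided bound $2+8k<\theta_k<4+8k$ from Theorem~\ref{theta:t} (valid for $k\geq 1$, and with $\theta_0=2$ handled separately). For the lower bound on $P_m$ I would bound $\theta_{k+1}-2>8(k+1)$ and $\theta_k+2<6+8k$ for $k\geq 1$, and $\theta_0+2=4$, giving a product of the form $\prod 8(k+1)/(8k+6)$; for the upper bound I would use $\theta_{k+1}-2<2+8(k+1)$ and $\theta_k+2>4+8k$. Each resulting product is a ratio of shifted Pochhammer symbols, hence expressible via $\Gamma$-functions: e.g. $\prod_{k=1}^{m}\frac{8k+a}{8k+b}=\dfrac{\Gamma(m+1+a/8)\,\Gamma(1+b/8)}{\Gamma(1+a/8)\,\Gamma(m+1+b/8)}$. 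Collecting these, together with the elementary bounds $1<e^{2/(2+\theta_m)}<e^{2/(2+8m)}=e^{1/(1+4m)}$ and a more refined version with $\theta_m<4+8m$ to get the exponent $\tfrac{1}{3+4m}$ in the lower estimate and $\theta_m>2+8m$ to get $\tfrac{1}{2+4m}$ (I will need to arrange the splitting of a leading factor carefully so the constants come out exactly $\sqrt\pi$ and $6\Gamma(3/4)/\Gamma(1/4)$), yields \eqref{stiM0}. This will require some bookkeeping of which factor of the product is absorbed into the constant $c_1,c_2$ versus the Gamma ratio, and checking the $k=0$ term does not spoil the pattern.

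Finally, \eqref{asym} follows from \eqref{stiM0} by the classical asymptotics $\dfrac{\Gamma(m+a)}{\Gamma(m+b)}\sim m^{a-b}$ as $m\to\infty$: with $a-b=\tfrac12$ in both the lower and the upper bound, $\dfrac{\apice{M}{m+1}_{\!\!0}}{\sqrt m}$ is squeezed between quantities tending to $c_1$ and $c_2$ respectively (the exponential factors tend to $1$). I expect the main obstacle to be purely computational: matching the telescoping product exactly to Gamma-function ratios and pinning down the precise exponential correction terms $e^{1/(3+4m)}$ and $e^{1/(2+4m)}$ so that the stated constants $c_1=\sqrt\pi$ and $c_2=6\Gamma(\tfrac34)/\Gamma(\tfrac14)$ appear with the correct inequalities; the underlying idea (telescoping $+$ Theorem~\ref{theta:t} $+$ Gamma asymptotics) is straightforward, but one must be careful that the one-sided bounds on $\theta_k$ are applied in the direction that preserves the inequality throughout the product, and that the first factor ($k=0$, where $\theta_0=2$ makes $\theta_0+2=4$ exactly) is treated consistently.
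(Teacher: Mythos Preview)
Your telescoping formula
\[
\apice{M}{m+1}_{\!\!0}=e^{2/(2+\theta_m)}\prod_{k=0}^{m-1}\frac{\theta_{k+1}-2}{\theta_k+2}
\]
is correct and is exactly (after the obvious regrouping) the paper's Lemma~\ref{lemma:M0Theta}. The exponential bounds $e^{1/(3+4m)}<e^{2/(2+\theta_m)}<e^{1/(2+4m)}$ are also handled correctly. The genuine problem is your estimation of the product $P_m$: bounding $\theta_{k+1}-2$ from below and $\theta_k+2$ from above \emph{separately} in each factor is too lossy. Concretely, your lower bound gives
\[
\frac{\theta_{k+1}-2}{\theta_k+2}>\frac{8(k+1)}{8k+6}=\frac{k+1}{k+\tfrac34},
\]
whose product over $k=1,\dots,m-1$ behaves like $\Gamma(m+1)/\Gamma(m+\tfrac34)\sim m^{1/4}$; similarly your upper bound gives factors $\frac{k+\tfrac54}{k+\tfrac12}$ with product $\sim m^{3/4}$. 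So your scheme yields only $c\,m^{1/4}\lesssim \apice{M}{m+1}_{\!\!0}\lesssim C\,m^{3/4}$, which neither proves \eqref{stiM0} nor \eqref{asym}.

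The remedy---and this is precisely what the paper does---is to first regroup the product so that numerator and denominator of each factor involve the \emph{same} $\theta_k$:
\[
P_m=\frac{\theta_m-2}{\theta_0+2}\prod_{k=1}^{m-1}\frac{\theta_k-2}{\theta_k+2}
=\frac{\theta_m-2}{4}\prod_{k=1}^{m-1}\frac{\theta_k-2}{\theta_k+2}.
\]
Now the monotonicity of $s\mapsto\frac{s-2}{s+2}$ together with $2+8k<\theta_k<4+8k$ gives the tight two-sided bound $\frac{k}{k+\tfrac12}<\frac{\theta_k-2}{\theta_k+2}<\frac{k+\tfrac14}{k+\tfrac34}$; the leading factor satisfies $2m<\frac{\theta_m-2}{4}<2m+\tfrac12$. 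These produce exactly $\sqrt{\pi}\,\Gamma(m+1)/\Gamma(m+\tfrac12)$ and $6\frac{\Gamma(3/4)}{\Gamma(1/4)}\,\Gamma(m+\tfrac54)/\Gamma(m+\tfrac34)$ after the Gamma identities, and then \eqref{asym} follows from $\Gamma(m+a)/\Gamma(m+b)\sim m^{a-b}$ as you said. So your overall plan is right, but the ``bookkeeping'' you flagged is not cosmetic: without the regrouping, the inequalities go through with the wrong exponent of $m$.
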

\begin{corollary} \label{thm:growthOtherConstants} For every $i\in\mathbb N$,
\begin{align}\label{allOtherMax}
&0<\liminf_{m\rightarrow\infty} \frac{\apice{M}{m}_i}{\sqrt m}\leq\limsup_{m\rightarrow\infty} \frac{\apice{M}{m}_i}{\sqrt m}<\infty,
\qquad 0<\liminf_{m\rightarrow\infty} \frac{\apice{D}{m}_i}{\sqrt m}\leq\limsup_{m\rightarrow\infty} \frac{\apice{D}{m}_i}{\sqrt m}<\infty,
\\
&0<\liminf_{m\rightarrow\infty} \apice{S}{m}_i \sqrt m\leq \limsup_{m\rightarrow\infty} \apice{S}{m}_i \sqrt m<\infty,
\qquad
0<\liminf_{m\rightarrow\infty} \apice{R}{m}_i \sqrt m\leq \limsup_{m\rightarrow\infty}\apice{R}{m}_i \sqrt m<\infty.
\end{align}
\end{corollary}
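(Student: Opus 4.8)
The final statement to prove is Corollary \ref{thm:growthOtherConstants}, which asserts that for each fixed index $i$, the constants $\apice{M}{m}_i$, $\apice{D}{m}_i$ scale like $\sqrt m$ and $\apice{S}{m}_i$, $\apice{R}{m}_i$ scale like $1/\sqrt m$ as $m\to\infty$, in the sense of positive finite $\liminf$ and $\limsup$. Given that Theorem \ref{a:t} already establishes exactly this growth for the ``leading'' constant $\apice{M}{m+1}_{\!\!0}$, the plan is to reduce everything to that theorem by exploiting the explicit product/ratio relations in Definition \ref{definition:Constants2} that express $\apice{M}{m}_i$, $\apice{D}{m}_i$, $\apice{S}{m}_i$, $\apice{R}{m}_i$ in terms of $\apice{M}{m}_0$ (equivalently $\apice{M}{m+1}_{\!\!0}$ up to a shift) and of finitely many ``local'' quantities $\theta_j$, $\apice{M}{j}_{j-1}$, $\apice{R}{j}_{j-1}$ with $j$ close to $i$, which are bounded above and below by absolute constants thanks to Theorem \ref{theta:t} and Lemma \ref{lemma:PropertiesConstants}.

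First I would record the key structural identities. From \eqref{SoloUltimiSeM} we have $\apice{M}{m}_{i}=\apice{M}{i+1}_{\,i}\big/\prod_{k=i+2}^{m}\apice{R}{k}_{k-1}$ and $\apice{S}{m}_{i}=\apice{S}{i+1}_{\,i}\prod_{k=i+2}^{m}\apice{R}{k}_{k-1}$, so that the product $\apice{M}{m}_i\,\apice{S}{m}_i=\apice{M}{i+1}_{\,i}\apice{S}{i+1}_{\,i}$ is \emph{independent of $m$} (a positive constant, nonzero for $i\ge 1$ and zero for $i=0$, but $i=0$ is already covered by Theorem \ref{a:t}). Hence it suffices to control $\apice{M}{m}_i$; the behavior of $\apice{S}{m}_i$ then follows immediately. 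Likewise, by \eqref{SoloUltimiReD}, $\apice{R}{m}_i=\prod_{k=i+1}^m\apice{R}{k}_{k-1}$ and $\apice{D}{m}_i=\apice{D}{i}_i\big/\prod_{k=i+1}^m\apice{R}{k}_{k-1}=\apice{D}{i}_i/\apice{R}{m}_i$, so $\apice{R}{m}_i\,\apice{D}{m}_i=\apice{D}{i}_i$ is again independent of $m$; thus controlling $\apice{R}{m}_i$ controls $\apice{D}{m}_i$. Finally, comparing the two displays in \eqref{SoloUltimiSeM} and \eqref{SoloUltimiReD} we see $\apice{M}{m}_i$ and $\apice{R}{m}_i$ differ by the factor $\prod_{k=i+1}^m\apice{R}{k}_{k-1}$ vs $\prod_{k=i+2}^m\apice{R}{k}_{k-1}$, i.e. by the single bounded factor $\apice{R}{i+1}_{\,i}$ and the bounded constants $\apice{M}{i+1}_{\,i}$; more usefully, I would relate $\apice{R}{m}_i$ directly to $\apice{M}{m}_i$ and $\apice{M}{m}_{i'}$ for another fixed index via \eqref{SoloUltimiReD}–\eqref{SoloUltimiSeM}, or simply observe that $\apice{R}{m}_i=\apice{R}{m}_{0}\big/\prod_{k=1}^{i}\apice{R}{k}_{k-1}$, reducing to the case $i=0$.

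The remaining point is therefore to pin down the growth of $\apice{M}{m}_i$ for arbitrary fixed $i\ge 1$ from the known growth of $\apice{M}{m}_0$. Write $\apice{M}{m}_i=\apice{M}{m}_0\cdot\dfrac{\prod_{k=2}^{i+1}\apice{R}{k}_{k-1}}{\apice{M}{1}_{0}}\cdot\dfrac{\apice{M}{i+1}_{\,i}}{1}$ — more carefully, from \eqref{SoloUltimiSeM} with $i=0$, $\apice{M}{m}_0=\apice{M}{1}_0/\prod_{k=2}^m\apice{R}{k}_{k-1}$, while $\apice{M}{m}_i=\apice{M}{i+1}_{\,i}/\prod_{k=i+2}^m\apice{R}{k}_{k-1}$, so $\apice{M}{m}_i=\apice{M}{m}_0\cdot\dfrac{\apice{M}{i+1}_{\,i}}{\apice{M}{1}_0}\cdot\prod_{k=2}^{i+1}\apice{R}{k}_{k-1}$. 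The factor $\dfrac{\apice{M}{i+1}_{\,i}}{\apice{M}{1}_0}\prod_{k=2}^{i+1}\apice{R}{k}_{k-1}$ is a fixed positive constant depending only on $i$ (each $\theta_j$, and hence each $\apice{M}{j}_{j-1}$ and $\apice{R}{j}_{j-1}$, is finite and strictly positive by Definition \ref{definition:Constants}, Theorem \ref{theta:t}, and \eqref{Rk}). Therefore $\displaystyle\lim\inf_{m}\apice{M}{m}_i/\sqrt m$ and $\limsup_m\apice{M}{m}_i/\sqrt m$ equal that constant times the corresponding quantities for $\apice{M}{m}_0$, which by Theorem \ref{a:t} lie in $[c_1,c_2]\subset(0,\infty)$. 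This gives \eqref{allOtherMax} for $\apice{M}{m}_i$; the statements for $\apice{D}{m}_i$, $\apice{S}{m}_i$, $\apice{R}{m}_i$ then follow from the $m$-independence of the products $\apice{R}{m}_i\apice{D}{m}_i$, $\apice{M}{m}_i\apice{S}{m}_i$, and the identity relating $\apice{R}{m}_i$ to $\apice{M}{m}_{i-1}$ (or to $\apice{M}{m}_0$) noted above, by taking $\liminf$/$\limsup$ of reciprocals.

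I do not anticipate a genuine obstacle: the entire argument is bookkeeping with the explicit definitions plus one appeal to Theorem \ref{a:t}. The only mildly delicate point is to make sure the ``constant factor depending on $i$'' is genuinely finite and nonzero — i.e. that $\theta_i<\infty$ for every $i$ (so $\apice{M}{i+1}_{\,i}=e^{2/(2+\theta_i)}>1$ is finite) and $\theta_i>2$ for $i\ge 1$ (so the denominators $\theta_{k-1}-2$ in \eqref{Rk} do not vanish and $\apice{R}{k}_{k-1}\in(0,\infty)$). Both are guaranteed by Theorem \ref{theta:t} (indeed $2+8k<\theta_k<4+8k$), so the reduction is clean. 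A final remark: since the same leading factor $\apice{M}{m}_0$ drives all four families, one actually gets that $\sqrt m\,\apice{S}{m}_i$, $\sqrt m\,\apice{R}{m}_i$, $\apice{M}{m}_i/\sqrt m$, $\apice{D}{m}_i/\sqrt m$ have, for each fixed $i$, the \emph{same} $\liminf$ and $\limsup$ up to explicit multiplicative constants, which is consistent with—and in fact slightly strengthens—the statement of the corollary.
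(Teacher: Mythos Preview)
Your proposal is correct and follows essentially the same approach as the paper: you express each of $\apice{M}{m}_i$, $\apice{D}{m}_i$, $\apice{S}{m}_i$, $\apice{R}{m}_i$ as $\apice{M}{m}_0$ times (or divided by) an $m$-independent positive constant depending only on $i$, and then invoke Theorem \ref{a:t}. The paper records exactly these identities (for $\apice{M}{m+1}_{\!\!0}/\apice{M}{m+1}_{\!\!i}$, $\apice{M}{m+1}_{\!\!0}/\apice{D}{m+1}_{\!\!i}$, $\apice{M}{m+1}_{\!\!0}\apice{S}{m+1}_{\!\!i}$, $\apice{M}{m+1}_{\!\!0}\apice{R}{m+1}_{\!\!i}$) and concludes in one line.
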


 To show Theorem \ref{theta:t}, we begin with an alternative formula for $\theta_k.$ Let
 \begin{align*}
 \phi:(0,\infty)\to (0,\infty)\quad \text{ be given by }\quad \phi(s):=\frac{s}{1+2s}e^{-\frac{s}{1+2s}}
 \end{align*}
 and let
 \begin{align*}
 A_1:=\Pl(\frac{1}{2}e^{-\frac{1}{2}}),\qquad A_{k+1}:=\Pl(\phi(A_{k}))\quad \text{ for }k\in\N.	
 \end{align*}
 
 \begin{lemma}\label{Ak:l} For every $m\in \N$,
 	\begin{align}\label{eq}
 	\theta_m=2+\frac{2}{A_m}\qquad .
 	\end{align}
 \end{lemma}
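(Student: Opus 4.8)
The plan is to prove \eqref{eq} by induction on $m$, showing that the recursion \eqref{iterateAlphaM} for $\theta_m$ is equivalent, under the substitution $\theta_m = 2 + 2/A_m$, to the recursion $A_{m+1} = \Pl(\phi(A_m))$ defining the sequence $(A_k)$.

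\textbf{Base case.} First I would check $m=1$. By \eqref{iterateAlphaM} with $k=1$ and $\theta_0=2$, we have $\theta_1 = \frac{2}{\Pl\left[\frac{2}{4}e^{-2/4}\right]} + 2 = 2 + \frac{2}{\Pl(\frac12 e^{-1/2})} = 2 + \frac{2}{A_1}$, which is exactly \eqref{eq} for $m=1$.

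\textbf{Inductive step.} Assume $\theta_m = 2 + 2/A_m$. From \eqref{iterateAlphaM},
\[
\theta_{m+1} = 2 + \frac{2}{\Pl\left[\dfrac{2}{2+\theta_m}\,e^{-\frac{2}{2+\theta_m}}\right]}.
\]
So it suffices to show that the argument of $\Pl$ here equals $\phi(A_m)$, i.e. that
\[
\frac{2}{2+\theta_m}\,e^{-\frac{2}{2+\theta_m}} = \frac{A_m}{1+2A_m}\,e^{-\frac{A_m}{1+2A_m}},
\]
because then $\Pl\left[\frac{2}{2+\theta_m}e^{-2/(2+\theta_m)}\right] = \Pl(\phi(A_m)) = A_{m+1}$, giving $\theta_{m+1} = 2 + 2/A_{m+1}$ as desired. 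To verify the displayed identity I would substitute $\theta_m = 2 + 2/A_m$ into the left side: then $2 + \theta_m = 4 + 2/A_m = \frac{2(2A_m+1)}{A_m}$, hence $\frac{2}{2+\theta_m} = \frac{A_m}{2A_m+1} = \frac{A_m}{1+2A_m}$, and the exponent $-\frac{2}{2+\theta_m} = -\frac{A_m}{1+2A_m}$ matches as well. So both sides coincide, completing the induction.

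\textbf{Main obstacle.} There is no serious obstacle here; the only point requiring a little care is that $\Pl$ is well-defined and single-valued on the relevant range — one should note that $\phi(s) = \frac{s}{1+2s}e^{-s/(1+2s)} > 0$ for $s>0$, and more precisely that its argument lies in the domain where the principal branch of the Lambert function is the inverse of $f(L)=Le^L$ (e.g. checking that $\frac{A_m}{1+2A_m}\in(0,\tfrac12)$, so $\phi(A_m)\in(0,\tfrac12 e^{-1/2})\subset(0,e^{-1})$, where $\Pl$ is the honest inverse of $f$ restricted to $(0,\infty)$). With that remark the computation above is purely algebraic and the lemma follows.
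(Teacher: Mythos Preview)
Your proof is correct and follows essentially the same approach as the paper: induction on $m$, with the base case verified directly from the definitions and the inductive step carried out by substituting $\theta_m = 2 + 2/A_m$ into the argument of $\Pl$ in \eqref{iterateAlphaM} to recognize it as $\phi(A_m)$. Your additional remark on the range of $\phi(A_m)$ and the well-definedness of the principal branch of $\Pl$ is a nice touch that the paper leaves implicit.
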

 \begin{proof} We argue by induction. The inductive base is clear since 
 	$\theta_1=2+\frac{2}{A_1}$ by definition.  Let $k\in\N$ and assume that $\theta_k=2+\frac{2}{A_k}$.  Then
 	\begin{align*}
 	\frac{2}{ e^{{2}/(2+\theta_{k})}\ (\theta_{k}+2)}
 	=\frac{2}{ e^{{2}/(4+\frac{2}{A_k})}\ (4+\frac{2}{A_k})}
 	=\frac{A_k}{2A_k+1}e^{-\frac{A_k}{2A_k+1}},
 	\end{align*}
 	and therefore
 	\begin{align*}
 	\theta_{k+1}=2+\displaystyle\frac{2}{\mathcal L\left[\displaystyle \frac{2}{ e^{{2}/(2+\theta_{k})}\ (\theta_{k}+2)}\right]}
 	=2+\displaystyle\frac{2}{\mathcal L\left[
 		\frac{A_k}{2A_k+1}e^{-\frac{A_k}{2A_k+1}}
 		\right]}
 	=2+\displaystyle\frac{2}{A_{k+1}}.
 	\end{align*}
 \end{proof}
 
 Next we deduce some bounds for $A_k$.  We need first an auxiliary lemma. Recall the following well-known properties of the $\Pl$ function:  For every $s>0$, $s\mapsto\Pl(s)$ is monotone increasing, $\Pl(0)=0$, $\Pl'(s)=\frac{\Pl(s)}{s (\Pl(s)+1)}$, and 
 \begin{align}\label{Pl:bds}
 s-s^2<\Pl(s)<s\qquad \text{ for $s>0$.}
 \end{align}
 
 \begin{lemma}\label{aux:l}
 	For every $k\in\N$,
 	\begin{align}
 	\phi\Big(\frac{1}{4k}\Big)=\frac{1}{4k+2}e^{-\frac{1}{4k+2}}&< \frac{1}{4k+4}e^{ \frac{1}{4k+4}}=\Pl^{-1}\Big(\frac{1}{4(k+1)}\Big),\label{cc1}\\
 	\phi\Big(\Pl\Big(\frac{1}{4k}\Big)\Big)&> \frac{1}{4(k+1)}.\label{cc2}
 	\end{align}
 \end{lemma}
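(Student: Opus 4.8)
\textbf{Proof plan for Lemma \ref{aux:l}.}

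The plan is to prove the two inequalities \eqref{cc1} and \eqref{cc2} essentially by elementary calculus manipulations, isolating the single genuinely analytic ingredient, which is the bound $\Pl(s)<s$ from \eqref{Pl:bds} together with the identity $\Pl^{-1}(t)=te^{t}$.

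First I would record the explicit form of $\phi$ evaluated at $\tfrac1{4k}$. Since $\phi(s)=\tfrac{s}{1+2s}e^{-s/(1+2s)}$, substituting $s=\tfrac1{4k}$ gives $\tfrac{1/(4k)}{1+1/(2k)}=\tfrac{1}{4k+2}$, so $\phi(\tfrac1{4k})=\tfrac{1}{4k+2}e^{-1/(4k+2)}$, which is the left-hand equality in \eqref{cc1}. For the right-hand equality in \eqref{cc1}, I would use $\Pl^{-1}(t)=te^{t}$ with $t=\tfrac1{4(k+1)}=\tfrac1{4k+4}$, giving $\Pl^{-1}(\tfrac1{4(k+1)})=\tfrac1{4k+4}e^{1/(4k+4)}$. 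Thus \eqref{cc1} reduces to the purely elementary inequality
\[
\frac{1}{4k+2}\,e^{-1/(4k+2)}<\frac{1}{4k+4}\,e^{1/(4k+4)}\qquad\text{for all }k\in\N,
\]
i.e. $\tfrac{4k+4}{4k+2}<e^{1/(4k+2)+1/(4k+4)}$. Writing $a=4k+2$ and $b=4k+4$, the right side is $e^{1/a+1/b}>e^{1/b}>1+\tfrac1b=\tfrac{b+1}{b}>\tfrac{b}{a}$ since $b+1>b$ and $a<b$; hence the inequality holds and in fact with plenty of room. (One could even just use $e^{1/a+1/b}>e^{2/b}>1+\tfrac2b=\tfrac{b+2}{b}>\tfrac{b}{a}$.) This settles \eqref{cc1}.

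Next, for \eqref{cc2}, I would note that $\phi$ is monotone increasing on $(0,\infty)$ — this follows because $s\mapsto\tfrac{s}{1+2s}$ is increasing and $u\mapsto u e^{-u}$ is increasing on $(0,1)$, and $\tfrac{s}{1+2s}<\tfrac12<1$; one records this monotonicity as a short preliminary observation. Since $\Pl(\tfrac1{4k})>\Pl^{-1}(\tfrac1{4(k+1)})$ — which is exactly what \eqref{cc1} together with monotonicity of $\Pl$ would give once we observe that $\phi(\tfrac1{4k})<\Pl^{-1}(\tfrac1{4(k+1)})$ combined with... — hmm, let me instead argue \eqref{cc2} directly: applying $\Pl$ to both sides of the already-proven chain in \eqref{cc1} and using that $\Pl$ is increasing and $\Pl(\Pl^{-1}(\cdot))=\mathrm{id}$ gives $\Pl(\phi(\tfrac1{4k}))<\tfrac1{4(k+1)}$, but that is the \emph{wrong} direction. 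So instead the right approach to \eqref{cc2} is: we want $\phi(\Pl(\tfrac1{4k}))>\tfrac1{4(k+1)}$, equivalently (applying $\Pl$, increasing) $\Pl(\phi(\Pl(\tfrac1{4k})))>\Pl(\tfrac1{4(k+1)})$, and the left side is $A_{k+1}$-type quantity when $\tfrac1{4k}$ plays the role of $A_k$. The clean route is to use $\Pl(s)<s$ to write $\Pl(\tfrac1{4k})<\tfrac1{4k}$... but again the wrong direction for plugging into the increasing $\phi$.

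\textbf{The main obstacle}, therefore, is getting the inequality directions right: one must use the \emph{lower} bound $\Pl(s)>s-s^2$ from \eqref{Pl:bds}, not the upper bound, to control $\Pl(\tfrac1{4k})$ from below, and then feed this into the increasing function $\phi$. Concretely I would set $s_k:=\Pl(\tfrac1{4k})$, so $s_k>\tfrac1{4k}-\tfrac1{16k^2}=\tfrac{4k-1}{16k^2}$, and then estimate $\phi(s_k)=\tfrac{s_k}{1+2s_k}e^{-s_k/(1+2s_k)}$ from below using that $t\mapsto\tfrac{t}{1+2t}$ is increasing (so $\tfrac{s_k}{1+2s_k}\ge\tfrac{s}{1+2s}$ with $s=\tfrac{4k-1}{16k^2}$) and that $e^{-u}\ge 1-u$; then one reduces \eqref{cc2} to an explicit rational inequality in $k$ which can be verified by clearing denominators and checking a polynomial is positive for all $k\in\N$ (routine but the place where care is needed). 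An alternative, possibly cleaner, finish is: since $\phi$ is increasing and $\phi(\tfrac1{4k})=\tfrac{1}{4k+2}e^{-1/(4k+2)}$, and since $\Pl(\tfrac1{4k})<\tfrac1{4k}$ does \emph{not} help, use instead that we only need $\phi(\Pl(\tfrac1{4k}))>\tfrac1{4(k+1)}=\tfrac1{4k+4}$; because $\phi(s)>\tfrac{s}{1+2s}\cdot(1-\tfrac{s}{1+2s})=\tfrac{s(1+s)}{(1+2s)^2}$, plugging $s=\Pl(\tfrac1{4k})>\tfrac{4k-1}{16k^2}$ and simplifying should beat $\tfrac1{4k+4}$ for all $k\ge1$ with room to spare; the final step is a one-line polynomial positivity check. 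I would present \eqref{cc1} in full and \eqref{cc2} via this lower bound on $\phi$ followed by the elementary estimate, flagging that all the reductions are monotone substitutions and the only quantitative input is \eqref{Pl:bds}.
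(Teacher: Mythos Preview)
Your proposal has genuine gaps in both parts, because both inequalities are much tighter than you assume and your crude estimates fail numerically.

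For \eqref{cc1}: after reducing to $\tfrac{b}{a}<e^{1/a+1/b}$ with $a=4k+2$, $b=4k+4$, your chain $e^{1/a+1/b}>e^{1/b}>1+\tfrac1b=\tfrac{b+1}{b}>\tfrac{b}{a}$ breaks at the last step: $\tfrac{b+1}{b}>\tfrac{b}{a}$ means $a(b+1)>b^2$, i.e.\ $(b-2)(b+1)>b^2$, which is false. Your alternative $e^{2/b}>1+\tfrac2b=\tfrac{b+2}{b}>\tfrac{b}{a}$ fails the same way: $(b-2)(b+2)>b^2$ is false. In fact for $k=1$ one needs $\tfrac43<e^{7/24}\approx 1.339$, so the margin is genuinely thin. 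The paper handles this by proving the sharper bound $\ln(1+\tfrac1s)<\tfrac{s+1/2}{s(s+1)}=\tfrac12(\tfrac1s+\tfrac1{s+1})$ via a convexity argument: set $f(s)=\tfrac{s+1/2}{s+1}-s\ln(1+\tfrac1s)$, check $f''>0$ and $\lim_{s\to\infty}f(s)=0$, hence $f>0$; then take $s=2k+1$.

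For \eqref{cc2}: your plan is to use $\Pl(\tfrac1{4k})>\tfrac1{4k}-\tfrac1{16k^2}$ and $\phi(s)\ge\tfrac{s(1+s)}{(1+2s)^2}$, then check a polynomial inequality. This already fails at $k=1$: the lower bound gives $s_1>\tfrac3{16}$, but $\phi(\tfrac3{16})\approx 0.119<\tfrac18$; and even at the exact value $s_1=\Pl(\tfrac14)\approx 0.204$ your rational lower bound gives $\tfrac{s_1(1+s_1)}{(1+2s_1)^2}\approx 0.124<\tfrac18$. The paper instead studies the ratio $h(s)=\phi(\Pl(s))\big/\tfrac{s}{1+4s}$ directly: one computes $h'(s)$ and shows its sign is that of $s-\Pl(s)-\Pl(s)^2$, which is positive on $(0,1)$ (again by a monotonicity trick on $G(s)=s/(\Pl(s)+\Pl(s)^2)$ using $G(0)=1$, $G'>0$); since $h(0)=1$, one gets $h(\tfrac1{4k})>1$, which is exactly \eqref{cc2}.
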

 \begin{proof}
 	Let $f:(0,\infty)\to (0,\infty)$ be given by $f(s)=\frac{s+\frac{1}{2}}{s+1}-\ln\left( \Big(1+\frac{1}{s}\Big)^s\right).$  Observe that $f$ is convex (because $f''(s)=\frac{1}{s(1+s)^3}>0$) and that $\lim_{s\to\infty} f(s)=0$; as a consequence, $f>0$ in $(0,\infty)$, and therefore $\ln\left(1+\frac{1}{s} \right)<\frac{s+\frac{1}{2}}{s(s+1)}$, or, equivalently,
 	\begin{align*}
 	1+\frac{1}{s}< e^{\frac{s+\frac{1}{2}}{s(s+1)}}
 	=e^{\frac{1}{2}(\frac{1}{1+s}+\frac{1}{s})}.
 	\end{align*}
 	If $k\in\N$ and $s=2k+1$, then
 	\begin{align*}
 	\frac{4k+4}{4k+2}=1+\frac{1}{2k+1}<e^{\frac{1}{2}(\frac{1}{2k+2}+\frac{1}{2k+1})},
 	\end{align*}
 	and \eqref{cc1} follows. Next we show inequality \eqref{cc2}. For $s>0$, let 
 	\begin{align*}
 	h(s)=\frac{\phi(\Pl(s))}{\frac{s}{1+4s}},\quad \text{ then }\quad h'(s)=\frac{4 e^{-\frac{\Pl(s)}{1+2\Pl(s)}} \Pl(s) \left(s-\Pl(s)-\Pl(s)^2\right)}{s^2 (2 \Pl(s)+1)^3}.
 	\end{align*} 
 	From \eqref{Pl:bds} we deduce that $h(0)=1$ and $h'(s)>0$ for $s\in(0,1)$, therefore $h>1$ in $(0,1)$. Indeed, let $G(s):=\frac{s}{\Pl(s)+\Pl(s)^2}$, then $G(0)=1$ (by \eqref{Pl:bds}) and $G'(s)=\frac{\Pl(s)}{(\Pl(s)+1)^3}>0$ for $s\in(0,1)$, as a consequence $G>1$ in $(0,1)$ and thus $s-\Pl(s)-\Pl(s)^2>0$ for $s\in(0,1)$.   Then, $h(\frac{1}{4k})>1$ for $k\in\N$ and thus $\phi(\Pl(\frac{1}{4k}))> \frac{1}{4(k+1)}$ as claimed in \eqref{cc2}.
 \end{proof}
 
 \begin{proposition}\label{b:l}
 	It holds that
 	\begin{align*}
 	\frac{1}{4k+1}<\Pl\Big( \frac{1}{4k} \Big) < A_k < \frac{1}{4k}\qquad \text{ for all }k\in\N.
 	\end{align*}
 \end{proposition}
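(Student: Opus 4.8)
The plan is to prove Proposition~\ref{b:l} by induction on $k$, using Lemma~\ref{Ak:l} (the recursion $\theta_m = 2 + 2/A_m$, i.e.\ the characterization of $A_k$ via $A_1 = \Pl(\tfrac12 e^{-1/2})$, $A_{k+1} = \Pl(\phi(A_k))$), Lemma~\ref{aux:l} (the two key inequalities \eqref{cc1} and \eqref{cc2}), and the elementary monotonicity and bounds \eqref{Pl:bds} for the Lambert function $\Pl$. The claimed chain of inequalities $\tfrac{1}{4k+1} < \Pl(\tfrac{1}{4k}) < A_k < \tfrac{1}{4k}$ splits naturally: the inequality $\tfrac{1}{4k+1} < \Pl(\tfrac{1}{4k})$ is purely a statement about $\Pl$ and follows directly from the lower bound in \eqref{Pl:bds} with $s = \tfrac{1}{4k}$ (since $\tfrac{1}{4k} - \tfrac{1}{16k^2} = \tfrac{4k-1}{16k^2} > \tfrac{1}{4k+1}$, as $(4k-1)(4k+1) = 16k^2 - 1 > 16k^2 \cdot \tfrac{4k}{4k+1}$ — a one-line check); so the real content is the two-sided bound $\Pl(\tfrac{1}{4k}) < A_k < \tfrac{1}{4k}$, which I would establish inductively.

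First I would check the base case $k=1$: since $A_1 = \Pl(\tfrac12 e^{-1/2})$ and $\Pl$ is increasing, I need $\Pl(\tfrac14) < \Pl(\tfrac12 e^{-1/2}) < \Pl(\tfrac14)$ — wait, that is not literally what is needed; rather I need $\Pl(\tfrac14) < A_1 < \tfrac14$. The right-hand bound $A_1 < \tfrac14$ follows because $\tfrac12 e^{-1/2} < \tfrac14 e^{1/4} = \Pl^{-1}(\tfrac14)$ (equivalently $2 e^{-3/4} < 1$, true since $e^{3/4} > 2$) together with monotonicity of $\Pl$. The left-hand bound $\Pl(\tfrac14) < A_1$ follows because $\tfrac14 < \tfrac12 e^{-1/2}$? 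That is false ($\tfrac12 e^{-1/2} \approx 0.303 > 0.25$), so actually $\tfrac14 < \tfrac12 e^{-1/2}$ is \emph{true}, giving $\Pl(\tfrac14) < \Pl(\tfrac12 e^{-1/2}) = A_1$ by monotonicity. Good — the base case reduces to the two numerical inequalities $e^{3/4} > 2$ and $\tfrac14 < \tfrac12 e^{-1/2}$, both immediate.

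For the inductive step, assume $\Pl(\tfrac{1}{4k}) < A_k < \tfrac{1}{4k}$. I want $\Pl(\tfrac{1}{4(k+1)}) < A_{k+1} < \tfrac{1}{4(k+1)}$, where $A_{k+1} = \Pl(\phi(A_k))$. Since $\Pl$ is monotone increasing, it suffices to control $\phi(A_k)$, and here I would use that $\phi$ is monotone increasing on $(0,\infty)$ (this needs a quick verification: $\phi(s) = \tfrac{s}{1+2s} e^{-s/(1+2s)}$, and writing $u(s) = \tfrac{s}{1+2s}$ which is increasing with $u(s) \in (0,\tfrac12)$, one has $\phi = u e^{-u}$ and $t \mapsto t e^{-t}$ is increasing on $(0,1)$, so $\phi$ is increasing). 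Then for the upper bound: $A_k < \tfrac{1}{4k}$ gives $\phi(A_k) < \phi(\tfrac{1}{4k})$, and by \eqref{cc1} this is $< \Pl^{-1}(\tfrac{1}{4(k+1)})$, hence $A_{k+1} = \Pl(\phi(A_k)) < \tfrac{1}{4(k+1)}$. For the lower bound: $A_k > \Pl(\tfrac{1}{4k})$ gives $\phi(A_k) > \phi(\Pl(\tfrac{1}{4k}))$, and by \eqref{cc2} this is $> \tfrac{1}{4(k+1)}$, hence $A_{k+1} = \Pl(\phi(A_k)) > \Pl(\tfrac{1}{4(k+1)})$. This closes the induction.

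The main obstacle I anticipate is \emph{not} in the logical structure — the induction is clean once $\phi$ is known to be monotone and Lemma~\ref{aux:l} is in hand — but rather in making sure the monotonicity of $\phi$ is correctly verified and, more subtly, that the hypotheses of Lemma~\ref{aux:l} are genuinely applicable at each step (in particular that the arguments stay in the ranges $(0,1)$ where \eqref{cc2} was proved, which holds since $\tfrac{1}{4k} \le \tfrac14 < 1$ and $\Pl(\tfrac{1}{4k}) < \tfrac{1}{4k} < 1$). I would also double-check the elementary inequality $\tfrac{1}{4k+1} < \Pl(\tfrac{1}{4k})$ carefully, since it is the one new estimate not already packaged in the lemmas; it reduces via \eqref{Pl:bds} to $\tfrac{1}{4k} - \tfrac{1}{(4k)^2} \ge \tfrac{1}{4k+1}$, i.e.\ $(4k-1)(4k+1) \ge 16 k^2$, i.e.\ $16k^2 - 1 \ge 16k^2$, which is \emph{false}; so in fact one must use the sharper consequence $\Pl(s) > s - s^2$ more carefully, or instead note $\Pl(\tfrac{1}{4k}) > \tfrac{1}{4k+1} \iff \tfrac{1}{4k+1} e^{1/(4k+1)} < \tfrac{1}{4k}$ (applying $\Pl^{-1}$, which is increasing) $\iff e^{1/(4k+1)} < \tfrac{4k+1}{4k} = 1 + \tfrac{1}{4k}$, and this follows from $e^x < \tfrac{1}{1-x}$ for $x \in (0,1)$ applied with $x = \tfrac{1}{4k+1}$ since $\tfrac{1}{1 - 1/(4k+1)} = \tfrac{4k+1}{4k} = 1 + \tfrac1{4k}$. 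That is the cleanest route and I would present it that way.
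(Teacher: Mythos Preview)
Your proposal is correct and follows essentially the same approach as the paper: induction on $k$ for the two-sided bound $\Pl(\tfrac{1}{4k}) < A_k < \tfrac{1}{4k}$, using monotonicity of $\Pl\circ\phi$ together with the two inequalities of Lemma~\ref{aux:l}. The only minor difference is the leftmost inequality $\tfrac{1}{4k+1} < \Pl(\tfrac{1}{4k})$: the paper proves $\Pl(s) > \tfrac{s}{s+1}$ by differentiating $g(s) = \Pl(s)\cdot\tfrac{s+1}{s}$, whereas you obtain the same estimate by applying $\Pl^{-1}$ and invoking $e^x < \tfrac{1}{1-x}$; these are equivalent one-line verifications of the same bound.
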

 \begin{proof}
 	We show first that $A_k < \frac{1}{4k}$ for all $k\in\N.$ The inductive base is immediate, since  $0.2388\approx A_1 < \frac{1}{4}$.  Let $k\in\N$ and assume that $A_k<\frac{1}{4k}$. Using that $t\mapsto \Pl(\phi(t))$ is a monotone increasing function, it follows, by Lemma \ref{aux:l}, that
 	\begin{align*}
 	A_{k+1}=\Pl(\phi(A_k))< \Pl\left(\phi\Big(\frac{1}{4k}\Big)\right)< \frac{1}{4(k+1)}.
 	\end{align*}
 	
 	Next, we show that $A_k>\Pl(\frac{1}{4k})$ for all $k\in\N$. Note that $A_1>\Pl(\frac{1}{4})\approx 0.2038$ and if $A_k>\Pl(\frac{1}{4k})$ then, using that $t\mapsto \Pl(\phi(t))$ and $t\mapsto \Pl(t)$ are monotone increasing functions and Lemma \ref{aux:l},
 	\begin{align*}
 	A_{k+1}=\Pl(\phi(A_k))>\Pl\left(\phi\Big(\Pl\Big(\frac{1}{4k}\Big)\Big)\right)>\Pl\Big(\frac{1}{4(k+1)}\Big).
 	\end{align*}
 	Finally, for $s\in(0,1)$, let $g(s):=\frac{\Pl(s)}{\frac{s}{s+1}}$; then, by \eqref{Pl:bds},
 	\begin{align*}
 	g'(s)=\frac{(s-\Pl(s)) \Pl(s)}{s^2 (\Pl(s)+1)}>0\quad \text{ in }(0,1)\qquad \text{ and }\qquad g(0)=1.
 	\end{align*}
 	Therefore, $g>1$ in $(0,1)$ and, using $s=\frac{1}{4k}$ with $k\in\N$, we deduce that
 	$\Pl(\frac{1}{4k})>\frac{1}{4k+1}.$
 \end{proof}
	
\medskip
We are now ready to prove Theorem \ref{theta:t}.
 \begin{proof}[Proof of Theorem \ref{theta:t}] 
	Let $k\in\N$. By Proposition \ref{b:l} we know that $4k<\frac{1}{A_k}<4k+1$, then, by Lemma \ref{Ak:l}, $2+8k < \theta_k < 2+\frac{2}{\Pl\left( \frac{1}{4k} \right)} < 4+8k$ and $[\frac{\theta_k}{2}]=1+[\frac{1}{A_k}]=1+4k$.
\end{proof}

The proof of Theorem \ref{a:t} relies on a formula for $\apice{M}{m+1}_{\!\!0}$ in terms of $\theta_k$'s (see Lemma \ref{lemma:M0Theta} below), on the estimates for $\theta_k$ (see Theorem \ref{theta:t}), and on some well-known properties of the $\Gamma$-function (see \eqref{G:p:1}--\eqref{p:l}).

\begin{lemma} \label{lemma:M0Theta} Let $m\in\N$, then
 \[\apice{M}{m+1}_{\!\!0}=\frac{(\theta_{m}-2)}{4}e^{\frac{2}{2+\theta_{m}}}
 \displaystyle\prod_{k=1}^{m-1}\frac
 {\theta_{k}-2}
 {\theta_{k}+2}
 	\]
 \end{lemma}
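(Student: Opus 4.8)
The plan is to derive the formula directly from the closed expression \eqref{SoloUltimiSeM} for $\apice{M}{m}_{i}$ by telescoping, with an induction on $m$ as an equally short alternative. Taking $i=0$ and replacing $m$ by $m+1$ in \eqref{SoloUltimiSeM} gives
\[
\apice{M}{m+1}_{\!\!0}=\frac{\apice{M}{1}_{0}}{\displaystyle\prod_{k=2}^{m+1}\apice{R}{k}_{k-1}},
\]
and since $\theta_0=2$ (see \eqref{iterateAlphaM}), \eqref{Mk} yields $\apice{M}{1}_{0}=e^{2/(2+\theta_0)}=\sqrt e$. Thus the whole statement reduces to evaluating $\prod_{k=2}^{m+1}\apice{R}{k}_{k-1}$.

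To do this I would substitute the definition \eqref{Rk}, $\apice{R}{k}_{k-1}=\frac{\apice{M}{k-1}_{k-2}(\theta_{k-2}+2)}{\apice{M}{k}_{k-1}(\theta_{k-1}-2)}$, and split the product into an $M$-factor and a $\theta$-factor. The $M$-factor $\prod_{k=2}^{m+1}\frac{\apice{M}{k-1}_{k-2}}{\apice{M}{k}_{k-1}}$ telescopes to $\frac{\apice{M}{1}_{0}}{\apice{M}{m+1}_{m}}$, which by \eqref{Mk} and $\theta_0=2$ equals $e^{1/2-2/(2+\theta_m)}$. The $\theta$-factor $\prod_{k=2}^{m+1}\frac{\theta_{k-2}+2}{\theta_{k-1}-2}$ becomes, after reindexing ($j=k-2$ in the numerators, $j=k-1$ in the denominators), $\frac{\prod_{j=0}^{m-1}(\theta_j+2)}{\prod_{j=1}^{m}(\theta_j-2)}=\frac{\theta_0+2}{\theta_m-2}\prod_{j=1}^{m-1}\frac{\theta_j+2}{\theta_j-2}=\frac{4}{\theta_m-2}\prod_{j=1}^{m-1}\frac{\theta_j+2}{\theta_j-2}$. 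Dividing $\apice{M}{1}_{0}=\sqrt e$ by the product of these two factors gives precisely $\frac{\theta_m-2}{4}\,e^{2/(2+\theta_m)}\prod_{k=1}^{m-1}\frac{\theta_k-2}{\theta_k+2}$, as claimed. Alternatively, combining the instances of \eqref{SoloUltimiSeM} for $m$ and $m+1$ yields the recursion $\apice{M}{m+1}_{0}=\apice{M}{m}_{0}/\apice{R}{m+1}_{m}$; plugging in \eqref{Rk} and \eqref{Mk} and using the inductive hypothesis reproduces the formula, with base case $m=1$ handled by the one-step computation $\apice{M}{2}_{0}=\apice{M}{1}_{0}/\apice{R}{2}_{1}=\frac{\theta_1-2}{4}e^{2/(2+\theta_1)}$ (the displayed product being empty).

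I do not expect a genuine obstacle here: the argument is a finite telescoping identity, and the only points requiring care are the index shifts in the reindexing of the $\theta$-factor and the degenerate value $\theta_0=2$, which forces the formula to be read for $m\ge 1$ (for $m=1$ the product $\prod_{k=1}^{m-1}$ is empty, and at the would-be value $m=0$ the factor $\theta_m-2$ vanishes, so the identity is not meant there).
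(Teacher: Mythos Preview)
Your proposal is correct and follows essentially the same route as the paper's proof: start from \eqref{SoloUltimiSeM}, substitute \eqref{Rk}, split into an $M$-factor and a $\theta$-factor, and telescope both using $\theta_0=2$. The paper writes the $M$-telescoping as a sum in the exponent rather than as a ratio $\apice{M}{1}_{0}/\apice{M}{m+1}_{m}$, but the computation is identical.
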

\begin{proof} By \eqref{SoloUltimiSeM},
 	\begin{align*}
 	\apice{M}{m+1}_{\!\!0} &= \frac{\apice{M}{1}_{\! 0}}{\displaystyle\prod_{k=2}^{m+1}\!\apice{R}{k}_{k-1}}
 	=\frac{e^{{2}/(2+\theta_{0})}}{\displaystyle\prod_{k=2}^{m+1}\!\frac{\apice{M}{k-1}_{\!\!k-2}\ (\theta_{k-2}+2)}{\apice{M}{k}_{k-1}(\theta_{k-1}-2)}}
 	=e^{{2}/(2+\theta_{0})}\displaystyle\prod_{k=2}^{m+1}\frac
 	{\apice{M}{k}_{k-1}(\theta_{k-1}-2)}
 	{\apice{M}{k-1}_{\!\!k-2}\ (\theta_{k-2}+2)}\\
 	&=e^{\frac{1}{2}}\displaystyle\prod_{k=2}^{m+1}\frac
 	{\theta_{k-1}-2}
 	{\theta_{k-2}+2}
 	\displaystyle\prod_{k=2}^{m+1}\frac
 	{\apice{M}{k}_{k-1}}
 	{\apice{M}{k-1}_{\!\!k-2}}
 	=e^{\frac{1}{2}}\displaystyle\prod_{k=2}^{m+1}\frac
 	{\theta_{k-1}-2}
 	{\theta_{k-2}+2}
 	\displaystyle\prod_{k=2}^{m+1}\frac
 	{e^{{2}/(2+\theta_{k-1})}}
 	{e^{{2}/(2+\theta_{k-2})}}\\
 	&=e^{\frac{1}{2}+\sum\limits_{k=2}^{m+1} \frac{2}{2+\theta_{k-1}}-\frac{2}{2+\theta_{k-2}}}\frac
 	{\displaystyle\prod_{k=2}^{m+1}(\theta_{k-1}-2)}
 	{\displaystyle\prod_{k=2}^{m+1}(\theta_{k-2}+2)}
 	=
 \frac{(\theta_{m}-2)}{4}e^{\frac{2}{2+\theta_{m}}}
 	\displaystyle\prod_{k=1}^{m-1}\frac
 	{\theta_{k}-2}
 	{\theta_{k}+2}.
 	\end{align*}
 \end{proof}
 Recall that the Gamma function is the unique function that simultaneously satisfies that $\Gamma(1)=1$,
 \begin{align}
 z\Gamma(z)&=\Gamma(z+1),\label{G:p:1}\\
 \lim_{n\to\infty}\frac{\Gamma(n+z)}{\Gamma(n)n^z}&=1\qquad \text{ for every }z\in\mathbb C.\label{G:p:2}
 \end{align}
 In particular, by  \eqref{G:p:1}, we have, for every $\alpha\geq 0$ and $\beta\geq 0$, that
 	\begin{align}
 	\prod_{k=1}^{m-1} 
 	\frac{k+\beta}{k+\alpha} = \frac{\Gamma(\alpha+1)}{\Gamma(\beta+1)}
 	\frac{\Gamma(m+\beta)}{\Gamma(m+\alpha)}.\label{p:l}
 	\end{align}
 
\medskip
 
 We are now ready to show Theorem \ref{a:t}.

 \begin{proof}[Proof of Theorem \ref{a:t}] 
By Theorem \ref{theta:t},
 	\[e^{\frac{1}{3+4m}}<e^{\frac{2}{2+\theta_{m}}}<e^{\frac{1}{2+4m}} \quad\text{ and }\quad  2m<\frac{\theta_m-2}{4}<2m+\frac{1}{2}\quad \text{ for $m\in\mathbb N$}\]
 	and, using that $s\mapsto \frac{s-2}{s+2}$ is a monotone increasing function, 
 	\[\frac{k}{k+\frac{1}{2}}<\frac{\theta_k-2}{\theta_k+2}<\frac{k+\frac{1}{4}}{k+\frac{3}{4}}\quad\text{ for $k\in\N$}.\]
 	Then, by Lemma \ref{lemma:M0Theta},
\[2m\,e^{\frac{1}{3+4m}}\displaystyle\prod_{k=1}^{m-1}\frac{k}{k+\frac{1}{2}}
<\apice{M}{m+1}_{\!\!0}
=\frac{(\theta_{m}-2)}{4}e^{\frac{2}{2+\theta_{m}}}
 \displaystyle\prod_{k=1}^{m-1}\frac
 {\theta_{k}-2}
 {\theta_{k}+2}
<(2m+\frac{1}{2})\,e^{\frac{1}{2+4m}}\displaystyle\prod_{k=1}^{m-1}\frac{k+\frac{1}{4}}{k+\frac{3}{4}},\]
and, by \eqref{p:l},
\begin{equation}\label{comp}2\frac{m\Gamma(m)}{\Gamma(m+\frac{1}{2})}\Gamma(1+\frac{1}{2})
e^{\frac{1}{3+4m}} <\apice{M}{m+1}_{\!\!0}<(2m+\frac{1}{2})\frac{\Gamma(m+\frac{1}{4})}{\Gamma(m+\frac{3}{4})}\frac{\Gamma(1+\frac{3}{4})}{\Gamma(1+\frac{1}{4})}e^{\frac{1}{2+4m}}.\end{equation}
Using \eqref{G:p:1} and the fact that $\Gamma(\frac{1}{2})=\sqrt{\pi}$,
 	\begin{align}
& 2\frac{m\Gamma(m)}{\Gamma(m+\frac{1}{2})}\Gamma(1+\frac{1}{2}) =\sqrt{\pi}\ \frac{m\Gamma(m)}{\Gamma(m+\frac{1}{2})} =
 	\sqrt{\pi}\ \frac{\Gamma(m+1)}{\Gamma(m+\frac{1}{2})},
\label{bcost1} 	\\
 	&(2m+\frac{1}{2})\frac{\Gamma(m+\frac{1}{4})}{\Gamma(m+\frac{3}{4})}\frac{\Gamma(1+\frac{3}{4})}{\Gamma(1+\frac{1}{4})}= 	3\frac{\Gamma(\frac{3}{4})}{\Gamma(\frac{1}{4})}\ (2m+\frac{1}{2})\frac{\Gamma(m+\frac{1}{4})}{\Gamma(m+\frac{3}{4})}
 	=6\frac{\Gamma(\frac{3}{4})}{\Gamma(\frac{1}{4})}\frac{\Gamma(m+\frac{5}{4})}{\Gamma(m+\frac{3}{4})}.\label{bcost2}
 	\end{align}
Substituting \eqref{bcost1} and \eqref{bcost2} into  \eqref{comp}, we obtain \eqref{stiM0}.  Finally, from \eqref{stiM0} and \eqref{G:p:2} one has \eqref{asym}, because
 	\begin{align*}
  \liminf_{m\to \infty}\frac{\apice{M}{m+1}_{\!\!0}}{m^{\frac{1}{2}}}&\geq \lim_{m\to\infty}\sqrt{\pi}\ \frac{\Gamma(m+1)}{m^{\frac{1}{2}}\Gamma(m+\frac{1}{2})}e^{\frac{1}{3+4m}} 
 	= \sqrt{\pi},\\
 	\limsup_{m\to \infty}\frac{\apice{M}{m+1}_{\!\!0}}{m^{\frac{1}{2}}} &\leq \lim_{m\to\infty}6\frac{\Gamma(\frac{3}{4})}{\Gamma(\frac{1}{4})}\frac{\Gamma(m+\frac{5}{4})}{\Gamma(m+\frac{3}{4})}e^{\frac{1}{2+4m}}
 	=6\frac{\Gamma(\frac{3}{4})}{\Gamma(\frac{1}{4})}.
 	\end{align*}
 \end{proof}

 \begin{proof} [Proof of Corollary \ref{thm:growthOtherConstants}]
 This is a direct consequence of Theorem \ref{a:t} and the fact that, for every $i\in \N$, \begin{align*}
 \frac{\apice{M}{m+1}_{\!\!0}}{\apice{M}{m+1}_{\!\!i}}=\frac{\apice{M}{1}_0}{\apice{M}{i+1}_{\!i}}\prod_{k=2}^{i+1}\apice{R}{k}_{k-1},\qquad 
 \frac{\apice{M}{m+1}_{\!\!0}}{\apice{D}{m+1}_{\!\!i}}&=
 \frac{\apice{M}{1}_0}{\apice{D}{i}_{i}}\prod_{k=2}^{i}\apice{R}{k}_{k-1}, \qquad
 \apice{M}{m+1}_{\!\!0} \apice{S}{m+1}_{\!\!i}= \apice{M}{1}_0 \apice{S}{i+1}_{\! i} (\prod_{k=2}^{i+1}\apice{R}{k}_{k-1})^{-1},\\
   \apice{M}{m+1}_{\!\!0} \apice{R}{m+1}_{\!\!i}&= \apice{M}{1}_0 (\prod_{k=2}^{i}\apice{R}{k}_{k-1})^{-1},
 \end{align*} where all the right-hand sides are positive  constants independent of $m$.
 \end{proof}

 \medskip
 
 We conclude the section with the proofs of Theorem \ref{theorem:NObounds} and  Corollary \ref{coro}.
 \begin{proof}[Proof of Theorem \ref{theorem:NObounds}]
 	By Theorem \ref{theorem:mainDirichletG}, we have that $|\apice{u}{m}_{\alpha,p}(\apice{s}{m}_{0,\alpha,p})|=\apice{M}{m}_{0}+o(1)$ as $p\to\infty$, and then \eqref{D:bd}, \eqref{D:asym} follow from Theorem \ref{a:t}. Furthermore, by Theorem \ref{theoremNeumannMain}, $|\apice{\bar u}{m}_{\alpha,p}(\apice{\bar s}{m}_{0,\alpha,p})|=\apice{\bar M}{m}_{0} +o(1)=\apice{S}{m}_{m-1} \apice{ M}{m}_0+o(1)$. Then \eqref{N:bd} follows from Theorem \ref{a:t} and the fact that
 	\begin{align*}
 	e^{-\frac{1}{4m-2}}<\apice{S}{m}_{m-1}=e^{-\frac{A_{m-1}}{2A_{m-1}+1}}<e^{-\frac{1}{4m-1}}.
 	\end{align*}
 	Finally, \eqref{N:asym} holds by Theorem \ref{a:t} and because \begin{equation}\label{smZero}\lim_{m\to\infty} \apice{S}{m}_{m-1}=\lim_{m\to\infty} e^{-\frac{2}{2+\theta_{m-1}}} =1.\end{equation} 
 \end{proof}
 
 \begin{proof}[Proof of Corollary \ref{coro}] By Theorems \ref{theorem:mainDirichletG} and \ref{theoremNeumannMain},  $|\apice{u}{m}_{\alpha,p}(\apice{s}{m}_{i,\alpha,p})|=\apice{M}{m}_{i}+o(1)$
 	and  $|\apice{\bar u}{m}_{\alpha,p}(\apice{\bar s}{m}_{i,\alpha,p})|=\apice{\bar M}{m}_{i} +o(1)=\apice{S}{m}_{m-1} \apice{ M}{m}_i+o(1)$ as $p\to\infty$ and the claim follows from \eqref{smZero} and  \eqref{allOtherMax}. 
 \end{proof}
 
   \begin{remark}
 	An efficient way of computing numerically the values of the $\theta_k$'s is via a for-cycle; for instance, in the computing software Mathematica, for a given number $N1$, the code 
 	\begin{verbatim}
 	 T[0] = N[2]
 	 psi[t_] := 2/(2 + t)*Exp[-2/(2 + t)]
 	 For[k = 1, k < N1, k++, T[k] = 2 + 2 / ProductLog[psi[T[k - 1]]]]
 	\end{verbatim}
computes N1 values of $\theta_k=T[k]$ (c.f. \eqref{definition:Constants}). Using $N1=10^6$ and  Lemma \ref{lemma:M0Theta}, we see that $\lim\limits_{m\to\infty}\frac{\apice{M}{m+1}_{0}}{\sqrt{m}}\approx 1.82774.$ Indeed, numerically it seems that the sequence $(\frac{\apice{M}{m+1}_{0}}{\sqrt{m}})_{m\in\N}$ is monotone, and in particular it has a limit as $m\to \infty$.  This would imply that Theorem \ref{theorem:NObounds} and Corollary \ref{coro} can be written with $\lim$ instead of $\limsup$ and $\liminf$.  However, a rigorous proof of this monotonicity seems to require sharper bounds that the ones given by Theorem \ref{a:t}.
 \end{remark}

\begin{center}
\begin{minipage}{.9\textwidth}
\begin{multicols}{2}	

	{\color{white}.}

	\begin{tabular}{||c|c|c|c||}
		\hline 
		$m$ & $\theta_m$ & $\apice{M}{m}_{0}$ & $\apice{M}{m}_{0}/\sqrt{m}$ \\
		 \hline\hline 
		1 & 10.374 & 1.6487 & 1.6487 \\
		2 & 18.4277 & 2.46075 & 1.74001 \\
		3 & 26.4493 & 3.06521 & 1.7697 \\
		4 & 34.4609 & 3.56876 & 1.78438 \\
		5 & 42.4682 & 4.00957 & 1.79313 \\
		6 & 50.4731 & 4.40651 & 1.79895 \\
		7 & 58.4767 & 4.77053 & 1.80309 \\
		8 & 66.4795 & 5.10867 & 1.80619 \\
		9 & 74.4816 & 5.42579 & 1.8086 \\
		10 & 82.4833 & 5.72537 & 1.81052 \\
		11 & 90.4848 & 6.01003 & 1.81209 \\
		12 & 98.486 & 6.28181 & 1.8134 \\
		13 & 106.487 & 6.5423 & 1.81451 \\
		14 & 114.488 & 6.79282 & 1.81546 \\
		15 & 122.489 & 7.03442 & 1.81628 \\
		16 & 130.489 & 7.26799 & 1.817 \\
		17 & 138.49 & 7.49429 & 1.81763 \\
		18 & 146.49 & 7.71395 & 1.81819 \\
		19 & 154.491 & 7.92752 & 1.8187 \\
		20 & 162.491 & 8.13549 & 1.81915 \\
		21 & 170.492 & 8.33828 & 1.81956 \\
		22 & 178.492 & 8.53625 & 1.81993 \\
		23 & 186.492 & 8.72973 & 1.82027 \\
		24 & 194.493 & 8.91901 & 1.82059 \\
		25 & 202.493 & 9.10436 & 1.82087 \\
		\hline
\end{tabular}

\setlength{\unitlength}{1cm}
\thicklines
\begin{picture}(5,4)
\put(5.8,.4){$m$}
\put(0.5,3.5){$\theta_m$}
\includegraphics[width=.4\textwidth]{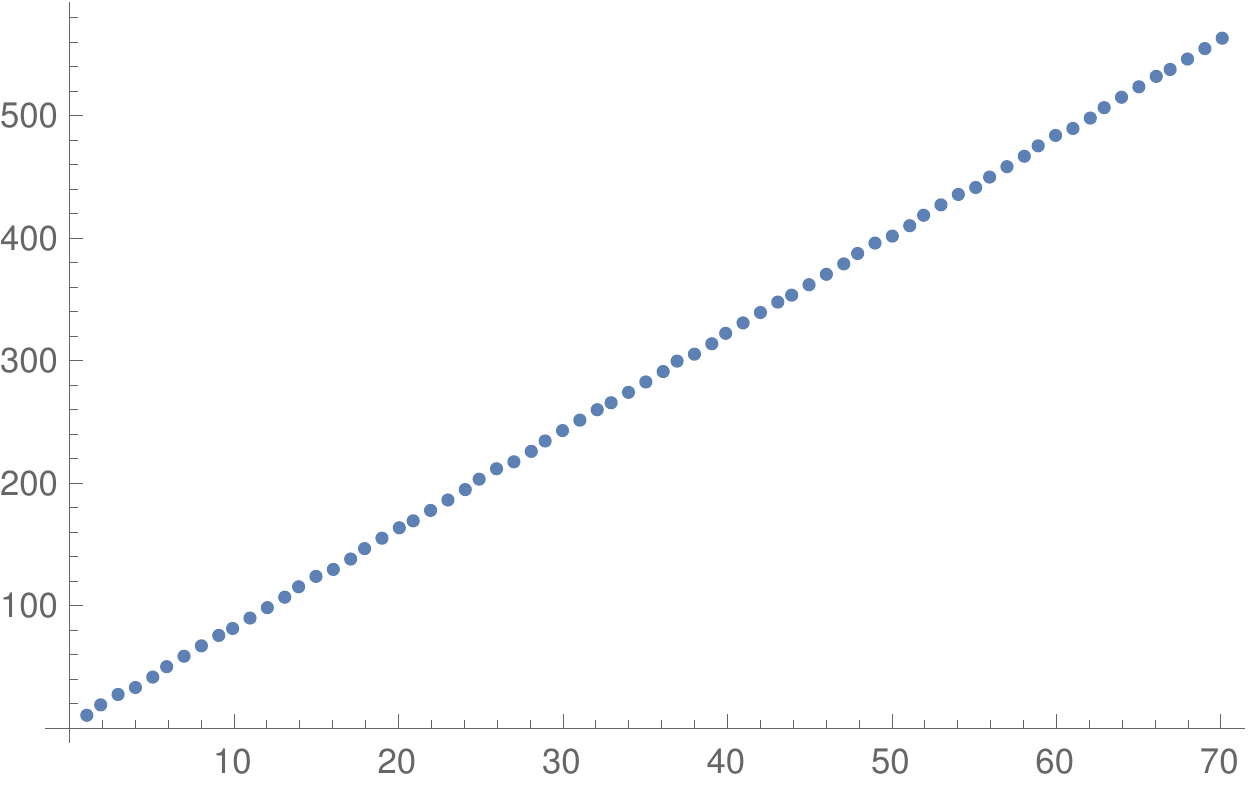}
\end{picture}

\setlength{\unitlength}{1cm}
\thicklines
\begin{picture}(5,4)
	\put(5.8,.4){$m$}
	\put(0.5,3.5){$\apice{M}{m}_{0}$}
\includegraphics[width=.4\textwidth]{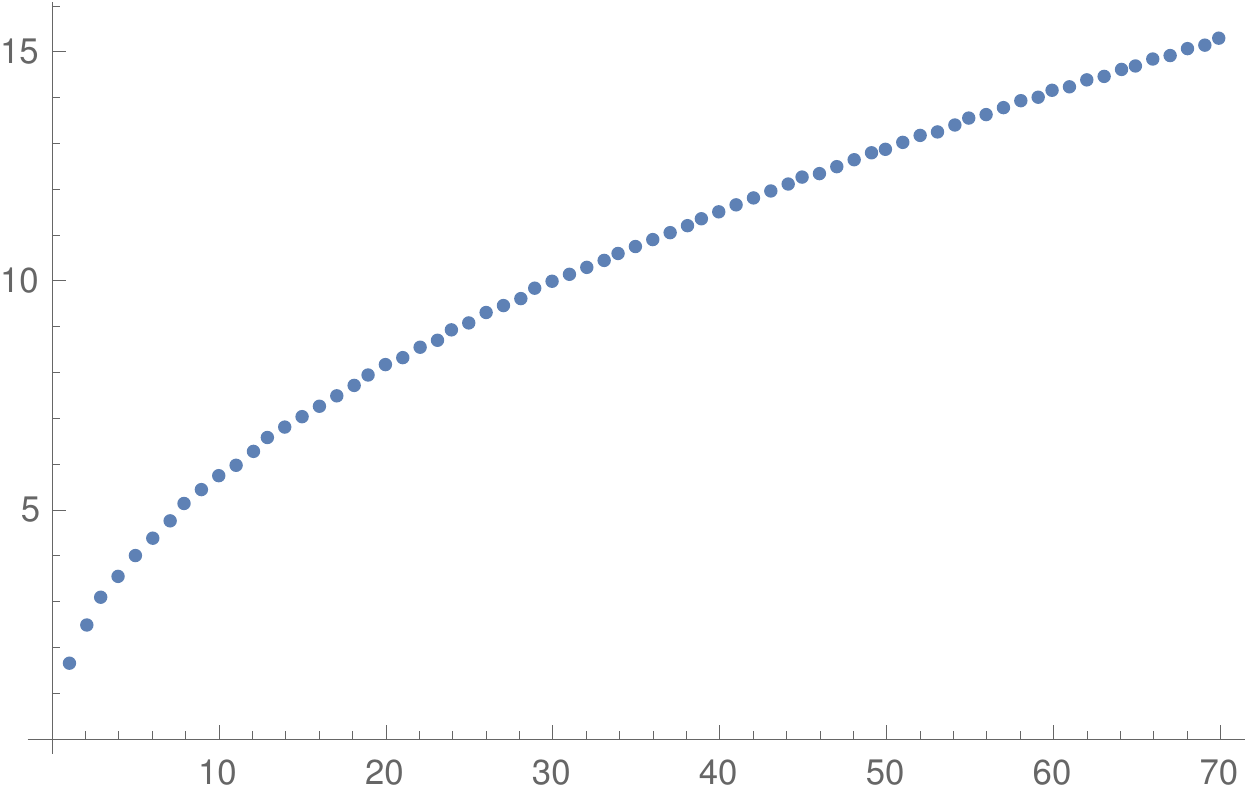}
\end{picture}

\setlength{\unitlength}{1cm}
\thicklines
\begin{picture}(5,4)
	\put(5.8,.4){$m$}
	\put(0.6,3.5){$\apice{M}{m}_{0}/\sqrt{m}$}
\includegraphics[width=.4\textwidth]{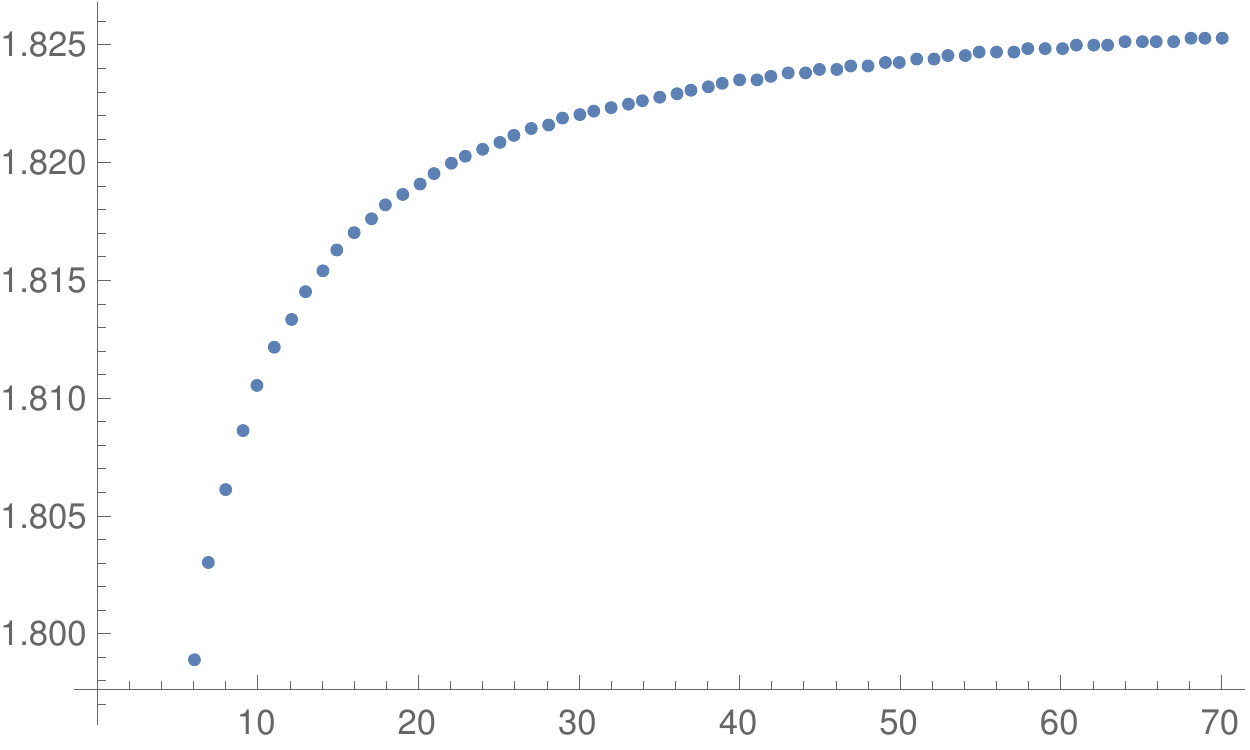}
\end{picture}

\end{multicols}
\end{minipage}
\captionof{figure}{Some numerical values and plots of $\theta_m$, $\apice{M}{m}_{0}$, and $\apice{M}{m}_{0}/\sqrt{m}$.}
	\label{f3}
\end{center}

We close this section with the proof of Lemma \ref{lemma:PropertiesConstants}. 
\begin{proof}[Proof of Lemma \ref{lemma:PropertiesConstants}]
	To see \eqref{catenateointro} and \eqref{catenaMintro}, observe that $\apice{M}{m}_{m-1}= e^{\frac{2}{2+\theta_{m-1}}} >1,$ and as a consequence, $\apice{S}{m}_{m-1}= (\apice{M}{m}_{m-1})^{-1}<1.$ Then, $\frac{\apice{S}{m}_{i}}{\apice{R}{m}_{i+1}}=\apice{S}{i+1}_{i}<1$. Hence, $\apice{S}{m}_{i}<\apice{R}{m}_{i+1}$ for $i=1,\ldots, m-2.$	Furthermore, since $\mathcal L\left(\frac{y}{e^y}\right)<\frac{y}{e^y}$ we have (setting $y:=\frac{2}{\theta_{m-2}+2}$) that $e^{\frac{2}{\theta_{m-2}+2}}\frac{(\theta_{m-2}+2)}{2}<\frac{\theta_{m-1}-2}{2}$, and then, by \eqref{Rk} and \eqref{Sk}, we deduce that $\apice{R}{m}_{m-1}<\apice{S}{m}_{m-1}.$  As a consequence, $\apice{R}{m}_{i}<\apice{S}{m}_{i}$ for $i=1,\ldots, m-1$ and \eqref{catenateointro}, \eqref{catenaMintro} follow. The strict monotonicity of the sequence  $(\apice{M}{m}_j)_m$ follows from \eqref{SoloUltimiSeM}, since $\apice{M}{m-1}_{\!\!j}=\apice{R}{m}_{m-1}\apice{M}{m}_j<\apice{M}{m}_j$.  Similarly, one can prove the monotonicity for the sequences  $(\apice{S}{m}_j)_m$, $(\apice{R}{m}_j)_m$ and the monotonicity of  $(\apice{D}{m}_j)_m$ can be deduced from \eqref{SoloUltimiReD} and the monotonicity of  $(\apice{R}{m}_j)_m$.
\end{proof}	

\appendix
\section{Morse index conjecture for the Dirichlet Lane-Emden case}\label{App:Morse}

Let $\apice{u}{m}_{p}:=\apice{u}{m}_{0,p}$ be the radial  solution of the Lane-Emden equation \eqref{equationHenon} with $\alpha=0$ and with Dirichlet boundary conditions \eqref{DirichletBC}, having $m-1$ interior zeros. We conjecture that for $p$ sufficiently large the Morse index $\mathfrak m$ satisfies
\begin{equation}\label{conjecture}\mathfrak{m}(\apice{u}{m}_{p})=\sum_{k=0}^{m-1}\mathfrak{m}(Z_k),\end{equation}
where $Z_k:=Z_{k,0}$ are the limit profiles in \eqref{bubblecomplete} with $\alpha=0$, whose Morse indexes are known (see \cite{ChenLin}), namely,
\begin{align*}
\mathfrak{m}(Z_0)=1,\qquad \mathfrak{m}(Z_k)=1 +2\left[ \frac{\theta_k}{2}\right]\qquad \mbox{ for }k\geq 1,
\end{align*}
 where $[c]$ is the integer part of $c\in\mathbb R$. Then, by \eqref{theta:ip},
\begin{align}\label{c2}
\mathfrak{m}(Z_k)=1 +2\left[ \frac{\theta_k}{2}\right]=8k+3\qquad \text{ for all }k\in\N,\ k\geq 1.
\end{align}

As a consequence from conjecture \eqref{conjecture} one would obtain the following \emph{Morse index formula.}
\begin{conjecture}
\label{MorseFormula} 
	For any $m\geq 1$ there exists $p_m>1$ such that
	\[\mathfrak{m}(\apice{u}{m}_{p})=4m^2-m-2, \qquad \forall p\geq p_m.\]
\end{conjecture}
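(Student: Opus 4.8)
The plan is to prove Conjecture \ref{MorseFormula} as a direct corollary of conjecture \eqref{conjecture} together with the two already-available ingredients: the Morse indices $\mathfrak m(Z_k)$ of the limit bubbles, and the closed-form description of $[\theta_k/2]$ from Theorem \ref{theta:t}. So the whole argument reduces to an evaluation of the sum $\sum_{k=0}^{m-1}\mathfrak m(Z_k)$, which is purely arithmetic.

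First I would recall the values: $\mathfrak m(Z_0)=1$, and for $k\geq 1$, by \eqref{c2}, $\mathfrak m(Z_k)=1+2[\theta_k/2]=8k+3$, which uses \eqref{theta:ip} from Theorem \ref{theta:t}. Then I would simply compute
\[
\sum_{k=0}^{m-1}\mathfrak m(Z_k) = \mathfrak m(Z_0) + \sum_{k=1}^{m-1}(8k+3)
= 1 + 8\cdot\frac{(m-1)m}{2} + 3(m-1)
= 1 + 4m^2 - 4m + 3m - 3
= 4m^2 - m - 2,
\]
using the standard identity $\sum_{k=1}^{m-1}k=\frac{(m-1)m}{2}$. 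For $m=1$ the sum is just $\mathfrak m(Z_0)=1=4\cdot 1-1-2$, so the formula is consistent with the empty-sum convention. Assuming \eqref{conjecture}, i.e. $\mathfrak m(\apice{u}{m}_p)=\sum_{k=0}^{m-1}\mathfrak m(Z_k)$ for $p\geq p_m$, we conclude $\mathfrak m(\apice{u}{m}_p)=4m^2-m-2$ for all $p\geq p_m$, which is the claim.

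The genuinely hard part is of course establishing \eqref{conjecture} itself, which is exactly why it is left as a conjecture: one would need to show that the Morse index of $\apice{u}{m}_p$ equals the sum of the Morse indices of the bubbles appearing in the tower-of-bubbles decomposition (Theorem \ref{theorem:analisiAsintoticaRiscalateAlpha}). This is the type of statement proved in \cite{DeMarchisIanniPacellaMathAnn, DeMarchisIanniPacellaAdvMath, AmadoriGladiali_Henon_2, AmadoriGladiali_Henon_N>=3} in the $m=1,2$ cases or in higher dimensions: one splits the quadratic form $Q_p(\varphi)=\int_B|\nabla\varphi|^2 - p\int_B|\apice{u}{m}_p|^{p-1}\varphi^2$ over suitable annular regions adapted to the scales $\apice{\varepsilon}{m}_{i,p}$, uses the local convergence of the rescaled solutions to the singular Liouville profiles $Z_{i,0}$ to control each piece by the spectral properties of the limit equation $-\Delta w = (\ldots)|x|^{\alpha}e^{Z_i}w$ on $\R^2$ (whose Morse indices are computed in \cite{ChenLin}), and then shows there is no extra degeneracy or cross-interaction between the scales. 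Carrying this out rigorously for arbitrary $m$ (and with the decomposition into angular Fourier modes) is a substantial project; once it is in place, the arithmetic above immediately yields the stated formula. I would therefore present the proof of Conjecture \ref{MorseFormula} \emph{conditionally} on \eqref{conjecture}, flagging the bubble-by-bubble Morse index splitting as the main obstacle and the object of the future work mentioned in the introduction.
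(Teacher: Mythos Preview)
Your proposal is correct and matches the paper's own treatment: the paper also presents Conjecture \ref{MorseFormula} as a formal consequence of the unproved splitting conjecture \eqref{conjecture}, and derives the formula by the same arithmetic computation $\sum_{k=0}^{m-1}\mathfrak m(Z_k)=1+3(m-1)+8\sum_{k=1}^{m-1}k=4m^2-m-2$. Your additional discussion of what proving \eqref{conjecture} would entail is accurate extra context, but the conditional derivation itself is identical to the paper's.
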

Indeed, simple computations would show that
 \begin{eqnarray*}\mathfrak{m}(\apice{u}{m}_{p})&\overset{\eqref{conjecture}}{=}&\sum_{k=0}^{m-1}\mathfrak{m}(Z_k) \overset{\eqref{c2}}{=} 1 + 3(m-1)+8\sum_{k=1}^{m-1} k 
=3m-2+4m(m-1)=4m^2-m-2.\end{eqnarray*}

Conjecture \ref{MorseFormula} holds in  the case $m=1$, since it can be proved that the least energy solution has Morse index $1$ (see \cite{Lin}). Moreover it has been proved in the case $m=2$ in \cite{DeMarchisIanniPacellaMathAnn}, where it is shown that $\mathfrak{m}(\apice{u}{2}_{p})=12.$ We remark that in higher dimension $N\geq 3$, the Morse index is smaller, since, in this case, the Morse index grows linearly; to be more precise, in \cite{DeMarchisIanniPacellaAdvMath} it is shown that for $N\geq 3$ and $m\geq 1$ there exists $\delta_m>0$ such that
\[\mathfrak m(\apice{u}{m}_p) = m+ N(m- 1) \qquad\mbox{ for }\  \frac{N+2}{N-2}-\delta_m\leq p<\frac{N+2}{N-2}. \]


\begin{thebibliography}{99}
	
\bibitem{AdimurthiGrossi}
Adimurthi, M. Grossi,
\emph{Asymptotic estimates for a two-dimensional problem with polynomial nonlinearity},
Proceedings of the American Math. Soc. 132 (4) (2003), 1013--1019	

\bibitem{AmadoriGladiali_Henon_2}
A. Amadori, F. Gladiali, 
\emph{The H\'enon problem with large exponent in the disc}, 
preprint arXiv:1904.05907

\bibitem{AmadoriGladiali_Henon_N>=3}
A. Amadori, F. Gladiali, 
\emph{Asymptotic profile and Morse index of nodal radial solutions to the H\'enon problem},
preprin arXiv:1810.11046 

	
\bibitem{BahriLiRey}
A. Bahri, Y. Li, O. Rey, \emph{On a variational problem with lack of compactness: The topological effect of the critical points at infinity}, Calc. Var.
Partial Differential Equations 3 (1995) 67-93.	

\bibitem{AtkinsonPeletier}
F.V. Atkinson, L.A. Peletier,
\emph{Elliptic equations with nearly critical growth}, 
J. Diff. Equations 70 (3) (1987) 348-365.




\bibitem{BrezisPeletier}
H. Brezis, L.A. Peletier, \emph{Asymptotics for elliptic equations involving critical growth} Partial differential equations and the calculus of variations, Vol. I, 149-192, Progr. Nonlinear Differential Equations Appl., 1, Birkh\"{a}user Boston, Boston, MA, 1989.

\bibitem{ByeonWangII}
J. Byeon, Z.-Q. Wang, 
\emph{On the H\'enon equation: asymptotic profile of ground states, II},
J. Differential Equations 216 (2005) 78 -108

\bibitem{ByeonWangI}
J. Byeon, Z.-Q. Wang, 
\emph{On the H\'enon equation: asymptotic profile of ground states, I},
 Ann. I. H. Poincar\'e - AN 23 (2006) 803-828	
 
 \bibitem{ByeonWang2018}
J. Byeon, Z.-Q. Wang, 
\emph{On the H\'{e}non equation with a Neumann boundary condition: Asymptotic profile of ground states},
Journal of Functional Analysis 274 (2018) 3325-3376. 
 

	
\bibitem{ChenLin}
	C.C. Chen, C.S. Lin,
	\emph{Mean field equations of Liouville type with singular data: sharper estimates},
	Discrete Contin. Dyn. Syst. 28 (2010), no. 3, 1237-1272.
	
	
\bibitem{Clem} P. Cl\'ement, D. de Figueiredo, E. Mitidieri, 
\emph{ Quasilinear elliptic	equations with critical exponents}, 
Topol. Methods Nonlinear Anal. 7(1) (1996) 133?170.

\bibitem{CaoPeng}
D. Cao, S. Peng, 
\emph{The asymptotic behaviour of the ground state solutions for H\'enon equation},
J. Math. Anal. Appl. 278 (2003) 1-17.




\bibitem{CowGhou}
C. Cowan, N. Ghoussoub, 
\emph{Estimates on pull-in distances in microelectromechanical systems models and other nonlinear eigenvalue problems}, 
SIAM J. Math. Anal. 42(5) (2010) 1949-1966.

\bibitem{DeMarchisGrossiIanniPacellaImprovement}
F. De Marchis, M. Grossi, I. Ianni, F. Pacella, 
\emph{$L^{\infty}$-norm and energy quantization for the planar Lane-Emden problem with large exponent},
  Archiv der Mathematik,  111 (4) 421-429 (2018).
	

	
	
\bibitem{DeMarchisIanniPacellaJEMS}
	F. De Marchis, I. Ianni, F. Pacella,
	\emph{Asymptotic analysis and sign changing bubble towers for Lane-Emden problems},
	Journal of the European Mathematical Society 17 (8) (2015), 2037-2068.
	
	
	
	\bibitem{DIPpositive} 
	F. De Marchis, I. Ianni, F. Pacella, 
	\emph{Asymptotic profile of positive solutions of  Lane-Emden problems in dimension two}, Journal of Fixed Point Theory and Applications 19 (2017), no. 1, 889-916.
	
	
	\bibitem{DeMarchisIanniPacellaMathAnn}
F. De Marchis, I. Ianni, F. Pacella, 
\emph{Exact Morse index computation for nodal radial solutions of  Lane-Emden problems}, 
Mathematische Annalen 367 (1) (2017) 185-227.
	

\bibitem{DeMarchisIanniPacellaAdvMath} 
F. De Marchis, I. Ianni, F. Pacella, \emph{A Morse index formula for radial solutions of Lane-Emden problems}, Advances in Mathematics 322 (2017) 682-737.




 
	\bibitem{DicksteinPacellaSciunzi} 
	F. Dickstein, F. Pacella, B. Sciunzi, 
	\emph{Sign-changing stationary solutions and blow up for the nonlinear heat equation in dimension two}, 
	Journal of Evolution Equation 14 (3) (2014), 617-633.
	
	

	
\bibitem{GazziniSerra}
M. Gazzini, E. Serra,
\emph{The Neumann problem for the H\'{e}non equation,	trace inequalities and Steklov eigenvalues},
Ann. I. H. Poincar\'{e} - AN 25 (2008) 281-302.

	
\bibitem{GNN}
B. Gidas , W.-M. Ni,  L. Nirenberg,
\emph{Symmetry and Related Properties via the Maximum Principle}
CCommun. Math. Phys. 68, 209-243 (1979) 

\bibitem{GiraoWeth}
P. Gir\~ao, T. Weth, 
\emph{The shape of extremal functions for
	Poincar\'e-Sobolev-type inequalities in a ball},
 Journal of Functional Analysis 237 (2006) 194-223

	
\bibitem{GGN} 
	F. Gladiali, M. Grossi, S.L.N. Neves, 
	\emph{Nonradial solutions for the H\'enon equation in $\mathbb R^N$}, 
	Adv. Math. 249 (2013) 1-36. 
	

\bibitem{GrossiGrumiauPacella1}
M. Grossi, C. Grumiau, F. Pacella,
\emph{Lane-Emden problems: Asymptotic behavior of low energy nodal solutions}
Ann. I. H. Poincar\'e - AN 30 (2013) 121-140
	
\bibitem{GrossiGrumiauPacella2}
	M. Grossi, C. Grumiau, F. Pacella,
	\emph{Lane Emden problems with large exponents and singular Liouville equations},
	J. Math. Pures Appl. 101 (2014), 735-754.
 
\bibitem{GrossiSaldanaTavares}	
 M. Grossi, A. Salda\~na, H. Tavares,
 \emph{Sharp concentration estimates near criticality for radial sign-changing solutions of Dirichlet and Neumann problems}, preprint arXiv:1806.09437.
 
 	
\bibitem{Han}
Z.-C. Han, 
\emph{Asymptotic approach to singular solutions for nonlinear elliptic equations involving critical Sobolev exponent},
Ann. Inst. H. Poincar\'{e} Anal. Non Lin\'{e}aire 8 (1991), no. 2, 159-174	
	
	

\bibitem{Henon}
M. H\'enon, \emph{Numerical experiments on the stability of spherical stellar systems}, Astronomy and Astrophysics 24 (1973) 229?238

	
	\bibitem{K}
	R. Kajikiya, \emph{Sobolev norm of radially symmetric oscillatory solutions for super-linear elliptic equations}, Hiroshima Math. J. 20 (1990), 259-276.
	
	\bibitem{KamburovSirakov}
		N. Kamburov, B. Sirakov,
	\emph{Uniform a priori estimates for positive solutions of the Lane-Emden equation in the plane}, Calc. Var. Partial Differential Equations 57 (2018), no. 6, Art. 164, 8 pp.
	
\bibitem{Lin}
C.-S. Lin, 
\emph{Uniqueness of least energy solutions to a
semilinear elliptic equation in $\mathbb R^2$},
Manuscripta Math. 84 (1994) 13-19.
	
\bibitem{MoreiPacella}
	E. Moreira dos Santos, F. Pacella,
	\emph{Morse index of radial nodal solutions of H\'{e}non type equations in dimension two},
	Comm. in Cont. Math.  19 (3) (2016).
	
\bibitem{NiHenon}	
W.-M. Ni, 
\emph{A nonlinear Dirichlet problem on the unit ball and its applications}, Indiana Univ. Math. J. 31 (6) (1982) 801-807
	
\bibitem{N}
	W.-M. Ni, 
	\emph{Uniqueness of solutions of nonlinear Dirichlet problems}, J. Diff. Equations 50 (2) (1983), 289-304.
	
\bibitem{RenWeiTAMS1994} X. Ren, J. Wei, 
\emph{On a two-dimensional elliptic problem with large exponent in nonlinearity}, Trans. Amer. Math. Soc. 343 (1994), no. 2, 749-763.
	
	\bibitem{RenWeiPAMS1996}
	X. Ren,  J. Wei, 
	\emph{Single-point condensation and least-energy solutions}, Proc. Amer. Math. Soc. 124 (1996), no. 1, 111-120.
	

\bibitem{Rey}
O. Rey,
\emph{Blow-up points of solutions to elliptic equations with limiting nonlinearity}, 
Differential Integral Equations 4 (1991), no. 6, 1155-1167. 

\bibitem{SaldanaTavares}
A. Salda\~na, H. Tavares, 
\emph{Least energy nodal solutions of Hamiltonian elliptic systems with Neumann boundary conditions},
Journal of Differential Equations 265 (12) (2018),  6127-6165.



\bibitem{SmetsSuWillem}
D. Smets, J. Su, M. Willem, 
\emph{Non-radial ground states for the H\'enon equation},
Commun. Contemp. Math. 4 (2002), no. 3, 467-480.

\bibitem{Strauss}
W.A. Strauss, \emph{Existence of solitary waves in higher dimensions}, Comm. Math. Phys. 55 (1977), no. 2, 149-162.

\bibitem{Struwe}
M. Struwe, \emph{A global compactness result for elliptic boundary value problems involving limiting nonlinearities}, Math. Z. 187 (1984), no. 4, 511-517.



\bibitem{Thizy}
P.-D. {Thizy}, \emph{Sharp quantization for Lane-Emden problems in dimension two}, to appear in Pacific Journal of Mathematics.
	
\end{thebibliography}
\end{document}